\documentclass[11pt]{amsart}

\usepackage{microtype}
\usepackage[margin=2.5cm]{geometry}
\usepackage{amsmath,amssymb,amsthm,mathtools,mathrsfs}
\usepackage{enumitem}
\usepackage{xcolor}
\usepackage[all]{xy}
\usepackage{tikz-cd}
\usepackage{hyperref}
\usepackage[nameinlink,noabbrev]{cleveref}
\usepackage{bookmark}
\usepackage[T1]{fontenc}
\usepackage{lmodern}
\hypersetup{
	colorlinks=true,
	linkcolor=red!68!black,
	citecolor=blue!68!black,
	urlcolor=blue!68!black,
}
\allowdisplaybreaks
\setlist{itemsep=3pt,topsep=5pt}
\numberwithin{equation}{subsection}

\newtheorem{theorem}[subsection]{Theorem}
\newtheorem{proposition}[subsection]{Proposition}
\newtheorem{lemma}[subsection]{Lemma}
\newtheorem{corollary}[subsection]{Corollary}
\newtheorem{conjecture}[subsection]{Conjecture}

\theoremstyle{definition}
\newtheorem{definition}[subsection]{Definition}
\newtheorem{remark}[subsection]{Remark}
\newtheorem{question}[subsection]{Question}

\newcommand{\SSing}{\operatorname{SS}}
\newcommand{\SSmu}{\operatorname{SS}_{\mu}}
\newcommand{\CC}{\operatorname{CC}}
\newcommand{\CCmu}{\operatorname{CC}_{\mu}}
\newcommand{\supp}{\operatorname{supp}}

\newcommand{\muhom}{\mu\mathcal Hom}

\newcommand{\D}{\mathbb D}
\newcommand{\id}{\operatorname{id}}

\title[Microlocal singular support]
{Microlocalization and singular supports of constructible \'etale sheaves}

\author[Jiangnan Xiong]{Jiangnan Xiong${}^\dag$}
\thanks{${}^\dag$xiongjiangnan@stu.pku.edu.cn}
\thanks{${}^\dag$School of Mathematical Sciences, Peking University,
	No. 5 Yiheyuan Road Haidian District, Beijing 100871, P. R. China.}

\author[Enlin Yang]{Enlin Yang${}^{\ddag}$}
\thanks{${}^{\ddag}$yangenlin@cnu.edu.cn}
\thanks{${}^\ddag$School of Mathematical Sciences, Capital Normal University,
	No. 105 XiSanHuan North Road, Beijing 100048, P. R. China.}

\thanks{${}^\dag{}^\ddag$This work was partially supported by the National Key
	R\&D Program of China (Grant No.2021YFA1001400), NSFC Grant No.12522101 and NSFC Grant
	No.12271006.}

\date{\today}

\begin{document}
	
	\begin{abstract}
		Let $X$ be a smooth scheme over a perfect field of characteristic $p>0$, and let $F$ be a constructible complex
		of finite Tor-dimension with finite coefficients of characteristic prime to $p$.
		We prove 
		\[
		{\rm SS}_\mu(F)= {\rm SS}(F),
		\]
		where ${\rm SS}_\mu(F)$ is Saito's microlocal singular support and ${\rm SS}(F)$ is Beilinson's singular support.
		This answers a question of Saito.
		
		As applications, we resolve another question of Saito regarding support estimates for microlocalization along a smooth closed subscheme, and
		we prove
		Saito's conjecture on characteristic classes (\emph{Invent. Math. 207: 597-695, 2017}), showing that  the cohomological characteristic classes of Abbes and Saito are the cycle classes associated with the corresponding characteristic cycles.
		
	\end{abstract}
	
	\keywords{singular support, microlocalization, Verdier specialization,
		Fourier transform, Radon transform}
	\subjclass[2010]{Primary 14F20; Secondary 14C17, 14F05}
	
	\maketitle
	\tableofcontents
	
	\section{Introduction}
	
	\subsection{}\label{intro:two-microlocal-supports}
	Let $k$ be a perfect field of characteristic $p>0$, and let
	$\Lambda$ be a finite local ring of residue characteristic $\ell\neq p$.
	Let $X$ be a smooth
	$k$-scheme, and let $F\in D^b_{\mathrm{ctf}}(X,\Lambda)$.  The singular support ${\rm SS}(F)$ defined by Beilinson \cite{Beilinson} is a closed conical subset of the cotangent bundle $T^\ast X$, characterized via universal local acyclicity of test pairs.
	Furthermore, Saito \cite{SaitoCC} constructs the characteristic cycle ${\rm CC}(F)$ supported on ${\rm SS}(F)$, which is characterized by the Milnor formula.
	
	Following the approach of Kashiwara and Schapira \cite{KashiwaraSchapira} in the transcendental setting,
	Saito \cite{SaitoMicro} introduces microlocal analogues of singular support and characteristic cycle for $F\in D^b_{\rm ctf}(X,\Lambda)$.  Let
	$\Delta:X\hookrightarrow X\times X$ be the diagonal and put
	\begin{equation}\label{eq:HF-intro}
		\mathscr H_F=
		R\mathcal{H}om(\operatorname{pr}_1^*F,\operatorname{pr}_2^!F).
	\end{equation}
	We fix a
	nontrivial character $\psi:\mathbf F_p\to\Lambda^\times$.  Verdier
	specialization $\nu_\Delta(\mathscr{H}_F)$ of $\mathscr{H}_F$ along the diagonal $\Delta$ yields a monodromic complex on
	the normal bundle
	$T_X(X\times X)\simeq TX$.  More precisely, this complex is defined on $TX\times_k\eta_0$, where $\eta_0$ is the  generic point of the henselization of $\mathbb A_k^1$ at $0$.
	Applying the Fourier-Deligne transform $\mathcal{F}_\psi$ on $TX$, Saito sets
	\begin{align}\label{eq:muhom-intro}
		\mu\mathcal{H}om(F,F)=\mathcal{F}_\psi\nu_\Delta(\mathscr H_F),
	\end{align}
	and defines the microlocal singular support  and microlocal characteristic cycle
	\[  \mathrm{SS}_\mu(F)=\operatorname{pr}_{T^*X}\mathrm{supp}\,\mu\mathcal{H}om(F,F)\subseteq T^*X \quad {\rm and} \quad	{\rm CC}_\mu(F) \in H^0_{{\rm SS}_\mu(F)}(T^\ast X, \pi^\ast \mathcal K_{X/k}), \]
	where $\pi : T^\ast X\to X$ is the structure map and $\mathcal K_{X/k}=Ra^!\Lambda$ with $a:X\to{\rm Spec}k$ the structure morphism.

	The relation between ${\rm SS}(F)$ and ${\rm SS}_\mu(F)$ is not immediate from their definitions.  Beilinson's
	singular support ${\rm SS}(F)$ is detected by functions and local acyclicity, whereas Saito's
	microlocal singular support ${\rm SS}_\mu(F)$ is obtained by applying the Fourier-Deligne transform to Verdier specialization along the diagonal $\Delta: X\hookrightarrow X\times X$.  Saito asks the following questions.
	\begin{question}[{Saito, \cite[Question 2.1.9]{SaitoMicro}}]\label{question:saito-microlocal}
		Let $F$ be a constructible complex on a smooth $k$-scheme. 
		\begin{itemize}
			\item[(1)] Do we have
			${\rm SS}_\mu(F)\subseteq{\rm SS}(F)$?
			\item[(2)] Does ${\rm CC}_\mu(F)$ equal the cycle class of ${\rm CC}(F)$?
		\end{itemize}
	\end{question}
	Our first main result answers both parts of
	Question~\ref{question:saito-microlocal} affirmatively. Moreover, we show the equality ${\rm SS}_\mu(F)={\rm SS}(F)$ holds.
	\begin{theorem}[Theorem \ref{thm:singular-support-equality}, Corollary \ref{cor:CC-cycleFinite}, Theorem \ref{thm:singular-support-equality-Ecoeff}, and Corollary \ref{cor:CC-cycle}]\label{thm:main}
		Let $X$ be a smooth $k$-scheme and 
		$F\in D^b_{\mathrm{ctf}}(X,\Lambda)$. We have
		\begin{equation}\label{eq:main}
			\mathrm{SS}_\mu(F)=\mathrm{SS}(F).
		\end{equation}
		If $X$ has pure  dimension $d$, then we have
		\begin{equation}
			\mathrm{CC}_\mu(F)=\mathrm{cl}\mathrm{CC}(F),
		\end{equation}
		where ${\rm cl}: Z_{d}({\rm SS}(F))\to H^0_{{\rm SS}(F)}(T^\ast X, \pi^\ast \mathcal K_{X/k})$ is the cycle class map.
		
		Both assertions also hold for constructible complexes with coefficients in a
		finite extension $E/\mathbf Q_\ell$, interpreted on the pro-\'etale site.
	\end{theorem}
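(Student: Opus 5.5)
We would prove the equality $\mathrm{SS}_\mu(F)=\mathrm{SS}(F)$ by establishing the two inclusions separately, and then deduce the statement on characteristic cycles from the equality of supports together with a comparison of both cycles on a dense open part of $\mathrm{SS}(F)$.

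\emph{The inclusion $\mathrm{SS}_\mu(F)\subseteq\mathrm{SS}(F)$.} We work étale locally on $X$ and reduce the support of $\mathcal{F}_\psi\nu_\Delta(\mathscr H_F)$ to stalks at points $(x,\xi)$ of $T^*X$. The plan is to use Brylinski's description of the Fourier transform of a monodromic complex via a Radon-type transform. The stalk of $\mathcal F_\psi\nu_\Delta(\mathscr H_F)$ at $(x,\xi)$ is then expressed through vanishing cycles of $\mathscr H_F$, or of $\nu_\Delta\mathscr H_F$, along a function on $X\times X$ near the diagonal whose differential in the normal direction is $\xi$. One concrete choice is $f(y)-f(x')$ with $df_x=\xi$, which is pulled back to the product. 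Using the Künneth formula, $\mathscr H_F\simeq D(F)\boxtimes F$ up to a twist when $F$ has finite tor-dimension, and Beilinson's theory gives $\mathrm{SS}(D F\boxtimes F)=\mathrm{SS}(DF)\times\mathrm{SS}(F)$ with $\mathrm{SS}(DF)=\mathrm{SS}(F)$. The key point is that if $\xi\notin\mathrm{SS}(F)_x$, then the function $(x',y)\mapsto f(y)-f(x')$ is $\mathrm{SS}(\mathscr H_F)$-transversal near $(x,x)$ in the relevant direction. Hence the vanishing cycles vanish by local acyclicity, and compatibility of Verdier specialization with vanishing cycles, which is Saito's deformation to the normal cone argument, yields vanishing of the stalk. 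We expect this to be the main obstacle: controlling the interaction between specialization to the normal cone and the universal local acyclicity defining $\mathrm{SS}$. First we would try to show that the estimate $\mathrm{SS}(\nu_\Delta K)\subseteq C_{T^*_\Delta}(\mathrm{SS}(K))$ holds via the deformation space, and then apply the general fact that the Fourier transform of a monodromic sheaf $G$ on a vector bundle $V$ has support contained in the image of $\mathrm{SS}(G)\cap(\text{zero section of }V\text{-directions})$.

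\emph{The inclusion $\mathrm{SS}(F)\subseteq\mathrm{SS}_\mu(F)$.} Both sides are closed conical subsets whose irreducible components have dimension $d$. Beilinson shows this for $\mathrm{SS}$; for $\mathrm{SS}_\mu$ we deduce it from the first inclusion. It then suffices to show that every irreducible component $C$ of $\mathrm{SS}(F)$ meets $\mathrm{supp}\,\mu\mathcal Hom(F,F)$ at a generic point. At a generic point of $C$ we take a function $f$ with an isolated characteristic point, where $\phi_f(F)\neq0$. We then identify the stalk of $\mu\mathcal Hom(F,F)$ at $df$ with $R\mathcal Hom(\phi_f F,\phi_f F)$, which is nonzero, via a microlocal Künneth comparison at an isolated characteristic point.

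\emph{Characteristic cycles.} Both $\mathrm{CC}_\mu(F)$ and $\mathrm{cl}\,\mathrm{CC}(F)$ lie in $H^0_{\mathrm{SS}(F)}(T^*X,\pi^*\mathcal K_{X/k})$. This group injects into $\bigoplus_C H^0$ at the generic points of the components, because of purity and dimension $d$. So it suffices to compare coefficients at the generic point of each component $C$. By the previous step, the coefficient of $\mathrm{CC}_\mu$ there is computed by the trace on $\phi_f F$, which is $-\dim\mathrm{tot}\,\phi_f(F)$. This equals the coefficient of $\mathrm{CC}(F)$ by the Milnor formula, so the two cycles agree.

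\emph{Coefficients in $E$.} For $E$-coefficients on the pro-étale site, we reduce to integral lattices and then to finite coefficients $\Lambda/\mathfrak m^n$. Both $\mathrm{SS}$ and $\mathrm{SS}_\mu$ can be read off mod $\mathfrak m$, using Nakayama for perfect complexes. The cycles are compatible with reduction, which completes the proof.
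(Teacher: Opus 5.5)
\textbf{Verdict: the proposal has genuine gaps.} Both of your inclusions rest on microlocal statements that you assume but do not prove, and these are exactly what the paper is built to avoid.

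\textbf{The inclusion $\mathrm{SS}_\mu(F)\subseteq\mathrm{SS}(F)$.} The transversality of $(x',y)\mapsto f(y)-f(x')$ to $\mathrm{SS}(\mathbb{D}F)\times\mathrm{SS}(F)$ is fine. But it only gives vanishing of the vanishing cycles of $\mathscr{H}_F$ for that one function. To reach $\mathcal{F}_\psi\nu_\Delta(\mathscr{H}_F)$ you need one of the following, and you give no argument for any of them:
\begin{enumerate}
\item a formula for the stalk at $(x,\xi)$ of the Fourier--Deligne transform of the $\eta_0$-monodromic complex $\nu_\Delta\mathscr{H}_F$ in terms of such vanishing cycles; or
\item the bound $\mathrm{SS}(\nu_\Delta K)\subseteq C_{T^*_\Delta}(\mathrm{SS}(K))$, plus a Kashiwara--Schapira type control of $\mathrm{supp}\,\mathcal{F}_\psi G$ by $\mathrm{SS}(G)$.
\end{enumerate}
These are \'etale analogues of topological theorems. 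The bound in (2) is a singular-support estimate for nearby cycles on the deformation space, and it does not follow formally from local acyclicity in characteristic $p$.

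The paper sidesteps all of this:
\begin{enumerate}
\item After reducing to $P=\mathbb{P}^n$, the identity $\overline{\mathrm{C}}_F\circ\mathrm{U}_F=(-1)^{n-1}\mathrm{id}_F$ makes $F$ a retract of $Rp_*G$, where $G=p^{\vee*}\mathrm{Rad}(F)[n-1](n-1)$. Hence $\nu\mathcal{H}om(F,F)$ is a retract of $R(h_p\times\mathrm{id})_*\nu_{i_p}\mathscr{H}_G$.
\item Over the ULA locus $U$ of $p$ relative to $G$, this specialization is pulled back from $U\times_PU$, so its Fourier transform lies on the zero section.
\item On the complement, $\nu_{i_p}\mathscr{H}_G$ is invariant under $W=T(Q\times Q/P^\vee\times P^\vee)$, because $\mathscr{H}_G$ comes from $P^\vee\times P^\vee$. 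This confines the Fourier support to $\ker\beta^\vee$, i.e.\ to $\rho^{-1}(Q\setminus U)\cup 0_{T^*P}$.
\item Beilinson's $\mathbb{P}(\mathrm{SS}(F))=E_p(G)$ concludes.
\end{enumerate}

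\textbf{The reverse inclusion.} The identification $\mu\mathcal{H}om(F,F)_{(x,df(x))}\simeq R\mathrm{Hom}(\phi_fF,\phi_fF)$ carries the whole weight. It is unproved, and delicate given the $\eta_0$-structure and wild inertia. The paper argues contrapositively, showing that $F$ is weakly micro-supported on $\mathrm{SS}_\mu(F)$:
\begin{enumerate}
\item The coev/ev retraction gives $\mathrm{supp}\,\mu_{Z/X}(F)\subseteq\mathrm{SS}_\mu(F)\cap T^*_ZX$.
\item The K\"unneth formula gives $\mathrm{SS}_\mu(\mathscr{H}_F)=\mathrm{SS}_\mu(\mathbb{D}F)\times\mathrm{SS}_\mu(F)$.
\item So if $df$ misses $\mathrm{SS}_\mu(F)$, then $\mu_{i_f}(\mathscr{H}_F)$ lives on the zero section for $i_f\colon X\times_{\mathbb{A}^1}X\hookrightarrow X\times X$.
\item Hence $\nu_{i_f}\mathscr{H}_F$ is pulled back from the base, and $i_f$ is $\mathscr{H}_F$-transversal. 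By Lu--Zheng this means $f$ is ULA relative to $F$.
\end{enumerate}

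\textbf{Characteristic cycles.} The trace of the identity of $\phi_fF$ computes its alternating rank, not $\dim\mathrm{tot}\,\phi_f(F)$, which also contains the Swan conductor. In characteristic $p$ that term comes from the Fourier transform over $\eta_0$ on curves, where Saito computes $\mathrm{CC}_\mu$ via Artin conductors.

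For finite coefficients, the paper obtains $\mathrm{CC}_\mu(F)=\mathrm{cl}\,\mathrm{CC}(F)$ by feeding only the bound $\dim\mathrm{SS}_\mu(F)\le d$ into Saito's Proposition 2.5.2. That proposition combines proper push-forward compatibility of $\mathrm{CC}_\mu$, the curve case, the Milnor formula and a good pencil. Note also that the first inclusion gives you only $\dim\mathrm{SS}_\mu(F)\le d$, not purity of dimension as you claim.

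\textbf{$E$-coefficients.} Derived Nakayama only gives $\mathrm{SS}_\mu(F)\subseteq\mathrm{SS}_\mu(F_1)$. Rationalization can kill torsion in $\mu\mathcal{H}om(F_0,F_0)$, so $\mathrm{SS}_\mu$ cannot be read off modulo $\pi$. Consequently $\mathrm{SS}(F)\subseteq\mathrm{SS}_\mu(F)$ does not descend from the finite-coefficient case.

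The paper needs a separate argument here:
\begin{enumerate}
\item For a perverse sheaf $\mathcal{P}$, the coefficients of $\mathrm{CC}(\mathcal{P})$ are positive integers, hence nonzero in $E$.
\item So $\mathrm{CC}_\mu(\mathcal{P})=\mathrm{cl}\,\mathrm{CC}(\mathcal{P})$ in $H^0_{\mathrm{SS}(\mathcal{P})}$ forces every component of $\mathrm{SS}(\mathcal{P})$ into $\mathrm{SS}_\mu(\mathcal{P})$.
\item Perverse t-exactness of $\mu_{\Delta_X}[n]$ yields $\bigcup_i\mathrm{SS}_\mu({}^pH^iF)\subseteq\mathrm{SS}_\mu(F)$.
\item Beilinson's $\mathrm{SS}(F)=\bigcup_i\mathrm{SS}({}^pH^iF)$ finishes.
\end{enumerate}
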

	When $X$ is a smooth curve,  Theorem \ref{thm:main} follows from \cite[Lemmas 2.1.2 and 2.1.5]{SaitoMicro}.

	We outline our proof of \eqref{eq:main}. 
	\subsection{}
	The first step is to prove the following inclusion, a question raised by Saito (cf. Theorem \ref{thm:singular-support-inclusion}):
	\begin{equation}\label{eq:main-inclusion} 
		\mathrm{SS}_\mu(F)\subseteq\mathrm{SS}(F).
	\end{equation}
		%
	By a standard reduction, we may assume that $X=P=\mathbb P^n$ is a projective space of dimension $n\geq 1$. Let $P^\vee$ be the dual projective space and let
	\begin{equation}
		Q=\{(x,[H]):x\in H\}\subset P\times P^\vee,\quad p:Q\to P,\quad p^\vee:Q\to P^\vee,
	\end{equation}
	be the incidence correspondence and its projections respectively. Beilinson's results on Radon transforms \cite[Theorem 3.2]{Beilinson} identify the projectivization of $\mathrm{SS}(F)$ with the non-ULA locus\footnote{This is the smallest closed subset of $Q$ outside
		which $p$ is universally locally acyclic relatively to $G$.} of the morphism $p$ with respect to $G=p^{\vee\ast}Rp_*^\vee p^*F[2n-2](n-1)$. It therefore suffices to show that $\mu\mathcal{H}om(F,F)$ vanishes on the cotangent lines parametrized by the ULA locus.
	
	We consider the base change of the diagonal $\Delta_P:P\hookrightarrow P\times P$ along $p\times p$
	\begin{equation}\label{eq:intro-collision-center}
		\begin{tikzcd}
			Q\times_PQ\arrow[r,hook]\arrow[d]&Q\times Q\arrow[d,"p\times p"]\\
			P\arrow[r,hook,"\Delta_P"]&P\times P.
		\end{tikzcd}
	\end{equation}
	It induces a natural morphism  $\Pi: T_{Q\times_PQ}(Q\times Q)\simeq Q\times_PQ\times_PTP\to T_P(P\times P)\simeq TP$.
	A key step is to construct the following retraction (see Lemma \ref{lem:Radon-retract-of-nuHom}):
	\begin{equation}\label{eq:intro-specialized-retraction}
		\nu_{\Delta_P}(\mathscr H_F)
		\longrightarrow R\Pi_*\nu_{Q\times_PQ/Q\times Q}(\mathscr H_G)
		\longrightarrow\nu_{\Delta_P}(\mathscr H_F).
	\end{equation}
	Applying the Fourier-Deligne transform to \eqref{eq:intro-specialized-retraction}, we obtain $\mu\mathcal{H}om(F,F)$ as a retract of
	\begin{equation}
		\mathcal{F}_\psi R\Pi_*\nu_{Q\times_PQ/Q\times Q}(\mathscr H_G).
	\end{equation}
	We then estimate the support ${\rm supp}\,\mathcal{F}_\psi R\Pi_*\nu_{Q\times_PQ/Q\times Q}(\mathscr H_G)$. 
	Let $U$ be the open complement of the non-ULA locus of the morphism $p$ with respect to $G$, $Z$ be the closed complement of $U\times_PU$ in $Q\times_PQ$, and let
	\begin{equation}
		J_{TP}:U\times_PU\times_PTP\hookrightarrow Q\times_PQ\times_PTP,\quad I_{TP}:Z\times_PTP\hookrightarrow Q\times_PQ\times_PTP
	\end{equation}
	be the inclusions.
	Over the open part, we use the universal local acyclicity of $p|_U$ with respect to $G$ to deduce that
	\begin{equation}
		\mathrm{supp}\,\mathcal{F}_\psi R\Pi_*RJ_{TP!}J_{TP}^*\nu_{Q\times_PQ/Q\times Q}(\mathscr H_G)
	\end{equation}
	is contained in the zero section $0_{T^*P}$.
	Over the closed part, we use deformation method to construct a linear action of $T(Q\times Q/P^\vee\times P^\vee)$ on $T_{Q\times_PQ}(Q\times Q)$ under which $\nu_{i_p}(G)$ is invariant (cf. Lemma \ref{lem:calculatekerbetavee}). We use this linear action to deduce that
	\begin{equation}
		\mathrm{supp}\,\mathcal{F}_\psi R\Pi_*RI_{TP*}I_{TP}^*\nu_{Q\times_PQ/Q\times Q}(\mathscr H_G)
	\end{equation}
	is contained in the union of the zero section $0_{T^*P}$ and $\rho^{-1}(Z)$, where $\rho:T^*P\setminus T^*_PP\to\mathbb{P}(T^*P)\simeq Q$. Combine the two estimations, we finally prove \eqref{eq:main-inclusion}.
	
	\subsection{}
	The next step is to prove a support estimate for microlocalization along a smooth closed subscheme, which also gives an affirmative answer to \cite[Question 2.5.3]{SaitoMicro}.
	\begin{theorem}[Theorem \ref{thm:support-of-microlocalization-to-closed-subscheme}]\label{thm:intro-Saito2}
		Let $Z\hookrightarrow X$ be a closed immersion of smooth $k$-schemes and $F\in D^b_{\rm ctf}(X,\Lambda)$. Let $T_ZX$ be the normal bundle of $Z\hookrightarrow X$ and $\mathcal{F}_{\psi,T_ZX/Z}$ the Fourier-Deligne transform on it. We define
		\begin{equation}\label{eq:intro-specialization-support}
			\mu_{Z/X}(F)=\mathcal{F}_{\psi,T_ZX/Z}(\nu_{Z/X}F)\in D^b_{\rm ctf}(T^*_ZX,\Lambda),
		\end{equation}
		where $\nu_{Z/X}(F)$ is the Verdier specialization of $F$ along $Z\hookrightarrow X$. We have:		 
		\begin{equation}\label{eq:suppMircClosed}
			\mathrm{supp}\,\mu_{Z/X}(F)
			\subseteq\mathrm{SS}_\mu(F)\cap T_Z^*X
			\subseteq\mathrm{SS}(F)\cap T_Z^*X.
		\end{equation}
	\end{theorem}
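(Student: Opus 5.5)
The second inclusion is formal once the first main step is available: by Theorem~\ref{thm:singular-support-inclusion} we have $\mathrm{SS}_\mu(F)\subseteq\mathrm{SS}(F)$, and intersecting with $T^*_ZX$ gives it. The real content is therefore the first inclusion, $\mathrm{supp}\,\mu_{Z/X}(F)\subseteq \mathrm{SS}_\mu(F)\cap T^*_ZX$. Since $\mu_{Z/X}(F)$ lives on $T^*_ZX$ by construction, it suffices to show that $\mu_{Z/X}(F)$ vanishes on the open subset of $T^*_ZX$ lying over the complement of $\mathrm{SS}_\mu(F)$.

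The plan is to realize $\nu_{Z/X}(F)$ as a \emph{module} over $\nu_\Delta(\mathscr H_F)$, in the sense of Kashiwara--Schapira.

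First, the identity of $F$ gives a class $\Lambda_\Delta\to\mathscr H_F$ on $X\times X$. We also have the evaluation maps $\operatorname{pr}_1^*F\otimes\mathscr H_F\to \operatorname{pr}_2^!F$, which after restriction to the diagonal become $F\otimes\Delta^*\mathscr H_F\to F$.

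Second, we specialize these maps. Consider the closed immersion $Z\hookrightarrow X$ and the diagonal of $X$ restricted over $Z$. Using the functoriality of Verdier specialization for the correspondence $X\xleftarrow{\operatorname{pr}_1}X\times X\xrightarrow{\operatorname{pr}_2}X$, specialized along $\Delta_Z\subset Z\times Z$ sitting inside $\Delta_X$, we obtain an action map
\[
\nu_{Z/X}(F)\boxtimes_{Z}\nu_\Delta(\mathscr H_F)|_{TX|_Z}\longrightarrow \nu_{Z/X}(F).
\]
This map is defined via the addition map $T_ZX\times_Z TX|_Z\to T_ZX$, and it is compatible with the unit coming from $\Lambda_\Delta\to\mathscr H_F$. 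Applying $\mathcal F_\psi$ turns additive convolution into tensor product after pullback. Concretely, $\mu_{Z/X}(F)$ becomes a retract of $\mu_{Z/X}(F)\otimes \iota^*\mu\mathcal Hom(F,F)$, where $\iota:T^*_ZX\times_Z\ldots\to T^*X$ is the restriction along $T^*X|_Z\to T^*_ZX$, i.e.\ the map dual to $T_ZX\leftarrow TX|_Z$. More precisely, the unit composed with the action is the identity, so $\mu_{Z/X}(F)$ is a retract of a complex supported where $\mu\mathcal Hom(F,F)$ is nonzero.

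Third, we need to check that the relevant fiber product is transverse enough. A covector $\xi\in T^*_ZX$ is viewed in $T^*X|_Z$ via the inclusion $T^*_ZX\subset T^*X|_Z$, so the support of the tensor product is contained in $\mathrm{supp}\,\mu_{Z/X}(F)\cap \mathrm{SS}_\mu(F)$. This yields the claimed inclusion.

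The main obstacle is the second step. We must construct the action map rigorously in the étale setting, with the $\eta_0$-base and the monodromic formalism, and verify the Fourier compatibility: convolution along the addition map corresponds to restriction of tensor products to $T^*_ZX\subset T^*X|_Z$. We must also verify the unit axiom. I would first try to do this by the deformation to the normal cone of the triple $Z\subset X\subset X\times X$, giving a two-parameter deformation that specializes simultaneously. Failing that, an alternative is to reduce via a local embedding $Z\hookrightarrow X$ to the situation of a projection $X\simeq Z\times\mathbb A^r$ étale locally, where $\nu_{Z/X}$ can be compared with restriction, and use the Künneth behaviour of $\mu\mathcal Hom$.
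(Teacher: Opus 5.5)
Your overall argument is the paper's. You exhibit $\nu_{Z/X}(F)$ as a retract of $R\alpha_!(\mathrm{pr}_{TX}^*\nu\mathcal{H}om_X(F,F)\otimes\mathrm{pr}_{T_ZX}^*\nu_{Z/X}(F))$, where $\alpha(w,v)=q(w)+v$ for the quotient $q\colon TX|_Z\to T_ZX$; note this is an action through $q$, not literally addition. You then Fourier-transform via Lemma~\ref{lem:Fourier-transform-of-induced-action}, so that $\mu_{Z/X}(F)$ becomes a retract of $(q^\vee)^*\mu\mathcal{H}om_X(F,F)\otimes\mu_{Z/X}(F)$ with $q^\vee\colon T^*_ZX\hookrightarrow T^*X|_Z$. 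The arrow $T^*X|_Z\to T^*_ZX$ in your second step is reversed, but you fix this in the third step, and no transversality is needed there. Finally you conclude with Theorem~\ref{thm:singular-support-inclusion}. What your proposal does not contain is the construction you yourself flag as the main obstacle. Neither of the routes you suggest for it is the right one.

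The missing idea is that no second deformation parameter is needed. Lemma~\ref{lem:pullback-of-deformation-spaces-in-triangle}, applied to $Z\subset X\xrightarrow{\Delta}X\times X$ with retraction $\mathrm{pr}_1$, identifies the one-parameter deformation $\mathrm{D}_Z(X\times X)$ with $\mathrm{D}_X(X\times X)\times_{X\times\mathbb{A}^1}\mathrm{D}_Z(X)$, with everything over the same $\mathbb{A}^1$. Under this identification, $\mathrm{D}_{\mathrm{id}}(\mathrm{pr}_1)$, $\mathrm{D}_{\mathrm{id}}(\mathrm{pr}_2)$ and $\mathrm{D}_{\mathrm{id}}(\Delta)\colon\mathrm{D}_Z(X)\to\mathrm{D}_Z(X\times X)$ become the projection, the action $\tilde\alpha$ and the unit section. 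On the special fibre these are $\mathrm{pr}_{T_ZX}$, $\alpha$ and $(0_{TX},\mathrm{id})$.

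On the generic fibre, $\mathrm{ev}\circ\mathrm{coev}=\mathrm{id}_F$ is a commutative diagram of cohomological correspondences over $\mathbb{A}^1$. Here $\mathrm{ev}$ is the convolution correspondence $(X\times X)\times X\xleftarrow{(\mathrm{id},\mathrm{pr}_1)}X\times X\xrightarrow{\mathrm{pr}_2}X$ with $\mathscr{H}_F\otimes\mathrm{pr}_1^*F\to\mathrm{pr}_2^!F$. It is not the restriction to the diagonal used in your first step, which loses the convolution structure; it is the $!$-pushforward along $\mathrm{pr}_2$ that specializes to $R\alpha_!$. Applying the lax symmetric monoidal nearby-cycle functor $\Psi^\otimes$ (Appendix~\ref{sec:CohCorr}) transports this diagram to the special fibre. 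The unit axiom then comes for free from functoriality, and the specialized evaluation factors through the cocartesian edge that produces $R\alpha_!$.

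A genuinely two-parameter deformation would instead force you into nearby cycles over a two-dimensional base, with its base-change and constructibility problems. Your fallback does not work. An \'etale-local product structure $X\simeq Z\times\mathbb{A}^r$ does not make $\nu_{Z/X}(F)$ a pullback of $F|_Z$ without a local acyclicity hypothesis (cf.\ Lemma~\ref{lem:specialization-in-ULA-case}). Moreover, the K\"unneth formula for $\mu\mathcal{H}om$ applies only to external products, which $F$ is not.
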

	This is proved by constructing a retraction
	\begin{equation}
		\mu_{Z/X}(F)\to	(q^\vee)^*\mu\mathcal{H}om(F,F)\otimes\mu_{Z/X}(F)\longrightarrow\mu_{Z/X}(F),
	\end{equation}
	where $q^\vee:T^*_ZX\hookrightarrow T^*X\times_XZ$ is dual to the quotient bundle map $q:TX\times_XZ\to T_ZX$.
	Then \eqref{eq:main-inclusion} implies \eqref{eq:suppMircClosed}.
	
	\subsection{}
	We establish a K\"unneth formula for $\mu\mathcal{H}om$ in Lemma \ref{lem:Kunneth-for-muHom}.  Let $X$ and $Y$ be smooth
	$k$-schemes.  For constructible complexes $F\in D_{\rm ctf}^b(X,\Lambda)$ and $G\in D_{\rm ctf}^b(Y,\Lambda)$, we construct a canonical isomorphism
	\begin{align}
		\mu\mathcal{H}om(F\boxtimes G,F\boxtimes G)
		\simeq\mu\mathcal{H}om(F,F)\boxtimes\mu\mathcal{H}om(G,G).
	\end{align}
	This gives the product formulas
	\begin{align}\label{eq-intro:SSCCKunneth}
		\mathrm{SS}_\mu(F\boxtimes G)=\mathrm{SS}_\mu(F)\times\mathrm{SS}_\mu(G),\qquad
		\mathrm{CC}_\mu(F\boxtimes G)=\mathrm{CC}_\mu(F)\boxtimes\mathrm{CC}_\mu(G).
	\end{align}
	Finally, the equality \eqref{eq:main} follows from \eqref{eq:main-inclusion}, Theorem \ref{thm:intro-Saito2} together with the K\"unneth formula \eqref{eq-intro:SSCCKunneth} for $\mathrm{SS}_\mu$.
	
	\subsection{}
	As an application of Theorem~\ref{thm:main}, we prove Saito's
	conjecture on cohomological characteristic classes.  
	\begin{conjecture}[Saito, {\cite[Conjecture~6.8]{SaitoCC}}] 
		\label{conj:saito-characteristic-intro}
		Let $Z$ be a $k$-scheme, and let $i: Z\to X$ be a closed immersion into a smooth $k$-scheme $X$.
		Let $\Lambda$ be a finite local ring whose residue characteristic is invertible in $k$.
		For every
		$F\in D^b_{\mathrm{ctf}}(Z,\Lambda)$, we have
		\begin{equation}\label{eq:saito-characteristic-conjecture-intro}
			C_{Z/k}(F)=\operatorname{cl}_Z(cc_{Z,0}(F))
			\quad\text{in }H^0(Z,\mathcal{K}_{Z/k}),
		\end{equation}
		where $cc_{Z,0}(F)$ denotes the characteristic class associated with the cycle ${\rm CC}(i_\ast F)$ $($see \cite[Definition 6.7]{SaitoCC}$)$.
	\end{conjecture}
	
	In \cite[Theorem 1.2]{YangZhao}, the second author and Zhao prove this conjecture in the case where
	$Z$ is smooth and quasi-projective by using non-acyclicity classes and the fibration method. Using microlocalization, we prove Saito's conjecture in full generality.
	
	\begin{theorem}[Theorem~\ref{thm:characteristic-class}]
		\label{thm:characteristic-intro}
		Conjecture~\ref{conj:saito-characteristic-intro} holds.
	\end{theorem}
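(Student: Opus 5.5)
We aim to prove Theorem~\ref{thm:characteristic-intro} by expressing both sides of \eqref{eq:saito-characteristic-conjecture-intro} through the microlocal characteristic cycle of $i_*F$ on the smooth scheme $X$, and then invoking Theorem~\ref{thm:main}. First, both sides are compatible with pushforward along the closed immersion $i$. For the left side, $C_{Z/k}(F)$ maps to $C_{X/k}(i_*F)$ under $i_*:H^0(Z,\mathcal K_{Z/k})\to H^0(X,\mathcal K_{X/k})$. Since $i_*$ identifies $H^0(Z,\mathcal K_{Z/k})$ with $H^0_Z(X,\mathcal K_{X/k})$, we keep track of supports throughout. For the right side, $cc_{Z,0}(F)$ is by definition the pullback along the zero section of ${\rm CC}(i_*F)$, which is supported on $T^*X|_Z$. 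We therefore want the identity in $H^0_Z(X,\mathcal K_{X/k})$
\[
C_{Z/k}(F)=0_X^*\,{\rm cl}\,{\rm CC}(i_*F),
\]
where $0_X^*$ denotes the refined pullback along the zero section $X\hookrightarrow T^*X$ with supports ${\rm SS}(i_*F)\cap 0_X\subseteq Z$.

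Second, we relate the characteristic class to $\mu\mathcal Hom$. The class $C_{X/k}(i_*F)$ is the image of $\id\in\operatorname{Hom}(i_*F,i_*F)=H^0(X\times X,\mathscr H_{i_*F})$ under the restriction to the diagonal followed by the evaluation map to $\mathcal K_{X/k}$. Saito's construction of ${\rm CC}_\mu$ in \cite{SaitoMicro} starts from the same element. One specializes $\id$ to $\nu_\Delta(\mathscr H_{i_*F})$ and applies the Fourier transform, which gives a section of $\mu\mathcal Hom$ and then, via evaluation, the class ${\rm CC}_\mu(i_*F)\in H^0_{{\rm SS}_\mu}(T^*X,\pi^*\mathcal K_{X/k})$. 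I expect a formal compatibility here: restricting $\nu_\Delta$ to the zero section of $TX$ recovers $\Delta^*$. Under Fourier transform, restriction to the zero section of $TX$ corresponds to $R\pi_!$, or equivalently to pulling back along $0_X$ on $T^*X$ up to a shift and twist via the homotopy/monodromic property. Together these should yield $C_{X/k}(i_*F)=0_X^*{\rm CC}_\mu(i_*F)$ with supports in ${\rm SS}_\mu(i_*F)\cap 0_X$. I would look for this statement in \cite{SaitoMicro}, where I believe it is essentially recorded. If it is not, I would prove it from the compatibility of Fourier transform with the zero-section restriction and the identification of the specialization on the zero section with $\Delta^*$.

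Third, Theorem~\ref{thm:main} gives ${\rm CC}_\mu(i_*F)={\rm cl}\,{\rm CC}(i_*F)$ and ${\rm SS}_\mu={\rm SS}$. Refined Gysin pullback commutes with the cycle class map, and ${\rm SS}(i_*F)\subseteq T^*X|_Z$. Combining these, we get $C_{X/k}(i_*F)=i_*{\rm cl}_Z(cc_{Z,0}(F))$ in $H^0_Z(X,\mathcal K_{X/k})\cong H^0(Z,\mathcal K_{Z/k})$, which is the conjecture. The reduction to pure dimension is harmless because the statement is local on connected components of $X$. The $E$-coefficient case follows in the same way.

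The main obstacle is the second step: making the compatibility $C_{X/k}=0_X^*{\rm CC}_\mu$ hold with supports and with correct signs and Tate twists. The unsupported equality in $H^0(X,\mathcal K)$ loses information when $Z$ is singular, so the support version is essential. I would carry supports through the Verdier specialization using the fact that $\mathscr H_{i_*F}$ is supported on $Z\times Z$.
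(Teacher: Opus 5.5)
Your plan is correct and is essentially the paper's proof. The paper shows $0_{X,Z}^*\CCmu(i_*F)=\iota_Z(C_{Z/k}(F))$ with supports (Lemma~\ref{lem:micro-zero}), gets $\CCmu(i_*F)=\operatorname{cl}_X(\CC(i_*F))$ from $\SSmu(i_*F)\subseteq\SSing(i_*F)$, $\dim\SSing(i_*F)\le\dim X$ and \cite[Proposition~2.5.2]{SaitoMicro}, uses compatibility of the cycle class with Gysin pullback along the zero section (Lemma~\ref{lem:cycle-zero}), and concludes because $\iota_Z$ is an isomorphism.

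For your second step, the paper uses the identification $0_X^*\muhom(G,G)\simeq p_!\nu\mathscr H_G\simeq 0^!_{TX}\nu\mathscr H_G\simeq\Delta_X^!\mathscr H_G$. So the identity lives in $\Delta^!$, and $\Delta^*$ enters only through $\Delta^!\to\Delta^*$ before evaluation. Your route via $0^*\nu\simeq\Delta^*$ works only if the zero-section restriction on $T^*X$ is taken to be $0^!$, using $0^!\simeq p^\vee_!$ for monodromic complexes. It does not work as $0^*$ up to a shift and twist.
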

	
	\subsection{}\label{intro:organization}
	The paper is organized as follows.
	Section~\ref{sec:deformation-and-microlocalization} develops the necessary
	preliminaries on deformation to the normal bundle and recalls the Verdier
	specialization functor.
	Section~\ref{sec:radontransformandFourierOrth} establishes a retraction for
	Radon transforms and constructs \eqref{eq:intro-specialized-retraction}.
	Section~\ref{sec:inclusion-of-singluar-supports} proves \eqref{eq:main-inclusion}, while
	Section~\ref{sec:microlocalization-along-closed-subschemes} proves
	Theorem~\ref{thm:intro-Saito2}.
	Section~\ref{sec:comparison-on-singular-supports} proves the equality \eqref{eq:main} for finite coefficients.
	Section~\ref{sec:characteristic-classes} proves
	Theorem~\ref{thm:characteristic-intro}.
	Section~\ref{sec:external-products} establishes a K\"unneth formula
	for $\mathrm{CC}_\mu$.
	Finally, Section~\ref{sec:NA-equality} discusses the $\ell$-adic coefficients.
	\subsection{}
	During 2016--2018, when the second author was a postdoctoral researcher at the University of Regensburg, Denis-Charles Cisinski suggested developing a theory of singular support and microlocalization in the motivic setting. While a notion of singular support for motives was successfully constructed, the difficulties arising from wild ramification led the second author to remain skeptical about the existence of a satisfactory theory of microlocalization. After completing the joint work with Zhao \cite{YangZhao}, however, he came to believe that an appropriate form of microlocalization might exist, although no substantial progress was made at that stage. It was only after the appearance of Saito’s related work \cite{SaitoMicro} that the key idea emerged. More specifically, the idea of deforming after the base change \eqref{eq:intro-collision-center}, which plays a central role in the present paper, was inspired by the construction of non-acyclicity classes in \cite[4.2--4.3]{YangZhao}.
	
	\subsection*{Acknowledgments}
	The authors are grateful to Takeshi Saito for his careful reading of the manuscript
	and many helpful suggestions.  They also thank Yigeng Zhao, Haoyu Hu,
	Fangzhou Jing and Xiangyu Pan for their comments.
	This work was partially supported by the National Key
	R\&D Program of China (Grant No.~2021YFA1001400), NSFC Grant No.~12522101 and NSFC Grant
	No.~12271006.
	
	\subsection*{Notation and Conventions}
	
	\begin{enumerate}[label=\textup{(\arabic*)},leftmargin=2.2em]
		\item All schemes are separated and of finite type over the fixed perfect
		field $k$ of characteristic $p>0$, unless otherwise stated.
		
		\item The coefficient ring $\Lambda$ is a finite local ring killed by a power
		of a prime $\ell\ne p$.  The notation
		$D^b_{\mathrm{ctf}}(X,\Lambda)$ denotes the derived ($\infty$-)category of constructible
		complexes of finite Tor dimension.
		
		\item We fix a
		nontrivial character
		$\psi:\mathbf F_p\to\Lambda^\times$. Let $\mathcal L_\psi$ be the Artin--Schreier sheaf on $\mathbb A^1_k$ corresponding to the character $\psi$. For a morphism $f: X\to \mathbb A^1_k$, we put $\mathcal L_\psi(f)=f^\ast \mathcal L_\psi$.
		
		\item For a separated morphism $a:X\to S$, write
		$\mathcal{K}_{X/S}=Ra^!\Lambda$ and
		$\mathbb{D}_{X/S}(-)=R\mathcal{H}om(-,\mathcal{K}_{X/S})$.  We write $\mathbb{D}_X$ when the base is
		$\operatorname{Spec}k$.
		
		\item For a smooth morphism $f:X\to Y$, the symbols $T(X/Y)$ and $T^*(X/Y)$ denote the relative tangent bundle and the relative cotangent bundle respectively.
		
		\item For a closed immersion $i:Z\hookrightarrow X$ between smooth schemes, the symbols $T_ZX$ and $T^*_ZX$ denote the normal bundle and the conormal bundle respectively.
		
		\item Unless explicitly stated otherwise, we omit $R$ and $L$ from the notation for
		derived functors.
	\end{enumerate}
	
	\section{Deformation to the normal bundle and the Verdier specialization functor}\label{sec:deformation-and-microlocalization}
	\subsection{}
	We first recall the construction of the deformation to the normal cone.
	Let $i:Z\hookrightarrow X$ be a closed immersion of smooth schemes defined by the quasi-coherent ideal sheaf $I\subseteq\mathcal{O}_X$, and let $\tau$ denote the coordinate on $\mathbb{A}^1$. Note that in this case $i$ is  a regular immersion.
	
	The deformation of $X$ to the normal bundle along $Z\hookrightarrow X$ is the scheme\footnote{In \cite{SaitoMicro}, Saito denotes this $\mathrm{D}_Z(X)$ by
		$D_{Z\times\mathbb{A}^1}(X\times\mathbb{A}^1)$ or $A_Z(X)$.}
	\begin{equation}\label{eq:def-of-deformation-by-Rees}
		\mathrm{D}_Z(X)=\mathrm{Spec}_X(\mathcal{R}_{I/\mathcal{O}_X}),
	\end{equation}
	where $\mathcal{R}_{I/\mathcal{O}_X}$ is the extended Rees algebra
	\begin{equation}
		\mathcal{R}_{I/\mathcal{O}_X}=\mathcal{O}_X[\tau,I\tau^{-1}]=\oplus_{m\in\mathbb{Z}}I^m\tau^{-m}\subset\mathcal{O}_X[\tau,\tau^{-1}],
	\end{equation}
	with the convention that $I^m=\mathcal{O}_X$ for $m\leq0$.
	Equivalently, $\mathrm{D}_Z(X)$ is the complement of the strict transform of $X\times\{0\}$ in the blow-up ${\rm Bl}_{Z\times\{0\}}(X\times\mathbb A^1)$.
	
	\subsection{}
	The element $\tau$ defines a morphism
	\begin{equation}\label{eq:structure-map-of-deformation-space}
		\mathrm{D}_Z(X)\to\mathbb{A}^1,
	\end{equation}
	which is smooth by \cite[Lemma 1.2.2]{SaitoMicro}. 
	The inverse image of $\mathbb{G}_m$ is canonically isomorphic to $X\times \mathbb{G}_m$, and the fiber over $0$ is the normal bundle
	\begin{equation}
		T_ZX=\mathrm{Spec}_Z(\mathrm{Sym}_{\mathcal{O}_Z}(I/I^2)).
	\end{equation}
	Together with the natural projection $\mathrm{D}_Z(X)=\mathrm{Spec}_X(\mathcal{R}_{I/\mathcal{O}_X})\to X$, the morphism \eqref{eq:structure-map-of-deformation-space} gives
	\begin{equation}
		\pi_{Z/X}:\mathrm{D}_Z(X)\to X\times\mathbb{A}^1.
	\end{equation}
	
	\begin{lemma}\label{lem:Saito-lemma-on-deformation-spaces}
		Let
		\begin{equation}\label{eq:morphism-of-regular-immersions}
			\begin{tikzcd}
				Z\arrow[r,hook,"i"]\arrow[d,"g"]&X\arrow[d,"f"]\\
				W\arrow[r,hook,"j"]&Y
			\end{tikzcd}
		\end{equation}
		be a commutative diagram of smooth schemes such that the horizontal arrows are closed immersions.
		\begin{enumerate}
			\item[$(1)$] There is a natural morphism
			\begin{equation}\label{eq:Dgf}
				\mathrm{D}_g(f):\mathrm{D}_Z(X)\to \mathrm{D}_W(Y),
			\end{equation}
			which fits into a commutative diagram
			\begin{equation}\label{eq:morphismofdeformationspacesoverA}
				\begin{tikzcd}
					\mathrm{D}_Z(X)\arrow[r,"{\mathrm{D}_g(f)}"]\arrow[d,"{\pi_{Z/X}}"]&\mathrm{D}_W(Y)\arrow[d,"{\pi_{W/Y}}"]\\
					X\times\mathbb{A}^1\arrow[r,"{f\times\mathrm{id}}"]&Y\times\mathbb{A}^1
				\end{tikzcd}
			\end{equation}
			\item[$(2)$] If $f$ and $g$ are smooth, then $\mathrm{D}_g(f)$ is smooth.
			\item[$(3)$]If $f$ and $g$ are closed immersions and if \eqref{eq:morphism-of-regular-immersions} is cartesian, then ${\rm D}_g(f)$ is a closed immersion.
			\item[$(4)$] If \eqref{eq:morphism-of-regular-immersions} is cartesian and $\mathcal{O}_Z=\mathcal{O}_X\otimes^L_{\mathcal{O}_Y}\mathcal{O}_W$ $($e.g. $f$ is flat$)$, then \eqref{eq:morphismofdeformationspacesoverA} is cartesian.
		\end{enumerate}
	\end{lemma}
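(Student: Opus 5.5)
We work throughout with the Rees algebra description \eqref{eq:def-of-deformation-by-Rees}; every assertion is local on $X$ and $Y$, so we may take $X$, $Y$ affine. Let $J\subseteq\mathcal O_Y$ be the ideal of $W$ and $I\subseteq\mathcal O_X$ the ideal of $Z$.

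\emph{Part (1).} Since the square commutes, $f^\#(J)\subseteq I$, hence $f^\#(J^m)\subseteq I^m$ for all $m$. Therefore $f^\#$ extended by $\tau\mapsto\tau$ gives a graded homomorphism
\[
\mathcal R_{J/\mathcal O_Y}=\oplus_m J^m\tau^{-m}\longrightarrow \oplus_m I^m\tau^{-m}=\mathcal R_{I/\mathcal O_X}
\]
compatible with $\mathcal O_Y\to\mathcal O_X$ and with $k[\tau]$. Taking relative $\mathrm{Spec}$ gives $\mathrm D_g(f)$. Commutativity of \eqref{eq:morphismofdeformationspacesoverA} is the compatibility with $\mathcal O_Y[\tau]\to\mathcal O_X[\tau]$. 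Over $\mathbb G_m$ the map is $f\times\mathrm{id}$, and on the fiber over $0$ it is the map $T_ZX\to T_WY\times_WZ\to T_WY$ induced by $J/J^2\to I/I^2$.

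\emph{Part (3).} Here $f$ is a closed immersion and $I=J\mathcal O_X$ by cartesianness. Then $J^m\to I^m$ is surjective for each $m$, so the Rees map is surjective and $\mathrm D_g(f)$ is a closed immersion.

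\emph{Part (4).} Cartesianness of \eqref{eq:morphismofdeformationspacesoverA} means
\[
\mathcal R_{J}\otimes_{\mathcal O_Y}\mathcal O_X\xrightarrow{\ \sim\ }\mathcal R_I .
\]
Cartesianness of the original square gives $I=J\mathcal O_X$. The Tor condition $\mathcal O_X\otimes^L\mathcal O_W=\mathcal O_Z$ means $\mathrm{Tor}_1^{\mathcal O_Y}(\mathcal O_X,\mathcal O_W)=0$. Since $W\subset Y$ is regular, locally cut out by a regular sequence $t_1,\dots,t_c$, this vanishing is equivalent to $t_1,\dots,t_c$ remaining regular in $\mathcal O_X$ (Koszul homology). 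For a regular sequence, $J^m/J^{m+1}\cong\mathrm{Sym}^m(J/J^2)$ is flat over $\mathcal O_W$, and $J^m$ has a finite filtration with these graded pieces. Using this, one shows $J^m\otimes\mathcal O_X\to I^m$ is an isomorphism; this is the standard fact that the Rees algebra commutes with Tor-independent base change. Alternatively, one checks directly that $\mathcal R_J=\mathcal O_Y[\tau,u_1,\dots,u_c]/(\tau u_i-t_i)$, and that for a regular sequence this presentation is exact. Base changing along $\mathcal O_Y\to\mathcal O_X$ gives $\mathcal O_X[\tau,u]/(\tau u_i-t_i)$, which is $\mathcal R_I$ because the $t_i$ are regular in $\mathcal O_X$ and generate $I$.

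\emph{Part (2).} Here $f$, $g$ are smooth. Smoothness is étale local, so choose local coordinates. Locally $Y\to\mathbb A^c\times Y'$ is arranged so that $W$ is cut out by $t_1,\dots,t_c$. Because $g$ and $f$ are smooth, the $f^*t_i$ form part of a regular sequence cutting out $Z$: we have $Z\subseteq X\times_YW$, with $X\times_YW\to W$ smooth and $Z\to W$ smooth. So we may write the ideal of $Z$ as $(t_1,\dots,t_c,s_1,\dots,s_e)$, where the $s_j$ cut out $Z$ in the smooth $W$-scheme $X\times_YW$. Factor the square as
\[
(Z\subseteq X)\to(X\times_YW\subseteq X)\to(W\subseteq Y).
\]
The second map is flat base change, so by Part (4) the corresponding map of deformation spaces is a base change of the smooth $f\times\mathrm{id}$, hence smooth. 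The first map is $\mathrm D_Z(X)\to\mathrm D_{X\times_YW}(X)$, with both immersions regular and $Z$ smooth over $X\times_Y W$ up to the defining coordinates. In the explicit presentation $\mathcal O_X[\tau,u,v]/(\tau u_i-t_i,\tau v_j-s_j)\leftarrow\mathcal O_X[\tau,u]/(\tau u_i-t_i)$, we check smoothness using the Jacobian criterion, or via \cite[Lemma 1.2.2]{SaitoMicro} applied relatively. This smoothness of the first factor is the main obstacle. I would handle it by an étale-local reduction to the linear model $X=W\times\mathbb A^{c}\times\mathbb A^{e}$, $Z=W\times 0$, where the deformation spaces are explicit affine spaces and the map is visibly a projection.
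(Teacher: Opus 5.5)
Your arguments for (1), (3) and (4) are sound direct Rees-algebra proofs; the paper itself only cites \cite[(1.50) and Lemma 1.2.3]{SaitoMicro} applied to the square multiplied by $\mathbb{A}^1$. Part (2), however, has a genuine gap.

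\textbf{The gap in (2).} The first factor $\mathrm{D}_Z(X)\to\mathrm{D}_{X\times_YW}(X)$ of your factorization is \emph{not} smooth unless $Z$ is open in $X\times_YW$. Smoothness of a composite cannot be deduced from a factorization in which one factor is not smooth. Take $Y=W=\mathrm{Spec}\,k$, $X=\mathbb{A}^1$, $Z=\{0\}$, where the statement reduces to \cite[Lemma 1.2.2]{SaitoMicro}. Then $X\times_YW=X$, and your first factor is $\mathrm{Spec}\,k[\tau,u]\to\mathrm{Spec}\,k[x,\tau]$, $x\mapsto\tau u$. Its fibre over the origin is $\mathbb{A}^1_u$, while its fibres over points with $\tau=0$, $x\neq0$ are empty, so it is not even flat. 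The Jacobian check you propose detects exactly this: the relations $\tau v_j-s_j$ have Jacobian $\tau\cdot\mathrm{Id}$ in the $v$-variables, which degenerates along $\tau=0$. In your linear model the first factor is $(w,\tau,u,v)\mapsto(w,\tau,u,\tau v)$; only the composite $(w,\tau,u,v)\mapsto(w,\tau,u)$ to $\mathrm{D}_W(Y)$ is a projection. Also, the model $Z=W\times 0$ forces $g$ to be étale-locally an isomorphism, so a smooth $g$ of positive relative dimension needs an extra factor. The non-cartesian case is exactly the one the paper relies on, e.g.\ for the smoothness of $\mathrm{D}_{\mathrm{id}}(\mathrm{pr}_{f,2})$ in Lemma \ref{lem:invariant-structure-on-specialization}.

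\textbf{A fix.} Factor through $(W\times\{0\}\subseteq Y\times\mathbb{A}^e)$ instead of $(X\times_YW\subseteq X)$.
\begin{enumerate}
\item Over $X\setminus Z$ one has $\mathrm{D}_Z(X)\times_X(X\setminus Z)=(X\setminus Z)\times\mathbb{G}_m$. It maps into the open subset $Y\times\mathbb{G}_m\subseteq\mathrm{D}_W(Y)$ by $f\times\mathrm{id}$, which is smooth.
\item Near a point of $Z$, choose $s_1,\dots,s_e$ on $X$ whose restrictions generate the ideal of $Z$ in $V=X\times_YW$. Since $Z$ and $V$ are smooth over $W$, the $ds_j$ are linearly independent in $\Omega_{V/W}|_Z$.
\item By the fibrewise smoothness criterion, $(f,s):X\to Y\times\mathbb{A}^e$ is smooth near $Z$, and there $Z=X\times_{Y\times\mathbb{A}^e}(W\times 0)$.
\item By your (4), $\mathrm{D}_Z(X)\to\mathrm{D}_{W\times 0}(Y\times\mathbb{A}^e)$ is a base change of $(f,s)$, hence smooth.
\item By Lemma \ref{lem:product-of-deformation-spaces} (or a direct computation), $\mathrm{D}_{W\times 0}(Y\times\mathbb{A}^e)\simeq\mathrm{D}_W(Y)\times_{\mathbb{A}^1}\mathrm{D}_0(\mathbb{A}^e)\simeq\mathrm{D}_W(Y)\times\mathbb{A}^e$, which projects smoothly onto $\mathrm{D}_W(Y)$.
\item By functoriality of the construction in (1), the composite of these two maps is $\mathrm{D}_g(f)$.
\end{enumerate}
Alternatively, you can keep your étale-local reduction to a linear model. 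In that case you must compute the composite $\mathrm{D}_Z(X)\to\mathrm{D}_W(Y)$ directly, not the map to $\mathrm{D}_{X\times_YW}(X)$, and justify the reduction via (4).
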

	\begin{proof}
		Consider the diagram
		\begin{equation}
			\begin{tikzcd}
				Z\times \mathbb A^1\arrow[r,hook]\arrow[d]&X\times \mathbb A^1\arrow[d]\\
				W\times \mathbb A^1\arrow[r,hook]&Y\times \mathbb A^1.
			\end{tikzcd}
		\end{equation}
		The assertions follow by applying \cite[(1.50) and Lemma 1.2.3]{SaitoMicro} to this diagram.
	\end{proof}
	
	\subsection{}
	Consider the diagram \eqref{eq:morphism-of-regular-immersions}. Its induced morphism $D_g(f)$ in \eqref{eq:Dgf}  is defined over $\mathbb{A}^1$. Over $\mathbb G_m$, the morphism \eqref{eq:Dgf} is identified with $f\times{\rm id}: X\times \mathbb G_m\to Y\times\mathbb G_m$:
	\begin{equation}
		\begin{tikzcd}[column sep=40]
			\mathrm{D}_Z(X)\times_{\mathbb{A}^1}\mathbb{G}_m\arrow[r,"\mathrm{D}_g(f)\times\mathrm{id}"]\arrow[d,equals]&\mathrm{D}_W(Y)\times_{\mathbb{A}^1}\mathbb{G}_m\arrow[d,equals]\\
			X\times\mathbb{G}_m\arrow[r,"f\times\mathrm{id}"]&Y\times\mathbb{G}_m.
		\end{tikzcd}
	\end{equation}
	On the special fiber, the morphism \eqref{eq:Dgf} is the natural map
	$T_ZX\to T_WY$
	between normal bundles:
	\begin{equation}
		\begin{tikzcd}[column sep=40]
			\mathrm{D}_Z(X)\times_{\mathbb{A}^1}\{0\}\arrow[r,"\mathrm{D}_g(f)\times\mathrm{id}"]\arrow[d,equals]&\mathrm{D}_W(Y)\times_{\mathbb{A}^1}\{0\}\arrow[d,equals]\\
			T_ZX\arrow[r,"T_gf"]&T_WY.
		\end{tikzcd}
	\end{equation}
	
	\begin{remark}
		Let $i:Z\hookrightarrow X$ be a closed immersion between smooth schemes. The structure morphism $\pi_{Z/X}:\mathrm{D}_Z(X)\to X\times\mathbb{A}^1$ can be described by the following diagram:
		\begin{equation}
			\begin{tikzcd}
				\mathrm{D}_Z(X)\arrow[d,equals]\arrow[r,"\pi_{Z/X}"]&X\times\mathbb{A}^1\arrow[d,"\simeq"]\\
				\mathrm{D}_Z(X)\arrow[r,"{\mathrm{D}_i(\mathrm{id})}"]&\mathrm{D}_X(X).
			\end{tikzcd}
		\end{equation}
	\end{remark}
	For convenience, we recall the following result.
	\begin{lemma}\label{lem:local-structure-of-smooth-section}
		Let $f:Y\to X$ be a smooth morphism of relative dimension $n$, and let $i:X\hookrightarrow Y$ be a regular immersion such that $f\circ i=\mathrm{id}$. 
		
		For every geometric point $x$ of $X$, there exist an open neighborhood $U$ of $x$ in $X$ and an open neighborhood $V\subset f^{-1}(U)$ of $i(U)$ in $Y$, together with an \'etale morphism $V\to U\times\mathbb{A}^n$ through which $f|_V:V\to U$ factors, such that the following diagram
		\begin{equation}
			\begin{tikzcd}
				U\arrow[r,"i"]\arrow[d,equals]&V\arrow[d]\\
				U\arrow[r,"0"]&U\times\mathbb{A}^n,
			\end{tikzcd}
		\end{equation}
		is cartesian, where the lower horizontal morphism is the zero section.
	\end{lemma}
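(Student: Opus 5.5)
We work étale locally near the image $i(x)$ and build the étale chart from a local regular sequence. Because $i$ is a regular immersion with $f\circ i=\mathrm{id}$ and $f$ is smooth of relative dimension $n$, the ideal $I\subseteq\mathcal O_Y$ of $i(X)$ has locally free conormal sheaf. Moreover $I/I^2\simeq i^*\Omega^1_{Y/X}$, because the section splits the conormal sequence. This sheaf is locally free of rank $n$.

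First, choose an open neighborhood $U$ of $x$ in $X$ and an affine open $V_0\subset f^{-1}(U)$ containing $i(U)$. Shrinking $U$, we may assume $I|_{V_0}$ is generated by $t_1,\dots,t_n\in\Gamma(V_0,\mathcal O)$. We may also assume their classes form a basis of $I/I^2$ on $U$, and that $dt_1,\dots,dt_n$ form a basis of $\Omega^1_{V_0/U}$ along $i(U)$. This is possible since $I/I^2\simeq i^*\Omega^1_{Y/X}$ via $t\mapsto dt$. To get the generators, lift a basis of $I/I^2$ near $x$ and apply Nakayama: the lifts generate $I$ near $i(x)$, and $i(U)\subset V_0$ can be arranged after shrinking.

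Next, the functions $t_j$ define a $U$-morphism $\phi=(f,t_1,\dots,t_n):V_0\to U\times\mathbb A^n$. The source and target are smooth over $U$ of the same relative dimension $n$. The map $\phi^*\Omega^1_{U\times\mathbb A^n/U}\to\Omega^1_{V_0/U}$ sends the coordinate differentials to the $dt_j$. It is therefore an isomorphism along $i(U)$, and hence on an open neighborhood $V\subset V_0$ of $i(U)$. By the Jacobian criterion for morphisms of smooth $U$-schemes, $\phi|_V$ is étale. By construction $f|_V$ factors as $\mathrm{pr}_1\circ\phi$.

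Finally, check that the square is cartesian. The fiber product $V\times_{U\times\mathbb A^n}(U\times\{0\})$ is the closed subscheme of $V$ cut out by $t_1,\dots,t_n$. This ideal is $I|_V$, so the fiber product is $i(U)\simeq U$. Compatibility of the maps holds because $\phi\circ i=(\mathrm{id},0)$, as the $t_j$ vanish on $i(X)$.

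The only delicate point is arranging simultaneously that the $t_j$ generate $I$ on $V$ and that $\phi$ is étale on $V$, not only along $i(U)$. The plan handles this by shrinking $V$ to the étale locus of $\phi$ after fixing generators on $V_0$. Generation of $I$ is preserved under restriction to opens, and $i(U)\subset V$ throughout.
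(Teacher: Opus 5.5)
Your proof is correct, but it is not how the paper argues. The paper gives a one-line deduction: it simply invokes \cite[Theorem 2.5.8]{FuLei}, a general local-structure result for smooth morphisms, applied to the triangle $X\xrightarrow{i}Y\xrightarrow{f}X$.

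What you have written is essentially the standard argument behind such a result, specialized to a section. You lift a basis of the conormal sheaf $I/I^2\simeq i^*\Omega^1_{Y/X}$ (via $t\mapsto dt$) to generators $t_j$ of $I$ using Nakayama. You then apply the Jacobian criterion to the $U$-morphism $(f,t_1,\dots,t_n)$ between smooth $U$-schemes of the same relative dimension. Finally, the cartesian square follows because the $t_j$ generate $I|_V$.

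The trade-off is as follows. The citation is shorter and covers more general situations than sections. Your argument is self-contained and directly produces an open $V$ containing all of $i(U)$, which is the form the statement asks for. It also shows that the regularity hypothesis on $i$ is automatic for a section of a smooth morphism.

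Two cosmetic points. First, nothing in your argument is actually \'etale-local: all the shrinkings are Zariski, as the statement requires, so the opening sentence is misleading. Second, whenever you shrink $U$, you should also replace $V_0$ by $V_0\cap f^{-1}(U)$. After choosing the lifts, replace it further by the open complement of $\mathrm{supp}\bigl(I/(t_1,\dots,t_n)\bigr)$, which contains $i(U)$ by Nakayama at each point of $i(U)$. This is clearly what you intend, but the order of the shrinkings in your write-up is a bit loose.
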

	\begin{proof}
		The result follows by applying \cite[Theorem 2.5.8]{FuLei} to the diagram
		\begin{equation}
			\begin{tikzcd}
				X\arrow[dr,equals]\arrow[r,"i"]&Y\arrow[d,"f"]\\
				&X.
			\end{tikzcd}
		\end{equation} 
	\end{proof}
	
	\begin{lemma}\label{lem:pullback-of-deformation-spaces-in-triangle}
		Let $j:Z\hookrightarrow X$ and $i:X\hookrightarrow Y$ be closed immersions between smooth schemes, and let $f:Y\to X$ be a smooth morphism such that $f\circ i=\mathrm{id}$.
		Then the following diagram is cartesian:
		\begin{equation}\label{eq:pullback-of-deformation-spaces-in-triangle}
			\begin{tikzcd}[column sep=5em, row sep=3em] 
				\mathrm{D}_Z(Y)\arrow[d,"{\mathrm{D}_j(\mathrm{id})}"]\arrow[r,"{\mathrm{D}_{\mathrm{id}}(f)}"]&\mathrm{D}_Z(X)\arrow[d,"\pi_{Z/X}"]\\
				\mathrm{D}_X(Y)\arrow[r,"(f\times {\rm id})\circ\pi_{X/Y}"]&X\times\mathbb{A}^1.
			\end{tikzcd}
		\end{equation}
	\end{lemma}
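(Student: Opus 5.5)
We must prove that the square \eqref{eq:pullback-of-deformation-spaces-in-triangle} is cartesian. First note that it commutes: both composites $\mathrm{D}_Z(Y)\to X\times\mathbb{A}^1$ are determined by $\tau$ and by the projection $\mathrm{D}_Z(Y)\to Y\xrightarrow{f}X$.

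The statement is local on $X$ and $Y$, and all the schemes involved are relative spectra of quasi-coherent algebras over $Y$ (respectively over $X$). So it is enough to check it étale locally around points of $Z$. Away from $Z$ this is immediate, because there $\mathrm{D}_Z(Y)$ and $\mathrm{D}_Z(X)$ are $Y\times\mathbb{A}^1$ and $X\times\mathbb{A}^1$ over the relevant loci. Over $\mathbb{G}_m$ everything is equally clear: the square becomes
\[
Y\times\mathbb{G}_m \to X\times\mathbb{G}_m, \qquad Y\times\mathbb{G}_m \to X\times\mathbb{G}_m,
\]
which is trivially cartesian. The real content therefore lies near $Z\times\{0\}$.

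To work near $Z\times\{0\}$, I would apply Lemma~\ref{lem:local-structure-of-smooth-section} to $f$ and $i$. This shrinks $X$ to an open $U$ and finds an étale map $V\to U\times\mathbb{A}^n$ over $U$ that pulls the zero section back to $i$. Deformation spaces are compatible with étale base change. Concretely, Lemma~\ref{lem:Saito-lemma-on-deformation-spaces}(4) applies to the flat (étale) map $V\to U\times\mathbb{A}^n$ with $Z\hookrightarrow V$ pulled back from $Z\hookrightarrow U\times\mathbb{A}^n$, and also with $U\hookrightarrow V$. Hence both left-hand terms pull back from the case $Y=X\times\mathbb{A}^n$, $i=0$, $f=\mathrm{pr}_1$. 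I would also shrink $X$ so that $X=\operatorname{Spec}A$ and $Z$ is cut out by a regular sequence $g_1,\dots,g_r$.

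In this model case I compute the extended Rees algebras directly. Write $Y=\operatorname{Spec}A[t_1,\dots,t_n]$, with $X$ the locus $t=0$ and $Z$ the locus $g=t=0$. Then:
\begin{itemize}
\item $\mathrm{D}_X(Y)=\operatorname{Spec}A[\tau,t_1/\tau,\dots,t_n/\tau]$, which is $X\times\mathbb{A}^n\times\mathbb{A}^1$ with coordinates $s_j=t_j/\tau$.
\item $\mathrm{D}_Z(X)=\operatorname{Spec}A[\tau,g\tau^{-1}]$.
\item $\mathrm{D}_Z(Y)=\operatorname{Spec}A[t,\tau,g\tau^{-1},t\tau^{-1}]$. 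Since $t$ and $g$ together form a regular sequence, this ring is $A[\tau,g\tau^{-1}][s_1,\dots,s_n]$.
\end{itemize}
Comparing, $\mathrm{D}_Z(Y)=\mathrm{D}_Z(X)\times\mathbb{A}^n$. This is exactly the fibre product
\[
\mathrm{D}_Z(X)\times_{X\times\mathbb{A}^1}\bigl(X\times\mathbb{A}^1\times\mathbb{A}^n\bigr),
\]
and the maps agree: $\mathrm{D}_j(\mathrm{id})$ remembers $\tau$, the image in $A$ and $s$, while $\mathrm{D}_{\mathrm{id}}(f)$ forgets $s$.

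The main obstacle is the Rees-algebra identification in the third bullet, namely
\[
\bigoplus_m (g,t)^m\tau^{-m}=\Bigl(\bigoplus_m (g)^m\tau^{-m}\Bigr)[t/\tau].
\]
This holds inside $A[t,\tau^{\pm1}]$ because $(g,t)^m=\sum_{a+b=m}(g)^a t^b$. A second point to justify is that the étale reduction is legitimate, i.e.\ that both vertical identifications of deformation spaces under étale base change are compatible with the square; Lemma~\ref{lem:Saito-lemma-on-deformation-spaces}(4) supplies this. Since being cartesian is étale local on the target, the general case then follows.
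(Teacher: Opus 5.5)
Your argument is correct and is essentially the paper's proof: you dispose of the part over $\mathbb{G}_m$, use Lemma~\ref{lem:local-structure-of-smooth-section} together with \'etale base change of deformation spaces to reduce to $Y=X\times\mathbb{A}^n$ with $i$ the zero section and $f$ the projection, and then compare extended Rees algebras, both sides being $A[\tau,I\tau^{-1}][s_1,\dots,s_n]$ with $s_j=t_j/\tau$. One slip needs fixing: away from $Z$ the deformation spaces are $Y\times\mathbb{G}_m$ and $X\times\mathbb{G}_m$, not $Y\times\mathbb{A}^1$ and $X\times\mathbb{A}^1$; you may localize near $i(Z)\times\{0\}$ because the special fibre $T_XY$ of $\mathrm{D}_X(Y)$ lies over $i(X)$ and $f\circ i=\mathrm{id}$, so the special fibre of the fibre product lies over $i(Z)$ (this is the paper's remark that only points of $i(X)$ matter).
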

	\begin{proof}
		Since the formation of deformation spaces is compatible with \'etale base change, we may work \'etale locally on $X$ and $Y$.
		
		Let $t$ be the coordinate on $\mathbb{A}^1$. Over $t\neq0$, the diagram \eqref{eq:pullback-of-deformation-spaces-in-triangle} is isomorphic to
		\begin{equation}
			\begin{tikzcd}
				Y\times\mathbb{G}_m\arrow[d,"\mathrm{id}"]\arrow[r,"f\times\mathrm{id}"]&X\times\mathbb{G}_m\arrow[d,"\mathrm{id}"]\\
				Y\times\mathbb{G}_m\arrow[r,"f\times\mathrm{id}"]&X\times\mathbb{G}_m,
			\end{tikzcd}
		\end{equation}
		which is cartesian.
		
		It remains to work near the special fiber. The projection $T_XY\to Y$ factors through $i:X\hookrightarrow Y$. So we only need to consider geometric points $y$ of $Y$ lying in $i(X)$.
		Using the smoothness of \(f\) together with the section \(i\), we may therefore reduce to the following case by Lemma~\ref{lem:local-structure-of-smooth-section}
		\begin{align}
			X={\rm Spec} A,\qquad Y=X\times\mathbb A^n ={\rm Spec} A[x_1,\ldots,x_n],
		\end{align}
		where \(i\) is the zero section and \(f\) is the projection. Let \(I\subset A\) be the defining ideal of \(Z\subset X\).
		By \eqref{eq:def-of-deformation-by-Rees}, we have
		\begin{align}
			D_Z(X)={\rm Spec} A[t,I/t].
		\end{align}
		The ideal defining \(X\subseteq Y\) is $J=(x_1,\ldots,x_n)$, and hence 
		\begin{align}
			D_X(Y)={\rm Spec} A[x_1,\ldots,x_n,t,x_1/t,\ldots,x_n/t].
		\end{align}
		Write \(u_a=x_a/t\), so that \(x_a=tu_a\). This gives a canonical isomorphism 
		\begin{align}
			D_X(Y)\simeq {\rm Spec} A[t,u_1,\ldots,u_n]. 
		\end{align}
		Notice that the ideal defining \(Z\subset Y\) is 
		\(K=IA[x_1,\ldots,x_n]+(x_1,\ldots,x_n)\).
		Consequently,  we have
		\begin{align}
			\begin{aligned} 
				D_Z(Y) &= {\rm Spec} A[x_1,\ldots,x_n,t,I/t,x_1/t,\ldots,x_n/t]\\ 
				&\simeq {\rm Spec} A[t,I/t,u_1,\ldots,u_n].
			\end{aligned} 
		\end{align}
		On the other hand, we have
		\begin{align}
			\begin{aligned}
				D_Z(X)\times_{X\times\mathbb A^1}D_X(Y)&={\rm Spec}\left( A[t,I/t]\otimes_{A[t]}A[t,u_1,\ldots,u_n]\right)\\
				&\simeq{\rm Spec} A[t,I/t,u_1,\ldots,u_n]\simeq D_Z(Y).
			\end{aligned}
		\end{align}
		Hence the diagram \eqref{eq:pullback-of-deformation-spaces-in-triangle} is cartesian.
	\end{proof}
	
	\begin{lemma}\label{lem:product-of-deformation-spaces}
		Let $i:Z\hookrightarrow X$ and $j:W\hookrightarrow Y$ be closed immersions between smooth schemes over $k$. We have a natural isomorphism
		\begin{equation}
			\mathrm{D}_{Z\times W}(X\times Y)\simeq\mathrm{D}_Z(X)\times_{\mathbb{A}^1}\mathrm{D}_W(Y).
		\end{equation}
	\end{lemma}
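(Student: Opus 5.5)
The plan is to compare extended Rees algebras directly, working affine-locally. Both sides are affine over $X\times Y\times\mathbb{A}^1$: the left via $\pi_{Z\times W/X\times Y}$, the right via $\pi_{Z/X}\times_{\mathbb{A}^1}\pi_{W/Y}$. So it suffices to treat $X=\mathrm{Spec}\,A$, $Y=\mathrm{Spec}\,B$, with $Z$ and $W$ defined by ideals $I\subset A$ and $J\subset B$, and then check that the resulting isomorphism is independent of choices so that it glues. For the natural morphism itself, I would use Lemma~\ref{lem:Saito-lemma-on-deformation-spaces}(1) applied to the squares given by the two projections $X\times Y\to X$ and $X\times Y\to Y$ (restricting to $Z\times W\to Z$ and $Z\times W\to W$). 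This gives $\mathrm{D}_{Z\times W}(X\times Y)\to\mathrm{D}_Z(X)$ and $\mathrm{D}_{Z\times W}(X\times Y)\to\mathrm{D}_W(Y)$ over $\mathbb{A}^1$, hence a map into the fiber product. That settles naturality, and the remaining task is to show this map is an isomorphism.

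On rings, the right side is
\begin{equation*}
A[\tau,I\tau^{-1}]\otimes_{k[\tau]}B[\tau,J\tau^{-1}],
\end{equation*}
and the left side is $(A\otimes B)[\tau,K\tau^{-1}]$ with $K=I\otimes B+A\otimes J$. Over $\tau$ invertible both rings become $(A\otimes_k B)[\tau^{\pm1}]$, so both sides restrict to $X\times Y\times\mathbb{G}_m$. The map sends $a\tau^{-m}\otimes b\tau^{-n}\mapsto (a\otimes b)\tau^{-m-n}$. Since $I^mJ^n\subset K^{m+n}$, the image lies in the left side, and it is surjective because $K^N=\sum_{m+n=N}I^m\otimes J^n$.

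For injectivity I would use that $\mathrm{D}_Z(X)\to\mathbb{A}^1$ is smooth, in particular flat. Then the fiber product is flat over $\mathbb{A}^1$, hence $\tau$-torsion free, so it embeds into its localization at $\tau$, which is $(A\otimes B)[\tau^{\pm1}]$. The same holds for the left side. Since the map is compatible with these embeddings, it is injective.

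The main obstacle is this injectivity and torsion-freeness step. The smoothness cited in the paper (\cite[Lemma 1.2.2]{SaitoMicro}) handles it. As a fallback, I would reduce étale locally, via Lemma~\ref{lem:local-structure-of-smooth-section}-type coordinates, to the case $I=(x_1,\dots,x_r)$ cut out by part of an étale coordinate system. There both sides are explicitly polynomial in $\tau$ and the $x_i/\tau$, $y_j/\tau$, exactly as in the proof of Lemma~\ref{lem:pullback-of-deformation-spaces-in-triangle}.
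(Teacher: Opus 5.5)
Your proposal is correct and follows the same route as the paper: you compare the extended Rees algebras $(A\otimes_kB)[\tau,K\tau^{-1}]$ and $A[\tau,I\tau^{-1}]\otimes_{k[\tau]}B[\tau,J\tau^{-1}]$ inside $(A\otimes_kB)[\tau,\tau^{-1}]$, using $K^N=\sum_{m+n=N}I^m\otimes J^n$. Your flatness argument for injectivity also supplies a step the paper takes for granted when it says the tensor product is ``by definition'' a subring of the Laurent ring; smoothness is not even needed for this, because $\mathcal{R}_{I/A}\subset A[\tau,\tau^{-1}]$ is already torsion-free, hence flat, over the PID $k[\tau]$.
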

	\begin{proof}
		We work \'etale locally on $X$ and on $Y$, and then we may assume that $X=\mathrm{Spec}(A)$ and $Y=\mathrm{Spec}(B)$ are affine.
		Let $I\subseteq A$ be the ideal defining $Z$ in $X$, $J\subseteq B$ be the ideal defining $W$ in $Y$, and 
		\begin{equation}
			K=I\otimes_kB+A\otimes_kJ,
		\end{equation}
		be the ideal defining $Z\times W$ in $X\times Y$. It suffices to prove that
		\begin{equation}\label{eq:tensor-product-of-Rees-algebras}
			\mathcal{R}_{K/A\otimes_kB}\simeq\mathcal{R}_{I/A}\otimes_{k[\tau]}\mathcal{R}_{J/B}.
		\end{equation}
		By definition, both sides of $\eqref{eq:tensor-product-of-Rees-algebras}$ are subrings of
		\begin{equation}
			A\otimes_kB[\tau,\tau^{-1}]\simeq A[\tau,\tau^{-1}]\otimes_{k[\tau]}B[\tau,\tau^{-1}]
		\end{equation}
		generated by $A\otimes_kB[\tau],(I\otimes_kB)\tau^{-1}$ and $(A\otimes_kJ)\tau^{-1}$.
	\end{proof}
	
	\subsection{}
	We recall the Verdier specialization functor from \cite{Ver83}.
	Let $i: Z\hookrightarrow X$ be a closed immersion of smooth schemes over a field $k$. We consider the  diagram
	\begin{equation}
		\begin{tikzcd}
			T_ZX\arrow[r]\arrow[d]&D_Z(X)\arrow[d]&X\times\mathbb{G}_m\arrow[l]\arrow[r]\arrow[d]&X\\
			0\arrow[r]&\mathbb{A}^1&\mathbb{G}_m\arrow[l],
		\end{tikzcd}
	\end{equation}
	where both squares are cartesian.
	Let $\eta_0 $ be the generic point of the henselization of $\mathbb A^1$ at 0 and  $\overline{\eta}_0$ a geometric point above $\eta_0$. 
	Let $R\Psi: D_c^b(X\times\mathbb G_m)\to D_c^b(T_{Z}X\times_k \eta_0)$ 
	be the nearby cycles functor with respect to $D_Z(X)\to \mathbb A^1$, 
	where $D_c^b(T_{Z}X\times_k \eta_0,\Lambda)$ denotes the derived category of constructible sheaves on $T_{Z}X$ with continuous action of  $\mathrm{Gal}(\overline{\eta}_0/\eta_0)$ compatible with the action on $T_{Z}X$.
	The Verdier specialization functor
	\begin{equation}\label{eq:def-specialization-functor}
		\nu_i=\nu_{Z/X}:D^b_c(X,\Lambda)\to D^b_c(T_{Z}X\times_k \eta_0,\Lambda),F\mapsto R\Psi(F\boxtimes\Lambda),
	\end{equation}
	is defined by $\nu_{Z/X}=R\Psi\circ {\rm pr}_1^\ast$.
	We  suppress the factor $\eta_0$ from the notation and simply write $\nu_{Z/X}$ as a functor from $D^b_c(X,\Lambda)$ to $D^b_c(T_{Z}X,\Lambda)$.
	
	\begin{lemma}\label{lem:specialization-in-ULA-case}
		Consider a cartesian diagram of smooth $k$-schemes
		\begin{equation}
			\begin{tikzcd}
				Z\arrow[r,"i",hook]\arrow[d,"g"]&X\arrow[d,"f"]\\
				W\arrow[r,"j",hook]&Y.
			\end{tikzcd}
		\end{equation}
		Let $p:T_ZX\to Z$ be the projection map and $F\in D^b_{\mathrm{ctf}}(X,\Lambda)$. Assume that $f,g$ are smooth and $f$ is universally locally acyclic relatively to $F$. Then we have a canonical isomorphism
		\begin{equation}
			\nu_{Z/X}(F)\simeq p^*i^*F.
		\end{equation}
	\end{lemma}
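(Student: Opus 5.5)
The plan is to reduce, via Lemma~\ref{lem:pullback-of-deformation-spaces-in-triangle} and Lemma~\ref{lem:Saito-lemma-on-deformation-spaces}(4), to a situation where the deformation space $\mathrm{D}_Z(X)\to\mathbb A^1$ is the base change of $\mathrm{D}_W(Y)\to\mathbb A^1$ along the smooth map $f$. Then the ULA property of $f$ relative to $F$ will make the nearby cycles trivial.

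\emph{Cartesian square of deformation spaces.} Since $f$ is smooth, hence flat, and the square is cartesian, Lemma~\ref{lem:Saito-lemma-on-deformation-spaces}(4) gives a cartesian square
\[
\mathrm{D}_Z(X)\simeq \mathrm{D}_W(Y)\times_{Y\times\mathbb A^1}(X\times\mathbb A^1).
\]
By Lemma~\ref{lem:Saito-lemma-on-deformation-spaces}(2), $\mathrm{D}_g(f)$ is smooth. On special fibers this identifies $T_ZX\simeq T_WY\times_W Z$. Over $\mathbb G_m$ the sheaf $\mathrm{pr}_1^*F=F\boxtimes\Lambda$ on $X\times\mathbb G_m$ extends to $\mathrm{pr}_X^*F$ on all of $X\times\mathbb A^1$. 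Write $\pi_{Z/X}^*\mathrm{pr}_X^*F=:\tilde F$ on $\mathrm{D}_Z(X)$; it restricts to $F\boxtimes\Lambda$ on the generic part and to $p^*i^*F$ on $T_ZX$, because $T_ZX\to X\times\{0\}$ factors through $Z$ via $p$. So it suffices to show that the canonical map $\tilde F|_{T_ZX}\to R\Psi(\tilde F|_{X\times\mathbb G_m})$ is an isomorphism. This holds whenever $\mathrm{D}_Z(X)\to\mathbb A^1$ is locally acyclic relative to $\tilde F$.

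\emph{ULA of the composite.} I factor $\mathrm{D}_Z(X)\to\mathbb A^1$ as
\[
\mathrm{D}_Z(X)\xrightarrow{\mathrm{D}_g(f)}\mathrm{D}_W(Y)\to\mathbb A^1.
\]
Since the square above is cartesian, $\tilde F$ is the pullback of $\mathrm{pr}_X^*F$ along the base change of $f\times\mathrm{id}:X\times\mathbb A^1\to Y\times\mathbb A^1$. ULA is stable under base change, and $f\times\mathrm{id}$ is ULA relative to $\mathrm{pr}_X^*F$, so $\mathrm{D}_g(f)$ is ULA relative to $\tilde F$. The map $\mathrm{D}_W(Y)\to\mathbb A^1$ is smooth, hence ULA relative to $\Lambda$. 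Composition of ULA morphisms (a ULA map relative to $\tilde F$ followed by a smooth map) is ULA, by the standard transitivity result in Beilinson/Saito or in \cite{SaitoCC}. Therefore $\mathrm{D}_Z(X)\to\mathbb A^1$ is ULA relative to $\tilde F$, the vanishing cycles vanish, and the specialization map is an isomorphism. This yields $\nu_{Z/X}(F)\simeq p^*i^*F$, with trivial Galois action.

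\emph{Main obstacle.} The delicate point is bookkeeping: $\tilde F$ really is the base change of $\mathrm{pr}_X^*F$ along the cartesian square, and the isomorphism is canonical, namely the cospecialization map. Local acyclicity for the composite relies on transitivity, which is standard but must be cited precisely. If transitivity is awkward to cite, the fallback is to work étale locally. By Lemma~\ref{lem:local-structure-of-smooth-section}-type charts one can take $Y$ and $X=Y\times\mathbb A^m$ étale locally with $Z=W\times\mathbb A^m$. Then $\mathrm{D}_Z(X)=\mathrm{D}_W(Y)\times\mathbb A^m$, and one can check the claim directly using smooth base change.
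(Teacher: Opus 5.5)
Your proof is correct and follows the paper's argument. Lemma~\ref{lem:Saito-lemma-on-deformation-spaces} gives the cartesian square, so by base change $\mathrm{D}_g(f)$ is universally locally acyclic relative to the pulled-back complex. Composing with the smooth map $\mathrm{D}_W(Y)\to\mathbb A^1$ gives local acyclicity of $\mathrm{D}_Z(X)\to\mathbb A^1$; the paper cites \cite[Th. finitude, Lemme 2.14]{SGA4.5} for this transitivity step. The nearby cycles then reduce to the restriction $p^*i^*F$, so the étale-local fallback you sketch is unnecessary.
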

	\begin{proof}
		By Lemma \ref{lem:Saito-lemma-on-deformation-spaces}, we have a cartesian diagram
		\begin{equation}
			\begin{tikzcd}
				\mathrm{D}_Z(X)\arrow[d]\arrow[r,"\mathrm{D}_g(f)"]&\mathrm{D}_W(Y)\arrow[d]\\
				X\arrow[r,"f"]&Y.
			\end{tikzcd}
		\end{equation}
		Then $\mathrm{D}_g(f)$ is universally locally acyclic with respect to $F|_{\mathrm{D}_Z(X)}$. Since $\mathrm{D}_W(Y)\to\mathbb{A}^1$ is smooth, it follows from \cite[Th. finitude, Lemme 2.14]{SGA4.5} that the morphism
		\begin{equation}
			\mathrm{D}_Z(X)\xrightarrow{\mathrm{D}_g(f)}\mathrm{D}_W(Y)\to\mathbb{A}^1
		\end{equation}
		is universally locally acyclic with respect to $F|_{\mathrm{D}_Z(X)}$.
		Applying the nearby cycles functor yields natural isomorphisms
		\begin{equation}
			\nu_{Z/X}(F)\simeq F|_{T_ZX}=p^*i^*F.
		\end{equation}
	\end{proof}
	
	\section{Radon transform and  Fourier orthogonality}\label{sec:radontransformandFourierOrth}
	\subsection{}\label{subsec:Radon-transform-notations}
	Let $V$ be a fixed vector space over $k$ of dimension $n+1$ with $n\geq 1$, and put
	$P=\mathbb{P}(V)$ and $P^\vee=\mathbb{P}(V^\vee)$.
	Consider the incidence correspondence
	\begin{equation}
		Q=\{(x,H)\in P\times P^\vee:x\in H\}\subseteq P\times P^\vee.
	\end{equation}
	Let $p:Q\to P$ and $p^\vee:Q\to P^\vee$ be the projection maps.
	The Radon transform and the dual Radon transform are, respectively, the functors
	\begin{align}
		&\mathrm{Rad}=Rp^\vee_*p^*[n-1]:D^b_c(P,\Lambda)\to D^b_c(P^\vee,\Lambda),\\
		&\mathrm{Rad}^\vee=Rp_*p^{\vee\ast}[n-1]:D^b_c(P^\vee,\Lambda)\to D^b_c(P,\Lambda).
	\end{align}
	We define the anti-Radon transform
	\begin{equation}
		\overline{\mathrm{Rad}}=\mathrm{Rad}^\vee(n-1):D^b_c(P^\vee,\Lambda)\to D^b_c(P,\Lambda).
	\end{equation}
	By absolute purity, $p^!=p^*[2n-2](n-1)$ and $p^{\vee!}=p^{\vee\ast}[2n-2](n-1)$. Consequently,  $\overline{\mathrm{Rad}}$ is both left and right adjoint to $\mathrm{Rad}$.
	
	It is well known that $\mathrm{Rad}$ and $\overline{\mathrm{Rad}}$ do not define an adjoint equivalence for $n\geq 2$ in general. Nevertheless, the following retraction statement holds.
	
	\begin{proposition}\label{prop:Radon-retract}
		Let $\mathrm{U},\mathrm{C}$ denote the unit and counit of the adjunction $\mathrm{Rad}\vdash\overline{\mathrm{Rad}}$, and let $\overline{\mathrm{U}},\overline{\mathrm{C}}$ denote the unit and counit of the adjunction $\overline{\mathrm{Rad}}\vdash\mathrm{Rad}$.
		For every $F\in D^b_c(P,\Lambda)$, the following identity holds:
		\begin{align}\label{eq:composition-of-unit-and-anti-counit}
			\overline{\mathrm{C}}_F\circ\mathrm{U}_F=(-1)^{n-1}\mathrm{id}_F.
		\end{align}
		In particular, $F$ is a retract of $\overline{\mathrm{Rad}}(\mathrm{Rad}F)$.
	\end{proposition}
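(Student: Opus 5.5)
Both composites $\overline{\mathrm{Rad}}\,\mathrm{Rad}$ and the identity are integral transforms on $P$, so the plan is to work kernel-theoretically. Write $\mathrm{Rad}\circ$ as a kernel on $P\times P^\vee$ given by $\Lambda_Q[n-1]$. Then $\overline{\mathrm{Rad}}\circ\mathrm{Rad}$ has kernel $K=R\mathrm{pr}_{13*}(\Lambda_{Q\times_{P^\vee}Q})[2n-2](n-1)$ on $P\times P$. Here $Q\times_{P^\vee}Q=\{(x,y,H):x,y\in H\}$. Over the diagonal $\Delta\subset P\times P$ the fiber is $\mathbb P^{n-1}$, and off the diagonal it is $\mathbb P^{n-2}$. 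The standard computation (Brylinski) gives a distinguished triangle
\begin{equation*}
\Lambda_{P\times P}[2n-2](n-1)\otimes R\Gamma(\mathbb P^{n-2})\to K\to \Lambda_\Delta\to,
\end{equation*}
or more precisely $K$ is an extension of $\Lambda_\Delta$ by constant-kernel terms $\Lambda_{P\times P}[2n-2-2i](n-1-i)$.

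The unit $\mathrm U_F$ and counit $\overline{\mathrm C}_F$ are induced by kernel maps $u:\Lambda_\Delta\to K$ and $\bar c:K\to\Lambda_\Delta$. These come from the adjunction maps for $p^*\dashv p_*$, $p^{\vee*}\dashv p^\vee_*$ and the trace/cotrace for the smooth proper maps $p,p^\vee$, combined with the purity identifications $p^!=p^*[2n-2](n-1)$. So it suffices to show that the composite $\bar c\circ u\in\mathrm{End}(\Lambda_\Delta)=\Lambda$ (as $\Delta\cong P$ is connected; in general, componentwise) equals $(-1)^{n-1}$. I will check this functorially in $F$ by computing on kernels, since convolution with kernels is compatible with these natural transformations.

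To compute the scalar I will restrict to a point of the diagonal; base change is compatible with all the adjunction data. Over $x\in P$, the map $u$ restricts to the composite
\begin{equation*}
\Lambda\to R\Gamma(p^{-1}(x),\Lambda)\to R\Gamma(p^{-1}(x), p^{!}\ldots)
\end{equation*}
of a pullback/unit for $p$ and a cotrace. Unwinding, $\bar c\circ u$ becomes the composite $\Lambda\to R\Gamma(\mathbb P^{n-1},\Lambda)\xrightarrow{\ \cdot\ } R\Gamma(\mathbb P^{n-1},\Lambda)[2n-2](n-1)\to\Lambda$. The first map is the unit for $\mathbb P^{n-1}\to\mathrm{pt}$ and the last is the trace. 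The middle map is induced by the diagonal/cotrace of $Q\to P^\vee$, i.e.\ the class of the diagonal of $Q\times_{P^\vee}Q$ restricted to $\mathbb P^{n-1}=\{H\ni x\}$. This class is the self-intersection of the fiber $\{x\}\times_P Q$ inside $Q$ relative to $P^\vee$, i.e.\ the top Chern class of the normal bundle. The composite is therefore the degree of an Euler-type class of $\mathbb P^{n-1}$, up to the Koszul sign produced by commuting the shifts $[n-1]$ past each other twice.

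The main obstacle is this last step: pinning down exactly which cohomology class appears and all the sign conventions. The danger is that a naive reading gives $\chi(\mathbb P^{n-1})=n$ rather than $\pm1$. The correct analysis should instead show that the map factors through the excess-intersection class of the fiber $\mathbb P^{n-1}$ in the transversal setting. Its degree is $1$ because the restriction $p^{-1}(x)\to P^\vee$ is a hyperplane embedding whose normal class contributes the Chern class $h$ matching the $[2]$-shift discrepancy, and the sign comes from the shift $[n-1]$ appearing twice in the adjunction isomorphisms.

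I would first verify the sign in the case $n=1$, where $Q\simeq P$ and $\mathrm{Rad}$ is an isomorphism, so $\overline{\mathrm C}\circ\mathrm U=\mathrm{id}$, consistent with $(-1)^0$. I would then track the general sign via the graded commutativity rule for shifts, $[a]\circ[b]$ versus $[b]\circ[a]$ differing by $(-1)^{ab}$, which yields $(-1)^{(n-1)^2}=(-1)^{n-1}$. Since the composite is then a unit times $\mathrm{id}_F$, the retraction statement follows immediately.
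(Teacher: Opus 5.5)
Your reduction has the right shape. After base change to a point $x\in P$, $\overline{\mathrm{C}}_F\circ\mathrm{U}_F$ is the unit $\Lambda\to R\Gamma(p^{-1}(x),\Lambda)$, followed by cup product with a class of degree $2n-2$, followed by the trace. Your first description of that class, as the diagonal class of $Q\times_{P^\vee}Q$ restricted to $p^{-1}(x)$, is also correct. But computing this class is the whole content of the proposition, and that is where your argument has a genuine gap. The missing step is to identify the composite
\[
G\to p^{\vee!}Rp^\vee_*G\simeq p^{\vee*}Rp^\vee_*G[2n-2](n-1)\to G[2n-2](n-1)
\]
as cup product with $c_{n-1}(T(Q/P^\vee))$. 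This is the top Chern class of the relative tangent bundle of $p^\vee$, which is the normal bundle of the diagonal $Q\hookrightarrow Q\times_{P^\vee}Q$. By the projection formula, $\overline{\mathrm{C}}_F\circ\mathrm{U}_F$ is then cup product with $p_*c_{n-1}(T(Q/P^\vee))\in H^0(P,\Lambda)$.

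On $p^{-1}(x)=\mathbb{P}((V/L)^\vee)\simeq\mathbb{P}^{n-1}$, this bundle has fiber $T_xH$ at $H\ni x$. It is not $T\mathbb{P}^{n-1}$, which would give the $\chi=n$ you feared. It is not the normal bundle of $p^{-1}(x)$ in $Q$, which is trivial of rank $n$. Nor is it the normal bundle in $P^\vee$, which is $\mathcal{O}(1)$ of rank $1$. So the heuristic of ``$h$ matching the $[2]$-shift discrepancy'' has no content. The correct bundle sits in
\[
0\to T(Q/P^\vee)|_{p^{-1}(x)}\to\operatorname{Hom}(L,V/L)\otimes\mathcal{O}\to L^\vee\otimes\mathcal{O}(1)\to 0,
\]
so its total Chern class is $(1+h)^{-1}$ and $\int_{p^{-1}(x)}c_{n-1}=(-1)^{n-1}$.

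As a result, your sign bookkeeping is wrong on both counts. The degree of the relevant class is $(-1)^{n-1}$, not $1$. There is also no Koszul sign $(-1)^{(n-1)^2}$. When the adjunctions are unwound, the $[n-1]$ of $\mathrm{Rad}$ is absorbed by the $[1-n]$ in $\operatorname{Hom}(p^\vee_*p^*F[n-1],-)\simeq\operatorname{Hom}(p^*F,p^{\vee!}(-)[1-n])$. The only shift left is the even purity shift $[2n-2](n-1)$. Already for $n=2$, $T(Q/P^\vee)|_{p^{-1}(x)}\simeq\mathcal{O}(-1)$ on $\mathbb{P}^1$ produces the $-1$ purely geometrically.

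Your final formula matches the statement only because two unjustified steps compensate for each other. The $n=1$ check cannot detect this, since $(-1)^0=1$. The kernel-theoretic framing and the triangle for $K$ are harmless but play no role. What the proof needs is the Chern-class identification above together with the fiberwise degree computation.
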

	\begin{proof} Unwinding the definitions, the left-hand side of \eqref{eq:composition-of-unit-and-anti-counit} is the composite
		\begin{equation}\label{eq:expand-the-composition-of-unit-and-anti-counit}
			\begin{aligned}
				&F\to Rp_*p^*F\to Rp_*p^{\vee!}Rp^\vee_*p^*F\\
				\simeq &Rp_*p^{\vee\ast}Rp^\vee_*p^*F[2n-2](n-1)\\
				\to& Rp_*p^*F[2n-2](n-1)\simeq Rp_*p^!F\to F.
			\end{aligned}
		\end{equation}
		We first evaluate the two arrows involving $p^\vee$. 
		By \cite[Expos\'e XVI, D\'efinition 2.3.1 and Proposition 2.3.2]{ILO14},
		the composite
		\begin{equation}
			p^*F\to p^{\vee!}Rp^\vee_*p^*F\simeq p^{\vee\ast}Rp^\vee_*p^*F[2n-2](n-1)\to p^*F[2n-2](n-1) 
		\end{equation}
		is given by cup product with the Chern class $c_{n-1}(T(Q/P^\vee))$ of the relative tangent bundle $T(Q/P^\vee)$.
		By the projection formula, we get
		\begin{equation}
			\overline{\mathrm{C}}_F\mathrm{U}_F=p_*c_{n-1}(T(Q/P^\vee))\cup \mathrm{id}_F.
		\end{equation}
		
		It remains to compute the cohomology class
		$p_*c_{n-1}(T(Q/P^\vee))\in H^0(P,\Lambda)$.
		Let $x=[L]\in P$ be a geometric point, where $L\subset V$ is a line. We have
		$p^{-1}(x)=\mathbb{P}((V/L)^\vee)\simeq\mathbb{P}^{n-1}$, and
		there is an exact sequence
		\begin{equation}
			0\to T(Q/P^\vee)|_{p^{-1}(x)}\to \operatorname{Hom}(L,V/L)\otimes\mathcal{O}_{p^{-1}(x)}\to L^\vee\otimes\mathcal{O}_{p^{-1}(x)}(1)\to0,
		\end{equation}
		and proper base change gives
		\begin{equation}
			\begin{aligned}
				p_*c_{n-1}(T(Q/P^\vee))_x&=\int_{p^{-1}(x)}c_{n-1}(T(Q/P^\vee)|_{p^{-1}(x)})\\
				&=(-1)^{n-1}\int_{p^{-1}(x)} c_1(\mathcal{O}_{p^{-1}(x)}(1))^{n-1}=(-1)^{n-1}.
			\end{aligned}
		\end{equation}
		Consequently, $p_*c_{n-1}(T(Q/P^\vee))=(-1)^{n-1}$ (see also \cite[Expos\'e VII, Lemme 8.4.1]{SGA5}).
	\end{proof}

	\subsection{}\label{subsec:collision-notations}
	Consider the following cartesian diagrams:
	\begin{equation}\label{eq:prphpp}
		\begin{tikzcd}
			Q\times_PQ\arrow[r,"\mathrm{pr}_{p,1}"]\arrow[d,"\mathrm{pr}_{p,2}"]&Q\arrow[d,"p"]\\
			Q\arrow[r,"p"]&P,
		\end{tikzcd}\quad\begin{tikzcd}
			Q\times_PQ\arrow[r,"i_p",hook]\arrow[d,"h_p"]&Q\times Q\arrow[d,"p\times p"]\\
			P\arrow[r,"\Delta_P",hook]&P\times P.
		\end{tikzcd}
	\end{equation}
	By Lemma~\ref{lem:Saito-lemma-on-deformation-spaces}, the following diagram is cartesian:
	\begin{equation}\label{eq:pullback-deformation-space-of-Q-times_P-Q}
		\begin{tikzcd}[column sep=40]
			\mathrm{D}_{Q\times_PQ}(Q\times Q)\arrow[r,"\mathrm{D}_{h_p}(p\times p)"]\arrow[d]&\mathrm{D}_P(P\times P)\arrow[d]\\
			Q\times Q\times\mathbb{A}^1\arrow[r,"p\times p\times\mathrm{id}"]&P\times P\times\mathbb{A}^1.
		\end{tikzcd}
	\end{equation}
	Over  the special fiber, the  morphism $D_{h_p}(p\times p)$ is the natural projection
	\begin{equation}\label{eq:hpidtpDef}
		\begin{tikzcd}[column sep=50]
			T_{Q\times_PQ}(Q\times Q)\arrow[r,"T_{h_p}(p\times p)"]\arrow[d,equals]&T_P(P\times P)\arrow[d,equals]\\
			Q\times_PQ\times_PTP\arrow[r,"{h_p\times\mathrm{id}_{TP}={\rm pr}_{TP}}"]&TP
		\end{tikzcd}
	\end{equation}
	Let $X$ be a smooth scheme.
	For $F_1,F_2\in D^b_{\rm ctf}(X,\Lambda)$, we define
	\begin{equation}
		\mathscr{H}_X(F_1,F_2)=R\mathcal{H}om_{X\times X}(\mathrm{pr}_1^*F_1,\mathrm{pr}_2^!F_2)\in D^b_{\rm ctf}(X\times X,\Lambda).
	\end{equation}
	This is the internal Hom object for cohomological correspondences; see \cite{LuZheng}.
	If $F_1=F_2=F$, we write $\mathscr{H}_F=\mathscr{H}_X(F,F)$.
	Let $\Delta_X:X\to X\times X$ be the diagonal morphism. Its  normal bundle is naturally identified with the tangent bundle $TX$ of $X$. Using the Verdier specialization functor \eqref{eq:def-specialization-functor}, we set (cf. \cite[(2.1)]{SaitoMicro})
	\begin{align}
		\nu\mathcal{H}om_X(F_1,F_2)=\nu_{\Delta_X}\mathscr{H}_X(F_1,F_2)\in D^b_{\rm ctf}(TX,\Lambda).
	\end{align}
	
	\begin{lemma}\label{lem:proper-pushforward-of-Hom-and-nuHom}
		Retain the notation of Subsection~\ref{subsec:collision-notations}. For $G_1,G_2\in D^b_{\rm ctf}(Q,\Lambda)$, there are natural isomorphisms
		\begin{align}
			&R(p\times p)_*\mathscr{H}_Q(G_1,G_2)=\mathscr{H}_P(Rp_*G_1,Rp_*G_2),\label{eq:proper-pushforward-of-internal-Hom}\\
			&R(h_p\times\mathrm{id}_{TP})_*\nu_{i_p}\mathscr{H}_Q(G_1,G_2)=\nu_{\Delta_P}\mathscr{H}_P(Rp_*G_1,Rp_*G_2)\label{eq:proper-pushforward-of-Verdier-specialization}.
		\end{align}
	\end{lemma}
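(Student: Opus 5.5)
For \eqref{eq:proper-pushforward-of-internal-Hom}, I plan to use the Künneth formula for internal Hom together with base change for the proper morphism $p$. Write $\mathrm{pr}_i$ for the projections of $Q\times Q$ and $P\times P$. Since $p$ is proper and smooth, $Rp_*=Rp_!$.

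First I will use the standard identification $\mathscr H_Q(G_1,G_2)\simeq \mathbb D_Q(G_1)\boxtimes G_2$. This holds for $G_1,G_2\in D^b_{\rm ctf}$ over a field, and it is the form used in \cite{LuZheng}. The same identification holds on $P$.

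Next, the Künneth formula for proper pushforward gives
\[R(p\times p)_*(\mathbb D_Q G_1\boxtimes G_2)\simeq Rp_*\mathbb D_QG_1\boxtimes Rp_*G_2.\]
Verdier duality for the proper map $p$ gives $Rp_*\mathbb D_Q=\mathbb D_P Rp_*$. Combining these yields \eqref{eq:proper-pushforward-of-internal-Hom}.

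To keep the isomorphism canonical, I will instead build it directly. Adjunction and proper base change give a natural map
\[R(p\times p)_*R\mathcal Hom(\mathrm{pr}_1^*G_1,\mathrm{pr}_2^!G_2)\to R\mathcal Hom(\mathrm{pr}_1^*Rp_*G_1,\mathrm{pr}_2^!Rp_*G_2).\]
This map comes from $(p\times p)_*=(p\times p)_!$ together with $\mathrm{pr}_2^!Rp_*\simeq R(p\times\mathrm{id})_*\mathrm{pr}_2^!$. I will then check that it is an isomorphism by the Künneth reduction above.

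For \eqref{eq:proper-pushforward-of-Verdier-specialization}, I will use the cartesian square \eqref{eq:pullback-deformation-space-of-Q-times_P-Q}. The map $\mathrm D_{h_p}(p\times p)$ is the base change of the proper map $p\times p\times\mathrm{id}$, so it is proper. Its restriction over $\mathbb G_m$ is $p\times p\times\mathrm{id}_{\mathbb G_m}$, and its special fiber is $h_p\times\mathrm{id}_{TP}$ by \eqref{eq:hpidtpDef}.

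Nearby cycles commute with proper pushforward (SGA 7, XIII, 2.1.7), and pullback along $\mathrm{pr}_1$ to $\times\mathbb G_m$ commutes with $R(p\times p)_*$ by smooth base change. Together these give
\[R(h_p\times\mathrm{id}_{TP})_*\nu_{i_p}\mathscr H_Q(G_1,G_2)\simeq \nu_{\Delta_P}R(p\times p)_*\mathscr H_Q(G_1,G_2).\]
This isomorphism is compatible with the Galois action of $\eta_0$. Applying \eqref{eq:proper-pushforward-of-internal-Hom} then finishes the proof.

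The main obstacle is not a hard step but bookkeeping: making the isomorphism in \eqref{eq:proper-pushforward-of-internal-Hom} natural in $G_1,G_2$, so that later adjunction units and counits are transported correctly. I expect finite Tor-dimension to be exactly what is needed for the Künneth identification of $R\mathcal Hom$ with $\mathbb D G_1\boxtimes G_2$.
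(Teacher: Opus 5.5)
Your proposal is correct and follows the paper's proof. For the first isomorphism, both use $\mathscr H_Q(G_1,G_2)\simeq\mathbb D_QG_1\boxtimes G_2$, the K\"unneth formula for $R(p\times p)_*$, and $Rp_*\mathbb D_Q\simeq\mathbb D_PRp_*$; for the second, both use the properness of $\mathrm{D}_{h_p}(p\times p)$ from the cartesian square, then proper base change for nearby cycles. Your separate ``direct'' construction of the comparison map is unnecessary, since each step of the K\"unneth chain is already natural in $G_1,G_2$.
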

	\begin{proof}
		By \cite[Proposition 2.11]{LuZheng} and the K\"unneth formula,
		\begin{equation}
			\begin{aligned}
				&R(p\times p)_*\mathscr{H}_Q(G_1,G_2)=R(p\times p)_*(\mathbb{D}_QG_1\boxtimes G_2)\\
				=&Rp_*\mathbb{D}_QG_1\boxtimes Rp_*G_2=\mathbb{D}_P(Rp_*G_1)\boxtimes Rp_*G_2=\mathscr{H}_P(Rp_*G_1,Rp_*G_2).
			\end{aligned}
		\end{equation}
		By \eqref{eq:pullback-deformation-space-of-Q-times_P-Q}, the morphism $\mathrm{D}_{Q\times_PQ}(Q\times Q)\to\mathrm{D}_P(P\times P)$ is proper over $\mathbb{A}^1$.
		By proper base change for nearby cycles \cite[(1.53)]{SaitoMicro}, we obtain
		\begin{equation}
			R(h_p\times\mathrm{id}_{TP})_*\nu_{i_p}\mathscr{H}_Q(G_1,G_2)=\nu_{\Delta_P}R(p\times p)_*\mathscr{H}_Q(G_1,G_2)=\nu_{\Delta_P}\mathscr{H}_P(Rp_*G_1,Rp_*G_2).
		\end{equation}
	\end{proof}
	
	\begin{lemma}\label{lem:Radon-retract-of-nuHom}
		Let $F\in D^b_{\rm ctf}(P,\Lambda)$, and set $G=p^{\vee\ast}\mathrm{Rad}(F)[n-1](n-1)\in D^b_{\rm ctf}(Q,\Lambda)$.
		Then $\nu\mathcal{H}om_P(F,F)$ is a retract of $R(h_p\times\mathrm{id}_{TP})_*\nu_{i_p}\mathscr{H}_G$.
	\end{lemma}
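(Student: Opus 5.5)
Observe first that $G=p^{\vee\ast}\mathrm{Rad}(F)[n-1](n-1)=p^{\vee\ast}Rp^\vee_*p^*F[2n-2](n-1)\simeq p^{\vee!}Rp^\vee_*p^*F$, so that $Rp_*G=\overline{\mathrm{Rad}}(\mathrm{Rad}F)$ up to the shift and twist conventions fixed in \ref{subsec:Radon-transform-notations}. We will use the second isomorphism of Lemma~\ref{lem:proper-pushforward-of-Hom-and-nuHom} with $G_1=G_2=G$:
\begin{equation*}
R(h_p\times\mathrm{id}_{TP})_*\nu_{i_p}\mathscr H_G\simeq\nu_{\Delta_P}\mathscr H_P(Rp_*G,Rp_*G).
\end{equation*}
This reduces the claim to showing that $\nu_{\Delta_P}\mathscr H_P(F,F)$ is a retract of $\nu_{\Delta_P}\mathscr H_P(Rp_*G,Rp_*G)$.

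By Proposition~\ref{prop:Radon-retract}, there are morphisms $a=\mathrm U_F:F\to Rp_*G$ and $b=(-1)^{n-1}\overline{\mathrm C}_F:Rp_*G\to F$ with $b\circ a=\mathrm{id}_F$. The bifunctor $\mathscr H_P(-,-)=R\mathcal Hom(\mathrm{pr}_1^*(-),\mathrm{pr}_2^!(-))$ is contravariant in the first variable and covariant in the second. We therefore obtain
\begin{equation*}
\mathscr H_P(F,F)\xrightarrow{\mathscr H_P(b,a)}\mathscr H_P(Rp_*G,Rp_*G)\xrightarrow{\mathscr H_P(a,b)}\mathscr H_P(F,F).
\end{equation*}
By functoriality the composite is $\mathscr H_P(b\circ a,\,b\circ a)=\mathscr H_P(\mathrm{id},\mathrm{id})=\mathrm{id}$. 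Applying the functor $\nu_{\Delta_P}=R\Psi\circ\mathrm{pr}_1^*$ preserves retractions, since it is a functor. Composing with the isomorphism above then gives the desired retraction.

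The main point to verify is that the identification of Lemma~\ref{lem:proper-pushforward-of-Hom-and-nuHom} really concerns the object $\mathscr H_P(Rp_*G,Rp_*G)$, with no hidden twist. This requires $Rp_*G$ to agree with $\overline{\mathrm{Rad}}\,\mathrm{Rad}F$ exactly as used in Proposition~\ref{prop:Radon-retract}; the twist $(n-1)$ and shift $[2n-2]$ in the definition of $G$ are chosen precisely for this, via $p^{\vee!}=p^{\vee*}[2n-2](n-1)$. Naturality of the isomorphism is not needed, since a retract statement only requires an abstract isomorphism. The only remaining care is that the coefficient constructions stay within $D^b_{\rm ctf}$, and this is automatic because $p$ and $p^\vee$ are proper and smooth.
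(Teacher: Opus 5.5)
Your proposal is correct and follows essentially the same route as the paper. The retraction $F\to Rp_*G\to F$ from Proposition~\ref{prop:Radon-retract} induces, by bifunctoriality of $\mathscr H_P(-,-)$, a retraction of $\mathscr H_F$ through $\mathscr H_{Rp_*G}$. This retraction is then transported by $\nu_{\Delta_P}$ and Lemma~\ref{lem:proper-pushforward-of-Hom-and-nuHom}.

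The difference is only one of phrasing. The paper expresses the bifunctoriality through $\mathscr H_F$ being the endomorphism object in the category of cohomological correspondences (Lu--Zheng) and composes the maps $(\mathrm{id},i)$, $(r,\mathrm{id})$, $(i,\mathrm{id})$, $(\mathrm{id},r)$. These composites are exactly your $\mathscr H_P(b,a)$ and $\mathscr H_P(a,b)$.
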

	\begin{proof}
		Note that
		\begin{equation}
			Rp_*G=Rp_*p^{\vee*}\mathrm{Rad}(F)[n-1](n-1)=\overline{\mathrm{Rad}}\mathrm{Rad}(F).
		\end{equation}
		By Proposition~\ref{prop:Radon-retract}, $F$ is a retract of $Rp_*G$. The retraction is defined by
		\begin{align}
			F\xrightarrow{i={\rm U}_F}Rp_*G \xrightarrow{r=(-1)^{n-1}\overline{\rm C}_F}F,
		\end{align}
		where $ri={\rm id}$.
		Let $\mathcal C=\mathrm{CohCorr}_k$ be the symmetric monoidal category of cohomological correspondences (cf. Appendix \ref{sec:CohCorr}).
		By \cite[Lemma 2.8]{LuZheng}, $\mathscr{H}_F=\mathcal{H}om_{\mathcal C}((P;F),(P;F))$ is the endomorphism object in the category $\mathcal C$ of cohomological correspondences. Therefore, $\mathscr{H}_F$ is  a retract of $\mathscr{H}_{Rp_*G}$. Indeed, the retraction is given by
		{\small
			\begin{align}
				\begin{aligned}
					\xymatrix{
						\mathcal{H}om_{\mathcal C}((P;F),(P;F))\ar[r]\ar[d]_-{({\rm id},i)}&\mathcal{H}om_{\mathcal C}((P;Rp_\ast G),(P;Rp_\ast G))\ar[r]\ar@{=}[d]&\mathcal{H}om_{\mathcal C}((P;F),(P;F))\\
						\mathcal{H}om_{\mathcal C}((P;F),(P;Rp_\ast G))\ar[r]^-{(r,{\rm id})}& \mathcal{H}om_{\mathcal C}((P;Rp_\ast G),(P;Rp_\ast G))\ar[r]^-{(i,{\rm id})}&\mathcal{H}om_{\mathcal C}((P;F),(P;Rp_\ast G)),\ar[u]_-{({\rm id},r)}
					}
				\end{aligned}
			\end{align}
		}
		where
		\begin{equation}
			\begin{aligned}
				&({\rm id},r)\circ (i,{\rm id})\circ (r,{\rm id})\circ ({\rm id},i)=({\rm id},r)\circ (ri,{\rm id})\circ ({\rm id},i)\\
				=&({\rm id},r)\circ ({\rm id},{\rm id})\circ ({\rm id},i)
				=({\rm id},r)\circ ({\rm id},i)=({\rm id},r\circ i)=({\rm id},{\rm id}).
			\end{aligned}
		\end{equation}
		Applying the functor $\nu_{\Delta_P}$, we get that $\nu\mathcal{H}om_P(F,F)=\nu_{\Delta_P}\mathscr{H}_F$ is a retract of 
		\[
		\nu_{\Delta_P}\mathscr{H}_{Rp_*G}\overset{{\eqref{eq:proper-pushforward-of-Verdier-specialization}}}{\simeq }R(h_p\times\mathrm{id}_{TP})_*\nu_{i_p}\mathscr{H}_G.
		\]
		This finishes the proof.
	\end{proof}
	
	\subsection{}
	For later convenience, we recall a result of Beilinson on the identification of  the projectivization of singular support with the non-ULA locus.
	Recall  the Legendre transform \cite{Beilinson}:
	\begin{equation}
		\mathbb{P}(T^*P)\simeq Q\simeq \mathbb{P}(T^*P^\vee).
	\end{equation}
	\begin{lemma}[{\cite[Theorem 3.2]{Beilinson}}]
		\label{lem:Radon-criterion}
		Let $F\in D^b_{\rm ctf}(P,\Lambda)$, and let  $\mathrm{SS}(F)\subset T^*P$ be the singular support of $F$.
		Let $E_p(p^{\vee\ast}\mathrm{Rad}(F))$ be  the non-ULA locus of $p^{\vee\ast}\mathrm{Rad}(F)$ with respect to $p:Q\to P$, i.e., $E_p(p^{\vee\ast}\mathrm{Rad}(F))$ is the smallest closed subset of $ Q$ outside which $p$ is universally locally acyclic relatively to $p^{\vee\ast}\mathrm{Rad}(F)$.
		Under the Legendre transform $\mathbb{P}(T^*P)= Q$, we have
		\begin{align}\label{eq:Beilinson-PSS}
			\mathbb{P}(\mathrm{SS}({F}))=E_p(p^{\vee\ast}\mathrm{Rad}(F)).
		\end{align}
	\end{lemma}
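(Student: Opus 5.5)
This is \cite[Theorem 3.2]{Beilinson}, so in the paper we can simply cite it. If I were to reprove it, I would follow Beilinson's strategy: compute the singular support of $K:=p^{\vee\ast}\mathrm{Rad}(F)$ on $Q$, and then read off the non-ULA locus of $p$ from a transversality condition at each point of $Q$.

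\emph{Step 1 (smooth pullback).} Since $p^\vee$ is smooth, Beilinson's smooth pullback formula gives
\[
\mathrm{SS}(K)=p^{\vee\circ}\,\mathrm{SS}(\mathrm{Rad}F)=dp^\vee\bigl(Q\times_{P^\vee}\mathrm{SS}(\mathrm{Rad}F)\bigr)\subset T^*Q .
\]

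\emph{Step 2 (the local geometry at $q=(x,H)\in Q$).} The subspaces $dp(T^*_xP)$ and $dp^\vee(T^*_HP^\vee)$ of $T^*_qQ$ meet in a line. Under the identifications $\mathbb P(T^*P)\simeq Q\simeq\mathbb P(T^*P^\vee)$ this line is, on both sides, the Legendre line attached to $q$. The relevant criterion is that $p$ is ULA for $K$ near $q$ precisely when $p$ is $\mathrm{SS}(K)$-transversal at $q$. One direction comes from the definition of singular support by test pairs. For the converse, I would use that $K$ is pulled back along the smooth map $p^\vee$, which is transverse to the fibres of $p$. One can then test local acyclicity on a one-parameter family transverse to the fibre and use the minimality of $\mathrm{SS}(\mathrm{Rad}F)$.

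Combining Steps 1 and 2, the non-ULA locus $E_p(K)$ is the set of $q$ whose Legendre line, viewed in $T^*_HP^\vee$, lies in $\mathrm{SS}(\mathrm{Rad}F)$. In other words, $E_p(K)=\mathbb P(\mathrm{SS}(\mathrm{Rad}F))$ under $Q\simeq\mathbb P(T^*P^\vee)$.

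\emph{Step 3 (Radon transform and Legendre transform).} It remains to show that $\mathbb P(\mathrm{SS}(\mathrm{Rad}F))=\mathbb P(\mathrm{SS}(F))$ as subsets of $Q$. The inclusion $\subseteq$ follows from the pushforward estimate $\mathrm{SS}(Rp^\vee_*p^*F)\subseteq p^\vee_\circ p^\circ\mathrm{SS}(F)$ for the proper map $p^\vee$. For $\supseteq$, apply the same estimate to $\overline{\mathrm{Rad}}$. Proposition~\ref{prop:Radon-retract} shows that $F$ is a retract of $\overline{\mathrm{Rad}}\,\mathrm{Rad}F$, so $\mathrm{SS}(F)\subseteq\mathrm{SS}(\overline{\mathrm{Rad}}\,\mathrm{Rad}F)$. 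Projectivising gives the reverse inclusion. The zero section plays no role, since we only compare projectivisations.

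The main obstacle is the converse in Step 2: showing that failure of transversality really forces failure of local acyclicity, so that we get equality rather than only $E_p(K)\subseteq\mathbb P(\mathrm{SS}(F))$. My first attempt would be to restrict to a pencil through $q$. This reduces the question to curves, where the singular support is the closure of the non-locally-constant locus and ULA is detected by vanishing cycles.
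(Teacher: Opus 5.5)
Citing Beilinson is exactly what the paper does; it gives no proof of its own. Your sketched reproof, however, establishes only one half of the equality, and the other half has a genuine gap.

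\textbf{What the sketch does prove.} Steps 1 and 3 are fine. Together with the easy half of Step 2 ($\mathrm{SS}(K)$-transversality of $p$ at $q$ implies universal local acyclicity near $q$), they give $E_p(K)\subseteq\mathbb P(\mathrm{SS}(\mathrm{Rad}F))=\mathbb P(\mathrm{SS}(F))$. Incidentally, this is the only inclusion the paper uses later, in Lemma~\ref{lem:reducetoprojectivespaces}.

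\textbf{Where the gap is.} The reverse inclusion $\mathbb P(\mathrm{SS}(F))\subseteq E_p(K)$ rests entirely on the converse in Step 2, and that converse is not proved. It cannot be a general criterion, because universal local acyclicity does not imply $\mathrm{SS}$-transversality. For example, in characteristic $p$ take $C=\{y=x^p\}\subset\mathbb A^2$ and $K'=i_*\Lambda_C$. The conormal line of $C$ is spanned by $d(y-x^p)=dy$ at every point, so the smooth function $y$ is nowhere $\mathrm{SS}(K')$-transversal along $C$. Yet $y|_C$ is a universal homeomorphism, so $y$ is universally locally acyclic relative to $K'$. Hence the converse would have to exploit the special form $K=p^{\vee*}\mathrm{Rad}(F)$. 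But given Steps 1 and 3, the converse for this $K$ is literally equivalent to $\mathbb P(\mathrm{SS}(F))\subseteq E_p(K)$, so nothing has been reduced. The pencil suggestion does not close this. Restricting to a pencil is non-transversal exactly in the covector directions at stake, so it gives no control of $\mathrm{SS}(F)$ at $x$ in those directions.

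\textbf{The missing idea.} Bypass ``ULA implies transversal'' and show directly that $F$ is micro-supported on $C_E=\rho^{-1}(E)\cup 0_{T^*P}(\operatorname{supp}F)$, where $E=E_p(K)$. This set is closed, since $p(E)\subseteq\operatorname{supp}F$ by the half you proved. The tool is the retraction of Proposition~\ref{prop:Radon-retract}: $F$ is a retract of $\overline{\mathrm{Rad}}\,\mathrm{Rad}(F)$, which is $Rp_*K$ up to shift and twist.

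Let $(h\colon U\to P,\ f\colon U\to Y)$ be a $C_E$-transversal test pair. Let $p_U\colon Q_U=U\times_PQ\to U$ be the base change, with projection $\mathrm{pr}_Q\colon Q_U\to Q$, and let $K_U=\mathrm{pr}_Q^*K$. Fix $u\in h^{-1}(\operatorname{supp}F)$; then $f$ is smooth near $u$.
\begin{itemize}
\item Near a point of $p_U^{-1}(u)$ lying over $Q\setminus E$, $p_U$ is universally locally acyclic relative to $K_U$. Hence so is $f\circ p_U$, since $f$ is smooth.
\item Near a point $(u,H)$ lying over $E$, transversality of $h$ and $f$ with respect to the Legendre covector of $(h(u),H)$ says that $h^{-1}(H)$ is smooth at $u$ and that $f|_{h^{-1}(H)}$ is smooth at $u$. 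Equivalently, $(f\circ p_U,\ p^\vee\circ\mathrm{pr}_Q)\colon Q_U\to Y\times P^\vee$ is smooth there. So $f\circ p_U$ is universally locally acyclic relative to anything pulled back from $P^\vee$, in particular relative to $K_U$.
\end{itemize}
Since $p_U$ is proper, $f$ is universally locally acyclic near $u$ relative to $Rp_{U*}K_U=h^*Rp_*K$. It is therefore also universally locally acyclic relative to the retract $h^*F$. This gives $\mathrm{SS}(F)\subseteq C_E$, i.e.\ $\mathbb P(\mathrm{SS}(F))\subseteq E_p(K)$, which is the missing inclusion.
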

	
	\subsection{}
	We recall the Fourier--Deligne transform. Let $a:V\to S$ be a vector bundle of rank $r$, and let $a^\vee:V^\vee\to S$ be its dual. Denote the projection maps by
	\begin{equation}
		\mathrm{pr}_V:V\times_SV^\vee\to V,\quad\mathrm{pr}_{V^\vee}:V\times_SV^\vee\to V^\vee,
	\end{equation}
	and the canonical pairing by
	\begin{equation}
		\langle\cdot,\cdot\rangle:V\times_SV^\vee\to\mathbb{A}^1_ S,\langle v,\xi\rangle=\xi(v).
	\end{equation}
	Fix a nontrivial character
	$\psi:\mathbf{F}_p\to\Lambda^\times$,
	and denote by $\mathcal{L}_\psi$ the Artin--Schreier sheaf on $\mathbb{A}^1$.
	The Fourier--Deligne transform is defined by
	\begin{equation}\label{eq:FourierTransformUnNormalized}
		\mathcal{F}_{\psi,V/S}:D^b_{\rm ctf}(V,\Lambda)\to D^b_{\rm ctf}(V^\vee,\Lambda),K\mapsto R\mathrm{pr}_{V^{\vee}!}(\mathrm{pr}_V^*K\otimes \langle\cdot,\cdot\rangle^\ast \mathcal{L}_\psi).
	\end{equation}

	The Fourier transform of a complex pulled back from the base is supported on the zero section.
	
	\begin{lemma}\cite[Lemma 1.1.2]{SaitoMicro}
		\label{lem:Fouriersupportofrelativelyconstantsheaf}
		Let $M\in D^b_{\rm ctf}(S,\Lambda)$ and let $a:V\to S$ be the structure map. Then we have
		\begin{equation}
			\mathrm{supp}\mathcal{F}_{\psi,V/S}(a^*M)\subset 0_{V^\vee},
		\end{equation}
		where $0_{V^\vee}=S\hookrightarrow V^\vee$ is the zero section.
	\end{lemma}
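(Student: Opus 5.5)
We compute the Fourier transform of $a^*M$ directly and show that its restriction to the complement of the zero section vanishes. By definition,
\[
\mathcal{F}_{\psi,V/S}(a^*M)=R\mathrm{pr}_{V^\vee!}\bigl(\mathrm{pr}_V^*a^*M\otimes\langle\cdot,\cdot\rangle^*\mathcal{L}_\psi\bigr).
\]
Since $a\circ\mathrm{pr}_V=a^\vee\circ\mathrm{pr}_{V^\vee}$, we have $\mathrm{pr}_V^*a^*M=\mathrm{pr}_{V^\vee}^*a^{\vee*}M$. The projection formula for $R\mathrm{pr}_{V^\vee!}$ then gives
\[
\mathcal{F}_{\psi,V/S}(a^*M)\simeq a^{\vee*}M\otimes R\mathrm{pr}_{V^\vee!}\langle\cdot,\cdot\rangle^*\mathcal{L}_\psi=a^{\vee*}M\otimes\mathcal{F}_{\psi,V/S}(\Lambda_V).
\]
So it suffices to show that $K:=R\mathrm{pr}_{V^\vee!}\langle\cdot,\cdot\rangle^*\mathcal{L}_\psi$ restricts to zero on $V^\vee\setminus 0_{V^\vee}$.

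By proper base change, the stalk of $K$ at a geometric point $\xi$ of $V^\vee$ over $s\in S$ is $R\Gamma_c(V_s,\mathcal{L}_\psi(\xi))$. Here $V_s\simeq\mathbb{A}^r$ and $\xi:V_s\to\mathbb{A}^1$ is a nonzero linear form. If $\xi\neq0$, choose coordinates $V_s\simeq\mathbb{A}^1\times\mathbb{A}^{r-1}$ with $\xi$ the first coordinate. The Künneth formula then gives
\[
R\Gamma_c(V_s,\mathcal{L}_\psi(\xi))\simeq R\Gamma_c(\mathbb{A}^1,\mathcal{L}_\psi)\otimes R\Gamma_c(\mathbb{A}^{r-1},\Lambda).
\]
The first factor vanishes because $\mathcal{L}_\psi$ is a nontrivial Artin--Schreier sheaf, which is the standard computation going back to Deligne. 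Hence all stalks of $K$ off the zero section vanish, and therefore so do all stalks of $a^{\vee*}M\otimes K$ there. This proves $\mathrm{supp}\,\mathcal{F}_{\psi,V/S}(a^*M)\subset 0_{V^\vee}$.

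The only substantive input is the vanishing of $R\Gamma_c(\mathbb{A}^1_{\bar k},\mathcal{L}_\psi)$. It holds because $\mathcal{L}_\psi$ is the $\psi^{-1}$-isotypic part of the pushforward along the Artin--Schreier covering $t\mapsto t^p-t$, which is finite étale and Galois with group $\mathbf{F}_p$. Hence $R\Gamma_c(\mathbb{A}^1,\mathcal{L}_\psi)$ is the $\psi^{-1}$-isotypic part of $R\Gamma_c(\mathbb{A}^1,\Lambda)$. The group $\mathbf{F}_p$ acts trivially on $H^2_c(\mathbb{A}^1,\Lambda)\simeq\Lambda(-1)$, and the other degrees vanish, so this isotypic part is zero. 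Since $p$ is invertible in $\Lambda$, taking isotypic parts is exact, which makes this argument valid. Everything else is formal (projection formula, base change, Künneth), and I expect no obstacle beyond keeping track of the coordinate change on the fibres.
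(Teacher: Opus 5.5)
Your proof is correct and is essentially the argument the paper relies on: the paper cites this lemma without reproving it, but remarks that it is the case $W=V$, $\beta=\mathrm{id}_V$ of Proposition~\ref{prop:Fouriersupportofinvariantsheaf}. The proof of that proposition is exactly your stalk computation (proper base change, splitting $V_s$ along a line on which $\xi$ is nonzero, K\"unneth, and vanishing of $R\Gamma_c$ of a nontrivial Artin--Schreier sheaf). You pull out $M$ with the projection formula instead of using the translation-invariant structure, which is a harmless simplification, and your Artin--Schreier covering argument for $R\Gamma_c(\mathbb{A}^1,\mathcal{L}_\psi)=0$, which the paper takes for granted, is sound.
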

	
	\subsection{}\label{subsec:defineinvariantsheaf}
	Let $a:V\to S,b:W\to S$ be vector bundles, and let $\beta:W\to V$ be a linear morphism of vector bundles over $S$.
	Let $\mathrm{add}_V$ be the addition map
	\begin{equation}
		\mathrm{add}_V:V\times_SV\to V,(v_1,v_2)\mapsto v_1+v_2.
	\end{equation}
	The morphism $\beta$ induces an action $\alpha$ of $W$ on $V$:
	\begin{equation}
		\alpha=\mathrm{add}_V\circ(\beta\times\mathrm{id}_V):W\times_SV\to V,(w,v)\mapsto v+\beta(w).
	\end{equation}
	Write $\mathrm{pr}_V=b\times\mathrm{id}_V:W\times_SV\to V$ for the projection.
	For $K\in D^b_{\rm ctf}(V,\Lambda)$, a $W$-invariant structure on $K$ is an isomorphism
	\begin{equation}\label{eq:defineinvariantstructure}
		\alpha^*K\simeq \mathrm{pr}_V^*K.
	\end{equation}
	If $K$ admits a $W$-invariant structure, then
	we say that $K$ is $W$-invariant.
	
	\begin{remark}
		If the isomorphism \eqref{eq:defineinvariantstructure} satisfies the usual cocycle condition, then $K$ is $W$-equivariant. We do not need this stronger condition.
	\end{remark}
	
	\begin{proposition}\label{prop:Fouriersupportofinvariantsheaf} 
		Retain the notation of Subsection~\ref{subsec:defineinvariantsheaf}, and let $K\in D^b_{\rm ctf}(V,\Lambda)$ be $W$-invariant.
		Consider the dual morphism
		\begin{equation}
			\beta^\vee:V^\vee\to W^\vee,
		\end{equation}
		and its kernel $\ker(\beta^\vee)\subset V^\vee$. Then we have
		\begin{equation}
			\mathrm{supp}(\mathcal{F}_{\psi,V/S}K)\subset\ker(\beta^\vee).
		\end{equation}
	\end{proposition}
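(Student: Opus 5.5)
To prove the support bound in Proposition~\ref{prop:Fouriersupportofinvariantsheaf}, I will compare the Fourier transform of the two sides of the invariant structure $\alpha^*K\simeq\mathrm{pr}_V^*K$, viewed as complexes on the vector bundle $W\times_SV$ over the new base $W$. The Fourier transform is compatible with base change of the base $S$. So applying $\mathcal{F}_{\psi,W\times_SV/W}$ to $\mathrm{pr}_V^*K$ gives $\mathrm{pr}_{V^\vee}^*\mathcal{F}_{\psi,V/S}(K)$ on $W\times_SV^\vee$; here and below $\mathrm{pr}_{V^\vee}$ denotes the projection $W\times_SV^\vee\to V^\vee$. Now view $\alpha$ as the morphism of $W$-schemes $(w,v)\mapsto(w,v+\beta(w))$, followed by the projection. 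The first map is a translation of the vector bundle $W\times_SV\to W$ by the section $w\mapsto\beta(w)$.

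The key step is the standard translation formula: translation by a section $s$ on $V$ corresponds, after Fourier transform, to tensoring with the character sheaf $\mathcal{L}_\psi(\langle s,\cdot\rangle)$. More precisely, for the translation $t_s$ one has $\mathcal{F}(t_s^*K)\simeq\mathcal{F}(K)\otimes\mathcal{L}_\psi(\langle s,\xi\rangle)$, up to a sign convention on $s$. This follows from the change of variables $v\mapsto v+s$ in the kernel, together with the additivity $\mathcal{L}_\psi(a+b)\simeq\mathcal{L}_\psi(a)\otimes\mathcal{L}_\psi(b)$ and the projection formula. Applied with $s=\beta(w)$, it gives an isomorphism on $W\times_SV^\vee$:
\begin{equation*}
\mathrm{pr}_{V^\vee}^*\mathcal{F}_{\psi,V/S}(K)\otimes\mathcal{L}_\psi(\langle\beta(w),\xi\rangle)\simeq\mathrm{pr}_{V^\vee}^*\mathcal{F}_{\psi,V/S}(K).
\end{equation*}
Note that $\langle\beta(w),\xi\rangle=\langle w,\beta^\vee(\xi)\rangle$.

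Next I conclude by restricting to fibers. Take a geometric point $\xi$ of $V^\vee$ over $s\in S$ with $\beta^\vee(\xi)\neq0$, and restrict the isomorphism to $W_s\times\{\xi\}\simeq W_s$. Set $M=\mathcal{F}_{\psi,V/S}(K)_\xi$, a perfect complex of $\Lambda$-modules. Then $M\otimes\mathcal{L}_\psi(\lambda)\simeq M$ on the affine space $W_s$, where $\lambda=\langle\cdot,\beta^\vee(\xi)\rangle$ is a nonzero linear form. Suppose $M\neq0$ and take its top nonzero cohomology $H$, a finite $\Lambda$-module. Then $H\otimes\mathcal{L}_\psi(\lambda)\simeq H$ as sheaves on $W_s$. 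The constant sheaf $H$ has trivial $\pi_1$-action, whereas $H\otimes\mathcal{L}_\psi(\lambda)$ does not, since $\pi_1(\mathbb{A}^1)$ acts nontrivially through $\psi$ and $\lambda$ is surjective. To make this precise, compute $R\Gamma_c$ on a line $L\subset W_s$ on which $\lambda$ is nonconstant. We have $R\Gamma_c(L,\mathcal{L}_\psi\otimes H)=0$ because $\psi$ is nontrivial, while $R\Gamma_c(L,H)=H[-2](-1)\neq0$. This contradiction shows $M=0$, so the support of $\mathcal{F}_{\psi,V/S}K$ lies in $\ker(\beta^\vee)$.

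I expect the main obstacle to be establishing the translation formula cleanly in families over the base $W$, including compatibility of $R\mathrm{pr}_!$ with base change and the identification of the kernel under $v\mapsto v+\beta(w)$. Since only the existence of some isomorphism is needed, and no cocycle condition, no compatibility subtleties should arise.
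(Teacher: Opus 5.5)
Your proof is correct, but it goes the other way round from the paper's. The paper never Fourier-transforms the invariance structure. It fixes a geometric point $(s,\xi)$ with $\beta_s^\vee(\xi)\neq0$. It chooses a line $L\subset W_s$ with $\xi|_{\beta_s(L)}\neq0$ and a complement $Z$ with $V_s=\beta_s(L)\oplus Z$. It then pulls back $\alpha^*K\simeq\mathrm{pr}_V^*K$ along $\beta_s(L)\times Z\hookrightarrow\beta_s(L)\times V_s\simeq L\times V_s\hookrightarrow W_s\times V_s$ to get $K|_{V_s}\simeq\Lambda\boxtimes i_Z^*K|_{V_s}$. The stalk $(\mathcal{F}_{\psi,V/S}K)_{(s,\xi)}$ then splits by K\"unneth, with a factor $R\Gamma_c(\beta_s(L),\mathcal{L}_\psi(\xi|_{\beta_s(L)}))=0$.

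You instead transform the invariance structure in the family over $W$. Base change and the translation/modulation formula turn invariance under translation by $\beta(w)$ into the twisted invariance $\mathrm{pr}_{V^\vee}^*\mathcal{F}_{\psi,V/S}(K)\otimes\mathcal{L}_\psi(\pm\langle w,\beta^\vee(\xi)\rangle)\simeq\mathrm{pr}_{V^\vee}^*\mathcal{F}_{\psi,V/S}(K)$ on $W\times_SV^\vee$. You then kill the stalk by computing $R\Gamma_c$ along a line.

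Both arguments rest on the same vanishing $R\Gamma_c(\mathbb{A}^1,\mathcal{L}_\psi)=0$. The paper's version is more hands-on: it needs only the stalk formula for the Fourier transform and K\"unneth, at the cost of choosing a splitting. Yours is more conceptual: it avoids splittings and gives a statement over all of $W\times_SV^\vee$ at once. The price is checking the translation formula and the base change of $\mathcal{F}_\psi$ in families; both are standard, and since only an isomorphism is needed, no coherence issues arise.

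One simplification: you need not pass to the top cohomology $H$. Applying $R\Gamma_c(L,-)$ directly to $M\otimes\mathcal{L}_\psi(\lambda)\simeq M$ and using the projection formula gives $0\simeq M[-2](-1)$, hence $M=0$.
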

	\begin{proof}
		Let $(s,\xi)$ be a geometric point of $V^\vee$, i.e., $s$ is a geometric point of $S$ and $\xi$ is a geometric covector in $V^\vee_s$.
		Suppose that $(s,\xi)\notin\ker(\beta^\vee)$, equivalently, that $\beta^\vee_s(\xi)\neq0$. Choose a line $L\subset W_s$ such that $\beta_s|_L$ is injective and $\xi|_{\beta_s(L)}\neq0$.
		Choose a direct-sum decomposition
		\begin{equation}\label{eq:linearsplitting}
			\beta_s(L)\oplus Z=V_s
		\end{equation}
		Denote by
		\begin{equation}
			i_L:L\hookrightarrow W_s,\quad i_{\beta_s(L)}:\beta_s(L)\hookrightarrow V_s,\quad i_Z:Z\hookrightarrow V_s,
		\end{equation}
		the inclusions, and by
		\begin{equation}
			p_{\beta_s(L)}:V_s\to\beta_s(L),\quad p_Z:V_s\to Z,
		\end{equation}
		the projections. Under the decomposition \eqref{eq:linearsplitting}, addition induces the isomorphism
		\begin{equation}
			\begin{tikzcd}[column sep=40]
				\mathrm{add}_{V_s}|_{\beta_s(L)\times Z}:\beta_s(L)\times Z\arrow[r,hook,"{i_{\beta_s(L)}\times i_Z}"]&V_s\times V_s\arrow[r,"{\mathrm{add}_{V_s}}"]&V_s.
			\end{tikzcd}
		\end{equation}
		Since $K$ is $W$-invariant, there is an isomorphism
		\begin{equation}\label{eq:WinvariantstructureonKs}
			\alpha_s^*K|_{V_s}\simeq \mathrm{pr}_{V_s}^*K|_{V_s}\quad\text{in }D^b_{\rm ctf}(W_s\times V_s,\Lambda).
		\end{equation}
		The action map fits into the commutative diagram
		\begin{equation}\label{eq:commutativediagramofactions}
			\begin{tikzcd}[row sep=40,column sep=60]
				&\beta_s(L)\times Z\arrow[d,hook,"{\mathrm{id}_{\beta_s(L)}\times i_Z}"]\arrow[r,"="swap,"\mathrm{add}_{V_s}|_{\beta_s(L)\times Z}"]&V_s\arrow[ddl,bend left=50,"\mathrm{id_{V_s}}"]\\
				L\times V_s\arrow[r,"="swap,"{\beta_s|_L\times\mathrm{id}_{V_s}}"]\arrow[d,hook,"i_L\times\mathrm{id}_{V_s}"]&\beta_s(L)\times V_s\arrow[d,"\mathrm{add}|_{\beta_s(L)\times V_s}"]\\
				W_s\times V_s\arrow[r,"\alpha_s"]&V_s,
			\end{tikzcd}
		\end{equation}
		while the projections fit into the commutative diagram
		\begin{equation}\label{eq:commutativediagramofprojections}
			\begin{tikzcd}[row sep=40,column sep=60]
				&\beta_s(L)\times Z\arrow[d,hook,"{\mathrm{id}_{\beta_s(L)}\times i_Z}"]\arrow[r,"="swap,"\mathrm{add}_{V_s}|_{\beta_s(L)\times Z}"]&V_s\arrow[ddl,bend left=50,"{i_Z\circ p_Z}"]\\
				L\times V_s\arrow[r,"="swap,"{\beta_s|_L\times\mathrm{id}_{V_s}}"]\arrow[d,hook,"i_L\times\mathrm{id}_{V_s}"]&\beta_s(L)\times V_s\arrow[d,"{a_s|_{\beta_s(L)}\times\mathrm{id}_{V_s}}"]\\
				W_s\times V_s\arrow[r,"\mathrm{pr}_{V_s}"]&V_s.
			\end{tikzcd}
		\end{equation}
		Applying $*$-pullback along the outer paths in \eqref{eq:commutativediagramofactions} and \eqref{eq:commutativediagramofprojections} to the isomorphism \eqref{eq:WinvariantstructureonKs}, we obtain
		\begin{equation}
			K|_{V_s}\simeq p_Z^*i_Z^*K|_{V_s}=\Lambda\boxtimes i_Z^*K|_{V_s}\quad\text{in }D^b_c(V_s,\Lambda)=D^b_c(\beta_s(L)\times Z,\Lambda).
		\end{equation}
		In the following, we calculate the stalk by using the
		K\"unneth formula
		\begin{equation}
			\begin{aligned}
				&(\mathcal{F}_{\psi,V/S}K)_{(s,\xi)}=R\Gamma_c(V_s,K_s\otimes\mathcal{L}_\psi(\xi))\\
				\simeq &R\Gamma_c(\beta_s(L)\times Z,\mathcal{L}_\psi(\xi|_{\beta_s(L)})\boxtimes(i_Z^*K_s\otimes\mathcal{L}_\psi(\xi|_Z)))\\
				\simeq &R\Gamma_c(\beta_s(L),\mathcal{L}_\psi(\xi|_{\beta_s(L)}))\otimes^L_\Lambda R\Gamma_c(Z,i_Z^*K_s\otimes\mathcal{L}_\psi(\xi|_Z)).
			\end{aligned}
		\end{equation}
		Since $\xi|_{\beta_s(L)}\neq0$, the associated Artin--Schreier sheaf has vanishing compactly supported cohomology:
		\begin{equation}
			R\Gamma_c(\beta_s(L),\mathcal{L}_\psi(\xi|_{\beta_s(L)}))=0,
		\end{equation}
		which completes the proof.
	\end{proof}
	
	\begin{remark}
		Take $W=V$ and $\beta={\rm id}_V$. Then Proposition~\ref{prop:Fouriersupportofinvariantsheaf} implies
		Lemma~\ref{lem:Fouriersupportofrelativelyconstantsheaf}.
	\end{remark}

	\begin{lemma}\label{lem:Kunneth-for-Fourier}
		Let $a:V\to S$ and $b:W\to T$ be vector bundles. For $F\in D^b_{\rm ctf}(V,\Lambda)$ and $G\in D^b_{\rm ctf}(W,\Lambda)$, there is a natural isomorphism
		\begin{equation}\label{eq:Fourier-product}
			\mathcal{F}_{\psi,V\times W/S\times T}(F\boxtimes G)=\mathcal{F}_{\psi,V/S}(F)\boxtimes\mathcal{F}_{\psi,W/T}(G).
		\end{equation}
	\end{lemma}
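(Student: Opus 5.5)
The plan is to unwind the definition \eqref{eq:FourierTransformUnNormalized} on both sides and reduce everything to the K\"unneth formula for compactly supported pushforward together with the multiplicativity of the Artin--Schreier sheaf. Put $V\times W\to S\times T$, whose dual is $V^\vee\times W^\vee\to S\times T$. The total space of the fiber product factors as
\begin{equation*}
(V\times W)\times_{S\times T}(V^\vee\times W^\vee)\simeq (V\times_S V^\vee)\times (W\times_T W^\vee).
\end{equation*}
Under this identification, the projection to $V\times W$ becomes $\mathrm{pr}_V\times\mathrm{pr}_W$, and the projection to $V^\vee\times W^\vee$ becomes $\mathrm{pr}_{V^\vee}\times \mathrm{pr}_{W^\vee}$. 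The pairing becomes $\langle\cdot,\cdot\rangle_V\circ\mathrm{pr}_1+\langle\cdot,\cdot\rangle_W\circ\mathrm{pr}_2$.

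The first step is to identify the kernels. Since $\mathcal L_\psi$ is a character sheaf, there is an isomorphism $\mathrm{add}^*\mathcal L_\psi\simeq \mathcal L_\psi\boxtimes\mathcal L_\psi$ on $\mathbb A^1\times\mathbb A^1$. Pulling it back gives
\begin{equation*}
\langle\cdot,\cdot\rangle_{V\times W}^*\mathcal L_\psi\simeq \langle\cdot,\cdot\rangle_V^*\mathcal L_\psi\boxtimes\langle\cdot,\cdot\rangle_W^*\mathcal L_\psi .
\end{equation*}
We also have $(\mathrm{pr}_V\times\mathrm{pr}_W)^*(F\boxtimes G)\simeq \mathrm{pr}_V^*F\boxtimes\mathrm{pr}_W^*G$. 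Because the tensor product of two external products is the external product of the tensor products, the integrand on the product is the external product of the two integrands.

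The second step is to push forward. Apply the K\"unneth formula for $R(f\times g)_!$, valid for separated finite type morphisms over $k$ and for ctf complexes (SGA 4 XVII, 5.4.3), to $f=\mathrm{pr}_{V^\vee}$ and $g=\mathrm{pr}_{W^\vee}$. This yields \eqref{eq:Fourier-product}. Each isomorphism used is canonical, so the composite is natural in $F$ and $G$.

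The only point needing care is bookkeeping rather than mathematics. One must check that the fiber product over $S\times T$ really splits as a product over $k$; this is clear because each factor is base-changed separately. One must also check that the isomorphism $\mathrm{add}^*\mathcal L_\psi\simeq\mathcal L_\psi\boxtimes\mathcal L_\psi$ is the standard one coming from the Lang torsor being a group extension. No finiteness issue arises, since all functors preserve $D^b_{\rm ctf}$.
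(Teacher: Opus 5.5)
Your proposal is correct and follows essentially the same route as the paper. Both split $(V\times W)\times_{S\times T}(V^\vee\times W^\vee)$ as $(V\times_SV^\vee)\times(W\times_TW^\vee)$, use $\mathrm{add}^*\mathcal{L}_\psi\simeq\mathcal{L}_\psi\boxtimes\mathcal{L}_\psi$ to factor the kernel, and conclude with the K\"unneth formula for $R(\mathrm{pr}_{V^\vee}\times\mathrm{pr}_{W^\vee})_!$.
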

	\begin{proof}
		We have natural isomorphisms
		\begin{equation}
			(V\times W)\times_{S\times T}(V\times W)^\vee=(V\times W)\times_{S\times T}(V^\vee\times W^\vee)=(V\times_SV^\vee)\times(W\times_TW^\vee).
		\end{equation}
		Consider the following commutative diagram
		\begin{equation}
			\begin{tikzcd}
				(V\times W)\times_{S\times T}(V\times W)^\vee\arrow[r,equal]\arrow[d]&(V\times_SV^\vee)\times(W\times_TW^\vee)\arrow[d]\\
				\mathbb{A}^1&\mathbb{A}^1\times\mathbb{A}^1.\arrow[l,"\mathrm{add}"]
			\end{tikzcd}
		\end{equation}
		Since $\mathrm{add}^*\mathcal{L}_\psi=\mathcal{L}_\psi\boxtimes\mathcal{L}_\psi$, we obtain an isomorphism
		\begin{equation}
			\mathcal{L}_\psi(\langle\cdot,\cdot\rangle_{V\times W})=\mathcal{L}_\psi(\langle\cdot,\cdot\rangle_V)\boxtimes\mathcal{L}_\psi(\langle\cdot,\cdot\rangle_W).
		\end{equation}
		Now \eqref{eq:Fourier-product} is the following composition of isomorphisms
		\begin{align}
			\begin{aligned}
				\mathcal{F}_{\psi,V\times W/S\times T}(F\boxtimes G)&=R(\mathrm{pr}_{V^\vee\times W^\vee})_!(\mathrm{pr}_{V\times W}^*(F\boxtimes G)\otimes\mathcal{L}_\psi(\langle\cdot,\cdot\rangle_{V\times W}))\\
				&=R(\mathrm{pr}_{V^\vee}\times\mathrm{pr}_{W^\vee})_!((\mathrm{pr}_V^*F\boxtimes\mathrm{pr}_W^*G)\otimes(\mathcal{L}_\psi(\langle\cdot,\cdot\rangle_V)\boxtimes\mathcal{L}_\psi(\langle\cdot,\cdot\rangle_W)))\\
				&\overset{(1)}{=}R\mathrm{pr}_{V^\vee!}(\mathrm{pr}_V^*F\otimes \mathcal{L}_\psi(\langle\cdot,\cdot\rangle_V))\boxtimes R\mathrm{pr}_{W^\vee!}(\mathrm{pr}_W^*G\otimes \mathcal{L}_\psi(\langle\cdot,\cdot\rangle_W))\\
				&=\mathcal{F}_{\psi,V/S}(F)\boxtimes\mathcal{F}_{\psi,W/T}(G),
			\end{aligned}
		\end{align}
		where (1) is an isomorphism by the K\"{u}nneth formula.
	\end{proof}
	
	\begin{lemma}\label{lem:Fourier-transform-of-induced-action}
		Let $V\to S$ and $W\to S$ be vector bundles. Let $q:V\to W$ be a linear morphism of vector bundles. We have an action
		\begin{equation}
			\alpha=\mathrm{add}_W\circ(q\times\mathrm{id}):V\times_SW\to W,(v,w)\mapsto q(v)+w.
		\end{equation}
		Consider the diagrams
		\begin{equation}
			\begin{tikzcd}
				&V\times_SW\arrow[dl,"\mathrm{pr}_1"swap]\arrow[d,"\mathrm{pr}_2"]\arrow[r,"\alpha"]&W\\
				V&W,
			\end{tikzcd}\quad
			\begin{tikzcd}
				&W\times_SW^\vee\arrow[dl,"\mathrm{pr}_W"swap]\arrow[d,"\mathrm{pr}_{W^\vee}"]\arrow[r,"{\langle\cdot,\cdot\rangle_W}"]&\mathbb{A}^1\\
				W&W^\vee.
			\end{tikzcd}
		\end{equation}
		For $F\in D^b_{\mathrm{ctf}}(V,\Lambda)$ and $G\in D^b_{\mathrm{ctf}}(W,\Lambda)$, we have a natural isomorphism
		\begin{equation}\label{eq:Fourier-transform-of-induced-action}
			\mathcal{F}_{\psi,W/S}(R\alpha_!(\mathrm{pr}_1^*F\otimes\mathrm{pr}_2^*G))\simeq q^{\vee\ast}\mathcal{F}_{\psi,V/S}(F)\otimes\mathcal{F}_{\psi,W/S}(G).
		\end{equation}
	\end{lemma}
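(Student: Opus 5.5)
We compute the left-hand side of \eqref{eq:Fourier-transform-of-induced-action} directly from the definition \eqref{eq:FourierTransformUnNormalized}, using proper base change and the projection formula. The guiding fact is that the pairing is additive:
\[
\langle q(v)+w,\xi\rangle_W=\langle v,q^\vee\xi\rangle_V+\langle w,\xi\rangle_W ,
\]
and $\mathrm{add}^*\mathcal L_\psi\simeq\mathcal L_\psi\boxtimes\mathcal L_\psi$, exactly as in the proof of Lemma~\ref{lem:Kunneth-for-Fourier}.

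First, consider the cartesian square obtained by base changing $\alpha\times\mathrm{id}_{W^\vee}:V\times_SW\times_SW^\vee\to W\times_SW^\vee$ against $\mathrm{pr}_W$. Proper base change and the projection formula give
\[
\mathcal F_{\psi,W/S}(R\alpha_!(\mathrm{pr}_1^*F\otimes\mathrm{pr}_2^*G))\simeq R\varpi_!\bigl(F\boxtimes G\boxtimes\Lambda\otimes\mathcal L_\psi(\langle q(v)+w,\xi\rangle)\bigr),
\]
where $\varpi:V\times_SW\times_SW^\vee\to W^\vee$ is the projection and $F\boxtimes G\boxtimes\Lambda$ denotes the external product over $S$ of the pullbacks.

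Second, by additivity of the pairing, the sheaf $\mathcal L_\psi(\langle q(v)+w,\xi\rangle)$ splits as
\[
\mathcal L_\psi(\langle v,q^\vee\xi\rangle_V)\otimes\mathcal L_\psi(\langle w,\xi\rangle_W).
\]
The space $V\times_SW\times_SW^\vee$ is the fiber product over $W^\vee$ of $V\times_SW^\vee$ and $W\times_SW^\vee$. The first factor involves only $(v,\xi)$ and the second only $(w,\xi)$, so the integrand is an external tensor product over $W^\vee$. Applying the Künneth formula for $R\varpi_!$ relative to $W^\vee$, the left-hand side becomes
\[
R\mathrm{pr}_{W^\vee!}\bigl(\mathrm{pr}_V^*F\otimes\mathcal L_\psi(\langle v,q^\vee\xi\rangle)\bigr)\otimes\mathcal F_{\psi,W/S}(G),
\]
where the first projection is $V\times_SW^\vee\to W^\vee$.

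Third, we identify the first factor. The map $V\times_SW^\vee\to W^\vee$ is the base change of $\mathrm{pr}_{V^\vee}:V\times_SV^\vee\to V^\vee$ along $q^\vee:W^\vee\to V^\vee$, and the pairing $\langle v,q^\vee\xi\rangle$ is the pullback of $\langle\cdot,\cdot\rangle_V$ along $\mathrm{id}_V\times q^\vee$. Proper base change therefore identifies the first factor with $q^{\vee*}\mathcal F_{\psi,V/S}(F)$, which gives \eqref{eq:Fourier-transform-of-induced-action}.

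The only delicate point is the second step: applying the relative Künneth formula for $!$-pushforward over the base $W^\vee$ rather than over $k$. This is standard, following from proper base change and the projection formula for finite Tor-dimension complexes. Naturality of the resulting isomorphism in $F$ and $G$ follows because each step is a canonical isomorphism.
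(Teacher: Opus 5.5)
Your proposal is correct and follows essentially the same route as the paper. Both arguments reduce the left side, by base change and the projection formula, to $Rp_{3!}(p_1^*F\otimes p_2^*G\otimes\mathcal{L}_\psi(\langle\alpha(\cdot,\cdot),\cdot\rangle_W))$ and then use additivity of the pairing; the paper's expansion of the right side (base change along $q^\vee$, projection formula, base change along $\mathrm{pr}_{1,3}$ and $\mathrm{pr}_{2,3}$) is just an unpacking of the relative K\"unneth formula over $W^\vee$ that you invoke.
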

	\begin{proof}
		This is a standard calculation for composition of Fourier-Mukai type transformations. We include a proof for completeness.
		We denote by
		\begin{equation}
			p_1:V\times_SW\times_SW^\vee\to V,\quad p_2:V\times_SW\times_SW^\vee\to W,\quad p_3:V\times_SW\times_SW^\vee\to W^\vee
		\end{equation}
		the projection morphisms.
		Consider the pullback diagram
		\begin{equation}
			\begin{tikzcd}[column sep=35]
				V\times_SW\times_SW^\vee\arrow[r,"{\alpha\times\mathrm{id}_{W^\vee}}"]\arrow[d,"\mathrm{pr}_{1,2}"]&W\times_SW^\vee\arrow[d,"\mathrm{pr}_W"]\\
				V\times_SW\arrow[r,"\alpha"]&W.
			\end{tikzcd}
		\end{equation}
		We calculate the left-hand side of \eqref{eq:Fourier-transform-of-induced-action} as follows:
		\begin{equation}\label{eq:FTOIA1}
			\begin{aligned}
				&\mathcal{F}_{\psi,W/S}(R\alpha_!(\mathrm{pr}_1^*F\otimes\mathrm{pr}_2^*G))\\
				=&R\mathrm{pr}_{W^\vee!}(\mathrm{pr}_W^*R\alpha_!(\mathrm{pr}_1^*F\otimes\mathrm{pr}_2^*G)\otimes\mathcal{L}_\psi\langle\cdot,\cdot\rangle_W)\\
				=&R\mathrm{pr}_{W^\vee!}(R(\alpha\times\mathrm{id}_{W^\vee})_!\mathrm{pr}_{1,2}^*(\mathrm{pr}_1^*F\otimes\mathrm{pr}_2^*G)\otimes\mathcal{L}_\psi\langle\cdot,\cdot\rangle_W)\\
				=&R\mathrm{pr}_{W^\vee!}R(\alpha\times\mathrm{id}_{W^\vee})_!(\mathrm{pr}_{1,2}^*(\mathrm{pr}_1^*F\otimes\mathrm{pr}_2^*G)\otimes(\alpha\times\mathrm{id}_{W^\vee})^*\mathcal{L}_\psi\langle\cdot,\cdot\rangle_W)\\
				=&Rp_{3!}(p_1^*F\otimes p_2^*G\otimes\mathcal{L}_\psi(\langle\alpha(\cdot,\cdot),\cdot\rangle_W)).
			\end{aligned}
		\end{equation}
		We have commutative diagrams
		\begin{equation}
			\begin{tikzcd}
				V\times_SW\times_SW^\vee\arrow[d,"{(\mathrm{pr}_{1,3},\mathrm{pr}_{2,3})}"]\arrow[r,"\alpha\times\mathrm{id}_{W^\vee}"]&W\times_SW^\vee\arrow[r,"{\langle\cdot,\cdot\rangle_W}"]&\mathbb{A}^1\\
				(V\times_SW^\vee)\times(W\times_SW^\vee)\arrow[rr,"{\langle q(\cdot),\cdot\rangle_W\times\langle\cdot,\cdot\rangle_W}"]&&\mathbb{A}^1\times\mathbb{A}^1\arrow[u,"\mathrm{add}"],
			\end{tikzcd}
		\end{equation}
		\begin{equation}
			\begin{tikzcd}
				V\times_SW^\vee\arrow[r,"\mathrm{id_V}\times q^\vee"]\arrow[d,"q\times\mathrm{id}_{W^\vee}"]&V\times_SV^\vee\arrow[d,"{\langle\cdot,\cdot\rangle_V}"]\\
				W\times_SW^\vee\arrow[r,"{\langle\cdot,\cdot\rangle_W}"]&\mathbb{A}^1,
			\end{tikzcd}
		\end{equation}
		and pullback diagrams
		\begin{equation}
			\begin{tikzcd}
				V\times_SW^\vee\arrow[r,"\mathrm{id}_V\times q^\vee"]\arrow[d,"\mathrm{pr}_2"]&V\times_SV^\vee\arrow[d,"\mathrm{pr}_{V^\vee}"]\\
				W^\vee\arrow[r,"q^\vee"]&V^\vee,
			\end{tikzcd}
		\end{equation}
		\begin{equation}
			\begin{tikzcd}
				V\times_SW\times_SW^\vee\arrow[r,"\mathrm{pr}_{1,3}"]\arrow[d,"\mathrm{pr}_{2,3}"]&V\times_SW^\vee\arrow[d,"\mathrm{pr}_2"]\\
				W\times_SW^\vee\arrow[r,"\mathrm{pr}_{W^\vee}"]&W^\vee.
			\end{tikzcd}
		\end{equation}
		Then we can calculate the right-hand side of \eqref{eq:Fourier-transform-of-induced-action} as follows:
		\begin{equation}
			\begin{aligned}\label{eq:FTOIA2}
				&q^{\vee\ast}\mathcal{F}_{\psi,V/S}(F)\otimes\mathcal{F}_{\psi,W/S}(G)\\
				=&q^{\vee\ast}R\mathrm{pr}_{V^\vee!}(\mathrm{pr}_V^*F\otimes\mathcal{L}_\psi\langle\cdot,\cdot\rangle_V)\otimes R\mathrm{pr}_{W^\vee!}(\mathrm{pr}_W^*G\otimes\mathcal{L}_\psi\langle\cdot,\cdot\rangle_W)\\
				=&R\mathrm{pr}_{2!}(\mathrm{id}_V\times q^\vee)^*(\mathrm{pr}_V^*F\otimes\mathcal{L}_\psi\langle\cdot,\cdot\rangle_V)\otimes R\mathrm{pr}_{W^\vee!}(\mathrm{pr}_W^*G\otimes\mathcal{L}_\psi\langle\cdot,\cdot\rangle_W)\\
				=&R\mathrm{pr}_{2!}((\mathrm{id}_V\times q^\vee)^*(\mathrm{pr}_V^*F\otimes\mathcal{L}_\psi\langle\cdot,\cdot\rangle_V)\otimes\mathrm{pr}_2^*R\mathrm{pr}_{W^\vee!}(\mathrm{pr}_W^*G\otimes\mathcal{L}_\psi\langle\cdot,\cdot\rangle_W))\\
				=&R\mathrm{pr}_{2!}((\mathrm{id}_V\times q^\vee)^*(\mathrm{pr}_V^*F\otimes\mathcal{L}_\psi\langle\cdot,\cdot\rangle_V)\otimes R(\mathrm{pr}_{1,3})_!\mathrm{pr}_{2,3}^*(\mathrm{pr}_W^*G\otimes\mathcal{L}_\psi\langle\cdot,\cdot\rangle_W))\\
				=&R\mathrm{pr}_{2!}R(\mathrm{pr}_{1,3})_!(\mathrm{pr}_{1,3}^*(\mathrm{id}_V\times q^\vee)^*(\mathrm{pr}_V^*F\otimes\mathcal{L}_\psi\langle\cdot,\cdot\rangle_V)\otimes \mathrm{pr}_{2,3}^*(\mathrm{pr}_W^*G\otimes\mathcal{L}_\psi\langle\cdot,\cdot\rangle_W))\\
				=&Rp_{3!}(p_1^*F\otimes p_2^*G\otimes\mathcal{L}_\psi(\langle\cdot,q^\vee(\cdot)\rangle_V\circ\mathrm{pr}_{1,3}+\langle\cdot,\cdot\rangle_W\circ\mathrm{pr}_{2,3}))\\
				=&Rp_{3!}(p_1^*F\otimes p_2^*G\otimes\mathcal{L}_\psi(\langle\alpha(\cdot,\cdot),\cdot\rangle_W)).
			\end{aligned}
		\end{equation}
		By \eqref{eq:FTOIA1} and \eqref{eq:FTOIA2}, we get the isomorphism \eqref{eq:Fourier-transform-of-induced-action}.
	\end{proof}

	\subsection{}\label{subsec:deformation-space-action-notations}
	Now we construct an invariant structure on Verdier specializations.
	Let $f:X\to Y$ be a smooth morphism, and let $i:Z\hookrightarrow X$ be a regular immersion between smooth $k$-schemes.
	Let $\Delta_f: X\hookrightarrow X\times_YX$ be the diagonal morphism.
	Applying Lemma~\ref{lem:pullback-of-deformation-spaces-in-triangle} to the following diagram
	\begin{align}
		\begin{aligned}
			\xymatrix{
				Z\ar@{^(->}[r]^-i&X\ar@{^(->}[r]^-{\Delta_f}\ar@{=}[rd]&X\times_YX\ar[d]^-{{\rm pr}_{f,1}}\ar[r]^-{{\rm pr}_{f,2}}&X\ar[d]^-f\\
				&&X\ar[r]_-f&Y,
			}
		\end{aligned}
	\end{align}
	we obtain a cartesian diagram
	\begin{equation}\label{eq:dzxyxddzx}
		\begin{tikzcd}
			\mathrm{D}_Z(X\times_YX)\arrow[d,"{\mathrm{D}_i(\mathrm{id})}"]\arrow[r,"{\mathrm{D}_{\mathrm{id}}(\mathrm{pr}_{f,1})}"]&\mathrm{D}_Z(X)\arrow[d,"\pi_{Z/X}"]\\
			\mathrm{D}_X(X\times_YX)\arrow[r,"\pi_{\Delta_f}"]&X\times\mathbb{A}^1,
		\end{tikzcd}
	\end{equation}
	where $\pi_{\Delta_f}=({\rm pr}_{f,1}\times{\rm id})\circ \pi_{X/X\times_YX}$. 
	The second projection $\mathrm{pr}_{f,2}:X\times_YX\to X$ induces the following action:
	\begin{equation}\label{eq:actionofDXDeltaonD}
		\tilde{\alpha}_{Z/X/Y}:\mathrm{D}_X(X\times_YX)\times_{X\times\mathbb{A}^1}\mathrm{D}_Z(X)\xleftarrow[\simeq]{(\mathrm{D}_i(\mathrm{id}),\mathrm{D}_{\mathrm{id}}(\mathrm{pr}_{f,1}))}\mathrm{D}_Z(X\times_YX)\xrightarrow{\mathrm{D}_{\mathrm{id}}(\mathrm{pr}_{f,2})}\mathrm{D}_Z(X).
	\end{equation}
	Unwinding the definitions, on the generic fiber, \eqref{eq:actionofDXDeltaonD} is given by
	\begin{equation}
		\mathrm{pr}_{f,2}:X\times_YX\to X,
	\end{equation}
	whereas on the special fiber, \eqref{eq:actionofDXDeltaonD} induces an action
	\begin{equation}\label{eq:actionofTXYZonNZX}
		\alpha_{Z/X/Y}:T(X/Y)|_Z\times_ZT_ZX=T(X/Y)\times_XT_ZX\to T_ZX.
	\end{equation}
	Similarly, the morphism induced by $\mathrm{pr}_{f,1}$ is the projection onto the $\mathrm{D}_Z(X)$-factor:
	\begin{equation}\label{eq:projectionofactionofDXDeltaonD}
		\mathrm{pr}_{\mathrm{D}_Z(X)}:\mathrm{D}_X(X\times_YX)\times_{X\times\mathbb{A}^1}\mathrm{D}_Z(X)\xleftarrow[\simeq]{(\mathrm{D}_i(\mathrm{id}),\mathrm{D}_{\mathrm{id}}(\mathrm{pr}_{f,1}))}\mathrm{D}_Z(X\times_YX)\xrightarrow{\mathrm{D}_{\mathrm{id}}(\mathrm{pr}_{f,1})}\mathrm{D}_Z(X).
	\end{equation}
	On the generic fiber, the morphism \eqref{eq:projectionofactionofDXDeltaonD} is given by
	\begin{equation}
		\mathrm{pr}_{f,1}:X\times_YX\to X.
	\end{equation}
	On the special fiber, \eqref{eq:projectionofactionofDXDeltaonD} is the projection
	\begin{equation}
		\mathrm{pr}_{T_{Z}X}:T({X/Y})|_Z\times_ZT_ZX=T({X/Y})\times_XT_ZX\to T_ZX,
	\end{equation}
	where $T({X/Y})$ is the relative tangent bundle of $X\to Y$ and it is isomorphic to the normal bundle of $\Delta_f: X\to X\times_YX$.
	\begin{lemma}\label{lem:invariant-structure-on-specialization}
		Retain the notation of Subsection~\ref{subsec:deformation-space-action-notations}.
		For $K\in D^b_{\rm ctf}(Y,\Lambda)$, the complex $\nu_{Z/X}(f^*K)\in D^b_{\rm ctf}(T_ZX,\Lambda)$ is $T(X/Y)|_Z$-invariant for the action of $T(X/Y)|_Z$ on $T_ZX$ defined by \eqref{eq:actionofTXYZonNZX}.
	\end{lemma}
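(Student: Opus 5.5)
We want an isomorphism $\alpha_{Z/X/Y}^*\nu_{Z/X}(f^*K)\simeq \mathrm{pr}_{T_ZX}^*\nu_{Z/X}(f^*K)$ on $T(X/Y)|_Z\times_Z T_ZX$. The plan is to lift this to the deformation spaces and then take nearby cycles. The relevant maps on $\mathrm{D}_Z(X\times_YX)$ are
\[
\tilde\alpha_{Z/X/Y}=\mathrm{D}_{\mathrm{id}}(\mathrm{pr}_{f,2})
\qquad\text{and}\qquad
\mathrm{pr}_{\mathrm{D}_Z(X)}=\mathrm{D}_{\mathrm{id}}(\mathrm{pr}_{f,1}),
\]
where we identify the source with the fibre product via \eqref{eq:dzxyxddzx}. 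Both maps lie over $\mathbb A^1$.

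\emph{Key observation.} Since $f\circ\mathrm{pr}_{f,1}=f\circ\mathrm{pr}_{f,2}$ on $X\times_YX$, the two pullbacks $\mathrm{pr}_{f,1}^*f^*K$ and $\mathrm{pr}_{f,2}^*f^*K$ are canonically isomorphic. Pulling further back to $(X\times_YX)\times\mathbb G_m$, the generic fibre of $\mathrm{D}_Z(X\times_YX)$, we obtain
\[
(\mathrm{pr}_{f,2}\times\mathrm{id})^*(f^*K\boxtimes\Lambda)\simeq(\mathrm{pr}_{f,1}\times\mathrm{id})^*(f^*K\boxtimes\Lambda).
\]
On generic fibres, $\tilde\alpha_{Z/X/Y}$ and $\mathrm{pr}_{\mathrm{D}_Z(X)}$ are exactly $\mathrm{pr}_{f,2}$ and $\mathrm{pr}_{f,1}$. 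So it suffices to show that nearby cycles over $\mathbb A^1$ commute with pullback along these two maps, i.e.
\[
R\Psi_{\mathrm{D}_Z(X\times_YX)}\bigl(\mathrm{pr}_{f,j}^*L\bigr)\simeq (\text{special fibre of }\mathrm{D}_{\mathrm{id}}(\mathrm{pr}_{f,j}))^*\,R\Psi_{\mathrm{D}_Z(X)}(L)
\]
for $j=1,2$. Here $L=f^*K\boxtimes\Lambda$, and the special fibre maps are $\alpha_{Z/X/Y}$ and $\mathrm{pr}_{T_ZX}$.

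\emph{Smoothness of the two maps.} Nearby cycles commute with smooth pullback, so the commutation follows once $\mathrm{D}_{\mathrm{id}}(\mathrm{pr}_{f,j})$ is smooth. Apply Lemma~\ref{lem:Saito-lemma-on-deformation-spaces}(2) to the square with top row $Z\hookrightarrow X\times_YX$ (via $\Delta_f\circ i$), bottom row $Z\hookrightarrow X$, left vertical map $\mathrm{id}_Z$ and right vertical map $\mathrm{pr}_{f,j}$. Both vertical maps are smooth, because $\mathrm{pr}_{f,j}$ is a base change of the smooth morphism $f$. Hence $\mathrm{D}_{\mathrm{id}}(\mathrm{pr}_{f,j})$ is smooth.

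For $j=1$ this is also visible from the cartesian square \eqref{eq:dzxyxddzx}: $\mathrm{pr}_{\mathrm{D}_Z(X)}$ is a base change of $\pi_{\Delta_f}$, and $\pi_{\Delta_f}$ is smooth since $\mathrm{D}_X(X\times_YX)$ is smooth over $X\times\mathbb A^1$ by the local description in Lemma~\ref{lem:pullback-of-deformation-spaces-in-triangle}.

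\emph{Compatibility with Galois action.} The base change isomorphism $\phi^*R\Psi\simeq R\Psi\phi^*$ for a smooth morphism $\phi$ over $\mathbb A^1$ is compatible with the $\mathrm{Gal}(\overline\eta_0/\eta_0)$-action. Combining it with the generic-fibre isomorphism above gives
\[
\alpha_{Z/X/Y}^*\nu_{Z/X}(f^*K)\simeq R\Psi(\tilde\alpha^*L)\simeq R\Psi(\mathrm{pr}_{\mathrm{D}_Z(X)}^*L)\simeq\mathrm{pr}_{T_ZX}^*\nu_{Z/X}(f^*K).
\]
This is the required $T(X/Y)|_Z$-invariant structure. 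No cocycle condition is needed.

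The main obstacle is the justification of smooth base change for nearby cycles, specifically that $\mathrm{D}_{\mathrm{id}}(\mathrm{pr}_{f,2})$ is genuinely smooth as a morphism over $\mathbb A^1$. If Lemma~\ref{lem:Saito-lemma-on-deformation-spaces}(2) did not literally apply, I would check smoothness étale locally. Using Lemma~\ref{lem:local-structure-of-smooth-section} for $X\to Y$, I would reduce to $X=Y\times\mathbb A^n$ and compute the Rees algebras explicitly, as in the proof of Lemma~\ref{lem:pullback-of-deformation-spaces-in-triangle}. In that model the action becomes $(u,v)\mapsto v+u$ in coordinates $u_a=(x_a-x'_a)/t$, which is visibly smooth.
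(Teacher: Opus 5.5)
Your proposal is correct and follows the paper's proof. The paper likewise starts from the isomorphism $\mathrm{pr}_{f,1}^*f^*K\simeq\mathrm{pr}_{f,2}^*f^*K$, gets smoothness of $\mathrm{D}_{\mathrm{id}}(\mathrm{pr}_{f,1})$ and $\mathrm{D}_{\mathrm{id}}(\mathrm{pr}_{f,2})$ over $\mathbb{A}^1$ from Lemma~\ref{lem:Saito-lemma-on-deformation-spaces}, and applies smooth base change for nearby cycles (Saito's (1.52)) to identify the two specializations with $\alpha_{Z/X/Y}^*\nu_{Z/X}f^*K$ and $\mathrm{pr}_{T_ZX}^*\nu_{Z/X}f^*K$.
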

	\begin{proof}
		Since $f\circ\mathrm{pr}_{f,1}=f\circ\mathrm{pr}_{f,2}$, there is an isomorphism
		\begin{equation}\label{eq:twopullbacksoffKareequal}
			\mathrm{pr}_{f,1}^*f^*K\simeq\mathrm{pr}_{f,2}^*f^*K.
		\end{equation}
		By Lemma~\ref{lem:Saito-lemma-on-deformation-spaces}, \eqref{eq:actionofDXDeltaonD} and \eqref{eq:projectionofactionofDXDeltaonD} are smooth morphisms over $\mathbb{A}^1$. Applying \cite[(1.52)]{SaitoMicro} to these two morphisms yields natural isomorphisms
		\begin{align}
			\nu_{Z/X\times_YX}\mathrm{pr}_{f,2}^*f^*K\simeq\alpha_{Z/X/Y}^*\nu_{Z/X}f^*K,\label{eq:comparespecializationactionside}\\
			\nu_{Z/X\times_YX}\mathrm{pr}_{f,1}^*f^*K\simeq\mathrm{pr}_{T_{Z}X}^*\nu_{Z/X}f^*K.\label{eq:comparespecializationprojectionside}
		\end{align}
		Combining \eqref{eq:twopullbacksoffKareequal}, \eqref{eq:comparespecializationactionside}, and \eqref{eq:comparespecializationprojectionside}, we obtain an isomorphism
		\begin{equation}
			\alpha_{Z/X/Y}^*\nu_{Z/X}f^*K\simeq\mathrm{pr}_{T_{Z}X}^*\nu_{Z/X}f^*K\quad\text{in }D^b_{\rm ctf}(T(X/Y)\times_XT_ZX,\Lambda).
		\end{equation}
		This proves the assertion.
	\end{proof}
	
	\subsection{}\label{subsec:collisionnormalbundleactionnotations}
	Retain the notation of Subsection~\ref{subsec:collision-notations}.
	We apply Lemma~\ref{lem:invariant-structure-on-specialization} to 
	the following diagram
	\begin{align}
		\begin{aligned}
			\xymatrix{
				Q\times_P Q\ar@{^(->}[r]^-{i_p}&Q\times Q\ar@{^(->}[r]^-\Delta\ar@{=}[rd]&(Q\times Q)\times_{P^\vee\times P^\vee}(Q\times Q)\ar[d]_-{{\rm pr}_1}\ar[r]^-{{\rm pr}_2}& Q\times Q\ar[d]^-{p^\vee\times p^\vee}\\
				&&Q\times Q\ar[r]^{p^\vee\times p^\vee}&P^\vee\times P^\vee.
			}
		\end{aligned}
	\end{align}
	We now describe the action \eqref{eq:actionofTXYZonNZX} in this setting.
	There are canonical identifications
	\begin{align}
		&T(Q\times Q/P^\vee\times P^\vee)=\mathrm{pr}_{Q,1}^*T({Q/P^\vee})\oplus\mathrm{pr}_{Q,2}^*T({Q/P^\vee})\quad\text{over }Q\times Q,\\
		&T_{Q\times_PQ}(Q\times Q)=Q\times_PQ\times_PTP\quad\text{over }Q\times_PQ.
	\end{align}
	Define a vector bundle $W$ over $Q\times_PQ$ by
	\begin{equation}
		W=T(Q\times Q/P^\vee\times P^\vee)|_{Q\times_PQ}=\mathrm{pr}_{p,1}^*T({Q/P^\vee})\oplus\mathrm{pr}_{p,2}^*T({Q/P^\vee}).
	\end{equation}
	Under these identifications, the action \eqref{eq:actionofTXYZonNZX} becomes an action of $W$ on $Q\times_PQ\times_PTP$ over $Q\times_PQ$:
	\begin{equation}\label{eq:actionofWonTP}
		\alpha:W\times_PTP=W\times_{Q\times_PQ}(Q\times_PQ\times_PTP)\to Q\times_PQ\times_PTP.
	\end{equation}
	More precisely, the action \eqref{eq:actionofWonTP} is induced by the linear morphism
	\begin{equation}
		\beta:W\to Q\times_PQ\times_P TP,(w_1,w_2)\mapsto Tp(w_2)-Tp(w_1),
	\end{equation}
	namely, the difference of the two morphisms
	\begin{align}
		&Tp\circ\mathrm{pr}_2:W\to\mathrm{pr}_2^*T({Q/P^\vee})\to T({Q/P^\vee})\xrightarrow{Tp}TP,\\
		&Tp\circ\mathrm{pr}_1:W\to\mathrm{pr}_1^*T({Q/P^\vee})\to T({Q/P^\vee})\xrightarrow{Tp}TP.
	\end{align}
	Here $Tp: T(Q/P^\vee)\to TP$ is the composition $T(Q/P^\vee)\to T(P\times P^\vee/P^\vee)\simeq TP\times P^\vee\xrightarrow{{\rm pr}}TP$.
	
	The natural morphism
	\begin{equation}
		\rho:T^*P\setminus 0_{T^*P}\to\mathbb{P}(T^*P)\simeq Q
	\end{equation}
	defines a tautological line subbundle
	\begin{equation}
		\mathcal{L}=\{(x,y_1,y_2,\xi):x\in P,y_1,y_2\in Q_x,\xi\in T^*_xP,y_1=y_2=\rho(\xi)\}\subset\Delta_p(Q)\times_PT^*P
	\end{equation}
	on $\Delta_p(Q)$.
	\begin{lemma}\label{lem:calculatekerbetavee}
		Retain the notation of Subsection~\ref{subsec:collisionnormalbundleactionnotations}.
		\begin{itemize}
			\item[$(1)$] Let $F\in D^b_{\rm ctf}(P,\Lambda)$ and put $G=p^{\vee\ast}\mathrm{Rad}(F)[n-1](n-1)\in D^b_{\rm ctf}(Q,\Lambda)$. Then $\nu_{i_p}(\mathscr{H}_G)$ on $Q\times_PQ\times_PTP$ is $W$-invariant.
			
			\item [$(2)$] Consider the dual morphism
			\begin{equation}
				\beta^\vee:Q\times_PQ\times_PT^*P\to W^\vee=\mathrm{pr}_{p,1}^*T^*({Q/P^\vee})\oplus\mathrm{pr}_{p,2}^*T^*({Q/P^\vee}),
			\end{equation}
			and the projection morphism
			\begin{equation}
				h_p\times\mathrm{id}_{T*P}:Q\times_PQ\times_PT^*P\to T^*P.
			\end{equation}
			The kernel $\ker(\beta^\vee)$ is a union of $Q\times_PQ\times_P0_{T^*P}$ and $\mathcal{L}$. As a corollary, for a closed subset $Z\hookrightarrow Q\times_PQ$, we have
			\begin{equation}
				(h_p\times\mathrm{id}_{T^*P})(\ker(\beta^\vee|_Z))=\rho^{-1}(Z)\cup0_{T^*P}(h_p(Z)).
			\end{equation}
		\end{itemize}
	\end{lemma}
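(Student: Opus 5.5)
For part (1), I will deduce the $W$-invariance of $\nu_{i_p}(\mathscr H_G)$ from Lemma~\ref{lem:invariant-structure-on-specialization}. I apply it to the smooth morphism $f=p^\vee\times p^\vee:Q\times Q\to P^\vee\times P^\vee$ and the regular immersion $i_p:Q\times_PQ\hookrightarrow Q\times Q$, as set up in Subsection~\ref{subsec:collisionnormalbundleactionnotations}. For this it suffices to show that $\mathscr H_G\simeq (p^\vee\times p^\vee)^*K$ for some $K\in D^b_{\rm ctf}(P^\vee\times P^\vee,\Lambda)$.

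Write $G=p^{\vee*}M$ with $M=\mathrm{Rad}(F)[n-1](n-1)$. Since $X=Q$ is smooth, the proof of Lemma~\ref{lem:proper-pushforward-of-Hom-and-nuHom} (via \cite[Proposition 2.11]{LuZheng}) gives $\mathscr H_G\simeq \mathbb D_QG\boxtimes G$. Because $p^\vee$ is smooth of relative dimension $n-1$, we have $\mathbb D_Q p^{\vee*}M\simeq p^{\vee!}\mathbb D_{P^\vee}M\simeq p^{\vee*}\mathbb D_{P^\vee}M[2n-2](n-1)$. Hence
\begin{equation*}
\mathscr H_G\simeq (p^\vee\times p^\vee)^*\bigl(\mathbb D_{P^\vee}M\boxtimes M\bigr)[2n-2](n-1).
\end{equation*}
Lemma~\ref{lem:invariant-structure-on-specialization} then gives invariance under $T(Q\times Q/P^\vee\times P^\vee)|_{Q\times_PQ}=W$, acting through \eqref{eq:actionofTXYZonNZX}. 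One also has to check that, under the identification $T_{Q\times_PQ}(Q\times Q)=Q\times_PQ\times_PTP$, this action is the one induced by $\beta(w_1,w_2)=Tp(w_2)-Tp(w_1)$. I would verify this in local coordinates. The normal bundle is $(TP\oplus TP)/\Delta TP$, identified with $TP$ through the difference map, and a relative tangent vector $(w_1,w_2)$ maps to the class of $(Tp\,w_1,Tp\,w_2)$. The paper takes this as the description of $\alpha$.

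For part (2), I work fiberwise at a point $(x,y_1,y_2)$ of $Q\times_PQ$, with $y_j=(x,H_j)$. For $\xi\in T^*_xP$ one has $\beta^\vee(\xi)=(-(Tp)^\vee\xi,(Tp)^\vee\xi)$, each component taken at the corresponding $y_j$. So $\xi\in\ker\beta^\vee$ if and only if $\xi$ kills $Tp(T_{y_j}(Q/P^\vee))$ for $j=1,2$. The fiber of $Q\to P^\vee$ over $H_j$ is $H_j$ itself, so $Tp(T_{y_j}(Q/P^\vee))=T_xH_j\subset T_xP$, a hyperplane. The annihilator of $T_xH_j$ is the line of conormal covectors to $H_j$ at $x$, which is exactly the fiber $\rho^{-1}(y_j)\cup\{0\}$ under the Legendre identification $\mathbb P(T^*P)\simeq Q$.

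It follows that $\xi\neq0$ lies in the kernel if and only if $\rho(\xi)=y_1=y_2$, and this gives $\ker\beta^\vee=(Q\times_PQ\times_P0_{T^*P})\cup\mathcal L$. Restricting to a closed subset $Z$ and projecting along $h_p\times\mathrm{id}$, the zero-section part maps onto $0_{T^*P}(h_p(Z))$. The $\mathcal L$ part maps onto the set of $\xi\ne0$ with $(\rho(\xi),\rho(\xi))\in Z$. This is $\rho^{-1}(Z)$, where $Z$ is regarded in $Q$ through the diagonal $\Delta_p(Q)\cap Z$; that is the intended meaning of $\rho^{-1}(Z)$.

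The main obstacle is the identification of the abstract action \eqref{eq:actionofTXYZonNZX} with the explicit $\beta$, with the correct sign conventions. The kernel computation itself is insensitive to the sign, so the argument does not depend on getting it right.
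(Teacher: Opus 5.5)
Your proposal is correct and follows the paper's proof essentially step for step. For part (1), you rewrite $\mathscr H_G$ as $(p^\vee\times p^\vee)^*(\mathbb D_{P^\vee}\mathrm{Rad}(F)\boxtimes\mathrm{Rad}(F))[2n-2](n-1)$ and invoke Lemma~\ref{lem:invariant-structure-on-specialization}. For part (2), you observe fiberwise that $\mathrm{im}\,\beta_{(y_1,y_2)}=T_xH_1+T_xH_2$, so a nonzero $\xi$ lies in $\ker\beta^\vee$ iff $y_1=y_2=\rho(\xi)$. The paper likewise does not prove the identification of the abstract action with the explicit $\beta$, which you flag as the main obstacle; it takes this as part of the setup in Subsection~\ref{subsec:collisionnormalbundleactionnotations}.
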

	\begin{proof}
		We have isomorphisms
		\begin{equation}
			\begin{aligned}
				\mathbb{D}_QG&=\mathbb{D}_Q((p^\vee)^*\mathrm{Rad}(F)[n-1](n-1))=(p^\vee)^!\mathbb{D}_{P^\vee}\mathrm{Rad}(F)[1-n](1-n)\\
				&=(p^\vee)^*\mathbb{D}_{P^\vee}\mathrm{Rad}(F)[n-1],
			\end{aligned}
		\end{equation}
		\begin{equation}
			\mathscr{H}_G=\mathbb{D}_QG\boxtimes G=(p^\vee\times p^\vee)^*(\mathbb{D}_{P^\vee}\mathrm{Rad}(F)\boxtimes\mathrm{Rad}(F))[2n-2](n-1).
		\end{equation}
		Now (1) follows from Lemma \ref{lem:invariant-structure-on-specialization}. We prove (2).
		Let $(x,y_1,y_2,\xi)\in Q\times_PQ\times_PT^*P$ be a geometric point, where $x\in P$ is a geometric point and $y_1,y_2\in Q_x,\xi\in T^*_xP$ be geometric points lying over $x$.
		Consider the morphism
		\begin{equation}\label{eq:stalk-of-beta}
			\beta_{(y_1,y_2)}:W_{(y_1,y_2)}=T_{y_1}(Q/P^\vee)\oplus T_{y_2}(Q/P^\vee)\to T_xP.
		\end{equation}
		For every $y=[H]\in Q_x$, the morphism $Tp:T_y(Q/P^\vee)\to T_xP$ induces an isomorphism
		\begin{equation}
			Tp:T_y(Q/P^\vee)\xrightarrow{\simeq}T_xH.
		\end{equation}
		The image of \eqref{eq:stalk-of-beta} is therefore
		\begin{equation}
			\mathrm{im}\beta_{(y_1,y_2)}=T_xH_1+T_xH_2\subset T_xP.
		\end{equation}
		The following conditions are equivalent for nonzero $\xi$:
		\begin{align}
			\xi\in\ker(\beta^\vee)&\Longleftrightarrow \xi(\mathrm{im}\beta_{(y_1,y_2)})=0\\
			&\Longleftrightarrow \xi(T_xH_1)=0\quad  \text{and}\quad \xi(T_xH_2)=0\\
			&\Longleftrightarrow  y_1=y_2 \quad \text{and}\quad T_{x}H_1=T_{x}H_2=\ker(\xi)\\
			&\Longleftrightarrow y_1=y_2=\rho(\xi).
		\end{align}
		Therefore $\ker(\beta^\vee)=Q\times_PQ\times_P0_{T^*P}\cup\mathcal{L}$.
	\end{proof}
	
	\section{Inclusion of singular supports}\label{sec:inclusion-of-singluar-supports}
	
	\subsection{}
	Let $X$ be a smooth scheme over $k$. The dual of the tangent bundle $TX\to X$ is the cotangent bundle $T^*X\to X$.
	For $F_1,F_2\in D^b_{\rm ctf}(X,\Lambda)$, define the microlocal Hom complex by
	\begin{equation}
		\mu\mathcal{H}om_X(F_1,F_2)=\mathcal{F}_{\psi,TX/X}(\nu\mathcal{H}om_X(F_1,F_2))\qquad{\rm in}\qquad D^b_{\rm ctf}(T^*X,\Lambda).
	\end{equation}
	The microlocal singular support of $F\in D^b_{\rm ctf}(X,\Lambda)$ is
	\begin{equation}
		\mathrm{SS}_\mu(F)=\mathrm{supp}\,\mu\mathcal{H}om_X(F,F)\subseteq T^*X.
	\end{equation}
	\begin{theorem}\label{thm:singular-support-inclusion}
		Let $\Lambda$ be a finite local ring killed by a power of a prime $\ell\ne p$. Let $X$ be a smooth $k$-scheme, and let $F\in D^b_{\rm ctf}(X,\Lambda)$. Then
		\begin{equation}\label{eq:muSSsubseteqSS}
			\mathrm{SS}_\mu(F)\subseteq\mathrm{SS}(F).
		\end{equation}
	\end{theorem}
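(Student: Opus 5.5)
We follow the strategy outlined in the introduction. First we reduce to $X=P=\mathbb P^n$. The question is étale local on $X$, and both $\mathrm{SS}_\mu$ and $\mathrm{SS}$ are compatible with étale pullback; for $\mathrm{SS}_\mu$ this follows from smooth base change for nearby cycles \cite[(1.52)]{SaitoMicro}. So we may assume $X$ is affine. We then embed $X$ as a locally closed subscheme of some $P$, using a closed immersion $i:X\hookrightarrow U\subset P$ with $U$ open, and compare $\mathrm{SS}_\mu(i_*F)$ with $\mathrm{SS}_\mu(F)$.

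For the comparison we want $\mathrm{SS}_\mu(F)$ to be determined by $\mathrm{SS}_\mu(i_*F)$ through the usual correspondence $T^*U|_X\to T^*X$, and similarly for $\mathrm{SS}$ by Beilinson's functoriality. Concretely, $\mathscr H_{i_*F}=(i\times i)_*\mathscr H_F$, and proper base change for $\nu$ gives $\nu_{\Delta_U}\mathscr H_{i_*F}=(Ti)_*\nu_{\Delta_X}\mathscr H_F$. Fourier transform converts pushforward along the linear injection $Ti$ into pullback along the surjection $T^*U|_X\to T^*X$. Hence $\mathrm{SS}_\mu(i_*F)$ is the preimage of $\mathrm{SS}_\mu(F)$. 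On the other side, $\mathrm{SS}(i_*F)$ is the preimage of $\mathrm{SS}(F)$ under the same map. Extending by zero from $U$ to $P$ is harmless for both sides: they are local on the open $U$, and for $\mathrm{SS}_\mu$ only the part over $U$ matters. Thus it suffices to treat $X=P$.

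On $P$, set $G=p^{\vee*}\mathrm{Rad}(F)[n-1](n-1)$. By Lemma~\ref{lem:Radon-retract-of-nuHom}, and since Fourier transform commutes with proper pushforward along $h_p\times\mathrm{id}_{TP}$ (fibrewise over the linear map $Q\times_PQ\times_PTP\to TP$, which is a base change), $\mu\mathcal Hom_P(F,F)$ is a retract of
\[
\mathcal F_\psi R(h_p\times\mathrm{id})_*\nu_{i_p}\mathscr H_G=R(h_p\times\mathrm{id}_{T^*P})_*\mathcal F_{\psi}\nu_{i_p}\mathscr H_G.
\]
It therefore suffices to bound the support of $\mathcal F_\psi\nu_{i_p}\mathscr H_G$ on $Q\times_PQ\times_PT^*P$ and project. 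Let $E=E_p(G)\subset Q$ be the non-ULA locus, which equals $\mathbb P(\mathrm{SS}(F))$ by Lemma~\ref{lem:Radon-criterion}. Let $U=Q\setminus E$, and let $Z=(E\times_PQ)\cup(Q\times_PE)$, the complement of $U\times_PU$. We use the localization triangle $J_!J^*\to\mathrm{id}\to I_*I^*$ on $\nu_{i_p}\mathscr H_G$. Nearby cycles commute with restriction to opens of the special fibre, and these are preimages of opens in $Q\times_PQ$.

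On $U\times_PU$ we use ULA. The morphism $p\times p$ is ULA relative to $\mathscr H_G|_{U\times U}=\mathbb D G\boxtimes G$, since ULA is preserved by duality and external products. Lemma~\ref{lem:specialization-in-ULA-case} then gives $\nu_{i_p}\mathscr H_G|_{U\times_PU\times_PTP}\simeq$ pullback from $U\times_PU$. By Lemma~\ref{lem:Fouriersupportofrelativelyconstantsheaf}, together with compatibility of Fourier transform with $J_!$ (base change along the base), the Fourier transform of this piece is supported on the zero section.

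On $Z$, the restriction $I^*\nu_{i_p}\mathscr H_G$ is still $W|_Z$-invariant, because restricting the invariance isomorphism of Lemma~\ref{lem:calculatekerbetavee}(1) along the base preserves it. Proposition~\ref{prop:Fouriersupportofinvariantsheaf} then confines its Fourier transform to $\ker(\beta^\vee|_Z)$. By Lemma~\ref{lem:calculatekerbetavee}(2), the image of this in $T^*P$ lies in $\rho^{-1}(Z)\cup 0_{T^*P}$. Since $\mathcal L$ lives only over the diagonal $y_1=y_2$, and $Z\cap\Delta_p(Q)=E$, the relevant part is $\rho^{-1}(E)\cup0_{T^*P}$.

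Combining the two pieces via the triangle, $\mathrm{SS}_\mu(F)\subseteq \rho^{-1}(\mathbb P(\mathrm{SS}F))\cup 0_{T^*P}$. It remains to handle the zero section, where we need $\mathrm{SS}_\mu(F)\cap 0_{T^*P}\subseteq \supp F$. This holds because $\mathscr H_F$ is supported on $\supp F\times\supp F$, so its specialization lives over $\supp F$. Since $\mathrm{SS}(F)$ is conical and contains $0_{T^*P}(\supp F)$, this gives $\mathrm{SS}_\mu(F)\subseteq\mathrm{SS}(F)$.

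The main obstacle I expect is the ULA step. We must justify that $p\times p$ is ULA for $\mathbb DG\boxtimes G$ on $U\times U$, and that Lemma~\ref{lem:specialization-in-ULA-case} applies to the open $U\times U$ with $i_p$ restricted there; the relevant square is cartesian with $h_p$ smooth. We must also check that the compatibilities of $\nu$ and $\mathcal F_\psi$ with $J_!$ and $I_*$ hold, the latter using that $I$ is a closed immersion of bases.
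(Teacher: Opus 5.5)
Your proposal is correct and follows essentially the same route as the paper. On $P$ you use the same Radon retraction (Lemma~\ref{lem:Radon-retract-of-nuHom}), the same localization triangle, the ULA argument on $U\times_PU$ and the $W$-invariance argument on $Z$, and the same zero-section argument via $\mathrm{supp}\,F$. Your reduction to $P$, obtained from closed-immersion compatibility of $\nu$ and $\mathcal F_\psi$, is just an unwinding of the results the paper cites there: Saito's Lemmas 2.3.1 and 2.4.1 and Beilinson's Lemmas 2.3 and 2.5.
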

	The proof of Theorem \ref{thm:singular-support-inclusion} is given in  Subsection~\ref{subsec:proofofTheoremMain}.
	\subsection{}
	We first reduce the assertion to the case of projective spaces.
	\begin{definition}
		\begin{enumerate}
			\item Let $X$ be a smooth $k$-scheme. We say that a complex $F\in D^b_{\rm ctf}(X,\Lambda)$ satisfies the singular support inclusion if
			\begin{equation}
				\mathrm{SS}_\mu(F)\subset\mathrm{SS}(F).
			\end{equation}
			\item Let $P$ be a projective space over $k$, and retain the notation of Subsection~\ref{subsec:Radon-transform-notations}. Set
			\begin{equation}
				\rho:(T^*P)^\times=T^*P\setminus {T_P^*P}\to\mathbb{P}(T^*P)\simeq Q,
			\end{equation}
			which is the composite of the projectivization map and the Legendre transform.
			We say that a complex $F\in D^b_{\rm ctf}(P,\Lambda)$ satisfies the projective singular support inclusion if
			\begin{equation}
				\mathrm{SS}_\mu(F)^\times=\mathrm{SS}_\mu(F)\setminus {T_P^*P}\subset\rho^{-1}(E_p(p^{\vee\ast}\mathrm{Rad}(F)))\overset{\eqref{eq:Beilinson-PSS}}{=}\rho^{-1}(\mathbb P({\rm SS}(F))).
			\end{equation}
		\end{enumerate}
	\end{definition}
	
	\begin{lemma}\label{lem:reducetoprojectivespaces}
		Assume that, for every $n\geq 1$, every $(n+1)$-dimensional vector space $V$, and every $F\in D^b_{\rm ctf}(\mathbb P(V),\Lambda)$, the complex $F$ satisfies the projective singular support inclusion.
		
		Then, for every smooth scheme $X$ over $k$ and every $F\in D^b_{\rm ctf}(X,\Lambda)$, the complex $F$ satisfies the singular support inclusion. Equivalently, Theorem~\ref{thm:singular-support-inclusion} holds.
	\end{lemma}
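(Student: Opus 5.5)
We reduce from an arbitrary smooth $X$ to projective space in three steps: localize, embed into $\mathbb P^n$ and push forward, then compare microlocal and Beilinson singular supports under the closed immersion. A separate small step handles the zero section.

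\emph{Localization.} Both $\mathrm{SS}(F)$ and $\mathrm{SS}_\mu(F)$ are étale local on $X$. For $\mathrm{SS}$ this is standard. For $\mathrm{SS}_\mu$, Verdier specialization and the Fourier transform commute with étale (indeed smooth) base change via Lemma~\ref{lem:Saito-lemma-on-deformation-spaces}(4) and \cite[(1.52)]{SaitoMicro}. Since $\mathscr H_F$ restricts compatibly to $U\times U$ for $U\to X$ étale, and the diagonal of $U$ is open in $U\times_XU$, we get $\mu\mathcal Hom_X(F,F)|_{T^*U}\simeq\mu\mathcal Hom_U(F|_U,F|_U)$. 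Hence we may assume $X$ is affine, and choose a closed immersion $i:X\hookrightarrow P=\mathbb P(V)$ into an open subscheme of a projective space. After shrinking, take $X$ closed in an open $P^\circ\subset P$ and extend by zero. Localization again lets us pass from $P^\circ$ to $P$, so it suffices to treat $i:X\hookrightarrow P$ a closed immersion of smooth schemes and $i_*F$.

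\emph{Behaviour under a closed immersion.} We need two facts:
\[
\mathrm{SS}(i_*F)=i_\circ\mathrm{SS}(F), \qquad \mathrm{SS}_\mu(i_*F)=i_\circ\mathrm{SS}_\mu(F),
\]
where $i_\circ$ denotes the image under $T^*P\times_PX\to T^*X$, pulled back and pushed forward. The first is Beilinson's. For the second:
\begin{enumerate}
\item Since $\mathscr H_{i_*F}=(i\times i)_*\mathscr H_F$ by the Künneth argument of Lemma~\ref{lem:proper-pushforward-of-Hom-and-nuHom}, proper base change for nearby cycles applied to the closed immersion $\mathrm D_X(X\times X)\to \mathrm D_P(P\times P)$ (Lemma~\ref{lem:Saito-lemma-on-deformation-spaces}(3)) gives $\nu\mathcal Hom_P(i_*F,i_*F)=(Ti)_*\nu\mathcal Hom_X(F,F)$, with $Ti:TX\hookrightarrow TP$.
\item The Fourier transform of a pushforward along the linear injection $TX\to TP\times_PX$ is the pullback along the surjection $T^*P\times_PX\to T^*X$, by Lemma~\ref{lem:Fourier-transform-of-induced-action} with $G$ the delta sheaf, or by direct computation.
\end{enumerate}
Thus $\mu\mathcal Hom_P(i_*F,i_*F)$ is the pullback of $\mu\mathcal Hom_X(F,F)$ to $T^*P\times_PX$, pushed forward to $T^*P$, and its support is $i_\circ\mathrm{SS}_\mu(F)$. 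Since the map $T^*P\times_PX\to T^*X$ is surjective, $i_\circ A\subset i_\circ B$ implies $A\subset B$. So the inclusion for $i_*F$ implies it for $F$.

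\emph{Zero section.} On $P$, the hypothesis controls $\mathrm{SS}_\mu(i_*F)$ away from $T^*_PP$. On the zero section we use the following. The restriction of $\mu\mathcal Hom$ to $0_{T^*P}$ is $\pi_!$ of $\nu\mathcal Hom$, which lives on $\mathrm{supp}\,F=\pi(\mathrm{SS}(F)\cap T^*_PP)$, and $\mathrm{SS}(F)$ contains the zero section over $\mathrm{supp}F$. Hence $\mathrm{SS}_\mu(F)\cap T^*_PP\subset\mathrm{SS}(F)$, because $\nu_\Delta\mathscr H_F$ is supported over $\mathrm{supp}F$.

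\emph{Main obstacle.} The main obstacle is the precise base-change statement in the closed-immersion step: identifying $\mathrm D_X(X\times X)\to\mathrm D_P(P\times P)$ as a closed immersion compatible with nearby cycles. Here I would invoke Lemma~\ref{lem:Saito-lemma-on-deformation-spaces}(3) and \cite[(1.53)]{SaitoMicro}.
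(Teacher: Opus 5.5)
Your proposal is correct and follows the paper's proof. You reduce to a projective space using the compatibility of $\mathrm{SS}$ and $\mathrm{SS}_\mu$ with \'etale localization and closed-immersion pushforward; the paper simply cites Saito's Lemmas 2.3.1 and 2.4.1 and Beilinson's Lemmas 2.3 and 2.5 for this, while you sketch the $\mathrm{SS}_\mu$ part via proper base change for nearby cycles and the Fourier transform of a linear pushforward. You then get the part off the zero section from the hypothesis and conicity, and handle the zero section by showing that $\mu\mathcal{H}om$ lives over $\mathrm{supp}\,F$, exactly as the paper does. The only difference is cosmetic: the paper treats $\dim X\le 1$ separately by citing Saito, and your uniform embedding into $\mathbb{P}^n$ with $n\ge 1$ makes that case distinction unnecessary.
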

	\begin{proof}
		When ${\rm dim}X\leq 1$, Theorem \ref{thm:singular-support-inclusion} follows from \cite[Lemma 2.1.2.(3)]{SaitoMicro}.
		
		Combining \cite[Lemmas 2.3.1 and 2.4.1]{SaitoMicro} with \cite[Lemmas 2.3 and 2.5]{Beilinson}, we may assume that $X=P=\mathbb P(V)$ is a projective space.
		Let $F\in D^b_{\rm ctf}(P,\Lambda)$.
		Since $F$ satisfies the projective singular support inclusion, and both $\mathrm{SS}_\mu(F)$ and $\mathrm{SS}(F)$ are conical (cf. \cite[Lemma 2.1.2]{SaitoMicro} and \cite[1.3]{Beilinson}), we have
		\begin{equation}\label{eq:singularsupportinclusionnon-zeropart}
			\mathrm{SS}_\mu(F)^\times\subset\mathrm{SS}(F)^\times.
		\end{equation}
		By \cite[Lemma 2.3]{Beilinson}, the base of $\mathrm{SS}(F)$ is $\mathrm{supp}(F)$. If $F$ vanishes on an open neighborhood $U$ of $x\in P$, then $\mathscr{H}_F$ vanishes on $U\times U$. Consequently, $\nu\mathcal{H}om_X(F,F)$ vanishes on $TU$ and $\mu\mathcal{H}om_X(F,F)$ vanishes on $T^*U$.
		It follows that
		\begin{equation}\label{eq:singularsupportinclusionzerosectionpart}
			\mathrm{SS}_\mu(F)\cap T_P^\ast P\subset\mathrm{supp}(F)=\mathrm{SS}(F)\cap T_P^\ast P.
		\end{equation}
		Now the result follows from \eqref{eq:singularsupportinclusionnon-zeropart} and \eqref{eq:singularsupportinclusionzerosectionpart}.
	\end{proof}
	
	\subsection{}
	For the remainder of this section, let $P$ be a projective space over $k$ and let $F\in D^b_{\rm ctf}(P,\Lambda)$.
	Retain the notation of Subsection~\ref{subsec:Radon-transform-notations}, and set
	\begin{equation}
		G=p^{\vee\ast}\mathrm{Rad}(F)[n-1](n-1)\in D^b_{\rm ctf}(Q,\Lambda).
	\end{equation}
	Universal local acyclicity is unaffected by cohomological shifts and Tate twists. Therefore,
	\begin{equation}
		E_p(p^{\vee\ast}\mathrm{Rad}(F))=E_p(G)
	\end{equation}
	is the non-ULA locus of $G$ with respect to $p:Q\to P$.
	
	\begin{lemma}\label{lem:nuH_G-is-relatively-constant-in-ULA-locus}
		Recall the notation $i_p: Q\times_PQ\hookrightarrow Q\times Q$.
		Let $U\subset Q$ be an open subscheme. If $p|_U:U\to P$ is  universally locally acyclic relatively to $G|_U$, then the natural morphism
		\begin{equation}\label{eq:specializationHGisisomorphism}
			i_p^*\mathscr{H}_G\boxtimes\Lambda_{TP}\to\nu_{i_p}\mathscr{H}_G\quad\text{in }D^b_{\rm ctf}(Q\times_PQ\times_PTP,\Lambda)
		\end{equation}
		restricts to an isomorphism on $U\times_PU\times_PTP$.
	\end{lemma}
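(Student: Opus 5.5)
The plan is to reduce to Lemma~\ref{lem:specialization-in-ULA-case}, applied to the cartesian square
\begin{equation*}
	\begin{tikzcd}
		U\times_PU\arrow[r,hook,"i_p"]\arrow[d,"h_p"]&U\times U\arrow[d,"p\times p"]\\
		P\arrow[r,hook,"\Delta_P"]&P\times P,
	\end{tikzcd}
\end{equation*}
which is the restriction of the right square in \eqref{eq:prphpp}. All schemes in it are smooth over $k$. The maps $p\times p$ and $h_p$ are smooth, since $p$ is a $\mathbb P^{n-1}$-bundle and $U\times_PU\to P$ is a fibre product of smooth maps.

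The specialization is compatible with restriction to opens. Indeed, $\mathrm D_{U\times_PU}(U\times U)$ is an open subscheme of $\mathrm D_{Q\times_PQ}(Q\times Q)$, and nearby cycles commute with open restriction. Hence the restriction of $\nu_{i_p}\mathscr H_G$ to $U\times_PU\times_PTP$ is $\nu_{U\times_PU/U\times U}(\mathscr H_G|_{U\times U})$. The same holds for the source $i_p^*\mathscr H_G\boxtimes\Lambda_{TP}$ of \eqref{eq:specializationHGisisomorphism}. The natural map \eqref{eq:specializationHGisisomorphism} should be the canonical map from the restriction to the special fibre into the nearby cycles (the specialization map $F|_{T_ZX}\to R\Psi$). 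In the proof of Lemma~\ref{lem:specialization-in-ULA-case}, this is exactly the map that ULA makes an isomorphism. So it suffices to check the ULA hypothesis: that $p\times p:U\times U\to P\times P$ is universally locally acyclic relatively to $\mathscr H_G|_{U\times U}$.

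For this, write $\mathscr H_G|_{U\times U}=\mathbb D_UG|_U\boxtimes G|_U$, using the computation in the proof of Lemma~\ref{lem:proper-pushforward-of-Hom-and-nuHom}. Next we need that $p|_U$ is ULA relatively to $\mathbb D_U(G|_U)$. This holds because ULA for a smooth morphism is preserved by Verdier duality (Braverman–Gaitsgory / Lu–Zheng, duality of ULA objects). Alternatively, and more concretely here, $\mathbb D_QG=(p^\vee)^*\mathbb D_{P^\vee}\mathrm{Rad}(F)[n-1]$ by the proof of Lemma~\ref{lem:calculatekerbetavee}. Since $p$ is smooth, local acyclicity relative to the dual follows from the duality statement.

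Finally, external products of ULA complexes are ULA for the product morphism. This is the Künneth property of ULA, e.g.\ \cite[Th.~finitude]{SGA4.5} or Lu–Zheng's characterization of ULA as dualizability in $\mathrm{CohCorr}$, which is stable under $\boxtimes$. Hence $p\times p|_{U\times U}$ is ULA relatively to $\mathscr H_G|_{U\times U}$, and Lemma~\ref{lem:specialization-in-ULA-case} gives the claim.

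The main obstacle is this ULA step: duality plus box product. I would invoke Lu–Zheng, where a complex is ULA over the base iff it is dualizable in the category of cohomological correspondences over the base. Duals and tensor products of dualizable objects are dualizable, and the dual there agrees with $\mathbb D$ up to shift and twist because $p$ is smooth. A secondary point is to confirm that the map in Lemma~\ref{lem:specialization-in-ULA-case} coincides with \eqref{eq:specializationHGisisomorphism}.
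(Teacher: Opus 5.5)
Your proposal is correct and is essentially the paper's proof: the paper also restricts to $U\times U$, invokes Lu--Zheng (their Theorem 2.16) to get that $p|_U\times p|_U$ is universally locally acyclic relatively to $\mathscr{H}_G|_{U\times U}=\mathbb{D}_U(G|_U)\boxtimes G|_U$, and then applies Lemma~\ref{lem:specialization-in-ULA-case} to the restricted cartesian square. Two small precisions: $\mathbb{D}_U$ agrees with the relative dual $\mathbb{D}_{U/P}$ up to shift and twist because the base $P$ is smooth (not because $p$ is), and getting the external product over $P\times P$ from dualizability requires base change along the two projections $P\times P\to P$ followed by $\otimes_{P\times P}$ in $\mathrm{CohCorr}_{P\times P}$, since tensor products in $\mathrm{CohCorr}_P$ alone only give fibre products over $P$.
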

	\begin{proof}
		By \cite[Theorem 2.16]{LuZheng}, the morphism $p|_U\times p|_U:U\times U\to P\times P$ is universally locally acyclic relatively to the complex
		\begin{equation}
			\mathscr{H}_G|_{U\times U}=\mathbb{D}_U(G|_U)\boxtimes(G|_U)\in D^b_{\rm ctf}(U\times U,\Lambda).
		\end{equation}
		Applying Lemma \ref{lem:specialization-in-ULA-case} to the cartesian diagram
		\begin{equation}
			\begin{tikzcd}[column sep=40]
				U\times_PU\arrow[r,hook,"i_p|_{U\times_PU}"]\arrow[d,"h_p|_{U\times_PU}"swap]&U\times U\arrow[d,"p|_U\times p|_U"]\\
				P\arrow[r,hook,"\Delta_P"]&P\times P,
			\end{tikzcd}
		\end{equation}
		we obtain natural isomorphisms
		\begin{equation}
			\begin{aligned}
				&\nu_{i_p}\mathscr{H}_G|_{U\times_PU\times_PTP}=\nu_{i_p|_{U\times_PU}}(\mathscr{H}_G|_{U\times U})\\
				=&(\mathscr{H}_G|_{U\times U})|_{U\times_PU\times_PTP}=(i_p^*\mathscr{H}_G\boxtimes\Lambda_{TP})|_{U\times_PU\times_PTP}.
			\end{aligned}
		\end{equation}
	\end{proof}
	\subsection{}
	Recall the notation $h_p: Q\times_PQ\to P$ in \eqref{eq:prphpp} and that $h_p\times\mathrm{id}_{TP}$ in \eqref{eq:hpidtpDef} denotes the projection
	\begin{equation}
		h_p\times\mathrm{id}_{TP}={\rm pr}_{TP}:Q\times_PQ\times_PTP\to TP.
	\end{equation}
	\begin{lemma}\label{lem:vanishofFouriertransform}
		Let $U\subset Q$ be an open subscheme. Suppose that $p|_U$ is universally locally acyclic relatively to $G|_U$. Then the complex
		\begin{equation}
			\mathcal{F}_{\psi,TP/P}(R(h_p\times\mathrm{id}_{TP})_*\nu_{i_p}\mathscr{H}_G)
		\end{equation}
		vanishes on $\rho^{-1}(U)$.
	\end{lemma}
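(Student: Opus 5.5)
The approach is to commute the Fourier transform on $TP$ past the proper pushforward along $h_p\times\mathrm{id}_{TP}$. This turns the question into a support estimate for the relative Fourier transform on the vector bundle $Q\times_PQ\times_PTP$ over the base $Q\times_PQ$. We then split that base into the ULA part $U\times_PU$ and its closed complement.

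\emph{Step 1: reduce to a relative Fourier transform.} The morphism $h_p:Q\times_PQ\to P$ is proper, so $R(h_p\times\mathrm{id}_{TP})_*=R(h_p\times\mathrm{id}_{TP})_!$. Write $\mathcal{F}'=\mathcal{F}_{\psi,Q\times_PQ\times_PTP/Q\times_PQ}$ for the Fourier transform on this bundle over $Q\times_PQ$. Proper base change and the projection formula, applied to the defining formula \eqref{eq:FourierTransformUnNormalized}, give
\[
\mathcal{F}_{\psi,TP/P}\bigl(R(h_p\times\mathrm{id}_{TP})_*K\bigr)\simeq R(h_p\times\mathrm{id}_{T^*P})_!\,\mathcal{F}'(K)
\]
for $K\in D^b_{\rm ctf}(Q\times_PQ\times_PTP,\Lambda)$. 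It therefore suffices to show that $\mathcal{F}'(\nu_{i_p}\mathscr{H}_G)$ vanishes on $(h_p\times\mathrm{id}_{T^*P})^{-1}(\rho^{-1}(U))$.

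\emph{Step 2: excision.} Let $Z$ be the closed complement of $U\times_PU$ in $Q\times_PQ$, and let $J_{TP}$, $I_{TP}$ be the corresponding open and closed immersions of the bundle. We use the distinguished triangle
\[
J_{TP!}J_{TP}^*\nu_{i_p}\mathscr{H}_G\to\nu_{i_p}\mathscr{H}_G\to I_{TP*}I_{TP}^*\nu_{i_p}\mathscr{H}_G.
\]
The relative Fourier transform $\mathcal{F}'$ commutes with extension by zero and restriction along the base. Hence it suffices to treat the two outer terms separately.

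\emph{Step 3: the open part.} By Lemma~\ref{lem:nuH_G-is-relatively-constant-in-ULA-locus}, $J_{TP}^*\nu_{i_p}\mathscr{H}_G$ is the pullback of $\mathscr{H}_G|_{U\times_PU}$ from the base. By Lemma~\ref{lem:Fouriersupportofrelativelyconstantsheaf}, its relative Fourier transform is supported on the zero section. But $\rho^{-1}(U)$ avoids the zero section of $T^*P$, so this term contributes nothing over $\rho^{-1}(U)$.

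\emph{Step 4: the closed part.} By Lemma~\ref{lem:calculatekerbetavee}(1), $\nu_{i_p}\mathscr{H}_G$ is $W$-invariant. Restricting the invariance isomorphism to $Z$ shows that $I_{TP}^*\nu_{i_p}\mathscr{H}_G$ is $W|_Z$-invariant. Proposition~\ref{prop:Fouriersupportofinvariantsheaf} then places the support of its Fourier transform inside $\ker(\beta^\vee|_Z)$. By Lemma~\ref{lem:calculatekerbetavee}(2), this kernel projects into $\rho^{-1}(Z)\cup 0_{T^*P}$, where $\rho^{-1}(Z)$ denotes the nonzero covectors $\xi$ with $(\rho(\xi),\rho(\xi))\in Z$. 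For $\xi$ with $\rho(\xi)\in U$, the diagonal point $(\rho(\xi),\rho(\xi))$ lies in $U\times_PU$, hence not in $Z$. So this term also vanishes over $\rho^{-1}(U)$.

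\emph{Main obstacle.} The delicate step is Step 1. One must check carefully that the Fourier transform on $TP$ of the pushforward agrees with the pushforward of the relative Fourier transform over $Q\times_PQ$, and that the latter is compatible with excision on the base. Both reduce to proper base change and the projection formula for $!$-pushforwards, so I expect them to be routine but bookkeeping-heavy.
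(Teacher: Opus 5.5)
Your proposal is correct and follows the paper's proof. It uses the same localization triangle for $U\times_PU\subset Q\times_PQ$. For the open term it uses relative constancy (Lemma~\ref{lem:nuH_G-is-relatively-constant-in-ULA-locus}) together with Lemma~\ref{lem:Fouriersupportofrelativelyconstantsheaf}, and for the closed term it uses $W$-invariance together with Proposition~\ref{prop:Fouriersupportofinvariantsheaf} and Lemma~\ref{lem:calculatekerbetavee}. The only difference is the order of steps: you commute the Fourier transform past the proper pushforward once over all of $Q\times_PQ$ before splitting, while the paper splits first and applies the base change \cite[(1.39)]{SaitoMicro} to each piece separately.
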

	\begin{proof}
		Let $Z$ be the closed complement of $U\times_PU$ in $Q\times_PQ$. Write
		\begin{align}
			U\times_P U\xhookrightarrow[]{\quad J\quad} Q\times_PQ\xhookleftarrow[]{\quad I\quad}Z,
		\end{align}
		and denote by
		\begin{align}
			&J_{TP}=J\times\mathrm{id}_{TP}:U\times_PU\times_PTP\hookrightarrow Q\times_PQ\times_PTP,\\
			&I_{TP}=I\times\mathrm{id}_{TP}:Z\times_PTP\hookrightarrow Q\times_PQ\times_PTP
		\end{align}
		their base changes.
		There is a localization triangle
		\begin{equation}\label{eq:localizationsequence}
			J_{TP!}J_{TP}^*\nu_{i_p}\mathscr{H}_G\to\nu_{i_p}\mathscr{H}_G\to I_{TP\ast}I_{TP}^*\nu_{i_p}\mathscr{H}_G\xrightarrow{+1}.
		\end{equation}
		We first consider the open term. By Lemma~\ref{lem:nuH_G-is-relatively-constant-in-ULA-locus}, the complex
		\begin{equation}
			J_{TP}^*\nu_{i_p}\mathscr{H}_G=(i_p^*\mathscr{H}_G)|_{U\times_PU}\boxtimes\Lambda_{TP}
		\end{equation}
		is relatively constant along the bundle $U\times_PU\times_PTP\to U\times_PU$. Hence Lemma~\ref{lem:Fouriersupportofrelativelyconstantsheaf} gives
		\begin{equation}\label{eq:supportofFourieropenpart}
			\mathrm{supp}\,\mathcal{F}_{\psi,U\times_PU\times_PTP/U\times_PU}(J_{TP}^*\nu_{i_p}\mathscr{H}_G)\subset U\times_PU\times_P0_{T^*P}.
		\end{equation}
		The following diagrams are cartesian:
		\begin{equation}
			\begin{tikzcd}
				U\times_PU\times_PTP\arrow[d,"{h_p J\times\mathrm{id}_{TP}}"]\arrow[r]&U\times_PU\arrow[d,"h_p J"]\\
				TP\arrow[r]&P,
			\end{tikzcd}
			\quad\begin{tikzcd}
				U\times_PU\times_PT^*P\arrow[d,"{h_p J\times\mathrm{id}_{T^*P}}"]\arrow[r]&U\times_PU\arrow[d,"h_p J"]\\
				T^*P\arrow[r]&P.
			\end{tikzcd}
		\end{equation}
		By \eqref{eq:supportofFourieropenpart} and \cite[(1.39)]{SaitoMicro}, the support of the complex
		\begin{equation}
			\begin{aligned}
				&\mathcal{F}_{\psi,TP/P}(R(h_p\times\mathrm{id}_{TP})_*J_{TP!}J^*_{TP}\nu_{i_p}\mathscr{H}_G)\\
				=&\mathcal{F}_{\psi,TP/P}(R(h_p J\times\mathrm{id}_{TP})_!J^*_{TP}\nu_{i_p}\mathscr{H}_G)\\
				=&R(h_p J\times\mathrm{id}_{T^*P})_!\mathcal{F}_{\psi,U\times_PU\times_PTP/U\times_PU}(J^*_{TP}\nu_{i_p}\mathscr{H}_G)
			\end{aligned}
		\end{equation}
		is contained in the zero section $0_{T^*P}=T_P^\ast P\subset T^*P$.
		
		We next consider the closed term.
		There are canonical isomorphisms
		\begin{equation}
			\begin{aligned}
				\mathbb{D}_QG&=\mathbb{D}_Q((p^\vee)^*\mathrm{Rad}(F)[n-1](n-1))=(p^\vee)^!\mathbb{D}_{P^\vee}\mathrm{Rad}(F)[1-n](1-n)\\
				&=(p^\vee)^*\mathbb{D}_{P^\vee}\mathrm{Rad}(F)[n-1],
			\end{aligned}
		\end{equation}
		\begin{equation}
			\mathscr{H}_G=\mathbb{D}_QG\boxtimes G=(p^\vee\times p^\vee)^*(\mathbb{D}_{P^\vee}\mathrm{Rad}(F)\boxtimes\mathrm{Rad}(F))[2n-2](n-1).
		\end{equation}
		By Lemma~\ref{lem:calculatekerbetavee}, the complex $\nu_{i_p}\mathscr{H}_G$ is $W$-invariant under the action of $W$ described in Subsection~\ref{subsec:collisionnormalbundleactionnotations}.
		This $W$-invariant structure restricts to a $W|_Z$-invariant structure on $I_{TP}^*\nu_{i_p}\mathscr{H}_G$.
		By Proposition~\ref{prop:Fouriersupportofinvariantsheaf} and Lemma~\ref{lem:calculatekerbetavee},
		\begin{equation}\label{eq:supportofFourierclosedpart}
			\mathrm{supp}\,\mathcal{F}_{\psi,Z\times_PTP/Z}(I_{TP}^*\nu_{i_p}\mathscr{H}_G)				\subset\ker(\beta^\vee|_Z)
		\end{equation}
		The following diagrams are cartesian:
		\begin{equation}
			\begin{tikzcd}
				Z\times_PTP\arrow[d,"{h_p I\times\mathrm{id}_{TP}}"]\arrow[r]&Z\arrow[d,"h_p I"]\\
				TP\arrow[r]&P,
			\end{tikzcd}
			\quad\begin{tikzcd}
				Z\times_PT^*P\arrow[d,"{h_p I\times\mathrm{id}_{T^*P}}"]\arrow[r]&Z\arrow[d,"h_p I"]\\
				T^*P\arrow[r]&P.
			\end{tikzcd}
		\end{equation}
		By \eqref{eq:supportofFourierclosedpart} and \cite[(1.39)]{SaitoMicro}, the support of the complex
		\begin{equation}
			\begin{aligned}
				&\mathcal{F}_{\psi,TP/P}(R(h_p\times\mathrm{id}_{TP})_*I_{TP\ast}I^*_{TP}\nu_{i_p}\mathscr{H}_G)\\
				=&\mathcal{F}_{\psi,TP/P}(R(h_p I\times\mathrm{id}_{TP})_*I^*_{TP}\nu_{i_p}\mathscr{H}_G)\\
				=&R(h_p I\times\mathrm{id}_{T^*P})_*\mathcal{F}_{\psi,Z\times_PTP/Z}(I^*_{TP}\nu_{i_p}\mathscr{H}_G)
			\end{aligned}
		\end{equation}
		is contained in
		\begin{equation}
			(h_pI\times\mathrm{id}_{T^*P})(\ker(\beta^\vee|_Z))=\rho^{-1}(Z)\cup0_{T^*P}(h_p(Z)),
		\end{equation}
		which is disjoint from $\rho^{-1}(U)$.
	\end{proof}
	\subsection{Proof of Theorem~\ref{thm:singular-support-inclusion}}\label{subsec:proofofTheoremMain}
	By Lemma~\ref{lem:reducetoprojectivespaces}, it suffices to prove that, for every projective space $P$ and every $F\in D^b_{\rm ctf}(P,\Lambda)$, the complex $F$ satisfies the projective singular support inclusion; that is,
	\begin{equation}
		\mathrm{SS}_\mu(F)^\times=\mathrm{SS}_\mu(F)\setminus {T_P^*P}\subset\rho^{-1}(E_p(p^{\vee\ast}\mathrm{Rad}(F))).
	\end{equation}
	Set $G=p^{\vee\ast}\mathrm{Rad}(F)[n-1](n-1)$.
	Let $U\subseteq Q$ be an open subscheme disjoint from $E_p(p^{\vee\ast}\mathrm{Rad}(F))$. Then $p|_U$ is universally locally acyclic relatively to $G|_U$.
	By Lemma~\ref{lem:vanishofFouriertransform}, the complex
	\begin{equation}\label{eq:asheafwhoseretractismuHom}
		\mathcal{F}_{\psi,TP/P}(R(h_p\times\mathrm{id}_{TP})_*\nu_{i_p}\mathscr{H}_G)
	\end{equation}
	vanishes on $\rho^{-1}(U)$.
	By Lemma~\ref{lem:Radon-retract-of-nuHom}, the complex
	\begin{equation}
		\mu\mathcal{H}om_P(F,F)=\mathcal{F}_{\psi,TP/P}\nu_{\Delta_P}\mathscr{H}_F
	\end{equation}
	is a retract of \eqref{eq:asheafwhoseretractismuHom}, and therefore also vanishes on $\rho^{-1}(U)$.
	Therefore $\mathrm{SS}_\mu(F)=\mathrm{supp}\,\mu\mathcal{H}om(F,F)$ is disjoint from $\rho^{-1}(U)$.

	\section{Microlocalization along closed subschemes}\label{sec:microlocalization-along-closed-subschemes}
	\subsection{}
	Let $i:Z\hookrightarrow X$ be a closed immersion of smooth schemes. Recall the Verdier specialization
	\begin{equation}
		\nu_{Z/X}:D^b_{\rm ctf}(X,\Lambda)\to D^b_{\rm ctf}(T_ZX,\Lambda).
	\end{equation}
	As before, we suppress the $\eta_0$ factor.
	For $F\in D^b_{\mathrm{ctf}}(X,\Lambda)$, the microlocalization of $F$ along $Z\hookrightarrow X$ is the Fourier-Deligne transform
	\begin{equation}
		\mu_{Z/X}(F)=\mathcal{F}_{\psi,T_ZX/Z}(\nu_{Z/X}(F))\in D^b_{\rm ctf}(T^*_ZX,\Lambda).
	\end{equation}
	
	\begin{theorem}\label{thm:support-of-microlocalization-to-closed-subscheme}
		Let $i:Z\hookrightarrow X$ be a closed immersion of smooth schemes and let $F\in D^b_{\rm ctf}(X,\Lambda)$. Then we have
		\begin{equation}
			\mathrm{supp}\ \mu_{Z/X}(F)\subseteq\mathrm{SS}_\mu(F)\cap T^*_ZX\subseteq\mathrm{SS}(F)\cap T^*_ZX.
		\end{equation}
	\end{theorem}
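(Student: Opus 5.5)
The second inclusion follows from Theorem~\ref{thm:singular-support-inclusion}, so the work is the first inclusion. I will follow the strategy announced in the introduction: exhibit $\mu_{Z/X}(F)$ as a retract of $(q^\vee)^*\mu\mathcal{H}om_X(F,F)\otimes\mu_{Z/X}(F)$. Here $q:TX\times_XZ\to T_ZX$ is the quotient map and $q^\vee:T^*_ZX\hookrightarrow T^*X\times_XZ$ is its dual. The support of such a tensor product is contained in $(q^\vee)^{-1}(\mathrm{SS}_\mu(F))=\mathrm{SS}_\mu(F)\cap T^*_ZX$, which gives the claim.

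\textbf{Step 1: an action on the deformation space.} Apply the construction of Subsection~\ref{subsec:deformation-space-action-notations} with $Y=\operatorname{Spec}k$. Then $X\times_YX=X\times X$ and $\Delta_f=\Delta_X$. This gives the action
\[
\tilde\alpha:\mathrm{D}_X(X\times X)\times_{X\times\mathbb A^1}\mathrm{D}_Z(X)\simeq \mathrm{D}_Z(X\times X)\xrightarrow{\mathrm{D}_{\mathrm{id}}(\mathrm{pr}_2)}\mathrm{D}_Z(X),
\]
together with the projection $\mathrm{D}_{\mathrm{id}}(\mathrm{pr}_1)$. On the special fibre, $\tilde\alpha$ is the map $\alpha:TX|_Z\times_ZT_ZX\to T_ZX$, $(v,w)\mapsto q(v)+w$.

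\textbf{Step 2: the specialized evaluation maps.} On $X\times X$ there is the evaluation (composition) map $\mathrm{pr}_1^*F\otimes\mathscr H_F\to\mathrm{pr}_2^!F$, with $\mathrm{pr}_2^!F\simeq\mathrm{pr}_2^*F[2d](d)$ locally. There is also the coevaluation $\mathrm{pr}_1^*F\to \mathrm{pr}_2^*F\otimes$(dual of $\mathscr H_F$ twisted), or more simply the unit $\Lambda\to\Delta^!\mathscr H_F$ coming from $\mathrm{id}_F$. Applying nearby cycles on $\mathrm{D}_Z(X\times X)$ and the Künneth-type compatibility $\nu_{Z/X\times X}(\mathrm{pr}_1^*F\otimes\mathscr H_F)\leftarrow \nu_{\Delta}\mathscr H_F\boxtimes_{Z}\nu_{Z/X}F$, which comes from Lemma~\ref{lem:pullback-of-deformation-spaces-in-triangle} and the Künneth formula for nearby cycles, I will produce a morphism
\[
R\alpha_!(\mathrm{pr}_1^*\nu\mathcal{H}om_X(F,F)|_Z\otimes\mathrm{pr}_2^*\nu_{Z/X}F)\to \nu_{Z/X}F
\]
up to shift and twist, using that $\mathrm{pr}_2^!$ is identified with a shifted pullback along the smooth map $\tilde\alpha$. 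I will also produce a map $\nu_{Z/X}F\to$ (the same source) by pushing the unit $\mathrm{id}_F\in H^0(X,\Delta^!\mathscr H_F)$ through specialization: restricted to the zero section of $TX$, it gives a class $\Lambda_Z\to \nu\mathcal Hom(F,F)|_{0}$ up to twist. Both maps specialize the generic-fibre maps over $\mathbb G_m$, and there the composite is $\mathrm{id}_F$. This is just associativity/unitality of composition of correspondences, as in Lemma~\ref{lem:Radon-retract-of-nuHom}. Hence the composite is the identity after specialization.

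\textbf{Step 3: Fourier transform.} Applying $\mathcal F_{\psi,T_ZX/Z}$ and Lemma~\ref{lem:Fourier-transform-of-induced-action} with $V=TX|_Z$, $W=T_ZX$ and the linear map $q$ turns the middle object into $q^{\vee*}\mu\mathcal Hom_X(F,F)\otimes\mu_{Z/X}(F)$. Compatibility with restriction to $Z$ follows from base change for $\mathcal F_\psi$. This completes the retraction.

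\textbf{Main obstacle.} The main obstacle is Step 2: constructing the specialized composition map on $\mathrm{D}_Z(X\times X)$, via Künneth for nearby cycles over the smooth maps in \eqref{eq:actionofDXDeltaonD}, and verifying that the composite is the identity. The verification requires checking that the unit map specializes compatibly, since nearby cycles commute with the specialization of identities. The first thing I would try is to define everything over $\mathrm{D}_Z(X\times X)\to\mathbb A^1$ at the sheaf level and then apply $R\Psi$ functorially, so that the identity relation holds before taking $R\Psi$ and therefore survives it.
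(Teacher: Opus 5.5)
Your proposal is correct and follows the paper's route. The paper likewise exhibits $\nu_{Z/X}(F)$ as a retract of $R\alpha_!(\mathrm{pr}_{TX}^*\nu\mathcal{H}om_X(F,F)\otimes\mathrm{pr}_{T_ZX}^*\nu_{Z/X}(F))$ by specializing the coevaluation/evaluation composite over $\mathrm{D}_Z(X\times X)\simeq\mathrm{D}_X(X\times X)\times_{X\times\mathbb{A}^1}\mathrm{D}_Z(X)$; it then applies Lemma~\ref{lem:Fourier-transform-of-induced-action} to $q$ and finishes with exactly your support argument. The coherence step you flag is handled there by applying the lax symmetric monoidal nearby-cycles functor $\Psi^\otimes\colon\mathrm{CohCorr}^\otimes_{S,\eta}\to\mathrm{CohCorr}^\otimes_{S,s}$ (with $S=(\mathbb{A}^1)^{\mathrm{sh}}_0$) to a commutative diagram of cohomological correspondences, so the identity composite survives, and noting that the specialized evaluation factors through the cocartesian edge to $R\alpha_!(\cdots)$; also, no net shift or twist should appear, since $\tilde\alpha^!=\tilde\alpha^*[2d](d)$ absorbs the $\mathrm{pr}_2^!$ built into $\mathscr H_F$ and the unit specializes to $0_!\Lambda\to\nu\mathcal{H}om_X(F,F)$ on the nose.
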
	
	
	The proof of Theorem \ref{thm:support-of-microlocalization-to-closed-subscheme} is given in Subsection \ref{subsec:proofofthm2-support}.
	\subsection{}\label{subsec:deformation-space-action-with-unit-notations}
	Let $i:Z\hookrightarrow X$ be a closed immersion of smooth schemes. Let $T_ZX$ be the normal bundle of $i:Z\hookrightarrow X$. Then $T^\ast_ZX=(T_ZX)^\vee\subseteq T^\ast X|_Z$.
	Applying the construction of Subsection~\ref{subsec:deformation-space-action-notations} to $i:Z\hookrightarrow X$ and $X\to\mathrm{Spec}(k)$, we obtain
	\begin{align}
		&\tilde{\alpha}:\mathrm{D}_X(X\times X)\times_{X\times\mathbb{A}^1}\mathrm{D}_Z(X)\to\mathrm{D}_Z(X),\\
		&\alpha:TX|_Z\times_ZT_ZX=TX\times_XT_ZX\to T_ZX.
	\end{align}
	We denote by $q:TX|_Z\to T_ZX$ the quotient bundle morphism.
	Unwinding the definitions, we see that for a geometric point $z\in Z$, the action
	\begin{equation}
		\alpha_z:T_zX\times(T_ZX)_z\to(T_ZX)_z
	\end{equation}
	is given as follows:
	\begin{equation}
		\alpha_z=\mathrm{add}\circ(q_z\times\mathrm{id}):T_zX\times T_zX/T_zZ\to T_zX/T_zZ.
	\end{equation}
	We have a commutative diagram
	\begin{equation}\label{eq:deformation-space-action-with-unit}
		\begin{tikzcd}[row sep=50,column sep=50]
			\mathrm{D}_Z(X)\arrow[r,"\mathrm{D}_{\mathrm{id}}(\Delta)"]\arrow[d,"\mathrm{D}_{i}(\Delta)"swap]\arrow[dr,near start,"\mathrm{id}"]&\mathrm{D}_Z(X\times X)\arrow[dl,"{\mathrm{D}_i(\mathrm{id})}",crossing over,near end]\arrow[d,"{\mathrm{D}_{\mathrm{id}}(\mathrm{pr}_1)}"]\arrow[r,"{\mathrm{D}_{\mathrm{id}}(\mathrm{pr}_2)}"]&\mathrm{D}_Z(X)\\
			\mathrm{D}_X(X\times X)&\mathrm{D}_Z(X).
		\end{tikzcd}
	\end{equation}
	On the generic fiber, \eqref{eq:deformation-space-action-with-unit} is
	\begin{equation}\label{eq:deformation-space-action-with-unit-generic-fiber}
		\begin{tikzcd}[row sep=50,column sep=50]
			X\arrow[r,"\Delta"]\arrow[d,"\Delta"swap]\arrow[dr,near start,"\mathrm{id}"]&X\times X\arrow[dl,"\mathrm{id}",crossing over,near end]\arrow[d,"\mathrm{pr}_1"]\arrow[r,"\mathrm{pr}_2"]&X\\
			X\times X&X,
		\end{tikzcd}
	\end{equation}
	and on the special fiber, \eqref{eq:deformation-space-action-with-unit} is
	\begin{equation}\label{eq:deformation-space-action-with-unit-special-fiber-origin}
		\begin{tikzcd}[row sep=50,column sep=50]
			T_ZX\arrow[r,"T_{\mathrm{id}}(\Delta)"]\arrow[d,"T_{i}(\Delta)"swap]\arrow[dr,near start,"\mathrm{id}"]&T_Z(X\times X)\arrow[dl,"{T_i(\mathrm{id})}",crossing over,near end]\arrow[d,"{T_{\mathrm{id}}(\mathrm{pr}_1)}"]\arrow[r,"{T_{\mathrm{id}}(\mathrm{pr}_2)}"]&T_ZX\\
			T_X(X\times X)&T_ZX.
		\end{tikzcd}
	\end{equation}
	By \eqref{eq:dzxyxddzx}, the diagram \eqref{eq:deformation-space-action-with-unit-special-fiber-origin} is isomorphic to the following diagram
	\begin{equation}\label{eq:deformation-space-action-with-unit-special-fiber}
		\begin{tikzcd}[row sep=50,column sep=50]
			T_ZX\arrow[r,"{(0_{TX},\mathrm{id})}"]\arrow[d,"0_{TX}"swap]\arrow[dr,near start,"\mathrm{id}"]&TX\times_XT_ZX\arrow[dl,"\mathrm{pr}_{TX}",crossing over,near end]\arrow[d,"\mathrm{pr}_{T_ZX}"]\arrow[r,"\alpha"]&T_ZX\\
			TX&T_ZX.
		\end{tikzcd}
	\end{equation}
	Note that $\mathrm{pr}_{TX}$ factors as follows
	\begin{equation}
		\begin{tikzcd}
			TX|_Z\times_ZT_ZX\arrow[d,"\mathrm{pr}_{TX|_Z}"]\arrow[r,equal]&TX\times_XT_ZX\arrow[d,"\mathrm{pr}_{TX}"]\\
			TX|_Z\arrow[r,hook]&TX.
		\end{tikzcd}
	\end{equation}
	
	\begin{lemma}
		Retain the notation of Subsection \ref{subsec:deformation-space-action-with-unit-notations}.
		For all $F\in D^b_{\mathrm{ctf}}(X,\Lambda)$, the sheaf
		$\nu_{Z/X}(F)\in D^b_{\mathrm{ctf}}(T_ZX,\Lambda)$
		is a retract of
		\begin{equation}\label{eq:a-sheaf-whose-retract-is-nu_Z/X}
			\begin{aligned}
				&R\alpha_!(\mathrm{pr}_{TX}^*\nu\mathcal{H}om_X(F,F)\otimes\mathrm{pr}_{T_ZX}^*\nu_{Z/X}(F))\\
				=&R\alpha_!(\mathrm{pr}_{TX|_Z}^*(\nu\mathcal{H}om_X(F,F)|_{TX|_Z})\otimes\mathrm{pr}_{T_ZX}^*\nu_{Z/X}(F)).
			\end{aligned}
		\end{equation}
	\end{lemma}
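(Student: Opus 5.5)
The plan is to build the retraction on the generic fiber of the deformation diagram \eqref{eq:deformation-space-action-with-unit}, where it comes from the formalism of cohomological correspondences. We then specialize it with nearby cycles along $\mathrm{D}_Z(X)\to\mathbb{A}^1$ and $\mathrm{D}_Z(X\times X)\to\mathbb{A}^1$, and read the result on the special fiber \eqref{eq:deformation-space-action-with-unit-special-fiber}.

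\emph{Step 1: the retraction on the generic fiber.} On $X\times X$, with the maps of \eqref{eq:deformation-space-action-with-unit-generic-fiber}, consider
\[
M=\mathrm{pr}_1^*F\otimes\mathscr{H}_F .
\]
The evaluation map $\mathrm{pr}_1^*F\otimes\mathscr H_F\to\mathrm{pr}_2^!F$ gives by adjunction a counit
\[
r\colon R\mathrm{pr}_{2!}M\to F .
\]
In the other direction, the identity correspondence $\mathrm{id}_F\in H^0(X,\Delta^!\mathscr H_F)$ gives a map $\Lambda_X\to\Delta^!\mathscr H_F$. Composing with the projection-formula map $F\otimes\Delta^!\mathscr H_F\to\Delta^!(\mathrm{pr}_1^*F\otimes\mathscr H_F)$, using $\mathrm{pr}_1\Delta=\mathrm{id}$, and then with the adjunction $\Delta_!\Delta^!\to\mathrm{id}$, we obtain
\[
s\colon F=R\mathrm{pr}_{2!}\Delta_!F\to R\mathrm{pr}_{2!}\Delta_!\Delta^!M\to R\mathrm{pr}_{2!}M .
\]
That $r\circ s=\mathrm{id}_F$ is the statement that composing a correspondence with the identity correspondence returns it, i.e.\ the unit axiom in $\mathrm{CohCorr}_k$ (Appendix~\ref{sec:CohCorr}, \cite{LuZheng}). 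This is the same mechanism as in Lemma~\ref{lem:Radon-retract-of-nuHom}.

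\emph{Step 2: specialization.} Since $\mathrm{D}_{\mathrm{id}}(\mathrm{pr}_1)$ and $\tilde\alpha$ are smooth over $\mathbb{A}^1$ (Lemma~\ref{lem:Saito-lemma-on-deformation-spaces}), \cite[(1.52)]{SaitoMicro} identifies nearby cycles of the pulled-back complexes on $\mathrm{D}_Z(X\times X)$. Through the cartesian square \eqref{eq:dzxyxddzx} we get
\[
\nu_{Z/X\times X}(\mathrm{pr}_1^*F)\simeq\mathrm{pr}_{T_ZX}^*\nu_{Z/X}F,\qquad
\nu_{Z/X\times X}(\mathscr H_F)\simeq\mathrm{pr}_{TX}^*\nu\mathcal Hom_X(F,F).
\]
The second uses $\mathrm{D}_i(\mathrm{id})$, which is smooth by the same lemma applied to \eqref{eq:dzxyxddzx}. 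The Künneth (cup-product) map for nearby cycles, $R\Psi A\otimes R\Psi B\to R\Psi(A\otimes B)$, then gives
\[
\mathrm{pr}_{TX}^*\nu\mathcal Hom_X(F,F)\otimes\mathrm{pr}_{T_ZX}^*\nu_{Z/X}F\to\nu_{Z/X\times X}(M).
\]
For $\alpha_!$ we use the base-change morphism $R\alpha_!R\Psi\to R\Psi R\tilde\alpha_!$.

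With these, we define the two maps on the special fiber:
\begin{itemize}
\item The map out of $\nu_{Z/X}(F)$ is obtained by specializing $s$ along the section $\mathrm{D}_{\mathrm{id}}(\Delta)$, which is a closed immersion over $\mathbb{A}^1$ and so commutes with $R\Psi$ for $!$-pushforward. Its image lies on the zero section $(0_{TX},\mathrm{id})$, where $\mathrm{pr}_{TX}^*\nu\mathcal Hom$ restricts to $\nu\mathcal Hom|_{0}$. This receives the specialization of the identity class, via $\Delta^!\mathscr H_F\to\Delta^*\mathscr H_F\to\nu\mathcal Hom|_{0_{TX}}$.
\item The map back to $\nu_{Z/X}(F)$ is the specialization of $r$, composed with the Künneth and base-change maps above.
\end{itemize}

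\emph{Step 3: the composite is the identity.} All the maps used (adjunctions, projection formula, Künneth, base change) are natural transformations of specialization. The composite on $T_ZX$ is therefore the specialization of $r\circ s=\mathrm{id}$, modulo compatibility diagrams.

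The main obstacle is exactly these compatibilities. The maps $\tilde\alpha$ and $\mathrm{pr}_2$ are not proper, so $R\Psi$ does not commute with $!$-pushforward along them, and the Künneth map is not an isomorphism. One must check that the section map $s$ factors through the source of the base-change and Künneth maps, and that $r$ extends to the target, so the composite can still be computed.

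To arrange this, I would first write $s$ on the special fiber directly through the closed immersion $(0_{TX},\mathrm{id})$, whose composite with $\alpha$ is the identity by \eqref{eq:deformation-space-action-with-unit-special-fiber}. Similarly I would write $r$ through the $\alpha^!$-adjunction of the specialized evaluation map
\[
\mathrm{pr}_{TX}^*\nu\mathcal Hom_X(F,F)\otimes\mathrm{pr}_{T_ZX}^*\nu_{Z/X}F\to\alpha^!\nu_{Z/X}F,
\]
obtained from $R\Psi$ commuting with $\tilde\alpha^!$ ($\tilde\alpha$ is smooth). After that, the identity $r\circ s=\mathrm{id}$ reduces to the commutativity of a diagram on $\mathrm{D}_Z(X)$, which restricts over $\mathbb{G}_m$ to the diagram of Step~1.
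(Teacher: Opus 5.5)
Your overall strategy is the paper's: an identity/evaluation composite on the generic fiber of \eqref{eq:deformation-space-action-with-unit}, specialized to $T_ZX$. The section goes through $(0_{TX},\mathrm{id})$, and the retraction is the $\alpha^!$-adjoint of the specialized evaluation, which is the paper's factorization through the cocartesian edge to $(\mathrm{D}_Z(X);R\alpha_!(\cdots))_s$.

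\textbf{The gap: Step 3.} The one nontrivial point, that the composite on $T_ZX$ is the identity, is asserted rather than proved. You construct the section and the evaluation directly on $T_ZX$, not as images of $s$ and $r$ under a functor. So $r\circ s=\mathrm{id}$ over $\mathbb{G}_m$ tells you nothing about their composite on $T_ZX$. There is also no ``diagram on $\mathrm{D}_Z(X)$'' restricting to both: the special-fiber objects are nearby cycles, not restrictions of sheaves on $\mathrm{D}_Z(X)$.

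The missing ingredient is the lax symmetric monoidal nearby-cycle functor $\Psi^\otimes:\mathrm{CohCorr}^\otimes_{S,\eta}\to\mathrm{CohCorr}^\otimes_{S,s}$ of \cite{LuZheng} (Appendix~\ref{sec:CohCorr}). Write three morphisms in $\mathrm{CohCorr}^\otimes_{S,\eta}$: the unit insertion $[\Lambda]$, $\mathrm{coev}$ over $[\mathrm{D}_{\mathrm{id}}(\Delta)_!]$, and $\mathrm{ev}$ over $[\mathrm{D}_{\mathrm{id}}(\mathrm{pr}_2)_!(\mathrm{D}_i(\mathrm{id}),\mathrm{D}_{\mathrm{id}}(\mathrm{pr}_1))^*]$. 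Their composite correspondence is the identity by Lemma~\ref{lem:pullback-of-deformation-spaces-in-triangle}, and their composite morphism is the identity. Applying $\Psi^\otimes$ gives a section through $(0_{TX},\mathrm{id})$ and an $\alpha^!$-evaluation of the kind you describe, and their composite is $\Psi^\otimes(\mathrm{id})=\mathrm{id}$. Factoring the image of $\mathrm{ev}$ through the cocartesian edge then yields the retraction. The base-change, projection-formula and K\"unneth compatibilities you defer are exactly what the functoriality of $\Psi^\otimes$ packages; without it they are a substantial coherence verification that you do not carry out.

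\textbf{Two intermediate claims are wrong.}

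First, $\mathrm{D}_i(\mathrm{id})$ is not smooth. Lemma~\ref{lem:Saito-lemma-on-deformation-spaces}(2) requires $g$ smooth, and here $g=i$. By \eqref{eq:dzxyxddzx} the map is a base change of $\pi_{Z/X}$, and on special fibers it is $\mathrm{pr}_{TX}$, whose image lies in $TX|_Z$. The isomorphism $\nu_{Z/X\times X}(\mathscr H_F)\simeq\mathrm{pr}_{TX}^*\nu\mathcal{H}om_X(F,F)$ is false. Take $X=\mathbb{A}^1$, $Z=\{0\}$, and $F$ the skyscraper at $0$. Then the left side is a skyscraper at the origin of $T_Z(X\times X)$, while the right side is constant on the line $0\times T_ZX$. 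Only the base-change arrow $\mathrm{pr}_{TX}^*\nu\mathcal{H}om_X(F,F)\to\nu_{Z/X\times X}(\mathscr H_F)$ exists. That arrow suffices for the retraction direction. The paper avoids $\nu_{Z/X\times X}(\mathscr H_F)$ entirely by using $\boxtimes$ over $\mathbb{A}^1$ and $*$-pulling back along $\mathrm{D}_Z(X\times X)\hookrightarrow\mathrm{D}_X(X\times X)\times_{\mathbb{A}^1}\mathrm{D}_Z(X)$.

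Second, the section needs a $!$-class, not a $*$-class. A map $(0_{TX},\mathrm{id})_!\nu_{Z/X}(F)\to\mathrm{pr}_{TX}^*\nu\mathcal{H}om_X(F,F)\otimes\mathrm{pr}_{T_ZX}^*\nu_{Z/X}(F)$ requires a class in $0_{TX}^!\nu\mathcal{H}om_X(F,F)\simeq\Delta^!\mathscr H_F$ (\cite[Proposition~1.2.8]{SaitoMicro}). The right one is the image of $\mathrm{id}_F$ under $R\Psi\,\Delta^!\to 0_{TX}^!R\Psi$. The $*$-restriction class you describe, coming from $\Delta^!\mathscr H_F\to\Delta^*\mathscr H_F\simeq 0_{TX}^*\nu\mathcal{H}om_X(F,F)$, cannot produce such a map.
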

	\begin{proof}
		We use the theory of cohomological correspondences over $S=(\mathbb{A}^1)^{\mathrm{sh}}_0$ (cf. Appendix \ref{sec:CohCorr}).
		We have the following commutative diagram in $\mathrm{Corr}(\mathrm{Sch}_{\mathbb{A}^1})^\otimes$
		\begin{equation}
			\begin{tikzcd}[column sep=120]
				(\mathrm{D}_Z(X))\arrow[d,"{([\Lambda],\mathrm{id})}"swap]\arrow[r,"\mathrm{id}"]&(\mathrm{D}_Z(X))\arrow[dd,equal]\\
				(\mathrm{D}_X(X),\mathrm{D}_Z(X))\arrow[d,"{([\mathrm{D}_\mathrm{id}(\Delta)_!],\mathrm{id})}"swap]\arrow[dr,bend left=15,"{[\mathrm{id}_!(\mathrm{D}_i(\mathrm{id}),\mathrm{id})^*]}"]\\
				(\mathrm{D}_X(X\times X),\mathrm{D}_Z(X))\arrow[r,"{[\mathrm{D}_\mathrm{id}(\mathrm{pr}_2)_!(\mathrm{D}_i(\mathrm{id}),\mathrm{D}_\mathrm{id}(\mathrm{pr}_1))^*]}"]&(\mathrm{D}_Z(X)),
			\end{tikzcd}
		\end{equation}
		where the lower triangle is given by the following pullback diagram in $\mathrm{Sch}_{\mathbb{A}^1}$ (cf. Lemma \ref{lem:pullback-of-deformation-spaces-in-triangle})
		\begin{equation}
			\begin{tikzcd}[column sep=80]
				\mathrm{D}_Z(X)\arrow[r,"{(\mathrm{D}_i(\mathrm{id}),\mathrm{id})}"]\arrow[d,"\mathrm{D}_\mathrm{id}(\Delta)"]&\mathrm{D}_X(X)\times_{\mathbb{A}^1}\mathrm{D}_Z(X)\arrow[d,"\mathrm{D}_\mathrm{id}(\Delta)\times\mathrm{id}"]\\
				\mathrm{D}_Z(X\times X)\arrow[d,"\mathrm{D}_\mathrm{id}(\mathrm{pr}_2)"]\arrow[r,"{(\mathrm{D}_i(\mathrm{id}),\mathrm{D}_\mathrm{id}(\mathrm{pr}_1))}"]&\mathrm{D}_X(X\times X)\times_{\mathbb{A}^1}\mathrm{D}_Z(X)\\
				\mathrm{D}_Z(X)
			\end{tikzcd}
		\end{equation}
		We then have a commutative diagram in $\mathrm{CohCorr}_{S,\eta}^\otimes$
		\begin{equation}\label{eq:coev-ev-composition}
			\begin{tikzcd}[column sep=130]
				(\mathrm{D}_Z(X);F)_\eta\arrow[d,"{([\Lambda],\mathrm{id};\mathrm{id},\mathrm{id})}"swap]\arrow[r,"\mathrm{id}"]&(\mathrm{D}_Z(X);F)_\eta\arrow[dd,equal]\\
				(\mathrm{D}_X(X),\mathrm{D}_Z(X);\Lambda,F)_\eta\arrow[d,"{([\mathrm{D}_\mathrm{id}(\Delta)_!],\mathrm{id};\mathrm{coev},\mathrm{id})}"swap]\\
				(\mathrm{D}_X(X\times X),\mathrm{D}_Z(X);\mathscr{H}_F,F)_\eta\arrow[r,"{([\mathrm{D}_\mathrm{id}(\mathrm{pr}_2)_!(\mathrm{D}_i(\mathrm{id}),\mathrm{D}_\mathrm{id}(\mathrm{pr}_1))^*];\mathrm{ev})}"]&(\mathrm{D}_Z(X);F)_\eta,
			\end{tikzcd}
		\end{equation}
		Applying the functor $\Psi^\otimes:\mathrm{CohCorr}_{S,\eta}^\otimes\to\mathrm{CohCorr}_{S,s}^{\otimes}$ to \eqref{eq:coev-ev-composition}, we obtain a commutative diagram in $\mathrm{CohCorr}_{S,s}^{\otimes}$:
		\begin{equation}\label{eq:specialization-of-coev-ev-composition}
			\begin{tikzcd}[column sep=20]
				(\mathrm{D}_Z(X);\nu_{Z/X}(F))_s\arrow[d]\arrow[r,"\mathrm{id}"]&(\mathrm{D}_Z(X);\nu_{Z/X}(F))_s\arrow[dd,equal]\\
				(\mathrm{D}_X(X),\mathrm{D}_Z(X);\Lambda,\nu_{Z/X}(F))_s\arrow[d]\\
				(\mathrm{D}_X(X\times X),\mathrm{D}_Z(X);\nu\mathcal{H}om_X(F,F),\nu_{Z/X}(F))_s\arrow[r]&(\mathrm{D}_Z(X);\nu_{Z/X}(F))_s,
			\end{tikzcd}
		\end{equation}
		whose bottom morphism factors through the cocartesian edge
		\begin{equation}
			\begin{aligned}
				&(\mathrm{D}_X(X\times X),\mathrm{D}_Z(X);\nu_{X/X\times X}\mathscr{H}_F,\nu_{Z/X}(F))_s\\
				\to&(\mathrm{D}_Z(X);R\alpha_!(\mathrm{pr}_{TX}^*\nu\mathcal{H}om_X(F,F)\otimes\mathrm{pr}_{T_ZX}^*\nu_{Z/X}(F)))_s.
			\end{aligned}
		\end{equation}
	\end{proof}
	
	\begin{corollary}\label{cor:mu_Z/X-as-a-retract-of-muHom}
		Consider the notation in \ref{subsec:deformation-space-action-with-unit-notations}.
		For all $F\in D^b_{\mathrm{ctf}}(X,\Lambda)$ the sheaf
		\begin{equation}
			\mu_{Z/X}(F)\in D^b_{\mathrm{ctf}}(T^*_ZX,\Lambda)
		\end{equation}
		is a retract of
		\begin{equation}\label{eq:a-sheaf-whose-retract-is-mu_Z/X}
			(q^\vee)^*(\mu\mathcal{H}om_X(F,F)|_{T^*X|_Z})\otimes\mu_{Z/X}(F).
		\end{equation}
	\end{corollary}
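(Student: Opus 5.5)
The plan is to apply the Fourier--Deligne transform $\mathcal{F}_{\psi,T_ZX/Z}$ to the retraction of the preceding lemma, which exhibits $\nu_{Z/X}(F)$ as a retract of
\[
R\alpha_!\bigl(\mathrm{pr}_{TX|_Z}^*(\nu\mathcal{H}om_X(F,F)|_{TX|_Z})\otimes\mathrm{pr}_{T_ZX}^*\nu_{Z/X}(F)\bigr).
\]
Since $\mathcal{F}_{\psi,T_ZX/Z}$ is a functor, it sends retracts to retracts. Hence $\mu_{Z/X}(F)=\mathcal{F}_{\psi,T_ZX/Z}\nu_{Z/X}(F)$ is a retract of the Fourier transform of this complex, and it remains to identify that transform.

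For the identification I apply Lemma~\ref{lem:Fourier-transform-of-induced-action} over the base $S=Z$. I take $V=TX|_Z$, $W=T_ZX$, and $q:TX|_Z\to T_ZX$ the quotient map. By the description of the special fiber action, $\alpha=\mathrm{add}_{T_ZX}\circ(q\times\mathrm{id})$, which is exactly the action in that lemma. The lemma then gives
\[
\mathcal{F}_{\psi,T_ZX/Z}\bigl(R\alpha_!(\mathrm{pr}_1^*K\otimes\mathrm{pr}_2^*\nu_{Z/X}F)\bigr)\simeq (q^\vee)^*\mathcal{F}_{\psi,TX|_Z/Z}(K)\otimes\mu_{Z/X}(F),
\]
with $K=\nu\mathcal{H}om_X(F,F)|_{TX|_Z}$.

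It remains to check that $\mathcal{F}_{\psi,TX|_Z/Z}(K)\simeq\mu\mathcal{H}om_X(F,F)|_{T^*X|_Z}$. This is base change for the Fourier--Deligne transform along $Z\hookrightarrow X$, i.e.\ proper base change for $R\mathrm{pr}_!$ in the fiber square $TX|_Z\times_Z T^*X|_Z\to TX\times_X T^*X$, together with the compatibility of the pairing and of $\mathcal L_\psi$ with restriction (cf.\ \cite[(1.39)]{SaitoMicro}). Here $q^\vee:T^*_ZX\hookrightarrow T^*X|_Z$ is the dual inclusion, which yields \eqref{eq:a-sheaf-whose-retract-is-mu_Z/X}.

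The step that needs the most care is the bookkeeping of the $\eta_0$-structure (Galois action) and the identification of $\alpha$ with $\mathrm{add}\circ(q\times\mathrm{id})$ globally over $Z$, not only fiberwise. If needed, I would verify the latter étale locally using the coordinates of Lemma~\ref{lem:pullback-of-deformation-spaces-in-triangle}, where $\mathrm{D}_{\mathrm{id}}(\mathrm{pr}_2)$ is visibly $(u,v)\mapsto u+v$ modulo $T Z$. Because Fourier transform and the Künneth-type isomorphisms are natural, they respect the Galois actions, and the retraction passes through unchanged.
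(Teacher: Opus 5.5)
Your proposal is correct and follows the same route as the paper: apply $\mathcal{F}_{\psi,T_ZX/Z}$ to the retraction of the preceding lemma, then identify the transform of the ambient complex via Lemma~\ref{lem:Fourier-transform-of-induced-action} with $V=TX|_Z$, $W=T_ZX$ and the quotient map $q$. The base-change identification $\mathcal{F}_{\psi,TX|_Z/Z}(\nu\mathcal{H}om_X(F,F)|_{TX|_Z})\simeq\mu\mathcal{H}om_X(F,F)|_{T^*X|_Z}$, which you spell out, is used implicitly in the paper.
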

	Note that this defines a canonical morphism
	\begin{align}
		(q^\vee)^*(\mu\mathcal{H}om_X(F,F)|_{T^*X|_Z})\otimes\mu_{Z/X}(F)\to \mu_{Z/X}(F).
	\end{align}
	\begin{proof}
		Applying Lemma \ref{lem:Fourier-transform-of-induced-action} to $q:TX|_Z\to T_ZX$ and the following diagram
		\begin{equation}
			\begin{tikzcd}
				&TX|_Z\times_ZT_ZX\arrow[dl,"\mathrm{pr}_{TX|_Z}"swap]\arrow[d,"\mathrm{pr}_{T_ZX}"]\arrow[r,"\alpha"]&T_ZX\\
				TX|_Z&T_ZX,
			\end{tikzcd}
		\end{equation}
		we see that \eqref{eq:a-sheaf-whose-retract-is-mu_Z/X} is the Fourier transform of \eqref{eq:a-sheaf-whose-retract-is-nu_Z/X}.
	\end{proof}
	\subsection{Proof of Theorem \ref{thm:support-of-microlocalization-to-closed-subscheme}}
	\label{subsec:proofofthm2-support}
	Let $U\subset T^*X$ be an open subset where $\mu\mathcal{H}om_X(F,F)$ vanishes.
	By Corollary \ref{cor:mu_Z/X-as-a-retract-of-muHom}, $\mu_{Z/X}(F)$ is a retract of
	\begin{equation}
		(q^\vee)^*(\mu\mathcal{H}om_X(F,F)|_{T^*X|_Z})\otimes\mu_{Z/X}(F)
	\end{equation}
	which vanishes on $q^{\vee-1}(U)=U\cap T^*_ZX$ (note that $q^\vee: T^\ast_ZX\hookrightarrow T^\ast X|_Z$).
	It follows that
	\begin{equation}
		\mathrm{supp}\,\mu_{Z/X}(F)\subset\mathrm{supp}\,\mu\mathcal{H}om_X(F,F)\cap T^*_ZX=\mathrm{SS}_\mu(F)\cap T^*_ZX\subseteq {\rm SS}(F)\cap T^\ast_ZX,
	\end{equation}
	where the last inclusion follows from \eqref{eq:muSSsubseteqSS}.
	
	\section{Comparison of singular supports}\label{sec:comparison-on-singular-supports}
	\subsection{}
	In this section, we show that the inclusion in Theorem \ref{thm:singular-support-inclusion} is always an equality.
	\begin{theorem}\label{thm:singular-support-equality}
		Let $\Lambda$ be a finite local ring killed by a power of a prime $\ell\ne p$. Let $X$ be a smooth $k$-scheme, and let $F\in D^b_{\rm ctf}(X,\Lambda)$. Then we have
		\begin{equation}\label{eq:muSS-equal-SS}
			\mathrm{SS}_\mu(F)=\mathrm{SS}(F).
		\end{equation}
	\end{theorem}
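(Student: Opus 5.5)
By Theorem~\ref{thm:singular-support-inclusion}, only the reverse inclusion $\mathrm{SS}(F)\subseteq\mathrm{SS}_\mu(F)$ remains. The plan follows the introduction: combine Theorem~\ref{thm:support-of-microlocalization-to-closed-subscheme} with a Künneth formula for $\mu\mathcal Hom$ to show that every irreducible component $C$ of $\mathrm{SS}(F)$ lies in $\mathrm{SS}_\mu(F)$. Both sides are closed conical subsets, and $\mathrm{SS}(F)$ is the union of its components, so it suffices to show that a dense open part of each $C$ meets $\mathrm{SS}_\mu(F)$. The question is étale local on $X$.

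The base is handled as in Lemma~\ref{lem:reducetoprojectivespaces}. Over the zero section, $\mathrm{SS}(F)\cap T^*_XX=\mathrm{supp}(F)$. Restricting $\mu\mathcal Hom(F,F)$ to the zero section gives $R\Gamma_c$ of the fibres of $\nu\mathcal Hom(F,F)$ along $TX\to X$. I expect this to recover $R\mathcal Hom(F,F)$-type stalks, so that a point with $F_x\neq 0$ lies in $\mathrm{SS}_\mu(F)$. Concretely, I would use Theorem~\ref{thm:support-of-microlocalization-to-closed-subscheme} with $Z=\{x\}$ (a closed point), or more simply with $Z=X$, where $\mu_{X/X}(F)=F$ on $T^*_XX=X$. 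This gives $\mathrm{supp}(F)\subseteq\mathrm{SS}_\mu(F)\cap T^*_XX$ directly.

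For the non-zero covectors, take a component $C$ and a general point $(x,\xi)$ with $\xi\neq 0$. The plan is to produce a smooth closed subscheme $Z\ni x$ with $\xi\in T^*_ZX$ such that $\mathrm{supp}\,\mu_{Z/X}(F)$ contains the ray through $\xi$. Theorem~\ref{thm:support-of-microlocalization-to-closed-subscheme} then puts $(x,\xi)$ in $\mathrm{SS}_\mu(F)$. The natural choice is a smooth divisor $Z=\{f=0\}$ with $df(x)=\xi$, chosen so that $x$ is an isolated characteristic point of $f$ relative to $\mathrm{SS}(F)$. In that case $\nu_{Z/X}(F)$ on the line bundle $T_ZX$ encodes the nearby/vanishing cycles of $F$ along $f$, and by Deligne–Laumon its Fourier transform on the fibre over $x$ recovers the total dimension of the vanishing cycles $\phi_x(F,f)$. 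Saito's Milnor formula makes this $-(\mathrm{CC}(F),df)_x$, which is nonzero for generic such $f$ because the coefficients of $\mathrm{CC}(F)$ on every component of $\mathrm{SS}(F)$ are nonzero. Nonvanishing of the vanishing cycles then forces $\mu_{Z/X}(F)$ to be nonzero at $(x,\xi)$, i.e. at the non-zero covector $df(x)$. This curve-direction computation is essentially \cite[Lemmas 2.1.2, 2.1.5]{SaitoMicro}, which treat the curve case.

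The Künneth formula enters where the divisor argument needs a transversal reduction. To avoid the monodromy and $\eta_0$ subtleties of the global specialization, I would instead pull back via a curve $\gamma$ through $x$ transversal to $Z$. Alternatively, I would take the external product $F\boxtimes\Lambda_{\mathbb A^1}$ and use $\mathrm{SS}_\mu(F\boxtimes G)=\mathrm{SS}_\mu(F)\times\mathrm{SS}_\mu(G)$, together with the curve case, to reduce to the graph of $f$. There $\mu_{Z/X}$ is the relevant object, and the known equality in dimension $1$ gives the nonvanishing.

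I expect the main obstacle to be this nonvanishing step: rigorously identifying $\mu_{Z/X}(F)$ at $df(x)$ with something governed by $\phi(F,f)$ and showing it is nonzero. This requires the local Fourier transform and relating Verdier specialization to nearby cycles along $f$. I would try first the graph-embedding trick combined with the Künneth formula, so that everything reduces to Saito's curve lemmas.
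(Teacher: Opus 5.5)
Your zero-section step is fine. Theorem~\ref{thm:support-of-microlocalization-to-closed-subscheme} with $Z=X$ gives $\mu_{X/X}(F)=F$, hence $\operatorname{supp}F\subseteq\SSmu(F)$, which is what Lemma~\ref{lem:base-of-SSmu} provides.

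The gap is in the non-zero covectors. Everything rests on $\mu_{Z/X}(F)$ being nonzero at $(x,df(x))$ for $Z=f^{-1}(0)$, and you leave exactly this step open. The suggested workaround does not supply it:
\begin{itemize}
\item $F\boxtimes\Lambda_{\mathbb A^1}$ has $\SSmu=\SSmu(F)\times 0_{\mathbb A^1}$, which carries no information about $f$.
\item \cite[Lemmas 2.1.2, 2.1.5]{SaitoMicro} concern sheaves on curves, not $\mu_{Z/X}(F)$ for a divisor in a higher-dimensional $X$.
\item The Milnor-formula input is false for general $F$: coefficients of $\CC(F)$ can vanish on components of $\mathrm{SS}(F)$. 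For example, $F=P\oplus P[1]$ with $P$ perverse has $\CC(F)=0$ but $\mathrm{SS}(F)=\mathrm{SS}(P)$. Positivity holds only for perverse sheaves \cite[Proposition 4.14]{SaitoCC}. Reducing to that case would need $\SSmu({}^pH^iF)\subseteq\SSmu(F)$, which is not established for finite $\Lambda$; the paper proves it only for $E$-coefficients, via perverse $t$-exactness in Section~\ref{sec:NA-equality}.
\end{itemize}

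The missing idea is to use Theorem~\ref{thm:support-of-microlocalization-to-closed-subscheme} in the vanishing direction rather than hunt for a nonvanishing statement. By \cite[Theorem 1.5]{Beilinson} and \cite[Theorem 4.4]{SaitoCC}, it suffices to show that $F$ is weakly micro-supported on $\SSmu(F)$. That is, a smooth test function $f$ with $df(U)\cap\SSmu(F)=\emptyset$ must be universally locally acyclic relatively to $F$. The paper proves this by doubling:
\begin{itemize}
\item By Lemma~\ref{lem:ULA-equivalent-transversal} (from \cite[Lemma 2.14]{LuZheng}), $f$ is ULA relatively to $F$ if and only if $i_f\colon U\times_{\mathbb A^1}U\hookrightarrow U\times U$ is $\mathscr H_F$-transversal.
\item The K\"unneth formula $\SSmu(\mathscr H_F)=\SSmu(\mathbb D F)\times\SSmu(F)$ and conicity show that the conormal bundle of $i_f$ meets $\SSmu(\mathscr H_F)$ only in the zero section.
\item Theorem~\ref{thm:support-of-microlocalization-to-closed-subscheme} applied to $(i_f,\mathscr H_F)$ then places $\mu_{i_f}(\mathscr H_F)$ on the zero section. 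So $\nu_{i_f}(\mathscr H_F)$ is pulled back from the base, and $i_f$ is $\mathscr H_F$-transversal by Corollary~\ref{cor:mu-supported-on-zero-section-implies-transversal}.
\end{itemize}
No vanishing cycles, Milnor formula or characteristic cycles are needed, and this is where K\"unneth genuinely enters.

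Note also that applying the theorem to $F$ along the single divisor $f^{-1}(0)$ would only give $F$-transversality of that fibre. That does not imply universal local acyclicity of $f$. Passing to $\mathscr H_F$ along $U\times_{\mathbb A^1}U$ is what turns the support estimate into ULA.
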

	Theorem \ref{thm:singular-support-equality} will be proved in Subsection \ref{subsec:proof-of-equality}.
	\begin{lemma}\label{lem:Kunneth-for-muHom}
		Let $X,Y$ be smooth $k$-schemes, $F_1,F_2\in D^b_{\mathrm{ctf}}(X,\Lambda)$ and $G_1,G_2\in D^b_{\mathrm{ctf}}(Y,\Lambda)$. We have a canonical isomorphism
		\begin{equation}
			\begin{aligned}
				\mu\mathcal{H}om_X(F_1,F_2)\boxtimes\mu\mathcal{H}om_Y(G_1,G_2)=\mu\mathcal{H}om_{X\times Y}(F_1\boxtimes G_1,F_2\boxtimes G_2).
			\end{aligned}
		\end{equation}
		Consequently, for any $F\in D_{\rm ctf}^b(X,\Lambda)$ and $G\in D_{\rm ctf}^b(Y,\Lambda)$, we have
		\begin{equation}
			\mathrm{SS}_\mu(F\boxtimes G)=\mathrm{SS}_\mu(F)\times\mathrm{SS}_\mu(G).
		\end{equation}
	\end{lemma}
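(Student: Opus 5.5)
The plan is to factor $\mu\mathcal{H}om$ into its three constituents, $\mathscr H$, $\nu$ and $\mathcal F_\psi$, and to check a K\"unneth isomorphism for each in turn.

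\emph{Step 1: $\mathscr H$.} We have $\mathscr{H}_X(F_1,F_2)=\mathbb D_XF_1\boxtimes F_2$ by \cite[Proposition 2.11]{LuZheng}. Combined with the K\"unneth formula for duality, $\mathbb D_{X\times Y}(F_1\boxtimes G_1)=\mathbb D_XF_1\boxtimes\mathbb D_YG_1$, this gives a canonical isomorphism
\[
\mathscr H_{X\times Y}(F_1\boxtimes G_1,F_2\boxtimes G_2)\simeq \tau^*\bigl(\mathscr H_X(F_1,F_2)\boxtimes\mathscr H_Y(G_1,G_2)\bigr),
\]
where $\tau:(X\times Y)\times(X\times Y)\simeq(X\times X)\times(Y\times Y)$ is the swap. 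Under $\tau$ the diagonal $\Delta_{X\times Y}$ corresponds to $\Delta_X\times\Delta_Y$.

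\emph{Step 2: $\nu$.} Lemma \ref{lem:product-of-deformation-spaces} gives
\[
\mathrm D_{X\times Y}((X\times X)\times(Y\times Y))\simeq\mathrm D_X(X\times X)\times_{\mathbb A^1}\mathrm D_Y(Y\times Y).
\]
On the special fibre this is $T(X\times Y)=TX\times TY$, and on the generic fibre it is the fibre product of the two pieces over $\mathbb G_m$. We then need a K\"unneth formula for nearby cycles over the henselian trait $(\mathbb A^1)^{\rm sh}_0$:
\[
R\Psi(K\boxtimes_{\mathbb A^1}L)\simeq R\Psi K\boxtimes R\Psi L.
\]
This holds for schemes over a henselian trait (Illusie's K\"unneth formula for nearby cycles, or Gabber's theorem that $R\Psi$ commutes with external products over a trait). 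We apply it to $K=\mathrm{pr}_1^*\mathscr H_X$ and $L=\mathrm{pr}_1^*\mathscr H_Y$ on the generic fibres $X\times X\times\mathbb G_m$ and $Y\times Y\times\mathbb G_m$. The point to check is that $\mathrm{pr}_1^*(\mathscr H_X\boxtimes\mathscr H_Y)$ on $X^2\times Y^2\times\mathbb G_m$ is indeed the relative external product over $\mathbb G_m$. This is immediate. We must also check compatibility of the $\eta_0$-Galois actions: the product over $\eta_0$ carries the diagonal action, which matches Saito's convention. The result is
\[
\nu\mathcal Hom_{X\times Y}(F_1\boxtimes G_1,F_2\boxtimes G_2)\simeq\nu\mathcal Hom_X(F_1,F_2)\boxtimes\nu\mathcal Hom_Y(G_1,G_2).
\]
I expect this step to be the main obstacle. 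One subtlety is that the K\"unneth formula for $R\Psi$ is over the strict henselisation, so the Galois action must be tracked. The other is that the formula requires the schemes to be of finite type over the trait, which holds here.

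\emph{Step 3: $\mathcal F_\psi$.} Applying Lemma \ref{lem:Kunneth-for-Fourier} to $TX\to X$ and $TY\to Y$ yields the isomorphism for $\mu\mathcal Hom$.

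\emph{Step 4: supports.} Set $F_1=F_2=F$ and $G_1=G_2=G$. For geometric points $(a,b)\in T^*X\times T^*Y$, the stalk of $A\boxtimes B$ is $A_a\otimes^L_\Lambda B_b$. To show that this is nonzero whenever $A_a\neq0$ and $B_b\neq0$, we use that $A_a$ and $B_b$ are perfect complexes over the local ring $\Lambda$. Reducing modulo the maximal ideal $\mathfrak m$, Nakayama gives $A_a\otimes^L k(\mathfrak m)\ne0$ and $B_b\otimes^L k(\mathfrak m)\ne0$. Over the field $k(\mathfrak m)$, the tensor product of nonzero complexes is nonzero. Hence
\[
\operatorname{supp}(A\boxtimes B)=\operatorname{supp}A\times\operatorname{supp}B,
\]
which gives $\mathrm{SS}_\mu(F\boxtimes G)=\mathrm{SS}_\mu(F)\times\mathrm{SS}_\mu(G)$.
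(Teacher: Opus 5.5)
Your proposal is correct and follows essentially the paper's route: the K\"unneth isomorphism for $\mathscr H$ from Lu--Zheng, then Lemma \ref{lem:product-of-deformation-spaces} combined with the K\"unneth formula for nearby cycles (the paper cites Lu--Zheng, Proposition 3.1, where you cite Gabber/Illusie), then Lemma \ref{lem:Kunneth-for-Fourier}. Your Step 4 (Nakayama on perfect stalks over the local ring $\Lambda$) correctly fills in the support equality, which the paper only asserts with ``Consequently.''
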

	\begin{proof}
		By \cite[Proposition 2.11]{LuZheng}, we have a natural isomorphism
		\begin{equation}\label{eq:Kunneth-for-RHom}
			\mathscr{H}_{X\times Y}(F_1\boxtimes G_1,F_2\boxtimes G_2)=\mathscr{H}_X(F_1,F_2)\boxtimes \mathscr{H}_Y(G_1,G_2).
		\end{equation}
		Applying Lemma \ref{lem:product-of-deformation-spaces} to $\Delta_X$ and $\Delta_Y$, we obtain an isomorphism
		\begin{equation}
			\mathrm{D}_{X\times Y}(X\times X\times Y\times Y)=\mathrm{D}_X(X\times X)\times_{\mathbb{A}^1}\mathrm{D}_Y(Y\times Y).
		\end{equation}
		Applying the Verdier
		specialization functor to the isomorphism \eqref{eq:Kunneth-for-RHom} along $\Delta_{X\times Y}=\Delta_X\times\Delta_Y:X\times Y\to X\times X\times Y\times Y$ and then applying \cite[Proposition 3.1]{LuZheng} and Lemma \ref{lem:Kunneth-for-Fourier}, we obtain natural isomorphisms
		\begin{equation}
			\begin{aligned}
				&\mu\mathcal{H}om_{X\times Y}(F_1\boxtimes G_1,F_2\boxtimes G_2)\\
				=&\mathcal{F}_{\psi,T(X\times Y)/X\times Y}(\nu_{\Delta_{X\times Y}}(\mathscr{H}_{X\times Y}(F_1\boxtimes G_1,F_2\boxtimes G_2)))\\
				=&\mathcal{F}_{\psi,T(X\times Y)/X\times Y}(\nu_{\Delta_X\times\Delta_Y}(\mathscr{H}_X(F_1,F_2)\boxtimes\mathscr{H}_Y(G_1,G_2)))\\
				=&\mathcal{F}_{\psi,TX\times TY/X\times Y}(\nu\mathcal{H}om_X(F_1,F_2)\boxtimes\nu\mathcal{H}om_Y(G_1,G_2))\\
				=&\mathcal{F}_{\psi,TX/X}(\nu\mathcal{H}om_X(F_1,F_2))\boxtimes\mathcal{F}_{\psi,TY/Y}(\nu\mathcal{H}om_Y(G_1,G_2))\\
				=&\mu\mathcal{H}om_X(F_1,F_2)\boxtimes\mu\mathcal{H}om_Y(G_1,G_2).
			\end{aligned}
		\end{equation}
	\end{proof}
	
	\subsection{}
	We recall a canonical construction (cf. \cite[(8.3)]{SaitoCC}). Let $f:X\to Y$ be a morphism between $k$-schemes and $F\in D^b_{\mathrm{ctf}}(Y,\Lambda)$. We have a relative purity morphism
	\begin{equation}\label{eq:defcfF}
		c_{f,F}:f^*F\otimes f^!\Lambda\to f^!F,
	\end{equation}
	which is obtained by adjunction from the following composition
	\begin{equation}\label{eq:relative-purity-morphism}
		f_!(f^*F\otimes f^!\Lambda)=F\otimes f_!f^!\Lambda\to F.
	\end{equation}
	Following Saito, we say that $f$ is $F$-transversal if \eqref{eq:defcfF} is an isomorphism (cf. \cite[Definition 8.5]{SaitoCC}).
	\subsection{}
	Consider the category $\mathrm{CohCorr}_k^\otimes$ recalled in  Appendix \ref{sec:CohCorr}.
	By \cite[(5.29)]{YangZhao}, we can characterize the morphism $c_{f,F}$ by the following commutative diagram in $\mathrm{CohCorr}_k^\otimes$
	\begin{equation}\label{eq:UMP-for-map-c}
		\begin{tikzcd}
			(X;f^*F\otimes f^!\Lambda)\arrow[rr,"(\mathrm{id};c_{f,F})"]&&(X;f^!F)\arrow[d,"\mathrm{cart}"]\\
			(Y,X;F,f^!\Lambda)\arrow[u,"\mathrm{cocart}"]\arrow[r,"\mathrm{cart}"]&(Y,Y;F,\Lambda)\arrow[r,"\mathrm{cocart}"]&(Y;F),
		\end{tikzcd}
	\end{equation}
	whose left vertical map is cocartesian and whose right vertical map is (locally) cartesian.
	
	\subsection{}\label{subsec:zero-section-of-nu-notations}
	Let $i:Z\hookrightarrow X$ be a closed immersion of smooth schemes.
	We denote by
	\begin{equation}
		p=p_{Z/X}:T_ZX\to Z,\quad p^\vee=p_{Z/X}^\vee:T^*_ZX\to Z
	\end{equation}
	the projection maps, and by
	\begin{equation}
		0:Z\hookrightarrow T_ZX,\quad 0^\vee:Z\hookrightarrow T^*_ZX
	\end{equation}
	the zero sections.
	Consider the map
	\begin{equation}\label{eq:zero-section-of-deformation-spaces}
		\mathrm{D}_{\mathrm{id}}(i):\mathrm{D}_Z(Z)\hookrightarrow\mathrm{D}_Z(X).
	\end{equation}
	For $F\in D^b_{\mathrm{ctf}}(X,\Lambda)$, \eqref{eq:zero-section-of-deformation-spaces} induces natural morphisms
	\begin{equation}\label{eq:zero-section-of-nu}
		0^*\nu_{Z/X}(F)\to i^*F,\quad i^!F\to 0^!\nu_{Z/X}(F),
	\end{equation}
	which are isomorphisms by \cite[Proposition 1.2.8]{SaitoMicro}.
	
	\begin{lemma}\label{lem:specialization-detect-transversality}
		Retain the notation of Subsection \ref{subsec:zero-section-of-nu-notations}.
		Let $i:Z\hookrightarrow X$ be a closed immersion of smooth $k$-schemes and $F\in D^b_{\mathrm{ctf}}(X,\Lambda)$. Then $i$ is $F$-transversal if and only if $0$ is $\nu_{Z/X}(F)$-transversal.
	\end{lemma}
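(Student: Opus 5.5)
We compare the two purity morphisms $c_{i,F}:i^*F\otimes i^!\Lambda\to i^!F$ and $c_{0,\nu}:0^*\nu_{Z/X}(F)\otimes 0^!\Lambda\to 0^!\nu_{Z/X}(F)$, where we write $\nu=\nu_{Z/X}(F)$. The plan is to show that, under the isomorphisms \eqref{eq:zero-section-of-nu}, the first is identified with the second. The lemma then follows immediately.

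\emph{Step 1: matching source and target.} By \cite[Proposition 1.2.8]{SaitoMicro}, $0^*\nu\simeq i^*F$ and $i^!F\simeq 0^!\nu$. We also need $i^!\Lambda\simeq 0^!\nu_{Z/X}(\Lambda_X)=0^!\Lambda_{T_ZX}$. This holds because $\nu_{Z/X}\Lambda_X=\Lambda_{T_ZX}$: apply Lemma \ref{lem:specialization-in-ULA-case}, or use smoothness of $\mathrm{D}_Z(X)\to\mathbb A^1$. Alternatively, both sides equal $\Lambda(-c)[-2c]$ by absolute purity, and one checks that the specialization isomorphism is compatible with the purity classes.

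\emph{Step 2: compatibility of the purity maps.} I would use the characterization \eqref{eq:UMP-for-map-c} of $c_{f,F}$ in $\mathrm{CohCorr}$. On the generic fiber, consider the closed immersion $\mathrm{D}_{\mathrm{id}}(i):\mathrm{D}_Z(Z)=Z\times\mathbb A^1\hookrightarrow\mathrm{D}_Z(X)$ over $\mathbb A^1$. Its restriction over $\mathbb G_m$ is $i\times\mathrm{id}$, and for the pulled-back complex $F\boxtimes\Lambda$ it has the purity map $c_{i,F}\boxtimes\mathrm{id}$. The diagram \eqref{eq:UMP-for-map-c} for this immersion over $S=(\mathbb A^1)^{\rm sh}_0$ lives in $\mathrm{CohCorr}_{S,\eta}^\otimes$. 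Applying the nearby cycle functor $\Psi^\otimes:\mathrm{CohCorr}_{S,\eta}^\otimes\to\mathrm{CohCorr}^\otimes_{S,s}$ from the Appendix, as in the proof of the previous lemma, gives a commutative diagram of the same shape on the special fiber. Its objects are $(Z;\Psi(i^*F))=(Z;i^*F)$ (the base is $Z\times\mathbb A^1$, trivial in $\tau$), $(T_ZX;\nu)$, and so on. The key point is that $\Psi^\otimes$ preserves the cocartesian edge (proper push-forward) and the cartesian edges. For the $!$-pullback edges this relies on the comparison maps $\Psi i^!\to 0^!\Psi$, which are isomorphisms by \cite[Proposition 1.2.8]{SaitoMicro}. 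By the universal property on the special fiber, the resulting horizontal arrow is $c_{0,\nu}$. Since $\Psi$ of the generic arrow is $\Psi(c_{i,F}\boxtimes\mathrm{id})=c_{i,F}$ (the base being $Z\times\mathbb A^1$), we get $c_{i,F}\simeq c_{0,\nu}$ compatibly with Step 1.

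\emph{Step 3: conclusion.} Since the two morphisms are identified via isomorphisms, one is an isomorphism if and only if the other is. That is, $i$ is $F$-transversal if and only if $0$ is $\nu_{Z/X}(F)$-transversal.

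\emph{Main obstacle.} The main obstacle is Step 2: checking that $\Psi^\otimes$ carries locally cartesian edges (given by $!$-pullback) to cartesian ones, i.e.\ that nearby cycles commute with $!$-pullback along $\mathrm{D}_{\mathrm{id}}(i)$. In general $\Psi$ only commutes with $*$-pullback along smooth maps and with proper push-forward. Here, though, the commutation holds precisely because of Saito's Proposition 1.2.8 for the zero section of a deformation space. If the $\infty$-categorical bookkeeping becomes heavy, I would instead verify directly that the adjunction maps $i_*i^!\to\mathrm{id}$ and $\mathrm{tr}:i_!i^!\Lambda\to\Lambda$ specialize to the corresponding maps for $0$. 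This reduces to the compatibility of $\Psi$ with proper push-forward and with the trace.
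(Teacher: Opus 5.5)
Your proposal is correct and follows essentially the same route as the paper. You write \eqref{eq:UMP-for-map-c} for $\mathrm{D}_{\mathrm{id}}(i)$ over $S=(\mathbb A^1)^{\rm sh}_0$, apply $\Psi^\otimes$, and use the isomorphisms \eqref{eq:zero-section-of-nu} from Saito's Proposition 1.2.8, together with $i^!\Lambda\simeq 0^!\Lambda$, to identify $c_{i,F}$ with $c_{0,\nu_{Z/X}F}$.

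One small correction: the cocartesian edges in that diagram lie over the $*$-pullback correspondences $[(\mathrm{D}_{\mathrm{id}}(i),\mathrm{id})^*]$ and $[(\mathrm{id},\mathrm{id})^*]$, not over proper push-forwards. So their preservation under $\Psi$ rests on the isomorphism $0^*\nu_{Z/X}(F)\simeq i^*F$, which you already have in Step 1, exactly as in the paper's dashed-arrow factorization.
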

	\begin{proof}
		Consider the following commutative diagram in $\mathrm{Corr}(\mathrm{Sch}_{\mathbb{A}^1})^\otimes$
		\begin{equation}
			\begin{tikzcd}[column sep=50]
				(\mathrm{D}_Z(Z))\arrow[rr,"\mathrm{id}"]&&(\mathrm{D}_Z(Z))\arrow[d,"{[\mathrm{D}_{\mathrm{id}}(i)]_!}"]\\
				(\mathrm{D}_Z(X),\mathrm{D}_Z(Z))\arrow[u,"{[(\mathrm{D}_{\mathrm{id}}(i),\mathrm{id})^*]}"]\arrow[r,"{(\mathrm{id},[\mathrm{D}_{\mathrm{id}}(i)]_!)}"]&(\mathrm{D}_Z(X),\mathrm{D}_Z(X))\arrow[r,"{[(\mathrm{id},\mathrm{id})^*]}"]&(\mathrm{D}_Z(X)).
			\end{tikzcd}
		\end{equation}
		Put $S=(\mathbb{A}^1)^{\mathrm{sh}}_0$. By \eqref{eq:UMP-for-map-c}, we have a commutative diagram in $\mathrm{CohCorr}_{S,\eta}^\otimes$
		\begin{equation}\label{eq:zero-section-of-map-c-generic-fiber}
			\begin{tikzcd}
				(\mathrm{D}_Z(Z);i^*F\otimes i^!\Lambda)_\eta\arrow[rr,"(\mathrm{id};c_{i,F})"]&&(\mathrm{D}_Z(Z);i^!F)_\eta\arrow[d,"\mathrm{cart}"]\\
				(\mathrm{D}_Z(X),\mathrm{D}_Z(Z);F,i^!\Lambda)_\eta\arrow[u,"\mathrm{cocart}"]\arrow[r,"\mathrm{cart}"]&(\mathrm{D}_Z(X),\mathrm{D}_Z(X);F,\Lambda)_\eta\arrow[r,"\mathrm{cocart}"]&(\mathrm{D}_Z(X);F)_\eta.
			\end{tikzcd}
		\end{equation}
		Applying the functor $\Psi^\otimes$, we obtain a commutative diagram in $\mathrm{CohCorr}_{S,s}^\otimes$
		\begin{equation}\label{eq:zero-section-of-map-c-special-fiber}
			\begin{tikzcd}[column sep=10]
				(\mathrm{D}_Z(Z);i^*F\otimes i^!\Lambda)_s\arrow[rr,"(\mathrm{id};c_{i,F})"]&&(\mathrm{D}_Z(Z);i^!F)_s\arrow[d]\\
				(\mathrm{D}_Z(X),\mathrm{D}_Z(Z);\nu_{Z/X}F,i^!\Lambda)_s\arrow[u]\arrow[r]&(\mathrm{D}_Z(X),\mathrm{D}_Z(X);\nu_{Z/X}F,\Lambda)_s\arrow[r]&(\mathrm{D}_Z(X);\nu_{Z/X}F)_s.
			\end{tikzcd}
		\end{equation}
		Using cocartesian edges and locally cartesian edges, we obtain the following factorization of \eqref{eq:zero-section-of-map-c-special-fiber} in $\mathrm{CohCorr}_{S,s}^\otimes$
		\begin{equation}
			\begin{tikzcd}
				(i^*F\otimes i^!\Lambda)_s\arrow[rrr,"{(\mathrm{id};c_{i,F})}"]&&&(i^!F)_s\arrow[d,dashed]\\
				(0^*\nu_{Z/X}F\otimes i^!\Lambda)_s\arrow[u,dashed]\arrow[r,dashed]&(0^*\nu_{Z/X}F\otimes 0^!\Lambda)_s\arrow[rr,"{(\mathrm{id};c_{0,\nu_{Z/X}F})}"]&&(0^!\nu_{Z/X}F)_s\arrow[d,"\mathrm{cart}"]\\
				(\nu_{Z/X}F,i^!\Lambda)_s\arrow[u,"\mathrm{cocart}"]\arrow[r,dashed]&(\nu_{Z/X}F,0^!\Lambda)_s\arrow[u,"\mathrm{cocart}"]\arrow[r,"\mathrm{cart}"]&(\nu_{Z/X}F,\Lambda)_s\arrow[r,"\mathrm{cocart}"]&(\nu_{Z/X}F)_s.
			\end{tikzcd}
		\end{equation}
		We then obtain the following commutative diagram in $D^b_{\mathrm{ctf}}(Z,\Lambda)$
		\begin{equation}
			\begin{tikzcd}[column sep=40]
				i^*F\otimes i^!\Lambda\arrow[rr,"c_{i,F}"]&&i^!F\\
				0^*\nu_{Z/X}F\otimes i^!\Lambda\arrow[u,"\eqref{eq:zero-section-of-nu}","\simeq"swap]\arrow[r,"\eqref{eq:zero-section-of-nu}","\simeq"swap]&0^*\nu_{Z/X}F\otimes 0^!\Lambda\arrow[r,"c_{0,\nu_{Z/X}F}"]&0^!\nu_{Z/X}F\arrow[u,"\eqref{eq:zero-section-of-nu}","\simeq"swap].
			\end{tikzcd}
		\end{equation}
		Therefore $c_{i,F}$ is an isomorphism if and only if $c_{0,\nu_{Z/X}F}$ is an isomorphism.
	\end{proof}
	
	\begin{corollary}\label{cor:mu-supported-on-zero-section-implies-transversal}
		Retain the notation of Subsection \ref{subsec:zero-section-of-nu-notations}.
		Let $i:Z\hookrightarrow X$ be a closed immersion of smooth $k$-schemes of pure codimension $c$, and $F\in D^b_{\mathrm{ctf}}(X,\Lambda)$. If $\mu_{Z/X}(F)$ is supported on the zero section $0^\vee:Z\hookrightarrow T^*_ZX$, then $i$ is $F$-transversal.
	\end{corollary}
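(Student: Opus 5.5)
By Lemma~\ref{lem:specialization-detect-transversality}, it suffices to show that the zero section $0:Z\hookrightarrow T_ZX$ is $\nu_{Z/X}(F)$-transversal. So the plan is to pass from the support hypothesis on $\mu_{Z/X}(F)$ to a structural statement about $K=\nu_{Z/X}(F)$ on the vector bundle $p:T_ZX\to Z$. We then check transversality of the zero section for such $K$ directly.

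The first step is to use Fourier inversion. Since $\mu_{Z/X}(F)=\mathcal F_{\psi,T_ZX/Z}(K)$ is supported on $0^\vee(Z)$, we can write
\[
\mu_{Z/X}(F)\simeq 0^\vee_*M, \qquad M=0^{\vee*}\mu_{Z/X}(F)\in D^b_{\rm ctf}(Z,\Lambda).
\]
The Fourier transform of a complex pushed forward from the zero section is pulled back from the base: $\mathcal F_{\psi}(0^\vee_* M)\simeq p^*M$, since the kernel $\mathcal L_\psi\langle\cdot,\cdot\rangle$ is trivial when restricted to $T_ZX\times_Z 0^\vee$. Fourier inversion states
\[
\mathcal F_{\psi^{-1},T^*_ZX/Z}\circ\mathcal F_{\psi,T_ZX/Z}\simeq (-1)^*[-2c](-c).
\]
Applying it, we get
\[
K\simeq p^*M'
\]
for some $M'\in D^b_{\rm ctf}(Z,\Lambda)$, namely a shift and twist of $M$, using $(-1)^*p^*=p^*$. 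Comparing with the isomorphism $0^*K\simeq i^*F$ from \eqref{eq:zero-section-of-nu}, we even have $M'\simeq i^*F$. The only care needed here is to invoke inversion with the correct normalization; any shift or twist is harmless for transversality.

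The second step is to show that $0$ is $p^*M'$-transversal. This is the standard fact that a section of a smooth morphism is transversal to any complex pulled back along that morphism. Concretely, $0^!\Lambda\simeq\Lambda[-2c](-c)$ by absolute purity, since $0$ is a section of the smooth map $p$ of relative dimension $c$. Moreover,
\[
0^!p^*M'\simeq 0^!p^!M'[-2c](-c)\simeq M'[-2c](-c).
\]
The map $c_{0,p^*M'}$ is then identified with the purity isomorphism tensored with $M'$. To see this, one can use the projection formula for the Gysin map, or note that both sides are compatible with the identification $p^*M'\simeq p^*M'\otimes\Lambda$, so that $c_{0,p^*M'}=\mathrm{id}_{M'}\otimes c_{0,\Lambda}$. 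The map $c_{0,\Lambda}$ is tautologically an isomorphism.

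I expect the main obstacle to be this compatibility: verifying that $c_{0,p^*M'}$ really factors as $\mathrm{id}\otimes c_{0,\Lambda}$. Equivalently, one needs to know that $c_{f,F}$ is compatible with pulling back $F$ along a smooth retraction. I would argue via the characterization \eqref{eq:UMP-for-map-c}: for $F=p^*M'$, the composite $0^*p^*M'\otimes 0^!\Lambda\to 0^!p^*M'$ is adjoint to $0_!(M'\otimes 0^!\Lambda)\simeq p^*M'\otimes 0_!0^!\Lambda\to p^*M'$, by the projection formula for $0_!$. This identifies the two maps. Combining the two steps with Lemma~\ref{lem:specialization-detect-transversality} then yields that $i$ is $F$-transversal.
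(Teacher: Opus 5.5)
Your proposal is correct and follows the paper's proof. The paper gets $\nu_{Z/X}(F)\simeq p^*G$ by citing \cite[Proposition 1.1.5]{SaitoMicro}, which your Fourier-inversion argument re-derives. (The $(-1)^*$ actually belongs to $\mathcal F_\psi\circ\mathcal F_\psi$, not to $\mathcal F_{\psi^{-1}}\circ\mathcal F_\psi$, but this is harmless since $(-1)^*p^*=p^*$.) It then gets transversality of $0$ from \cite[Lemma 8.6.3]{SaitoCC} applied to $p$, and concludes by Lemma~\ref{lem:specialization-detect-transversality}.

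One caveat: the adjunction argument in your last paragraph only restates the definition of $c_{0,p^*M'}$ and does not by itself prove that this map is invertible. Your second step should therefore rest, as in the paper, on that standard lemma, using that $p$ is smooth and $p\circ 0=\mathrm{id}$.
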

	\begin{proof}
		By \cite[Proposition 1.1.5]{SaitoMicro}, there exists a complex $G\in D^b_{\mathrm{ctf}}(Z,\Lambda)$ such that
		\begin{equation}
			\nu_{Z/X}(F)=p^*G.
		\end{equation}
		Applying \cite[Lemma 8.6.3]{SaitoCC} to $p:T_ZX\to Z$, we obtain that $0$ is $\nu_{Z/X}(F)$-transversal. We conclude by Lemma \ref{lem:specialization-detect-transversality}.
	\end{proof}
	
	\begin{lemma}\label{lem:base-of-SSmu}
		The base of $\mathrm{SS}_\mu(F)$ is equal to $\mathrm{supp}(F)$.
	\end{lemma}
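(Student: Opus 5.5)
The statement claims that the image of $\mathrm{SS}_\mu(F)$ under $\pi:T^*X\to X$ is exactly $\mathrm{supp}(F)$. We prove the two inclusions separately.

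\emph{Upper bound.} One inclusion is already available: by Theorem~\ref{thm:singular-support-inclusion}, $\mathrm{SS}_\mu(F)\subseteq\mathrm{SS}(F)$. By \cite[Lemma 2.3]{Beilinson}, the base of $\mathrm{SS}(F)$ is $\mathrm{supp}(F)$. Alternatively, one can argue directly as in the proof of Lemma~\ref{lem:reducetoprojectivespaces}: if $F|_U=0$, then $\mathscr H_F|_{U\times U}=0$, hence $\mu\mathcal{H}om_X(F,F)|_{T^*U}=0$. Either way, $\pi(\mathrm{SS}_\mu(F))\subseteq\mathrm{supp}(F)$.

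\emph{Lower bound.} For the reverse inclusion we apply Theorem~\ref{thm:support-of-microlocalization-to-closed-subscheme} to a point. Let $x\in\mathrm{supp}(F)$ be a closed point, and suppose $x\notin\pi(\mathrm{SS}_\mu(F))$. Since $\pi(\mathrm{SS}_\mu(F))$ is closed (it is conical, hence contains its projection onto the zero section), we may shrink $X$ to an open neighbourhood $U$ of $x$ on which $\mu\mathcal{H}om_X(F,F)=0$. The support of $F|_U$ is still nonempty. Since $\mathrm{supp}(F)$ is constructible, we may choose a closed point $z\in\mathrm{supp}(F|_U)$. After a finite extension of $k$ (the formation of $\mu\mathcal{H}om$ commutes with this base change), or by working with $Z=\operatorname{Spec}k(z)$ directly, we arrange that $Z\hookrightarrow U$ is a closed immersion of smooth schemes.

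Theorem~\ref{thm:support-of-microlocalization-to-closed-subscheme} gives
\[
\mathrm{supp}\,\mu_{Z/U}(F)\subseteq\mathrm{SS}_\mu(F|_U)\cap T^*_ZU=\emptyset,
\]
so $\mu_{Z/U}(F)=0$. Fourier--Deligne inversion then gives $\nu_{Z/U}(F)=0$. By \eqref{eq:zero-section-of-nu}, $i^*F\simeq 0^*\nu_{Z/U}(F)=0$, i.e.\ the stalk $F_z$ vanishes. This contradicts $z\in\mathrm{supp}(F)$.

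\emph{Points to check.} Two steps need care.
\begin{itemize}
\item Compatibility of $\mathrm{SS}_\mu$ with restriction to the open subscheme $U$. This follows because Verdier specialization and the Fourier transform commute with étale, in particular open, restriction.
\item The fact that $\mathrm{supp}(F)$ contains a closed point $z$ with $k(z)$ smooth over $k$. This holds because $k$ is perfect, so every closed point is smooth over $k$.
\end{itemize}
With these in place, $\mathrm{supp}(F)\subseteq\pi(\mathrm{SS}_\mu(F))$, and together with the upper bound this proves the equality.
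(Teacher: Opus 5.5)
Your proof is correct, but the lower bound takes a different route from the paper's.

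The paper stays on the diagonal. On an open $U$ avoiding the base of $\mathrm{SS}_\mu(F)$, Fourier inversion turns $\mu\mathcal{H}om_U(F|_U,F|_U)=0$ into $\nu\mathcal{H}om_U(F|_U,F|_U)=0$. Applying $0_{TU}^!$ together with \cite[Proposition 1.2.8]{SaitoMicro} then gives $R\mathcal{H}om_U(F|_U,F|_U)=\Delta_U^!\mathscr{H}_{F|_U}=0$. So $F|_U=0$ at once, because its identity endomorphism vanishes.

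You instead test stalks one closed point at a time. Your ingredients are Theorem~\ref{thm:support-of-microlocalization-to-closed-subscheme} for $Z=\operatorname{Spec}k(z)$, Fourier inversion on $T_ZU$, and the $0^*$-half of \eqref{eq:zero-section-of-nu}. Only the first inclusion of that theorem is needed, and it is proved in Section~\ref{sec:microlocalization-along-closed-subschemes} independently of this lemma, so there is no circularity.

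Your route is valid but strictly heavier. It rests on the retraction of Section~\ref{sec:microlocalization-along-closed-subschemes}, built with cohomological correspondences and $\Psi^\otimes$. It also uses perfectness of $k$ to make $Z$ smooth. The paper needs only Fourier inversion and Saito's zero-section isomorphism, and your argument uses both of those anyway. Even within your approach, the retraction lemma already exhibits $\nu_{Z/U}(F)$ as a retract of a complex built from $\nu\mathcal{H}om_U(F|_U,F|_U)=0$. So the detour through $\mu_{Z/U}(F)$ and Fourier inversion on $T_ZU$ is unnecessary.

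For the upper bound, your direct vanishing argument (as in Lemma~\ref{lem:reducetoprojectivespaces}) is more economical than the paper's appeal to Theorem~\ref{thm:singular-support-inclusion} and \cite[Lemma 2.3]{Beilinson}. It also makes that inclusion independent of the main theorem.

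One small precision: pick $z$ in the nonempty constructible locus where the stalk of $F|_U$ is nonzero, not just any closed point of the closed support. Otherwise $F_z=0$ is not a contradiction.
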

	\begin{proof}
		By Theorem \ref{thm:singular-support-inclusion}, the base of $\mathrm{SS}_\mu(F)$ is contained in the base of $\mathrm{SS}(F)$, which is $\mathrm{supp}(F)$.
		Conversely, let $U\subset X$ be an open subset outside of the base of $\mathrm{SS}_\mu(F)$. Then $\mu\mathcal{H}om_X(F,F)$ vanishes on $T^*U$, and hence
		\begin{equation}
			\nu\mathcal{H}om_U(F|_U,F|_U)=0.
		\end{equation}
		By \cite[Proposition 1.2.8]{SaitoMicro},
		\begin{equation}
			0=0_{TU}^!\nu\mathcal{H}om_U(F|_U,F|_U)=\Delta_U^!R\mathcal{H}om_{U\times U}(\mathrm{pr}_{U,1}^*F|_U,\mathrm{pr}_{U,2}^!F|_U)=R\mathcal{H}om_U(F|_U,F|_U).
		\end{equation}
		It follows that $F|_U=0$.
	\end{proof}
	\subsection{}\label{subsec:diagonal-of-function-notations}
	Let $X$ be a smooth $k$-scheme and $f:X\to\mathbb{A}^1$ be a function. Consider the cartesian diagram
	\begin{equation}
		\begin{tikzcd}
			X\times_{\mathbb{A}^1}X\arrow[r,hook,"i_f"]\arrow[d,"h_f"]&X\times X\arrow[d,"f\times f"]\\
			\mathbb{A}^1\arrow[r,hook,"\Delta_{\mathbb{A}^1}"]&\mathbb{A}^1\times\mathbb{A}^1.
		\end{tikzcd}
	\end{equation}
	We denote by
	\begin{equation}
		\mathrm{pr}_{f,1},\mathrm{pr}_{f,2}:X\times_{\mathbb{A}^1}X\to X
	\end{equation}
	the projections over $\mathbb{A}^1$, and by
	\begin{equation}
		\mathrm{pr}_1,\mathrm{pr}_2:X\times X\to X
	\end{equation}
	the projections over $k$.
	\begin{lemma}\label{lem:ULA-equivalent-transversal}
		Retain the notation of Subsection \ref{subsec:diagonal-of-function-notations}.
		Assume that $f$ is smooth. Let $F\in D^b_{\mathrm{ctf}}(X,\Lambda)$. Recall that
		$\mathscr{H}_F=R\mathcal{H}om_{X\times X}(\mathrm{pr}_1^*F,\mathrm{pr}_2^!F)=\mathbb{D}_X(F)\boxtimes F$.
		Then $f$ is universally locally acyclic with respect to $F$ if and only if $i_f$ is $\mathscr{H}_F$-transversal.
	\end{lemma}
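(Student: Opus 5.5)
We reduce both conditions to a statement about the relative internal Hom along the fibres of $f$, and compare them using the standard criterion for universal local acyclicity via duality. Recall the Lu--Zheng characterization (\cite[Theorem 2.16]{LuZheng} and its converse): for $f:X\to S$ and $F$ constructible, $f$ is ULA with respect to $F$ if and only if the natural map
\begin{equation}
	\mathbb{D}_{X/S}(F)\boxtimes_S F\to R\mathcal{H}om_{X\times_SX}(\mathrm{pr}_{f,1}^*F,\mathrm{pr}_{f,2}^!F)
\end{equation}
is an isomorphism. Equivalently, $(X;F)$ is dualizable in the category of cohomological correspondences over $S$. Here $S=\mathbb{A}^1$ and $X\times_SX=X\times_{\mathbb{A}^1}X$. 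Our aim is to identify this map with $c_{i_f,\mathscr{H}_F}$, up to the isomorphisms coming from smoothness.

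\textbf{Step 1: the target.} Since $f$ is smooth, $i_f$ is a regular immersion of codimension one between smooth schemes; it is the base change of $\Delta_{\mathbb{A}^1}$. Base change for internal Hom along $i_f$ gives
\begin{equation}
	i_f^!R\mathcal{H}om(\mathrm{pr}_1^*F,\mathrm{pr}_2^!F)\simeq R\mathcal{H}om(\mathrm{pr}_{f,1}^*F,\mathrm{pr}_{f,2}^!F)\otimes(\text{twist}).
\end{equation}
This uses $i_f^!R\mathcal{H}om(A,B)=R\mathcal{H}om(i_f^*A,i_f^!B)$ together with $i_f^!\mathrm{pr}_2^!=\mathrm{pr}_{f,2}^!\circ(\text{a shift } [-2](-1))$ and $\mathrm{pr}_2\circ i_f=\mathrm{pr}_{f,2}$. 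More precisely, $\mathrm{pr}_{f,2}^!F=i_f^!\mathrm{pr}_2^!F$ directly. Also $i_f^!\Lambda\simeq\Lambda[-2](-1)$ by purity, since $X\times_{\mathbb{A}^1}X$ is smooth.

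\textbf{Step 2: the source.} We have
\begin{equation}
	i_f^*\mathscr{H}_F=i_f^*(\mathbb{D}_XF\boxtimes F)=\mathbb{D}_XF\boxtimes_{\mathbb{A}^1}F.
\end{equation}
Since $\mathbb{A}^1$ is smooth, $\mathbb{D}_X F=\mathbb{D}_{X/\mathbb{A}^1}F[2](1)$. Hence
\begin{equation}
	i_f^*\mathscr{H}_F\otimes i_f^!\Lambda\simeq\mathbb{D}_{X/\mathbb{A}^1}F\boxtimes_{\mathbb{A}^1}F.
\end{equation}

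\textbf{Step 3: matching the morphisms.} The main obstacle is to check that, under the identifications of Steps 1 and 2, $c_{i_f,\mathscr{H}_F}$ coincides with the Lu--Zheng comparison map. I would do this in $\mathrm{CohCorr}_k^\otimes$ via the characterization \eqref{eq:UMP-for-map-c}. Both maps are adjoint to the evaluation-type pairing
\begin{equation}
	\mathrm{pr}_{f,1}^*F\otimes\mathbb{D}_{X/\mathbb{A}^1}F\boxtimes_{\mathbb{A}^1}F\to \mathrm{pr}_{f,2}^!F,
\end{equation}
built from $\mathrm{ev}:F\otimes\mathbb{D}F\to\mathcal{K}$ and the Künneth isomorphism. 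Base change along $\Delta_{\mathbb{A}^1}$ transports the absolute evaluation to the relative one, using the purity identification $\Delta_{\mathbb{A}^1}^!\Lambda=\Lambda[-2](-1)$ compatibly with $\mathcal{K}_{X/k}=\mathcal{K}_{X/\mathbb{A}^1}[2](1)$. Checking this compatibility, including signs and twists, requires a diagram chase with the cocartesian and cartesian edges. I expect that, with care, it reduces to the functoriality of $c_{-,-}$ under composition and base change, as in \cite[8.6]{SaitoCC}.

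\textbf{Step 4: conclusion.} The two maps agree up to isomorphisms. Therefore $i_f$ is $\mathscr{H}_F$-transversal if and only if the Lu--Zheng map is an isomorphism, which holds if and only if $f$ is ULA with respect to $F$. For the direction from transversality to ULA we need the converse part of the Lu--Zheng theorem, namely that dualizability over $S$ implies ULA. This is \cite[Theorem 2.16]{LuZheng}, stated as an equivalence.
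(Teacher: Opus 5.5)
Your proposal is correct and follows essentially the same route as the paper. Like the paper, you identify $c_{i_f,\mathscr{H}_F}$ with the Lu--Zheng comparison map $\mathbb{D}_{X/\mathbb{A}^1}F\boxtimes_{\mathbb{A}^1}F\to R\mathcal{H}om(\mathrm{pr}_{f,1}^*F,\mathrm{pr}_{f,2}^!F)$, using $i_f^*\mathscr{H}_F\otimes i_f^!\Lambda\simeq\mathbb{D}_{X/\mathbb{A}^1}F\boxtimes_{\mathbb{A}^1}F$ and $i_f^!\mathscr{H}_F\simeq R\mathcal{H}om(\mathrm{pr}_{f,1}^*F,\mathrm{pr}_{f,2}^!F)$, and then apply Lu--Zheng's criterion for universal local acyclicity. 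The compatibility you leave as a diagram chase in Step 3 is exactly the commutative square the paper imports from Yang--Zhao (3.51). Only a commutative square with vertical isomorphisms is needed there, so signs are irrelevant, and in Step 1 no twist appears, as you yourself end up noting.
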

	\begin{proof}
		We have a commutative diagram (cf. \cite[(3.51)]{YangZhao})
		\begin{equation}
			\begin{tikzcd}
				i_f^*R\mathcal{H}om_{X\times X}(\mathrm{pr}_1^*F,\mathrm{pr}_2^!F)\otimes i_f^!\Lambda\arrow[r,"c_{i_f,\mathscr{H}_F}"]&i_f^!R\mathcal{H}om_{X\times X}(\mathrm{pr}_1^*F,\mathrm{pr}_2^!F)\arrow[dd,"\simeq"]\\
				i_f^*(\mathbb{D}_X(F)\boxtimes F)\otimes i_f^!\Lambda\arrow[u,"\simeq"]\arrow[d,"\simeq"]\\
				\mathbb{D}_{X/\mathbb{A}^1}(F)\boxtimes_{\mathbb{A}^1}F\arrow[r]&R\mathcal{H}om_{X\times_{\mathbb{A}^1}X}(\mathrm{pr}_{f,1}^*F,\mathrm{pr}_{f,2}^!F).
			\end{tikzcd}
		\end{equation}
		We conclude using \cite[Lemma 2.14]{LuZheng}.
	\end{proof}
	
	\begin{lemma}\label{lem:SSmu-detects-ULA}
		Let $X$ be a smooth $k$-scheme and $F\in D^b_{\mathrm{ctf}}(X,\Lambda)$. Let $f:X\to\mathbb{A}^1$ be a smooth function. Let $\tau$ be the coordinate on $\mathbb{A}^1$. It induces a morphism
		\begin{equation}
			\mathrm{d}\tau:\mathbb{A}^1\to T^*\mathbb{A}^1\simeq\mathbb{A}^1\times\mathbb{A}^1.
		\end{equation}
		Consider the composition
		\begin{equation}
			X\xrightarrow{\mathrm{d}\tau}X\times_{\mathbb{A}^1}T^*\mathbb{A}^1\xrightarrow{\mathrm{d}f} T^*X,
		\end{equation}
		which is also denoted by $\mathrm{d}f:X\to T^*X$.
		Assume that
		\begin{equation}\label{eq:transversal-condition-for-SSmu}
			\mathrm{d}f(X)\cap\mathrm{SS}_\mu(F)=\emptyset.
		\end{equation}
		Then $f$ is universally locally acyclic relatively to $F$.
	\end{lemma}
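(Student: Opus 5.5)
The plan is to combine Lemma~\ref{lem:ULA-equivalent-transversal} with Corollary~\ref{cor:mu-supported-on-zero-section-implies-transversal}, applied to the closed immersion $i_f:X\times_{\mathbb{A}^1}X\hookrightarrow X\times X$ and the complex $\mathscr{H}_F$. By Lemma~\ref{lem:ULA-equivalent-transversal}, it suffices to show that $i_f$ is $\mathscr{H}_F$-transversal. Since $f$ is smooth, $X\times_{\mathbb{A}^1}X$ is smooth and $i_f$ has pure codimension $1$. Its conormal bundle is the line bundle spanned by $(-\mathrm{d}f,\mathrm{d}f)$, more precisely by the pullback of $\mathrm{d}(\tau_2-\tau_1)$ under $f\times f$. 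At a point $(x_1,x_2)$ with $f(x_1)=f(x_2)$ it is
\[
T^*_{X\times_{\mathbb{A}^1}X}(X\times X)_{(x_1,x_2)}=\{(-a\,\mathrm{d}f_{x_1},a\,\mathrm{d}f_{x_2})\}.
\]
By Corollary~\ref{cor:mu-supported-on-zero-section-implies-transversal}, it then suffices to show that $\mu_{i_f}(\mathscr{H}_F)$ is supported on the zero section of $T^*_{X\times_{\mathbb{A}^1}X}(X\times X)$.

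For the support estimate, apply Theorem~\ref{thm:support-of-microlocalization-to-closed-subscheme} on $X\times X$ to the complex $\mathscr{H}_F=\mathbb{D}_X(F)\boxtimes F$ and the smooth closed subscheme $X\times_{\mathbb{A}^1}X$. This gives
\[
\mathrm{supp}\,\mu_{i_f}(\mathscr{H}_F)\subseteq \mathrm{SS}_\mu(\mathscr{H}_F)\cap T^*_{X\times_{\mathbb{A}^1}X}(X\times X).
\]
Lemma~\ref{lem:Kunneth-for-muHom} gives $\mathrm{SS}_\mu(\mathbb{D}_XF\boxtimes F)=\mathrm{SS}_\mu(\mathbb{D}_XF)\times\mathrm{SS}_\mu(F)$.

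We therefore need $\mathrm{SS}_\mu(\mathbb{D}_XF)$ in terms of $\mathrm{SS}_\mu(F)$. I expect $\mathrm{SS}_\mu(\mathbb{D}_XF)=\mathrm{SS}_\mu(F)$ up to the antipode, which is harmless since these sets are conical. To prove this, one identifies $\mathscr{H}_{\mathbb{D}F}$ with the pullback of $\mathscr{H}_F$ under the swap $\sigma$ of $X\times X$, using $\mathbb{D}\mathbb{D}F=F$. The swap acts on the normal bundle $TX$ of the diagonal by $-1$. Hence $\nu\mathcal{H}om(\mathbb{D}F,\mathbb{D}F)=(-1)^*\nu\mathcal{H}om(F,F)$, and the Fourier transform converts this into $(-1)^*$ on $T^*X$. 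This duality step is the main point needing care, namely the compatibility of Verdier specialization with the swap automorphism of the deformation space. It should follow from functoriality (Lemma~\ref{lem:Saito-lemma-on-deformation-spaces}) applied to $\sigma$.

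Given that, suppose a nonzero covector $(-a\,\mathrm{d}f_{x_1},a\,\mathrm{d}f_{x_2})$ with $a\neq0$ lies in $\mathrm{SS}_\mu(\mathbb{D}F)\times\mathrm{SS}_\mu(F)$. Conicity then gives $\mathrm{d}f_{x_2}\in\mathrm{SS}_\mu(F)$, i.e. $\mathrm{d}f(x_2)\in \mathrm{d}f(X)\cap\mathrm{SS}_\mu(F)$, contradicting \eqref{eq:transversal-condition-for-SSmu}. So the support lies in the zero section, and the two reductions above conclude that $f$ is universally locally acyclic relatively to $F$.

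Alternatively, the duality step can be avoided. Only the second factor is used: $\mathrm{SS}_\mu(\mathbb{D}F)\times\mathrm{SS}_\mu(F)\subseteq T^*X\times \mathrm{SS}_\mu(F)$ already forces $a\,\mathrm{d}f_{x_2}\in\mathrm{SS}_\mu(F)$, hence $a=0$. So the duality identification is unnecessary, and I would present this simpler route.
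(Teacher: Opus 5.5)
Your proposal is correct and is essentially the paper's proof: Künneth for $\mathrm{SS}_\mu$, conicality of $\mathrm{SS}_\mu(F)$ applied to the second factor to show that $\mathrm{SS}_\mu(\mathscr{H}_F)$ meets the conormal bundle of $i_f$ only in the zero section, then Theorem~\ref{thm:support-of-microlocalization-to-closed-subscheme}, Corollary~\ref{cor:mu-supported-on-zero-section-implies-transversal} and Lemma~\ref{lem:ULA-equivalent-transversal}. As you noticed, the duality identification of $\mathrm{SS}_\mu(\mathbb{D}_XF)$ is unnecessary, and the paper does not use it either.
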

	\begin{proof}
		By Lemma \ref{lem:Kunneth-for-muHom}, we have
		\begin{equation}
			\mathrm{SS}_\mu(\mathscr{H}_F)=\mathrm{SS}_\mu(\mathbb{D}_XF)\times\mathrm{SS}_\mu(F).
		\end{equation}
		As $f$ is smooth, $X\times_{\mathbb{A}^1}X$ is smooth and $i_f$ is of pure codimension $1$. Consider the conormal bundle
		\begin{equation}
			T^*_{X\times_{\mathbb{A}^1}X}(X\times X)\simeq X\times_{\mathbb{A}^1}X\times_{\mathbb{A}^1}T^*\mathbb{A}^1\simeq X\times_{\mathbb{A}^1}X\times\mathbb{A}^1.
		\end{equation}
		Since $\mathrm{SS}_\mu(F)$ is conical, the condition \eqref{eq:transversal-condition-for-SSmu} implies that
		\begin{equation}
			\mathrm{SS}_\mu(\mathscr{H}_F)\cap T^*_{X\times_{\mathbb{A}^1}X}(X\times X)\subset X\times_{\mathbb{A}^1}X\times_{\mathbb{A}^1}0_{T^*\mathbb{A}^1}.
		\end{equation}
		Then by Theorem \ref{thm:support-of-microlocalization-to-closed-subscheme}, we obtain that
		\begin{equation}
			\mathrm{supp}\,\mu_{i_f}(\mathscr{H}_F)\subset X\times_{\mathbb{A}^1}X\times_{\mathbb{A}^1}0_{T^*\mathbb{A}^1}.
		\end{equation}
		Applying Corollary \ref{cor:mu-supported-on-zero-section-implies-transversal} to $i_f$ and $\mathscr{H}_F$, we know that $i_f$ is $\mathscr{H}_F$-transversal. Now the result follows from
		Lemma \ref{lem:ULA-equivalent-transversal}.
	\end{proof}
	\subsection{Proof of Theorem \ref{thm:singular-support-equality}}\label{subsec:proof-of-equality}
	By Theorem \ref{thm:singular-support-inclusion}, \cite[Theorem 1.5]{Beilinson} and \cite[Theorem 4.4]{SaitoCC}, it suffices to show that
	\begin{equation}
		\mathrm{SS}^w(F)\subset \mathrm{SS}_\mu(F),
	\end{equation}
	where $\mathrm{SS}^w(F)$ is Beilinson's weak singular support.
	Consider a test function $f:U\to\mathbb{A}^1$ on $X$ and a point $x\in U$, such that
	\begin{equation}
		(x,\mathrm{d}f(x))\notin\mathrm{SS}_\mu(F|_U).
	\end{equation}
	If $x\notin\mathrm{supp}(F)$, then $F$ vanishes near $x$ and hence $f$ is universally locally acyclic with respect to $F$ at $x$.
	Otherwise, we may assume by Lemma \ref{lem:base-of-SSmu} that $\mathrm{d}f(x)\neq0$. After shrinking $U$, we may assume that $f:U\to\mathbb{A}^1$ is smooth and
	\begin{equation}
		\mathrm{d}f(U)\cap\mathrm{SS}_\mu(F|_U)=\emptyset.
	\end{equation}
	By Lemma \ref{lem:SSmu-detects-ULA}, we conclude that $f$ is universally locally acyclic with respect to $F$ at $x$.
	
	\section{Saito's conjecture on characteristic classes}
	\label{sec:characteristic-classes}
	\subsection{}
	Let $X$ be a smooth $k$-scheme and $F\in D^b_{\rm ctf}(X,\Lambda)$. On the singular support $\mathrm{SS}_\mu(F)=\mathrm{SS}(F)$, Saito defined a characteristic class $\mathrm{CC}_\mu(F)$, which we recall as follows.
	
	Let $p:TX\to X$ and $p^\vee:T^*X\to X$ be the projections, and let $0:X\hookrightarrow TX,0^\vee:X\hookrightarrow T^*X$ be the zero sections. Let $\Delta:X\hookrightarrow X\times X$ be the diagonal morphism.
	
	We have the following maps
	\begin{equation}\label{eq:trace}
		R\Delta_!\Lambda\xrightarrow{\rm coev}\mathscr{H}_F\xrightarrow{\rm ev} R\Delta_*\mathcal{K}_X,
	\end{equation}
	where $\mathrm{coev}$ classifies the identity cohomological correspondence, and $\mathrm{ev}$ is obtained by adjunction from
	\begin{equation}
		\Delta^*\mathscr{H}_F\simeq \mathbb{D}_X(F)\otimes F\xrightarrow{\rm ev}\mathcal{K}_X.
	\end{equation}
	
	Taking the specialization and the microlocalization of \eqref{eq:trace} along $\Delta$, we obtain
	\begin{align}
		&R0_!\Lambda\to\nu\mathcal{H}om_X(F,F)\to R0_*\mathcal{K}_X,\label{eq:nu-trace}\\
		&\Lambda\to\mu\mathcal{H}om_X(F,F)\to p^{\vee\ast}\mathcal{K}_X.\label{eq:mu-trace}
	\end{align}
	Let $i:\mathrm{SS}_\mu(F)=\mathrm{SS}(F)\hookrightarrow T^*X$. \eqref{eq:mu-trace} induces the following morphisms
	\begin{equation}\label{eq:define-CCmu}
		\Lambda\to i^*\mu\mathcal{H}om_X(F,F)\to i^!p^{\vee\ast}\mathcal{K}_X.
	\end{equation}
	The microlocal characteristic cycle
	\begin{equation}
		\mathrm{CC}_\mu(F)\in H^0(\mathrm{SS}(F),i^!p^{\vee\ast}\mathcal{K}_X)=H^0_{\mathrm{SS}(F)}(T^*X,p^{\vee\ast}\mathcal{K}_X)
	\end{equation}
	is the class defined by \eqref{eq:define-CCmu}.
	\subsection{}
	We first prove the following comparison.
	\begin{corollary}\label{cor:CC-cycleFinite}
		Let $X$ be a smooth connected scheme and $F\in D_{\rm ctf}^b(X,\Lambda)$.
		We have
		\begin{equation}\label{eq:CC-cycleFinite}
			\CCmu(F)=\operatorname{cl}(\CC(F)),
		\end{equation}
		where ${\rm cl}: Z_{\mathrm{dim}(X)}({\rm SS}(F))\to H^0_{{\rm SS}(F)}(T^\ast X, p^{\vee\ast} \mathcal K_{X/k})$ is the cycle class map.
	\end{corollary}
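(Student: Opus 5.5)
We reduce the comparison $\CCmu(F)=\operatorname{cl}(\CC(F))$ to an identity of integers, one for each irreducible component of $\mathrm{SS}(F)$, and then verify that identity with a test function and the Milnor formula.

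Write $C=\mathrm{SS}(F)=\mathrm{SS}_\mu(F)$, which holds by Theorem \ref{thm:singular-support-equality}. By Beilinson, $C$ has pure dimension $d=\dim X$, so $H^0_C(T^*X,p^{\vee\ast}\mathcal K_X)$ injects into $\bigoplus_a H^0_{C_a^\circ}$, where $C_a$ runs over the irreducible components and $C_a^\circ$ is a dense open part of $C_a$ that is smooth and meets no other component. Indeed, the complement of $\bigcup_a C_a^\circ$ has dimension $<d$, so the relevant cohomology with supports in it vanishes by purity and dimension counting. On each $C_a^\circ$ the group is $\Lambda$, generated by the fundamental class. Therefore both sides of \eqref{eq:CC-cycleFinite} are determined by integers $m_a$ and $m_a'$, and it suffices to show $m_a=m_a'$ for each $a$. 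Here $m_a'$ is the coefficient of $C_a$ in $\CC(F)$.

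For the local computation I fix a component $C_a$ and choose a point of $C_a^\circ$ lying outside the zero section. If every component lies in the zero section, then $X$ is a curve or $F$ is locally constant, and the claim is Saito's curve case \cite[Lemmas 2.1.2, 2.1.5]{SaitoMicro}. Following Saito's proof of the Milnor formula, I choose a function $f$ near the base point $x$ such that $\mathrm{d}f$ meets $C$ transversally at exactly that single point. Then $x$ is an isolated $C$-characteristic point, and the Milnor formula gives
\[
-\dim\mathrm{tot}\,\phi_x(F,f)=(\CC(F),\mathrm{d}f)_{T^*X,x}=\pm m_a'.
\]
The task is then to show that the intersection of $\CCmu(F)$ with the section $\mathrm{d}f$ computes the same vanishing-cycle invariant. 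I would pull back \eqref{eq:mu-trace} by the section $\mathrm{d}f:X\to T^*X$ and use the functoriality of $\mu\mathcal Hom$ along $i_f:X\times_{\mathbb A^1}X\hookrightarrow X\times X$, in the form of Theorem \ref{thm:support-of-microlocalization-to-closed-subscheme} and Corollary \ref{cor:mu_Z/X-as-a-retract-of-muHom}. This identifies $\mathrm{d}f^*\CCmu(F)$ localized at $x$ with the trace of the identity on the microlocalization $\mu_{i_f}(\mathscr H_F)$ at the conormal point. Following Lemma \ref{lem:ULA-equivalent-transversal}, this trace should be the cone of the transversality map $c_{i_f,\mathscr H_F}$, equivalently the characteristic class of the local cohomological correspondence $\phi_x(F,f)$. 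That class is the total dimension $\pm\dim\mathrm{tot}\,\phi_x$, by \cite{YangZhao} or Saito's result that the intersection number equals minus the total dimension.

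Alternatively, and perhaps more robustly, I would use the K\"unneth formula (Lemma \ref{lem:Kunneth-for-muHom}) together with a fibration or Radon-transform reduction to $P$. Via Lemma \ref{lem:Radon-retract-of-nuHom}, this reduces to the case of a pencil, where $\CCmu$ can be compared with the non-acyclicity classes of \cite{YangZhao}. Those classes are already known to agree with $\operatorname{cl}\CC$.

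The main obstacle is the local trace identification: showing that pulling back $\CCmu(F)$ along $\mathrm{d}f$ yields exactly the vanishing-cycle total dimension, with the correct sign. I would first attempt it by specializing the coevaluation and evaluation diagram \eqref{eq:coev-ev-composition} to $i_f$ and invoking compatibility of $\Psi^\otimes$ with traces.
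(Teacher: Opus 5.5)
Your reduction to coefficients matches what the paper does. Theorem~\ref{thm:singular-support-equality} gives $C=\mathrm{SS}(F)=\SSmu(F)$, and $C$ has pure dimension $\dim X$. Semi-purity then identifies $H^0_C(T^*X,p^{\vee*}\mathcal K_X)$ with $\bigoplus_a\Lambda\,[C_a]$, so you must compare the coefficients of $\CCmu(F)$ (elements of $\Lambda$, not integers) with those of $\CC(F)$.

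\textbf{The gap.} The step that carries all the content is left unproved: the identity $(\CCmu(F),\mathrm{d}f)_x=-\dim\mathrm{tot}\,\phi_x(F,f)$.
\begin{itemize}
\item The results you cite do not give it. Theorem~\ref{thm:support-of-microlocalization-to-closed-subscheme} and Corollary~\ref{cor:mu_Z/X-as-a-retract-of-muHom} are support and retract statements. Lemma~\ref{lem:ULA-equivalent-transversal} only says when $c_{i_f,\mathscr H_F}$ is an isomorphism. None of them identifies the pullback along $\mathrm{d}f$ of the class defined by \eqref{eq:mu-trace} with a trace on $\mu_{i_f}(\mathscr H_F)$, let alone computes that trace.
\item The heuristic is also off as stated. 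The trace of the identity of the local correspondence $\phi_x(F,f)$ is its Euler characteristic $\dim\phi_x$. This differs from $\dim\mathrm{tot}\,\phi_x$ by the Swan conductor, and extracting the Swan part (from the Fourier transform and the $\eta_0$-monodromy) is precisely the hard part.
\item The alternative is likewise only a hope. The retraction of Lemma~\ref{lem:Radon-retract-of-nuHom} controls supports, not classes. No comparison of $\CCmu$ with the non-acyclicity classes of \cite{YangZhao} is available in your proposal or in the paper.
\end{itemize}

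\textbf{The missing idea.} No local trace computation is needed. The paper's proof takes two lines. Theorem~\ref{thm:singular-support-equality} gives $\dim\SSmu(F)=\dim\mathrm{SS}(F)\le\dim X$. Under this dimension bound the equality is exactly \cite[Proposition~2.5.2]{SaitoMicro}.

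That proposition is proved globally; the argument is reproduced in Corollary~\ref{cor:CC-cycle}.
\begin{enumerate}
\item Embed $X$ projectively and blow up the axis of a pencil. This gives a proper $f\colon X'\to\mathbb P^1$ with an isolated characteristic point $x$, where $\mathrm{d}f$ meets only $C_a$ and $(C_a,\mathrm{d}f)_x=1$ (replacing $X$ by $X\times\mathbb P^1$ if $p=2$).
\item Apply the push-forward formula $f_!\CCmu=\CCmu Rf_*$ of \cite[Proposition~2.3.3]{SaitoMicro}.
\item Apply the explicit curve formula $\CCmu=-\mathrm{rank}\,[T^*_YY]-\sum_y a_y[T^*_yY]$ of \cite[Proposition~2.5.1]{SaitoMicro}.
\item The conductor identity $a_y(Rf_*F)=\dim\mathrm{tot}\,\phi_x$ together with the Milnor formula then gives equality of the two coefficients of $C_a$.
\end{enumerate}
This works for every component, the zero section included.

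\textbf{A secondary gap.} Your choice of ``a point of $C_a^\circ$ outside the zero section'' does not handle the zero section. If $T^*_XX$ is one of several components, no such point exists. If $F$ is locally constant on $X$ with $\dim X>1$, the curve case you cite does not apply.
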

	\begin{proof}
		By Theorem \ref{thm:singular-support-equality}, we have
		$\dim\SSmu(F)=\dim\mathrm{SS}(F)\leq\dim X$.  
		Now \eqref{eq:CC-cycleFinite} follows from
		\cite[Proposition~2.5.2]{SaitoMicro}.
	\end{proof}
	\subsection{}
	Let $X$ be a smooth scheme and $f:X\to S$ a morphism to a smooth curve. Let $s\in S$, and let $t$ be
	a local parameter at $s$.  After shrinking $S$ around $s$, we may assume
	that $dt$ is nowhere zero and  defines a section $dt: S\to T^\ast S$.  
	Let $\theta_{f,t}: X\to T^\ast X$ be the composition
	\begin{equation}
		X\xrightarrow{dt} X\times_ST^*S\xrightarrow{df} T^\ast X.
	\end{equation}
	Let $K\in D_{\rm ctf}^b(X,\Lambda)$.
	We impose the following  condition:
	\begin{equation}\label{eq:differential-support}
		\theta_{f,t}^{-1}\SSmu(K)\subset X_s.
	\end{equation}
	Under the condition \eqref{eq:differential-support}, we have
	\begin{equation}\label{eq:I-f-s}
		\theta_{f,t}^*\CCmu(K)
		\in H^0_{X_s}(X,\mathcal K_{X/k}).
	\end{equation}
	\begin{proposition}
		\label{prop:NA-comparison}
		Let $X$ be a smooth  scheme, let $S$ be a smooth connected curve, and let $f:X\to S$ be a proper morphism.  Suppose that $f$ is universally locally acyclic relatively  to
		$K\in D^b_{\mathrm{ctf}}(X,\Lambda)$ over $S\setminus\{s\}$ and that
		\eqref{eq:differential-support} holds.  Then we have
		\begin{equation}\label{eq:NA-comparison}
			f_*(\theta_{f,t}^*\CCmu(K))
			=-a_s(Rf_*K)[s]\qquad{\rm in}\qquad H_s^0(S,\mathcal K_{S/k}),
		\end{equation}
		where $a_s$ is the Artin conductor.
	\end{proposition}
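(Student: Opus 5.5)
The plan is to reduce \eqref{eq:NA-comparison} to the known Milnor-type formula for characteristic cycles, using Corollary~\ref{cor:CC-cycleFinite}. Corollary~\ref{cor:CC-cycleFinite} gives $\CCmu(K)=\operatorname{cl}(\CC(K))$ in $H^0_{\SSing(K)}(T^*X,p^{\vee\ast}\mathcal K_{X/k})$ on each connected component of $X$. Theorem~\ref{thm:singular-support-equality} gives $\SSmu(K)=\SSing(K)$. Hence condition \eqref{eq:differential-support} says exactly that $\theta_{f,t}^{-1}\SSing(K)\subset X_s$. In other words, the section $\theta_{f,t}$ meets $\SSing(K)$ only over the closed fiber.

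Next I would identify the pullback $\theta_{f,t}^*\CCmu(K)\in H^0_{X_s}(X,\mathcal K_{X/k})$. By compatibility of the cycle class map with refined Gysin pullback along the regular immersion $\theta_{f,t}:X\to T^*X$ of codimension $\dim X$, this pullback should equal $\operatorname{cl}_{X_s}$ of the intersection class $(\CC(K),\theta_{f,t}(X))_{T^*X}$. This class lies in $\mathrm{CH}_0$ of the proper intersection locus $\theta_{f,t}^{-1}\SSing(K)$, which is proper over $s$ since $f$ is proper. I will work componentwise on $X$, so that the dimension hypothesis in Corollary~\ref{cor:CC-cycleFinite} holds. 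I also need the standard fact that cycle classes commute with refined Gysin maps and with proper pushforward on bivariant étale homology; this is available from Saito's and Fujiwara's formalism.

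Then I push forward by $f_*$ to $H^0_s(S,\mathcal K_{S/k})\simeq\Lambda$. By compatibility of $\operatorname{cl}$ with proper pushforward, the left side becomes the degree of the $0$-cycle $(\CC(K),\theta_{f,t}(X))_{T^*X,X_s}$ times $[s]$. The conductor formula \cite[Theorem 7.6]{SaitoCC} (Saito's "index formula / conductor formula" for proper $f$ to a curve, ULA off $s$) identifies this degree with $-a_s(Rf_*K)$. This gives \eqref{eq:NA-comparison}.

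The main obstacle is applicability of the conductor formula. In \cite{SaitoCC} it is proved under hypotheses such as $X$ quasi-projective, or after a suitable reduction, and possibly with sign/twist conventions to be matched with $\mathcal K_{X/k}$. If the available version needs projectivity, I would first try to localize: the class is supported on the finite set $\theta_{f,t}^{-1}\SSing(K)$, and both sides are additive in a way that lets one compactify or use the local form (Milnor formula summed over isolated points, plus a total-dimension/Swan computation via Deligne–Milnor). The remaining checks are the routine sign comparisons.
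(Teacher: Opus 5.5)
\textbf{There is a genuine gap.} Your reduction is sound up to the last step, but that step carries all the content. After you replace $\CCmu(K)$ by $\operatorname{cl}(\CC(K))$ and push forward, what remains is the conductor formula $\deg(\CC(K),df)_{T^*X,X_s}=-a_s(Rf_*K)$, for an arbitrary proper $f$ from an arbitrary smooth $X$. Given Corollary~\ref{cor:CC-cycleFinite} and the cycle-class compatibilities you invoke, this formula with values in $\Lambda$ is equivalent to the proposition you are proving, so it cannot be dispatched by a citation.

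The reference you give, \cite[Theorem~7.6]{SaitoCC}, concerns pull-back of $\CC$ by properly transversal morphisms; it is not a conductor formula. A conductor formula for $\CC$ is also not available as a black box in the generality needed here: $X$ is not assumed quasi-projective, and the characteristic locus need not be isolated.

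Your fallback does not close this. It assumes $\theta_{f,t}^{-1}\SSing(K)$ is finite, whereas \eqref{eq:differential-support} only places it inside $X_s$. For example, take $f$ smooth and $K=j_!\Lambda$ with $j:X\setminus X_s\hookrightarrow X$. Then $\SSing(K)=T^*_XX\cup T^*_{X_s}X$, and since $d(t\circ f)$ spans the conormal line of $X_s$ along $X_s$, you get $\theta_{f,t}^{-1}\SSing(K)=X_s$. The Milnor formula combined with $a_s(Rf_*K)=\sum_x\dim\operatorname{tot}R\Phi_x(K,f)$ only handles isolated characteristic points. The non-isolated case is exactly where a conductor formula on $X$ is hard.

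\textbf{The missing idea} is to push forward \emph{before} comparing with cycles, so that the comparison with $\CC$ only happens on the curve.
Saito's microlocal push-forward $f_!\CCmu(K)=\CCmu(Rf_*K)$ \cite[Proposition~2.3.3]{SaitoMicro} holds for every proper $f$.

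Put $C_S=f_\circ\SSmu(K)\cup\SSing(Rf_*K)$. Condition \eqref{eq:differential-support}, together with the local constancy of $Rf_*K$ on $S\setminus\{s\}$, gives $dt^{-1}(C_S)\subseteq\{s\}$. Via the correspondence $T^*X\leftarrow X\times_ST^*S\to T^*S$, this yields a commutative square $\theta_{\mathrm{id},t}^*\circ f_!=f_*\circ\theta_{f,t}^*$ on supported cohomology.

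On the curve $S$, $\CCmu(Rf_*K)=\operatorname{cl}\CC(Rf_*K)$ by \cite[Proposition~2.5.1]{SaitoMicro}. The coefficient of $T^*_sS$ in $\CC(Rf_*K)$ is $-a_s(Rf_*K)$, and $dt$ misses the zero section and meets $T^*_sS$ transversally with multiplicity one.

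This is the paper's argument. It needs no intersection theory on $T^*X$, no projectivity hypothesis, and not even Theorem~\ref{thm:singular-support-equality}. Conversely, combined with Corollary~\ref{cor:CC-cycleFinite}, it \emph{yields} the $\Lambda$-valued conductor formula you wanted to quote.
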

	
	\begin{proof}
		We work over the henselization $S_{(s)}$ and take cohomology with support on the closed
		fiber.  Consider the  correspondence
		\begin{align}\label{eq:correspondenceTYYSTSTS}
			T^*X\xleftarrow{\,d f\,}X\times_ST^*S
			\xrightarrow{\,\operatorname{pr}\,}T^*S.
		\end{align}
		Put $C_S=f_\circ\SSmu(K)\cup{\rm SS}(Rf_*K)$. 
		By the assumption, we have $dt^{-1}(C_S)\subseteq \{s\}$.
		By condition \eqref{eq:differential-support},
		we have  a commutative diagram
		\begin{equation}\label{eq:commutativeintheproofofcoreTYYSTSTS}
			\begin{tikzcd}[column sep=large,row sep=large]
				H^0_{\SSmu(K)}(T^*X,p_X^{\vee *}\mathcal K_{X/k})
				\arrow[r,"f_!"]\arrow[d,"\theta^\ast_{f,t}"']
				&H^0_{C_S}(T^*S,p_S^{\vee *}\mathcal K_{S/k})
				\arrow[d,"\theta^\ast_{{\rm id},t}"]\\
				H^0_{X_s}(X,\mathcal K_{X/k})\arrow[r,"f_*"']
				&H_s^0(S,\mathcal K_{S/k}).
			\end{tikzcd}
		\end{equation}
		By \cite[Proposition~2.3.3]{SaitoMicro}, we have
		\begin{equation}
			f_!\CCmu(K)=\CCmu(Rf_*K).
		\end{equation}
		On the curve $S$, this class is the cycle class of $\CC(Rf_*K)$ by
		\cite[Proposition~2.5.1]{SaitoMicro}.  The coefficient of $T_s^*S$ in
		$\CC(Rf_*K)$ is $-a_s(Rf_*K)$.  Since $dt$ is disjoint from the zero section
		and meets $T_s^*S$ transversally with multiplicity one, it follows that
		\begin{align}
			f_*\theta^\ast_{f,t}(\CCmu(K))=-a_s(Rf_*K)[s].
		\end{align}
		This finishes the proof.
	\end{proof}
	
	\subsection{}\label{subsec:pullbackbyzeroNotation}
	Let $i:Z\hookrightarrow X$ be a closed immersion with $X$ smooth and connected.  
	By Grothendieck
	duality, we have $\mathcal K_{Z/k}= i^!\mathcal K_{X/k}$.  Excision gives an isomorphism
	\begin{equation}\label{eq:excision}
		\iota_Z:H^0(Z,\mathcal K_{Z/k})\xrightarrow{\sim}H_Z^0(X,\mathcal K_{X/k}).
	\end{equation}
	Let $0_X:X\hookrightarrow T^*X$ be the zero section.  Let
	$F\in D^b_{\mathrm{ctf}}(Z,\Lambda)$ and $G=i_*F$. Since both
	$\SSmu(G)$ and $\SSing(G)$ lie over $Z$,   pullback by the zero-section defines
	a morphism
	\begin{equation}\label{eq:supported-pullback}
		0_{X,Z}^*:H^0_{\SSing(G)}(T^*X,p_X^{\vee *}\mathcal K_{X/k})
		\longrightarrow H_Z^0(X,\mathcal K_{X/k}).
	\end{equation}
	
	\begin{lemma}
		\label{lem:micro-zero}
		With the notation of Subsection~ \ref{subsec:pullbackbyzeroNotation}, we have
		\begin{equation}\label{eq:micro-zero}
			0_{X,Z}^*\CCmu(i_*F)=\iota_Z(C_{Z/k}(F)).
		\end{equation}
	\end{lemma}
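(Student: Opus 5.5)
We need to show that pulling $\CCmu(i_*F)$ back along the zero section recovers the Abbes--Saito characteristic class $C_{Z/k}(F)$, transported to $H^0_Z(X,\mathcal K_{X/k})$ by excision.

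The plan is to unwind both sides as classes built from the coevaluation/evaluation pair \eqref{eq:trace}. Recall that $C_{Z/k}(F)$ is the trace of the identity correspondence: the composite
\[
\Lambda_Z\to \Delta_Z^!\mathscr H_F\xrightarrow{\ \Delta_Z^!\mathrm{ev}\ }\Delta_Z^!R\Delta_{Z*}\mathcal K_{Z/k}=\mathcal K_{Z/k}.
\]
Alternatively, it is the composite $\Delta_Z^*\Delta_{Z!}\Lambda\to\Delta_Z^*\mathscr H_F\to\ldots$ paired through $\Delta_Z^*\to\Delta_Z^!$. For $G=i_*F$ we have $\mathscr H_G=(i\times i)_*\mathscr H_F$, and the coev/ev maps for $G$ are the pushforwards of those for $F$. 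The first step is therefore proper functoriality: $C_{X/k}(i_*F)=\iota_Z C_{Z/k}(F)$, regarded as a class in $H^0_Z(X,\mathcal K_{X/k})$. This is the standard compatibility of characteristic classes with proper pushforward (Abbes--Saito, or via the symmetric monoidal functor on $\mathrm{CohCorr}_k$ of the Appendix). It reduces the lemma to the statement $0_X^{*}\CCmu(G)=C_{X/k}(G)$ for $G$ on the smooth $X$, with supports in $Z$ kept track of throughout.

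The second step is to compare the diagonal with its specialization. Applying $\nu_\Delta$ to \eqref{eq:trace} gives \eqref{eq:nu-trace}. By \cite[Proposition 1.2.8]{SaitoMicro}, recalled in \eqref{eq:zero-section-of-nu}, there is a canonical isomorphism $0^!\nu\mathcal Hom_X(G,G)\simeq\Delta^!\mathscr H_G$. I would check that, under this isomorphism, $0^!$ of the maps in \eqref{eq:nu-trace} is $\Delta^!$ of coev and ev. This naturality can be verified as in Lemma~\ref{lem:specialization-detect-transversality}, using the functor $\Psi^\otimes$ on cohomological correspondences over $S=(\mathbb A^1)^{\rm sh}_0$ and the fact that $\mathrm D_{\mathrm{id}}(\Delta)$ deforms $\Delta$ to the zero section. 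It follows that $C_{X/k}(G)$ equals the class obtained from \eqref{eq:nu-trace} by applying $0^!$; call it $0^!$ of the $\nu$-trace.

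The third step is Fourier transform. The zero section of $T^*X$ is Fourier dual to the projection $p:TX\to X$: for $K$ on $TX$ one has $0^{\vee*}\mathcal F_\psi K\simeq Rp_!K$, and dually $Rp_*\simeq 0^{\vee!}\mathcal F_\psi$ up to shift and twist. Meanwhile, on the $TX$ side the class $0^!$ arises from $R0_!\Lambda\to\nu\mathcal Hom\to R0_*\mathcal K_X$ via the adjunction on $0$. Under $\mathcal F_\psi$, $R0_!\Lambda$ goes to $\Lambda_{T^*X}$ and $R0_*\mathcal K_X$ goes to $p^{\vee*}\mathcal K_X$, so \eqref{eq:mu-trace} is obtained by transport. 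Restricting along $0^\vee$ with supports, I would then identify $0^{\vee*}$ of the class \eqref{eq:define-CCmu} (refined to $\SSing(G)$, which lies over $Z$) with the $\nu$-side class.

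I expect this last compatibility to be the main obstacle: one must match pullback along $0^\vee$ with supports in $\SSing(G)\cap 0_X=Z$ against $0^!$ on the specialization side, keeping signs and twists consistent. The first thing I would try is to observe that both classes are the image of $\mathrm{id}$ under a composite of maps in $D^b_{\rm ctf}(X)$ that is natural in the pair (coev, ev), and then reduce to the universal case via the projection formula for $\mathcal F_\psi$ together with \cite[(1.39)]{SaitoMicro}. Alternatively, \cite[Section 2]{SaitoMicro} may already provide $0^{\vee*}\CCmu=C$ for smooth $X$, in which case only the supports and the pushforward of the first step remain to be checked.
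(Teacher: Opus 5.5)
The skeleton matches the paper's proof. You reduce from $F$ on $Z$ to $G=i_*F$ via $\mathscr H_G=R(i\times i)_*\mathscr H_F$ with supported evaluation, use Fourier base change $0^{\vee*}\mathcal F_\psi K\simeq Rp_!K$, and use $0^!\nu\mathscr H_G\simeq\Delta^!\mathscr H_G$ from \cite[Proposition 1.2.8]{SaitoMicro}. But the step you defer as ``the main obstacle'' is the one that needs an idea, and you do not supply it.

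Since $0_{X,Z}^*$ is a $*$-pullback along $0^\vee$, Fourier base change computes $0_{X,Z}^*\CCmu(G)$ from $\Lambda_X=Rp_!R0_!\Lambda\to Rp_!\nu\mathscr H_G\to Rp_!R0_*\mathcal K_{X/k}=\mathcal K_{X/k}$. Your Step 2 instead describes $C_{X/k}(G)$ through $\Lambda_X\to 0^!\nu\mathscr H_G\to\mathcal K_{X/k}$. Nothing in the proposal relates $Rp_!$ to $0^!$ on $TX$:
\begin{itemize}
\item The identity $0^{\vee!}\mathcal F_\psi\simeq Rp_*$ you mention involves $0^{\vee!}$ rather than $0^{\vee*}$, and $Rp_*$ rather than $0^!$, so it does not bridge the two.
\item The projection formula and \cite[(1.39)]{SaitoMicro} only reproduce $0^{\vee*}\mathcal F_\psi=Rp_!$.
\item Deferring to \cite{SaitoMicro} is speculative.
\end{itemize}

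The missing ingredient is monodromicity. On $TX$ there is a canonical transformation $0^!K=Rp_!R0_*0^!K\to Rp_!K$. It is an isomorphism for monodromic $K$ \cite[Lemma 6.1]{Ver83}, and $\nu\mathscr H_G$ is monodromic \cite[Section 8 (SP1)]{Ver83}. This gives the paper's chain $0_X^*\muhom(G,G)\simeq Rp_!\nu\mathscr H_G\simeq 0^!\nu\mathscr H_G\simeq\Delta_X^!\mathscr H_G$.

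On $R0_!\Lambda$ and $R0_*\mathcal K_{X/k}$ the transformation is compatible with the canonical identifications with $\Lambda_X$ and $\mathcal K_{X/k}$. By naturality, the $Rp_!$-trace and the $0^!$-trace therefore give the same composite. For that equality naturality alone suffices; the isomorphism is what identifies the middle terms, as the paper does.

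You must also make the comparison with supports, because $H^0_Z(X,\mathcal K_{X/k})\to H^0(X,\mathcal K_{X/k})$ need not be injective. Both $Rp_!\nu\mathscr H_G$ and $0^!\nu\mathscr H_G=i_*\Delta_Z^!\mathscr H_F$ are supported on $Z$. Hence their maps to $\mathcal K_{X/k}$ factor uniquely through $R\Gamma_Z\mathcal K_{X/k}$, and the comparison passes to the refined classes.

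Minor point: the canonical map is $\Delta_Z^!\to\Delta_Z^*$, not the reverse.
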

	
	\begin{proof}
		Put $j:X\setminus Z\hookrightarrow X$.  By the localization triangle and duality, we have canonical isomorphisms
		\begin{equation}\label{eq:local-dualizing-support}
			R\Gamma_Z\mathcal K_{X/k}:
			=\operatorname{fib}(\mathcal K_{X/k}\to Rj_*j^*\mathcal K_{X/k})
			\xrightarrow{\sim}i_*i^!\mathcal K_{X/k}=i_*\mathcal K_{Z/k},
			\qquad
			\D_X(i_*F)\xrightarrow{\sim}i_*\D_ZF.
		\end{equation}
		Therefore the evaluation morphism for $G=i_*F$ has a canonical factorization
		with support:
		\begin{equation}\label{eq:supported-evaluation}
			\D_XG\otimes^LG
			\longrightarrow i_*(\D_ZF\otimes^LF)
			\xrightarrow{i_*\operatorname{ev}_F}i_*\mathcal K_{Z/k}
			\xrightarrow{\sim}R\Gamma_Z\mathcal K_{X/k}\longrightarrow \mathcal K_{X/k}.
		\end{equation}
		The first morphism is induced by the projection formula.  After applying
		$i^*$, it becomes the canonical tensor-product morphism on $Z$ and hence is an isomorphism.
		
		Consider
		$\mathscr{H}_G=R\mathcal Hom(\operatorname{pr}_1^*G,
		\operatorname{pr}_2^!G)$ and $
		\mathscr{H}_F=R\mathcal Hom(\operatorname{pr}_1^*F,
		\operatorname{pr}_2^!F)$.
		By proper base change, we have
		canonical isomorphisms
		\begin{align}
			\mathscr{H}_G= R(i\times i)_*\mathscr H_F,
			\qquad
			\Delta_X^!\mathscr{H}_G= i_*\Delta_Z^!\mathscr{H}_F,
			\qquad
			\Delta_X^*\mathscr{H}_G= i_*\Delta_Z^*\mathscr{H}_F.
		\end{align}
		Under these isomorphisms, the full faithfulness of $i_*$ identifies $\id_G$
		with $i_*(\id_F)$, and the projection formula identifies the evaluation
		morphisms.  We therefore obtain a commutative diagram in $D_{\rm ctf}^b(X,\Lambda)$:
		\begin{equation}\label{eq:supported-diagonal-trace-diagram}
			\begin{tikzcd}[column sep=large,row sep=large]
				\Lambda_X\arrow[r,"1_G"]\arrow[d]
				&\Delta_X^!\mathscr{H}_G\arrow[r]\arrow[d,"\sim"']
				&\Delta_X^*\mathscr{H}_G\arrow[r,"{\rm ev}_{G,Z}"]
				\arrow[d,"\sim"']
				&R\Gamma_Z\mathcal K_{X/k}\arrow[d,"\sim"]\\
				i_*\Lambda_Z\arrow[r,"i_*1_F"']
				&i_*\Delta_Z^!\mathscr{H}_F\arrow[r]
				&i_*\Delta_Z^*\mathscr{H}_F
				\arrow[r,"i_*\operatorname{ev}_F"']
				&i_*\mathcal K_{Z/k},
			\end{tikzcd}
		\end{equation}
		where ${\rm ev}_{G,Z}$ is the supported evaluation morphism
		\begin{align}
			{\rm ev}_{G,Z}: \Delta_X^*\mathscr{H}_G\simeq 
			\D_XG\otimes^LG \xrightarrow{{\eqref{eq:supported-evaluation}}}
			R\Gamma_Z\mathcal K_{X/k}.
		\end{align}
		
		We compare \eqref{eq:supported-diagonal-trace-diagram} with
		microlocalization.  
		Note that $\nu\mathscr{H}_G$ is monodromic by \cite[Section 8 (SP1)]{Ver83}.
		We have isomorphisms 
		\begin{align}
			0^\ast_X\muhom(G,G)\simeq p_!\nu\mathscr{H}_G\simeq 0^{!}_{TX}\nu\mathscr{H}_G\simeq \Delta_X^!\mathscr{H}_G,
		\end{align}
		where the first isomorphism follows from \cite[Lemma 1.1.2.1 ]{SaitoMicro} and the second follows from
		\cite[Lemma 6.1]{Ver83} for monodromic sheaves, and the third isomorphism follows from  \cite[Proposition 1.2.8]{SaitoMicro}.
		From \eqref{eq:supported-diagonal-trace-diagram},
		we obtain a commutative diagram with support:
		\begin{equation}\label{eq:supported-Saito-zero-diagram}
			\begin{tikzcd}[column sep=huge,row sep=large]
				\Lambda_X\arrow[r]\arrow[d,equal]
				&0_X^*\muhom(G,G)\arrow[r,"\operatorname{ev}^\prime_{\mu,Z}"]
				\arrow[d,"\sigma_G"]
				&R\Gamma_Z\mathcal K_{X/k}\arrow[d]
				\\
				\Lambda_X\arrow[r,"1_G"']
				&\Delta_X^!\mathscr{H}_G\arrow[r]
				&\mathcal K_{X/k},
			\end{tikzcd}
		\end{equation}
		where ${\rm ev}^\prime_{\mu,Z}$ is the composition
		\begin{align}
			0_X^*\muhom(G,G)\simeq \Delta_X^!\mathscr{H}_G\to \Delta_X^\ast\mathscr{H}_G\xrightarrow{{\rm ev}_{G,Z}}R\Gamma_Z \mathcal K_{X/k}.
		\end{align}
		The lower composite is the upper row of
		\eqref{eq:supported-diagonal-trace-diagram} followed by the forget-support
		morphism.  The morphism $\sigma_G$ is the composite of Fourier base change
		and the zero-section unit for specialization.  The left square is the
		Fourier--specialization unit square, and the right square follows from the
		naturality of evaluation.

		Combining \eqref{eq:supported-diagonal-trace-diagram} and
		\eqref{eq:supported-Saito-zero-diagram}, we obtain the factorization
		\begin{equation}\label{eq:trace-factorization}
			\Lambda_X\longrightarrow i_*\Lambda_Z
			\xrightarrow{i_*C_{Z/k}(F)}i_*\mathcal K_{Z/k}
			= R\Gamma_Z\mathcal K_{X/k}\longrightarrow\mathcal  K_{X/k}.
		\end{equation}
		By excision, the part of \eqref{eq:trace-factorization} ending in
		$R\Gamma_Z\mathcal K_{X/k}$ represents $\iota_Z(C_{Z/k}(F))$.  On the other hand,
		\eqref{eq:supported-Saito-zero-diagram} identifies it with
		$0_{X,Z}^*\CCmu(G)$.  This proves \eqref{eq:micro-zero}. 
	\end{proof}
	
	\begin{lemma}
		\label{lem:cycle-zero}
		Consider the notation in \ref{subsec:pullbackbyzeroNotation}.
		Let $\operatorname{cl}_X: CH_{\dim X}(\SSing(i_*F))\to H^0_{\SSing(i_*F)}(T^*X,p_X^{\vee *}\mathcal K_{X/k})$ be the cycle-class map with support.  Then we have
		\begin{equation}\label{eq:cycle-zero}
			0_{X,Z}^*\operatorname{cl}_X(\CC(i_*F))
			=\iota_Z\bigl(\operatorname{cl}_Z(cc_{Z,0}(F))\bigr).
		\end{equation}
	\end{lemma}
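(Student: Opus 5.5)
This identity is essentially a matter of unwinding Saito's definition of $cc_{Z,0}(F)$ \cite[Definition 6.7]{SaitoCC} and using the compatibility of cycle classes with refined Gysin pullback. No microlocal input is needed. Recall that $cc_{Z,0}(F)$ is defined as the pullback of the cycle $\CC(i_*F)$ by the zero section. Concretely, $\CC(i_*F)$ is a $\dim X$-cycle supported on $\SSing(i_*F)$, which lies over $Z$. Intersecting it with the zero section $0_X(X)\subset T^*X$, a regular embedding of codimension $\dim X$, gives the refined Gysin pullback
\[
0_X^{!}\CC(i_*F)\in CH_0\bigl(\SSing(i_*F)\cap 0_X(X)\bigr),
\]
which is then pushed forward to $CH_0(Z)$. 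Up to the sign convention in \cite[Definition 6.7]{SaitoCC}, this class is $cc_{Z,0}(F)$, and I will take that as the definition to compare against.

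The plan has two steps. First, I invoke the compatibility of the cycle class map with refined Gysin maps along a regular immersion. For the closed immersion $0_X:X\hookrightarrow T^*X$, the closed subset $C=\SSing(i_*F)$, and $C\cap 0_X(X)\subseteq Z$, the following square commutes:
\[
\begin{tikzcd}
CH_{\dim X}(C)\arrow[r,"\operatorname{cl}_X"]\arrow[d,"0_X^!"']&H^0_{C}(T^*X,p_X^{\vee*}\mathcal K_{X/k})\arrow[d,"0_{X,Z}^*"]\\
CH_0(Z)\arrow[r,"\iota_Z\circ\operatorname{cl}_Z"']&H^0_Z(X,\mathcal K_{X/k}).
\end{tikzcd}
\]
Here the right vertical arrow is the supported pullback \eqref{eq:supported-pullback}. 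It uses $0_X^*p_X^{\vee*}\mathcal K_{X/k}=\mathcal K_{X/k}$ together with the restriction of supports from $C$ to $C\cap 0_X(X)\subseteq Z$. This commutativity is a standard property of the Borel--Moore cycle class, as in \cite[Exposé XVI]{ILO14} or Fulton-style refined intersection in étale homology. Second, applying this square to $\CC(i_*F)$ and using the definition of $cc_{Z,0}(F)$ yields \eqref{eq:cycle-zero} directly.

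The main obstacle is the first step: checking that the étale cycle class map commutes with refined Gysin pullback for the zero section when the cycle is supported on a closed subset that is not a vector subbundle. To handle this, I would deform to the normal cone. The zero section of a vector bundle has normal bundle $T^*X$ itself, so the specialization of $C$ to the normal cone is $C$ (since $C$ is conical), and $0_X^!$ becomes intersection with the zero section of the bundle $C\times_X T^*X|_Z$-type cone. The cycle class of that intersection is computed by the Euler/Thom class of the pulled-back bundle, which is exactly what the supported pullback $0_{X,Z}^*$ does on cohomology. I also need to track the identification $\mathcal K_{Z/k}=i^!\mathcal K_{X/k}$ through the excision isomorphism $\iota_Z$ of \eqref{eq:excision}, and match the sign conventions of $cc_{Z,0}$ with those of the Gysin map.
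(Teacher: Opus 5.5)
Your proposal is correct and is essentially the paper's proof. The paper uses the same commutative square, expressing compatibility of $\operatorname{cl}_X$ with the refined Gysin pullback $0_X^!$ along the zero section followed by $\iota_Z\circ\operatorname{cl}_Z$, treats that compatibility as standard rather than re-deriving it via deformation to the normal cone, applies it to $\CC(i_*F)$, and settles the point you left hedged (the identification $0_X^!\CC(i_*F)=cc_{Z,0}(F)$, including signs) by citing \cite[Definition~6.7.2 and Lemma~6.9.1]{SaitoCC} instead of taking it as the literal definition.
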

	
	\begin{proof}
		The cycle $\CC(i_*F)$ is a cycle on $T^*X$ whose base is contained in $Z$.
		Since the zero section is a regular immersion, the compatibility of the
		cycle-class map with refined Gysin pullback gives a commutative diagram
		\begin{equation}
			\begin{aligned}
				\begin{tikzcd}[column sep=large]
					CH_{\dim X}(\SSing(i_*F))\arrow[r,"\operatorname{cl}_X"]
					\arrow[d,"0_X^!"']
					&H^0_{\SSing(i_*F)}(T^*X,p_X^{\vee *}\mathcal K_{X/k})
					\arrow[d,"0_{X,Z}^*"]\\
					CH_0(Z)\arrow[r,"\iota_Z\circ\operatorname{cl}_Z"']
					&H_Z^0(X,\mathcal K_{X/k}).
				\end{tikzcd}
			\end{aligned}
		\end{equation}
		The lower horizontal morphism is the cycle-class map followed by the excision
		isomorphism, since the intersection is supported on $Z$.  Applying the
		diagram to $\CC(i_*F)$, we obtain
		\begin{align}
			0_{X,Z}^*\operatorname{cl}_X(\CC(i_*F))
			=\iota_Z\operatorname{cl}_Z(0_X^!\CC(i_*F))=\iota_Z\operatorname{cl}_Z(cc_{Z,0}(F)),
		\end{align}
		where the second equality follows from
		\cite[Definition~6.7.2 and Lemma~6.9.1]{SaitoCC}.
	\end{proof}
	
	\begin{theorem}[{\cite[Conjecture~6.8]{SaitoCC}}]
		\label{thm:characteristic-class}
		Let $Z$ be a $k$-scheme admitting a closed immersion into a smooth
		$k$-scheme.  For every
		$F\in D^b_{\mathrm{ctf}}(Z,\Lambda)$, we have
		\begin{equation}\label{eq:full-characteristic}
			C_{Z/k}(F)=\operatorname{cl}_Z(cc_{Z,0}(F))
			\quad\text{in }H^0(Z,\mathcal K_{Z/k}).
		\end{equation}
	\end{theorem}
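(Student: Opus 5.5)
The plan is to deduce the theorem from Lemma~\ref{lem:micro-zero}, Lemma~\ref{lem:cycle-zero} and Corollary~\ref{cor:CC-cycleFinite}. The idea is to pass from $Z$ to the smooth ambient scheme $X$ and compare the two sides after pulling back along the zero section.

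First, fix a closed immersion $i:Z\hookrightarrow X$ into a smooth $k$-scheme. The question is local in nature, and both sides of \eqref{eq:full-characteristic} are compatible with restriction to open subschemes: $C_{Z/k}$ by its construction via the trace map, and $cc_{Z,0}$ by \cite[Lemma~6.9.1]{SaitoCC}. Using this and decomposing $X$ into connected components, I would reduce to $X$ connected. I would also check that \eqref{eq:full-characteristic} does not depend on the choice of embedding; this is part of Saito's set-up in \cite[Definition~6.7]{SaitoCC}, so I would simply cite it.

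Next, put $G=i_*F\in D^b_{\mathrm{ctf}}(X,\Lambda)$. Since $\SSing(G)=\SSmu(G)$ by Theorem~\ref{thm:singular-support-equality}, Corollary~\ref{cor:CC-cycleFinite} gives
\[
\CCmu(G)=\operatorname{cl}_X(\CC(G))
\]
in $H^0_{\SSing(G)}(T^*X,p_X^{\vee*}\mathcal K_{X/k})$. I would then apply the supported zero-section pullback $0_{X,Z}^*$ of \eqref{eq:supported-pullback} to both sides. By Lemma~\ref{lem:micro-zero}, the left side becomes $\iota_Z(C_{Z/k}(F))$. By Lemma~\ref{lem:cycle-zero}, the right side becomes $\iota_Z(\operatorname{cl}_Z(cc_{Z,0}(F)))$. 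Since the excision map $\iota_Z$ of \eqref{eq:excision} is an isomorphism, the two classes agree in $H^0(Z,\mathcal K_{Z/k})$, which is \eqref{eq:full-characteristic}.

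The main obstacle I expect is bookkeeping of supports. The identity from Corollary~\ref{cor:CC-cycleFinite} must hold in cohomology supported on $\SSing(G)$, not merely on $T^*X$, so that $0_{X,Z}^*$ lands in $H^0_Z$ rather than in $H^0(X,\mathcal K_{X/k})$. Using the unsupported pullback would lose information, because $H^0_Z(X,\mathcal K_{X/k})\to H^0(X,\mathcal K_{X/k})$ need not be injective. I would check that Saito's \cite[Proposition~2.5.2]{SaitoMicro} is indeed stated with supports. I would also check that the definition of $\CCmu$ in \eqref{eq:define-CCmu} uses the same supported class that Lemma~\ref{lem:micro-zero} pulls back. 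Once these two compatibilities are confirmed, the argument is formal.
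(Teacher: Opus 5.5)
Your proposal is correct and follows the paper's own argument. You embed $Z$ in a smooth $X$ and identify $\CCmu(i_*F)$ with $\operatorname{cl}_X(\CC(i_*F))$ in cohomology supported on $\SSing(i_*F)$; this is exactly how Corollary~\ref{cor:CC-cycleFinite} and \cite[Proposition~2.5.2]{SaitoMicro} are stated, so your concern about supports is already settled. You then apply $0_{X,Z}^*$, Lemmas~\ref{lem:micro-zero} and~\ref{lem:cycle-zero}, and the excision isomorphism $\iota_Z$, as the paper does. The only difference is cosmetic: the paper uses just the inclusion $\SSmu(G)\subseteq\SSing(G)$ and Beilinson's bound $\dim\SSing(G)\le\dim X$, then extends supports from $\SSmu(G)$ to $\SSing(G)$, instead of passing through the full equality.
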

	
	\begin{proof}
		Take a closed immersion $i:Z\hookrightarrow X$ with $X$ smooth, and put
		$G=i_*F$.  By Theorem \ref{thm:main}, we have
		$\SSmu(G)\subset\SSing(G)$.
		By \cite[Theorem 1.2]{Beilinson},
		$\dim\SSing(G)\leq\dim X$.  Hence $\dim\SSmu(G)\leq\dim X$.  Applying
		\cite[Proposition~2.5.2]{SaitoMicro}, we obtain
		\begin{equation}\label{eq:ambient-CC}
			\CCmu(G)=\operatorname{cl}_X(\CC(G))
			\quad\text{in }H^0_{\SSing(G)}(T^*X,p_X^{\vee *}\mathcal K_{X/k}),
		\end{equation}
		where, by abuse of notation, $\CCmu(G)$ also denotes its image 
		under the map 
		\begin{align}
			H^0_{{\rm SS}_\mu(G)}(T^*X,p_X^{\vee *}\mathcal K_{X/k})\to H^0_{\SSing(G)}(T^*X,p_X^{\vee *}\mathcal K_{X/k}).
		\end{align}
		Apply $0_{X,Z}^*$ to \eqref{eq:ambient-CC}.  By Lemma
		\ref{lem:micro-zero} and Lemma \ref{lem:cycle-zero}, the two sides are, respectively,
		\begin{align}
			\iota_Z(C_{Z/k}(F))
			\quad\text{and}\quad
			\iota_Z(\operatorname{cl}_Z(cc_{Z,0}(F))).
		\end{align}
		Since $\iota_Z$ is an isomorphism, we obtain
		\eqref{eq:full-characteristic}.
	\end{proof}

	\section{External products}
	\label{sec:external-products}

	\subsection{}
	
	Let $X$ and $Y$ be smooth $k$-schemes.  Let
	$F\in D^b_{\mathrm{ctf}}(X,\Lambda)$ and
	$G\in D^b_{\mathrm{ctf}}(Y,\Lambda)$.
	We identify
	\begin{equation}\label{eq:cotangent-product}
		T^*(X\times Y)=T^*X\times T^*Y
	\end{equation}
	by the canonical decomposition
	$\Omega^1_{X\times Y/k}=
	\operatorname{pr}_X^*\Omega^1_{X/k}\oplus
	\operatorname{pr}_Y^*\Omega^1_{Y/k}$.
	
	For cohomology classes with support, $\alpha\boxtimes\beta$ denotes the
	cohomological external product followed by the canonical isomorphism  (cf. \cite[(3.2)]{YangZhao})
	\begin{equation}\label{eq:dualizing-product}
		\mathcal K_{X/k}\boxtimes \mathcal K_{Y/k}\xrightarrow{\sim}\mathcal K_{X\times Y/k}.
	\end{equation}
	
	\begin{lemma}
		\label{lem:product-specialization}
		Let
		$\mathscr H_F=R\mathcal Hom(p_{1,X}^*F,p_{2,X}^!F)$ and
		$\mathscr H_G=R\mathcal Hom(p_{1,Y}^*G,p_{2,Y}^!G)$.
		There are canonical isomorphisms
		\begin{align}
			\mathscr H_{F\boxtimes G}
			&\simeq
			\mathscr H_F\boxtimes\mathscr H_G,
			\label{eq:kernel-product}\\
			\nu_{\Delta_{X\times Y}}(\mathscr H_{F\boxtimes G})
			&\simeq
			\nu_{\Delta_X}(\mathscr H_F)
			\boxtimes_{\eta_0}
			\nu_{\Delta_Y}(\mathscr H_G).
			\label{eq:specialization-product}
		\end{align}
	\end{lemma}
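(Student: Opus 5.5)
The first isomorphism \eqref{eq:kernel-product} is formal, and I would obtain it from the box-product descriptions of the kernels. By \cite[Proposition 2.11]{LuZheng} (as already used in Lemma~\ref{lem:proper-pushforward-of-Hom-and-nuHom}), there are canonical identifications $\mathscr H_F\simeq\mathbb D_XF\boxtimes F$ and $\mathscr H_G\simeq\mathbb D_YG\boxtimes G$. Likewise $\mathscr H_{F\boxtimes G}\simeq\mathbb D_{X\times Y}(F\boxtimes G)\boxtimes(F\boxtimes G)$. The Künneth formula for duality, $\mathbb D_{X\times Y}(F\boxtimes G)\simeq\mathbb D_XF\boxtimes\mathbb D_YG$, uses \eqref{eq:dualizing-product} and holds for $F,G$ of finite Tor-dimension. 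Combining these and reordering the four factors via the canonical isomorphism $X\times Y\times X\times Y\simeq X\times X\times Y\times Y$ gives \eqref{eq:kernel-product}. This is exactly \eqref{eq:Kunneth-for-RHom} specialised to $F_1=F_2=F$ and $G_1=G_2=G$, so I would simply cite it.

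For \eqref{eq:specialization-product}, I would first identify the deformation spaces. Applying Lemma~\ref{lem:product-of-deformation-spaces} to $\Delta_X$ and $\Delta_Y$ gives
\[
\mathrm D_{X\times Y}(X\times X\times Y\times Y)\simeq \mathrm D_X(X\times X)\times_{\mathbb A^1}\mathrm D_Y(Y\times Y).
\]
This isomorphism is compatible with the generic fibres $(X\times X)\times(Y\times Y)\times\mathbb G_m$ and with the special fibres $TX\times TY=T(X\times Y)$. Under it, $\mathscr H_{F\boxtimes G}\boxtimes\Lambda_{\mathbb G_m}$ corresponds to the fibre product over $\mathbb G_m$ of $\mathscr H_F\boxtimes\Lambda_{\mathbb G_m}$ and $\mathscr H_G\boxtimes\Lambda_{\mathbb G_m}$. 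The claim therefore reduces to a Künneth formula for nearby cycles over the henselian trait $(\mathbb A^1)^{\mathrm{sh}}_0$:
\[
R\Psi(A\boxtimes_{\eta}B)\simeq R\Psi(A)\boxtimes_{\eta_0}R\Psi(B).
\]
Here $\boxtimes_{\eta_0}$ is the external product over the special fibre, with the diagonal Galois action. This formula is \cite[Proposition 3.1]{LuZheng}, or Gabber's Künneth theorem for nearby cycles, and it holds for constructible complexes of finite Tor-dimension.

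The main point needing care is checking that the Künneth formula for nearby cycles applies with its hypotheses: the two schemes are of finite type and smooth over the trait, and the coefficients are finite Tor-dimension. One must also verify that the resulting isomorphism is compatible with the $\mathrm{Gal}(\overline\eta_0/\eta_0)$-actions, so that it is an isomorphism of objects on $T(X\times Y)\times_k\eta_0$. Both follow from the functoriality of the map $R\Psi A\boxtimes R\Psi B\to R\Psi(A\boxtimes B)$, which is defined by adjunction and is Galois-equivariant by construction. Finally, I would compose this isomorphism with $\nu$ applied to \eqref{eq:kernel-product}, which gives \eqref{eq:specialization-product}.
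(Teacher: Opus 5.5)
Your proposal is correct and follows the paper's proof essentially step for step. The paper also obtains the first isomorphism from the Lu--Zheng identification $\mathscr H\simeq\mathbb D(-)\boxtimes(-)$ together with the K\"unneth formula for duality. For the second, it likewise identifies the deformation spaces via Lemma~\ref{lem:product-of-deformation-spaces} and then applies the K\"unneth formula for nearby cycles \cite[Proposition 3.1]{LuZheng}.
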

	\begin{proof}
		By \cite[Lemma 2.11(a)]{LuZheng}, we have natural isomorphisms
		\begin{equation}
			\begin{aligned}
				&\mathscr{H}_{F\boxtimes G}\simeq\mathbb{D}_{X\times Y}(F\boxtimes G)\boxtimes(F\boxtimes G)\\
				\simeq&(\mathbb{D}_X(F)\boxtimes\mathbb{D}_Y(G))\boxtimes(F\boxtimes G)\simeq\mathscr{H}_F\boxtimes\mathscr{H}_G.
			\end{aligned}
		\end{equation}
		Applying Lemma \ref{lem:product-of-deformation-spaces} to $\Delta_X$ and $\Delta_Y$, we obtain a natural isomorphism
		\begin{equation}
			\mathrm{D}_{X\times Y}(X\times X\times Y\times Y)\simeq\mathrm{D}_X(X\times X)\times_{\mathbb{A}^1}\mathrm{D}_Y(Y\times Y).
		\end{equation}
		Applying nearby cycles functor to \eqref{eq:kernel-product} and using \cite[Proposition 3.1]{LuZheng}, we obtain \eqref{eq:specialization-product}.
	\end{proof}

	\begin{theorem}[Microlocal K\"unneth formula]
		\label{thm:microlocal-Kunneth}
		After the identification \eqref{eq:cotangent-product}, there is a canonical
		isomorphism
		\begin{equation}\label{eq:microlocal-Kunneth}
			\muhom(F\boxtimes G,F\boxtimes G)
			\xrightarrow{\sim}
			\muhom(F,F)\boxtimes\muhom(G,G).
		\end{equation}
		Consequently,
		\begin{equation}\label{eq:SSmu-product}
			\SSmu(F\boxtimes G)=\SSmu(F)\times\SSmu(G),
		\end{equation}
		\begin{equation}\label{eq:CCmu-product}
			\CCmu(F\boxtimes G)=\CCmu(F)\boxtimes\CCmu(G).
		\end{equation}
	\end{theorem}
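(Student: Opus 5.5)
The isomorphism \eqref{eq:microlocal-Kunneth} is obtained in two steps: first the product formula for specialization, then the product formula for the Fourier transform. By Lemma \ref{lem:product-specialization}, we have $\nu_{\Delta_{X\times Y}}(\mathscr H_{F\boxtimes G})\simeq\nu_{\Delta_X}(\mathscr H_F)\boxtimes_{\eta_0}\nu_{\Delta_Y}(\mathscr H_G)$ on $T(X\times Y)=TX\times TY$. This uses the identification $T_{X\times Y}(X\times X\times Y\times Y)\simeq T_X(X\times X)\times T_Y(Y\times Y)$, which is the special fibre of Lemma \ref{lem:product-of-deformation-spaces}. Applying $\mathcal F_{\psi,TX\times TY/X\times Y}$ and Lemma \ref{lem:Kunneth-for-Fourier} gives the isomorphism \eqref{eq:microlocal-Kunneth}, exactly as in Lemma \ref{lem:Kunneth-for-muHom}. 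Under \eqref{eq:cotangent-product}, the dual of $TX\times TY$ is $T^*X\times T^*Y$, since the pairing on the product is the sum of the pairings.

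One point needs care: the Galois action of $\eta_0$. The product $\boxtimes_{\eta_0}$ is taken over $\eta_0$, so the Künneth isomorphism for $R\mathrm{pr}_!$ must be compatible with the diagonal $\mathrm{Gal}(\overline\eta_0/\eta_0)$-action. This holds because the Künneth formula is functorial. Since Saito works with the underlying complexes when he takes supports, this compatibility does not affect \eqref{eq:SSmu-product}.

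For \eqref{eq:SSmu-product}, take supports of both sides of \eqref{eq:microlocal-Kunneth}. The support of $A\boxtimes B$ is $\supp A\times\supp B$. To check this at a geometric point, we must show that $A_a\otimes^L_\Lambda B_b\neq0$ when both stalks are nonzero. The stalks are perfect complexes, because the complexes have finite Tor dimension. For perfect complexes over the local ring $\Lambda$, reduce modulo the maximal ideal: a perfect complex $M$ is nonzero if and only if $M\otimes^L_\Lambda\kappa\neq0$. Over the field $\kappa$, a tensor product of nonzero complexes is nonzero.

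For \eqref{eq:CCmu-product}, apply $\boxtimes$ to the two trace sequences \eqref{eq:mu-trace} for $F$ and $G$. We then have to identify the result with the trace sequence for $F\boxtimes G$ under \eqref{eq:microlocal-Kunneth} and \eqref{eq:dualizing-product}. Concretely, we must show that $\mathrm{coev}_{F\boxtimes G}=\mathrm{coev}_F\boxtimes\mathrm{coev}_G$ and $\mathrm{ev}_{F\boxtimes G}=\mathrm{ev}_F\boxtimes\mathrm{ev}_G$ in \eqref{eq:trace}. This compatibility is the symmetric monoidal structure of $\mathrm{CohCorr}_k$ (Appendix, and \cite{LuZheng}). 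It then has to be carried through specialization, which is symmetric monoidal via $\Psi^\otimes$ and \cite[Proposition 3.1]{LuZheng}, and through the Fourier transform, via the naturality of Lemma \ref{lem:Kunneth-for-Fourier}. Finally, we restrict to $\SSmu(F)\times\SSmu(G)$. Using the identity $i_{A\times B}^!(K\boxtimes L)=i_A^!K\boxtimes i_B^!L$, the class defined by \eqref{eq:define-CCmu} is the external product of the two classes.

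The main obstacle is this last step: checking that the unit maps $\Lambda\to R0_!\Lambda\to\dots$ and the evaluation maps really commute with the chain of Künneth isomorphisms, including the identification $p^{\vee*}\mathcal K_{X\times Y}\simeq p^{\vee*}\mathcal K_X\boxtimes p^{\vee*}\mathcal K_Y$ after the Fourier transform. I would first do this at the level of $\nu$, in $\mathrm{CohCorr}_{S,s}^\otimes$, and then pass to the Fourier transform by naturality.
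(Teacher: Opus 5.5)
Your proposal is correct and follows essentially the same route as the paper. The paper also combines Künneth for Verdier specialization with Künneth for the Fourier--Deligne transform (via Lemma \ref{lem:Kunneth-for-muHom}), and gets the product formula for $\CCmu$ by carrying the compatibility of the identity, $\Delta^!\to\Delta^*$ and evaluation morphisms with $\boxtimes$ through the symmetric monoidal nearby-cycle functor and the natural Fourier isomorphism, then restricting along $(i_F\times i_G)^!$. Your Nakayama argument on perfect stalks, showing $\supp(A\boxtimes B)=\supp A\times\supp B$, usefully fills in a step the paper leaves implicit.
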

	
	\begin{proof}
		\eqref{eq:microlocal-Kunneth} and \eqref{eq:SSmu-product} have been proved in Lemma \ref{lem:Kunneth-for-muHom}.
		It remains to prove \eqref{eq:CCmu-product}.  Consider the kernel $\mathscr H_F$.
		The cohomological characteristic class of $F$ is represented by the composition
		\begin{equation}\label{eq:diagonal-trace-F}
			\Lambda_X\xrightarrow{1_F}\Delta_X^!\mathscr H_F
			\longrightarrow\Delta_X^*\mathscr H_F
			\xrightarrow{\operatorname{ev}_F}\mathcal K_{X/k}.
		\end{equation}
		We have the analogous composites for $G$ and $F\boxtimes G$.
		The compatibility of \eqref{eq:kernel-product} with the identity morphisms
		gives a commutative diagram
		\begin{equation}\label{eq:identity-product-diagram}
			\begin{tikzcd}[column sep=large]
				\Lambda_X\boxtimes\Lambda_Y
				\arrow[r,"1_F\boxtimes1_G"]\arrow[d,"\sim"']
				&\Delta_X^!\mathscr H_F\boxtimes\Delta_Y^!\mathscr H_G
				\arrow[d,"\sim"]\\
				\Lambda_{X\times Y}\arrow[r,"1_{F\boxtimes G}"']
				&\Delta_{X\times Y}^!\mathscr H_{F\boxtimes G}
			\end{tikzcd}
		\end{equation}
		The canonical morphisms $\Delta^!\to\Delta^*$ give a commutative diagram
		\begin{equation}\label{eq:diagonal-shriek-star-product-diagram}
			\begin{tikzcd}[column sep=large]
				\Delta_X^!\mathscr H_F\boxtimes\Delta_Y^!\mathscr H_G
				\arrow[r]\arrow[d,"\sim"']
				&\Delta_X^*\mathscr H_F\boxtimes\Delta_Y^*\mathscr H_G
				\arrow[d,"\sim"]\\
				\Delta_{X\times Y}^!\mathscr H_{F\boxtimes G}
				\arrow[r]
				&\Delta_{X\times Y}^*\mathscr H_{F\boxtimes G}.
			\end{tikzcd}
		\end{equation}
		Its commutativity follows from the naturality of
		$\Delta^!\to\Delta^*$ and its compatibility with external products.  The
		evaluation morphisms give a commutative diagram
		\begin{equation}\label{eq:evaluation-product-diagram}
			\begin{tikzcd}[column sep=large]
				\Delta_X^*\mathscr H_F\boxtimes\Delta_Y^*\mathscr H_G
				\arrow[r,"\operatorname{ev}_F\boxtimes\operatorname{ev}_G"]
				\arrow[d,"\sim"']
				&\mathcal K_{X/k}\boxtimes \mathcal K_{Y/k}\arrow[d,"\sim"]\\
				\Delta_{X\times Y}^*\mathscr H_{F\boxtimes G}
				\arrow[r,"\operatorname{ev}_{F\boxtimes G}"']
				&\mathcal K_{X\times Y/k}.
			\end{tikzcd}
		\end{equation}
		The vertical morphisms are the external-product isomorphisms and
		\eqref{eq:dualizing-product}.  Diagram
		\eqref{eq:identity-product-diagram} follows from
		$\id_{F\boxtimes G}=\id_F\boxtimes\id_G$.  To verify
		\eqref{eq:evaluation-product-diagram}, use
		$\mathscr H_F\simeq \D_XF\boxtimes F$ and
		$\mathscr H_G\simeq \D_YG\boxtimes G$.  Under
		$\D_{X\times Y}(F\boxtimes G)=\D_XF\boxtimes\D_YG$, the evaluation 
		${\rm ev}_{F\boxtimes G}$ is the composite of the symmetry isomorphism
		\begin{align}
			(\D_XF\boxtimes\D_YG)\otimes(F\boxtimes G)
			\xrightarrow{\sim}
			(\D_XF\otimes F)\boxtimes(\D_YG\otimes G)
		\end{align}
		and $\operatorname{ev}_F\boxtimes\operatorname{ev}_G$, followed by
		\eqref{eq:dualizing-product}.  Hence
		\eqref{eq:evaluation-product-diagram} is commutative.
		
		By adjunction, \eqref{eq:diagonal-trace-F} defines a correspondence
		\begin{equation}\label{eq:adjoint-diagonal-trace-F}
			\Delta_{X!}\Lambda_X\longrightarrow\mathscr H_F
			\longrightarrow\Delta_{X*}\mathcal K_{X/k}.
		\end{equation}
		Let $0_X:X\hookrightarrow TX$ be the zero section.  Applying Verdier
		specialization to \eqref{eq:adjoint-diagonal-trace-F}, we obtain
		\begin{equation}\label{eq:specialized-trace-F}
			0_{X!}\Lambda_X\longrightarrow\nu_{\Delta_X}(\mathscr H_F)
			\longrightarrow0_{X*}\mathcal K_{X/k}.
		\end{equation}
		Here we use the canonical isomorphisms
		$\nu_{\Delta_X}(\Delta_{X!}\Lambda_X)=0_{X!}\Lambda_X$ and
		$\nu_{\Delta_X}(\Delta_{X*}\mathcal K_{X/k})=0_{X*}\mathcal K_{X/k}$.
		Applying the  Fourier-Deligne transformation and the canonical isomorphisms
		\begin{equation}\label{eq:Fourier-zero-identities}
			\mathcal F_{\psi,TX/X}(0_{X!}\Lambda_X)=\Lambda_{T^*X},
			\qquad
			\mathcal F_{\psi,TX/X}(0_{X*}\mathcal K_{X/k})= p_X^{\vee *}\mathcal K_{X/k}
		\end{equation}
		we obtain morphisms
		\begin{equation}\label{eq:microlocal-trace-maps}
			\Lambda_{T^*X}\xrightarrow{u_F}\muhom(F,F)
			\xrightarrow{v_F}p_X^{\vee *}\mathcal K_{X/k}.
		\end{equation}
		These are the two structural morphisms defining $\CCmu(F)$.
		
		The three commutative diagrams
		\eqref{eq:identity-product-diagram},
		\eqref{eq:diagonal-shriek-star-product-diagram}, and
		\eqref{eq:evaluation-product-diagram} can be specialized simultaneously.  In
		fact, the K\"unneth morphisms are the structural isomorphisms of the
		symmetric monoidal nearby-cycle functor
		\cite[Construction~3.3]{LuZheng}.  The naturality of the
		isomorphism \eqref{eq:Fourier-product} in Lemma \ref{lem:Kunneth-for-Fourier} preserves the three resulting
		morphisms. 
		We obtain a commutative diagram
		\begin{equation}\label{eq:microlocal-trace-product-diagram}
			\begin{tikzcd}[column sep=large]
				\Lambda_{T^*X}\boxtimes\Lambda_{T^*Y}
				\arrow[r,"u_F\boxtimes u_G"]\arrow[d,"\sim"']
				&\mu\mathcal{H}om(F,F)\boxtimes \mu\mathcal{H}om(G,G)
				\arrow[r,"v_F\boxtimes v_G"]\arrow[d,"\sim"']
				&p_X^{\vee *}\mathcal K_{X/k}\boxtimes p_Y^{\vee *}\mathcal K_{Y/k}
				\arrow[d,"\sim"]\\
				\Lambda_{T^*(X\times Y)}
				\arrow[r,"u_{F\boxtimes G}"']
				& \mu\mathcal{H}om(F\boxtimes G,F\boxtimes G)
				\arrow[r,"v_{F\boxtimes G}"']
				&p_{X\times Y}^{\vee *}\mathcal K_{X\times Y/k}.
			\end{tikzcd}
		\end{equation}
		The left vertical
		morphism is induced by the external product of constant sheaves, and the
		right one is induced by \eqref{eq:dualizing-product}.  Thus
		\eqref{eq:microlocal-trace-product-diagram} is compatible with both
		structural morphisms defining the microlocal characteristic cycle.
		
		We now pass to cohomology with support.  Let $i_F:\SSmu(F)\hookrightarrow T^\ast X $ and $i_G: \SSmu(G) \hookrightarrow T^\ast Y$ be the
		closed immersions.  By \eqref{eq:SSmu-product}, the microlocal singular support of
		$F\boxtimes G$ is $\SSmu(F)\times \SSmu(G)$.  By definition, $\CCmu(F)$ is represented
		on $\SSmu(F)$ by the composite
		\begin{equation}\label{eq:supported-microlocal-trace-F}
			\Lambda\longrightarrow i_F^* \mu\mathcal{H}om(F,F)
			\longrightarrow i_F^!p_X^{\vee *}\mathcal K_{X/k},
		\end{equation}
		obtained from \eqref{eq:microlocal-trace-maps} by adjunction.  By
		\eqref{eq:microlocal-trace-product-diagram}, the external product of
		\eqref{eq:supported-microlocal-trace-F} and its analogue for $G$ is the
		corresponding composite on $\SSmu(F)\times \SSmu(G)$.  Here we use the canonical
		isomorphism
		\begin{align}
			&(i_F^!p_X^{\vee *}\mathcal K_{X/k})\boxtimes
			(i_G^!p_Y^{\vee *}\mathcal K_{Y/k})\xrightarrow{\sim}
			(i_F\times i_G)^!
			p_{X\times Y}^{\vee *}\mathcal K_{X\times Y/k}.
		\end{align}
		Taking $H^0$, the resulting class is
		$\CCmu(F)\boxtimes\CCmu(G)$.  By the lower row of
		\eqref{eq:microlocal-trace-product-diagram}, it is also
		$\CCmu(F\boxtimes G)$.  This proves \eqref{eq:CCmu-product}.
	\end{proof}
	
	\section{Equality for \texorpdfstring{$\ell$}{ell}-adic coefficients}
	\label{sec:NA-equality}
	
	\subsection{}\label{sec:proetale-passage}
	Let $E/\mathbf Q_\ell$ be a finite extension, let $\mathcal O_E$ be the ring of integers in $E$, and let
	$\pi$ be a uniformizer. Let $Y$ be a scheme of finite type over $k$.
	Let $D_{\mathrm{cons}}(Y_{\mathrm{pro\acute et}},\mathcal O_E)$ be the stable
	$\infty$-category of constructible pro-\'etale
	sheaves on $Y$ with $\mathcal O_E$-coefficients. By \cite[Theorem 1.6 (1) and (2)]{HRS}, there is
	an equivalence of stable
	$\infty$-categories 
	\begin{align}
		D_{\mathrm{cons}}(Y_{\mathrm{pro\acute et}},\mathcal O_E)
		= \varprojlim_m
		D^b_{\mathrm{ctf}}(Y,\mathcal O_E/\pi^m).
	\end{align}
	By \cite[Corollary 2.4]{HansenScholze}, every constructible $E$-sheaf has an integral model. 
	By
	\cite[Lemma~2.1]{Barrett}, reduction modulo $\pi$ is conservative on derived
	complete constructible objects. 
	
	The operations used to form the diagonal endomorphism kernel, Verdier
	specialization, and Fourier-Deligne transform preserve constructibility and are
	compatible with reduction.  Hence, for an integral model $F_0$ of
	$F\in D_{\mathrm{cons}}(Y_{\mathrm{pro\acute et}},E)$ and
	$F_m=F_0\otimes^L_{\mathcal O_E}\mathcal O_E/\pi^m$, there are canonical
	isomorphisms
	\begin{align}
		\muhom_{\mathcal O_E}(F_0,F_0)
		\otimes^L_{\mathcal O_E}\mathcal O_E/\pi^m
		&\xrightarrow{\sim}
		\muhom_{\mathcal O_E/\pi^m}(F_m,F_m),
		\label{eq:proetale-muhom-reduction}\\
		\muhom_E(F,F)
		&\xrightarrow{\sim}
		\muhom_{\mathcal O_E}(F_0,F_0)\otimes_{\mathcal O_E}E.
		\label{eq:proetale-muhom-rationalization}
	\end{align}
	The right side of \eqref{eq:proetale-muhom-rationalization} is independent of
	$F_0$ because it realizes the intrinsically defined object on
	$(T^\ast Y)_{\mathrm{pro\acute et}}$.

	By \cite[Proposition~3.6]{Barrett}, universal local acyclicity on the pro-\'etale site is equivalent
	to dualizability in the relative correspondence category.  It is preserved
	by smooth pullback and proper pushforward.  Moreover, Beilinson's
	singular support and its projective Radon description hold for constructible
	$E$-sheaves (cf. \cite{Barrett}). Following \cite{UYZ}, the characteristic cycle of an $\ell$-adic sheaf $F$ is defined by 
	\begin{align}
		{\rm CC}(F):= {\rm CC}(F_1),
	\end{align}
	where $F_1=F_0\otimes^L \mathcal O_E/\pi$ for any integral model $F_0$ of $F$.

	\begin{theorem}\label{thm:singular-support-inclusion-Ecoefficient}
		Let $X$ be a smooth $k$-scheme, and let $F\in D_{\mathrm{cons}}(X_{\mathrm{pro\acute et}},E)$. Then
		\begin{equation}\label{eq:muSSsubseteqSS-Ecoefficient}
			\mathrm{SS}_\mu(F)\subseteq\mathrm{SS}(F).
		\end{equation}
	\end{theorem}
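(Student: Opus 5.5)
The plan is to reduce to the finite-coefficient case via integral models, rather than redo the Radon-transform argument $\ell$-adically. Choose an integral model $F_0\in D_{\mathrm{cons}}(X_{\mathrm{pro\acute et}},\mathcal O_E)$ of $F$, which exists by \cite[Corollary 2.4]{HansenScholze}. Put $F_m=F_0\otimes^L_{\mathcal O_E}\mathcal O_E/\pi^m$; each $F_m$ lies in $D^b_{\rm ctf}(X,\mathcal O_E/\pi^m)$. By \eqref{eq:proetale-muhom-rationalization}, $\mathrm{SS}_\mu(F)=\operatorname{supp}\muhom_E(F,F)$ is contained in $\operatorname{supp}\muhom_{\mathcal O_E}(F_0,F_0)$. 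So it suffices to show that $\muhom_{\mathcal O_E}(F_0,F_0)$ vanishes on the open complement $U$ of $\mathrm{SS}(F)$ in $T^*X$.

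The next step compares singular supports. Following the convention recalled above (cf. \cite{UYZ}, \cite{Barrett}), the singular support of $F$ agrees with that of $F_1=F_0\otimes^L\mathcal O_E/\pi$. Justification: a test pair $(h,f)$ is ULA for $F$ exactly when it is ULA for $F_0$, and by \cite[Proposition 3.6]{Barrett} together with conservativity of reduction, exactly when it is ULA for $F_1$. The same holds for every $F_m$, since $F_m$ is an iterated extension of copies of $F_1$, and ULA-ness is stable under extensions and detected mod $\pi$. Hence $\mathrm{SS}(F_m)=\mathrm{SS}(F_1)=\mathrm{SS}(F)$ for all $m$.

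Now apply Theorem \ref{thm:singular-support-inclusion} to each $F_m$ with $\Lambda=\mathcal O_E/\pi^m$. This gives $\mathrm{SS}_\mu(F_m)\subseteq \mathrm{SS}(F_m)=\mathrm{SS}(F)$, so $\muhom_{\mathcal O_E/\pi^m}(F_m,F_m)|_U=0$ for every $m$. Combined with \eqref{eq:proetale-muhom-reduction}, this yields $\muhom_{\mathcal O_E}(F_0,F_0)|_U\otimes^L\mathcal O_E/\pi^m=0$ for all $m$. Since $\muhom_{\mathcal O_E}(F_0,F_0)|_U$ is constructible, hence derived complete, conservativity of reduction mod $\pi$ \cite[Lemma 2.1]{Barrett} forces it to vanish. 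Inverting $\pi$ then gives $\muhom_E(F,F)|_U=0$, which is \eqref{eq:muSSsubseteqSS-Ecoefficient}.

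I expect the main obstacle to be justifying \eqref{eq:proetale-muhom-reduction}: we need every step (the kernel $\mathscr H$, Verdier specialization with its $\eta_0$ nearby cycles, and the Fourier–Deligne transform) to commute with $-\otimes^L\mathcal O_E/\pi^m$ and to preserve constructibility in the pro-étale setting. Commuting with reduction is formal for $*$-pullbacks, $!$-pushforwards and tensor products. For $R\mathcal Hom$, $\mathrm{pr}_2^!$ and nearby cycles, the first thing I would try is to write $\mathscr H_{F_m}\simeq\D F_m\boxtimes F_m$ using \cite[Proposition 2.11]{LuZheng}, so that only duality and Künneth are needed, and to use that nearby cycles commute with base change of finite Tor-dimension coefficients. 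Once these compatibilities are in hand, the rest of the argument is formal.
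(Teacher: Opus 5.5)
Your reduction strategy is the right one, but the comparison of singular supports has a genuine gap.

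You take an \emph{arbitrary} integral model $F_0$ and claim that a test pair is ULA for $F$ exactly when it is ULA for $F_0$. Only one direction holds. Since vanishing cycles commute with $-\otimes_{\mathcal O_E}E$, ULA for $F_0$ implies ULA for $F$. But ULA for $F$ only tells you that the relevant vanishing cycles of $F_0$ are killed by a power of $\pi$, not that they vanish. So in general $\mathrm{SS}(F)\subseteq\mathrm{SS}(F_0)=\mathrm{SS}(F_1)$, and the inclusion can be strict.

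For instance, take $X=\mathbb A^1$ with $i\colon\{0\}\hookrightarrow X$, and put $F_0=\ker(\mathcal O_{E,X}\to i_*(\mathcal O_E/\pi))$. Then $F_0\otimes_{\mathcal O_E}E\simeq E_X$, so $\mathrm{SS}(F)$ is the zero section. On the other hand, $F_1=F_0/\pi F_0$ has stalk $\pi\mathcal O_E/\pi^2\mathcal O_E$ at $0$, and its specialization map to the generic stalk is zero. Hence $T^*_0X\subseteq\mathrm{SS}(F_1)$.

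The convention you invoke from \cite{UYZ} concerns $\mathrm{CC}$. That cycle is lattice-independent because it depends only on total dimensions, i.e.\ Euler characteristics, of vanishing cycles. The singular support is a support condition and does detect $\pi$-power torsion. With an arbitrary model, your argument only yields $\mathrm{SS}_\mu(F)\subseteq\mathrm{SS}(F_1)$.

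The missing input is the existence of an adapted lattice, \cite[Theorem~1.5\textup{(v)}]{Barrett}: one can choose $F_0$ with $\mathrm{SS}(F_0)=\mathrm{SS}(F)$. This is exactly how the paper's proof begins. With that choice, the rest of your argument goes through:
\begin{enumerate}
\item $\mathrm{SS}(F_1)=\mathrm{SS}(F_0)$;
\item $\mathrm{SS}(F_m)\subseteq\mathrm{SS}(F_1)$ by d\'evissage;
\item Theorem~\ref{thm:singular-support-inclusion} applied to each $F_m$;
\item derived completeness to conclude.
\end{enumerate}
The result is then essentially the paper's proof.

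The paper needs only $F_1$. It sets $N=\muhom_{\mathcal O_E}(F_0,F_0)|_U$ and uses $\muhom_{\mathcal O_E/\pi}(F_1,F_1)|_U=0$ together with the triangle $N\xrightarrow{\pi}N\to N\otimes^L_{\mathcal O_E}\mathcal O_E/\pi$. This shows $\pi$ is invertible on $N$, so every reduction mod $\pi^m$ vanishes and $N=0$. That route avoids comparing $\mathrm{SS}(F_m)$ for $m\geq 2$.
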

	\begin{proof}
		The assertion is clear if $F=0$. By
		\cite[Theorem~1.5\textup{(v)}]{Barrett}, there exists an integral model
		$F_0\in D_{\mathrm{cons}}(X_{\mathrm{pro\acute et}},\mathcal O_E)$
		of $F$ such that
		\begin{equation}\label{eq:adapted-lattice-theorem82}
			F_0\otimes_{\mathcal O_E}E\simeq F,
			\qquad
			\SSing(F_0)=\SSing(F).
		\end{equation}
		Set $F_1=F_0\otimes_{\mathcal O_E}^{L}\mathcal O_E/\pi$.
		Under the equivalence between constructible sheaves on the pro-\'etale and the \'etale sites for finite coefficients \cite[Theorem 1.6.(1)]{HRS}, we have an equality by \cite[Corollary 4.5.1]{SaitoCC}
		\begin{equation}\label{eq:SS-adapted-reduction-theorem82}
			\SSing(F_1)=\SSing(F_0)\overset{{\eqref{eq:adapted-lattice-theorem82}}}{=}\SSing(F).
		\end{equation}

		We next compare the microlocal Hom complexes. 
		The  isomorphisms
		\eqref{eq:proetale-muhom-reduction} and
		\eqref{eq:proetale-muhom-rationalization} give
		\begin{equation}\label{eq:muhom-coefficient-change-theorem82}
			\begin{aligned}
				&\muhom_{\mathcal O_E}(F_0,F_0)\otimes_{\mathcal O_E}^{L}\mathcal O_E/\pi
				\xrightarrow{\ \sim\ }\muhom_{\mathcal O_E/\pi}(F_1,F_1),\\
				&\muhom_{\mathcal O_E}(F_0,F_0)\otimes_{\mathcal O_E}E
				\xrightarrow{\ \sim\ }\muhom_E(F,F).
			\end{aligned}
		\end{equation}
		Let us recall why these compatibilities hold. The diagonal
		endomorphism kernel can be written as
		\begin{equation}
			\mathscr H_{\mathcal O_E}(F_0,F_0)
			\simeq
			\mathbb D_{\mathcal O_E}(F_0)\boxtimes F_0.
		\end{equation}
		For constructible complexes, Verdier duality and external products
		commute with reduction modulo $\pi$ and with rationalization.
		Verdier specialization along $\Delta_X$ has the same coefficient
		compatibility. Finally, the Fourier--Deligne transform is constructed
		from pullback, tensor product with the Artin--Schreier sheaf, and
		compactly supported pushforward, all of which commute with these
		changes of coefficients. This proves
		\eqref{eq:muhom-coefficient-change-theorem82}.
		
		We claim that
		\begin{equation}\label{eq:SSmu-F-contained-F1}
			\SSmu(F)\subseteq\SSmu(F_1).
		\end{equation}
		Let $U=T^*X\setminus\supp(\muhom_{\mathcal O_E/\pi}(F_1,F_1))$.
		By definition, $\left.\muhom_{\mathcal O_E/\pi}(F_1,F_1)\right|_U=0$.
		Using the first isomorphism in
		\eqref{eq:muhom-coefficient-change-theorem82}, we obtain
		$	\left.\muhom_{\mathcal O_E}(F_0,F_0)\right|_U
		\otimes_{\mathcal O_E}^{L}\mathcal O_E/\pi=0$.
		The complex
		\begin{align}
			N=\left.\muhom_{\mathcal O_E}(F_0,F_0)\right|_U
		\end{align}
		is constructible and derived $\pi$-complete. Derived Nakayama
		therefore gives $N=0$. More explicitly, the distinguished triangle
		\begin{align}
			N\xrightarrow{\ \pi\ }N
			\longrightarrow
			N\otimes_{\mathcal O_E}^{L}\mathcal O_E/\pi
			\longrightarrow
		\end{align}
		shows that multiplication by $\pi$ on $N$ is an isomorphism. Hence
		$N\otimes_{\mathcal O_E}^{L}\mathcal O_E/\pi^m=0$ for every
		$m\geq1$.
		Since $N$ is derived $\pi$-complete, we have
		$N\simeq R\varprojlim_m
		\left(N\otimes_{\mathcal O_E}^{L}\mathcal O_E/\pi^m\right)=0$.
		This is precisely the uniformizer version of the derived Nakayama
		argument in \cite[Lemma~2.1]{Barrett}.
		
		Rationalizing and using the second isomorphism in
		\eqref{eq:muhom-coefficient-change-theorem82}, we obtain
		\begin{align}
			\left.\muhom_E(F,F)\right|_U
			\simeq
			\left.\muhom_{\mathcal O_E}(F_0,F_0)\right|_U
			\otimes_{\mathcal O_E}E
			=0.
		\end{align}
		It follows that
		\begin{align}
			\supp(\muhom_E(F,F))
			\subseteq
			\supp(\muhom_{\mathcal O_E/\pi}(F_1,F_1)),
		\end{align}
		which is exactly \eqref{eq:SSmu-F-contained-F1} after projecting away
		the suppressed $\eta_0$-factor.
		
		Finally, $F_1$ is a constructible complex with coefficients in the
		finite field $\mathcal O_E/\pi$. Under the equivalence between constructible
		torsion sheaves on the pro-\'etale and the \'etale sites,
		Theorem~\ref{thm:singular-support-inclusion} applies to $F_1$ and
		gives	$\SSmu(F_1)\subseteq\SSing(F_1)$.
		Together with \eqref{eq:SSmu-F-contained-F1} and
		\eqref{eq:SS-adapted-reduction-theorem82}, we obtain
		\begin{align}
			\SSmu(F)
			\subseteq
			\SSmu(F_1)
			\subseteq
			\SSing(F_1)
			=
			\SSing(F),
		\end{align}
		as required.
	\end{proof}
	
	Within the six-functor formalism, the microlocal characteristic cycle ${\rm CC}_\mu$ for an object $F\in D_{\mathrm{cons}}(X_{\mathrm{pro\acute et}},E)$ is also well-defined.
	\begin{proposition}[{\cite[Proposition 2.3.3 and Proposition 2.5.1]{SaitoMicro}}]\label{prop:pushforwardOfMicroCC-Ecoefficient}
		Let $f:X\to Y$ be a proper morphism of smooth schemes over $k$ and  $F\in D_{\mathrm{cons}}(X_{\mathrm{pro\acute et}},E)$. Then we have
		\begin{align}\label{eq:CCRFFFCCMUF}
			{\rm CC}_\mu (Rf_\ast F)=f_! {\rm CC}_\mu(F) \qquad{\rm in}\qquad H^0_{f_\circ {\rm SS}_\mu(F)\cup {\rm SS}_\mu(Rf_\ast F)}(T^\ast Y,e_Y^{\vee\ast} \mathcal K_{Y/k}),
		\end{align}
		where $e_Y^\vee: T^\ast Y\to Y$ is the projection.
		
		In particular if $X$ is a smooth curve and $F$ is smooth outside a finite set $Z\subseteq X$, then we have
		\begin{align}\label{eq:GOSforCC}
			{\rm CC}_\mu(F)= -{\rm rank}F\cdot {\rm cl}[T_X^\ast X]-\sum_{x\in Z}a_x(F)\cdot {\rm cl}[T_x^\ast X],
		\end{align}
		where $a_x(F)$ is the Artin conductor of $F$ at $x$.
	\end{proposition}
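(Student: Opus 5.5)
The plan is to transport Saito's finite-coefficient arguments \cite[Proposition 2.3.3, Proposition 2.5.1]{SaitoMicro} to $E$-coefficients. I would reduce the construction of $\CCmu$ to its integral and mod-$\pi^m$ shadows, using the coefficient-change isomorphisms \eqref{eq:proetale-muhom-reduction} and \eqref{eq:proetale-muhom-rationalization}. The point is that $\CCmu(F)$ is defined purely formally from the maps $\Lambda\to\muhom(F,F)\to p^{\vee*}\mathcal K$ of \eqref{eq:mu-trace}. Every ingredient of those maps (coev/ev for $\mathscr H_F$, Verdier specialization, Fourier--Deligne transform, proper base change for nearby cycles) exists in the six-functor formalism of $D_{\rm cons}((-)_{\rm pro\acute et},E)$. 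So I would rerun Saito's proofs verbatim rather than reduce to torsion, and the reduction to torsion would serve only as a consistency check.

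For \eqref{eq:CCRFFFCCMUF}, the first step is to set up the proper pushforward on $\mu\mathcal Hom$. I would use the correspondence $T^*Y\xleftarrow{df}X\times_YT^*Y\xrightarrow{\rm pr}T^*X$. On the specialized side, I would use the morphism of deformation spaces $\mathrm D_X(X\times X)\to\mathrm D_Y(Y\times Y)$ from Lemma~\ref{lem:Saito-lemma-on-deformation-spaces}, which is proper over $\mathbb A^1$ since $f$ is proper. Proper base change for nearby cycles gives a map $R(Tf)_*\nu\mathcal Hom_X(F,F)\to \nu\mathcal Hom_Y(Rf_*F,Rf_*F)$, induced by the natural map $R(f\times f)_*\mathscr H_F\to\mathscr H_{Rf_*F}$, as in Lemma~\ref{lem:proper-pushforward-of-Hom-and-nuHom}. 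I would then apply the Fourier transform, using its compatibility with linear maps of vector bundles \cite[(1.39)]{SaitoMicro}. This turns the map into one from $R{\rm pr}_! \,df^*\muhom(F,F)$ to $\muhom(Rf_*F,Rf_*F)$, and I would check that it is compatible with the unit maps $u$ and the evaluation maps $v$ of \eqref{eq:microlocal-trace-maps}.

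The unit compatibility comes from $f_*$ of $\mathrm{id}_F$ being $\mathrm{id}_{Rf_*F}$. The evaluation compatibility comes from the trace map $Rf_*\mathcal K_X\to\mathcal K_Y$. Taking supported cohomology on $f_\circ\SSmu(F)\cup\SSmu(Rf_*F)$ then yields \eqref{eq:CCRFFFCCMUF}. Properness of ${\rm pr}$ restricted to $df^{-1}\SSmu(F)$ holds because $f$ is proper, and this is what makes $f_!$ defined on the supports.

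For \eqref{eq:GOSforCC}, the approach is a local computation on a curve, which reduces to $X$ a henselian trait at $x$ together with the generic part. Away from $Z$, $F$ is smooth, so $\mathscr H_F$ is ULA along the diagonal in the sense of Lemma~\ref{lem:specialization-in-ULA-case}. The Fourier transform is then supported on the zero section, and the class equals $-\mathrm{rank}F\cdot{\rm cl}[T^*_XX]$; the sign comes from the shift conventions, as in Saito. Near $x\in Z$, I would reduce to the torsion statement rather than recompute. I would choose an integral model $F_0$ and use that both sides are compatible with reduction mod $\pi$: on the left via \eqref{eq:proetale-muhom-reduction}, and on the right because rank and Artin conductor are unchanged under reduction of a lattice, the latter by the definition of $\CC$ via $F_1$ following \cite{UYZ}.

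The main obstacle is showing that the class $\CCmu$ itself, not merely the complex, is compatible with coefficient change. The group $H^0_{\SSmu}(T^*X,\pi^*\mathcal K)$ with $E$-coefficients receives a map from the $\mathcal O_E$-version, but the mod-$\pi$ information alone does not determine the class. I would handle this by working integrally with $\CCmu(F_0)$. Since $\dim\SSmu\le 1$ on a curve, the integral group $H^0_{\SSmu}$ is free on the irreducible components, so its classes are detected mod $\pi$. I would therefore deduce the integral formula from \cite[Proposition~2.5.1]{SaitoMicro} applied to $F_1$, and then rationalize.
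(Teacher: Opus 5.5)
\textbf{Main gap: the curve formula \eqref{eq:GOSforCC}.} Your argument fails at its last step. Put $D=T^*_XX\cup\bigcup_{x\in Z}T^*_xX$. It is true that $H^0_D(T^*X,e_X^{\vee*}\mathcal K_{X/k})$ with $\mathcal O_E$-coefficients is free on the components of $D$. But freeness does not make classes ``detected mod $\pi$'': the reduction map $\bigoplus\mathcal O_E[C]\to\bigoplus(\mathcal O_E/\pi)[C]$ kills $\pi$ times everything.

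So applying \cite[Proposition~2.5.1]{SaitoMicro} to $F_1$ only shows that the coefficients of $\CCmu(F_0)$ are congruent to $-\operatorname{rank}F$ and $-a_x(F)$ modulo $\pi$. A coefficient $-a_x(F)+\pi u$ with $u\neq 0$ is equally consistent with this, and after inverting $\pi$ you learn nothing about the coefficients in $E$. You rightly noticed that mod-$\pi$ information does not determine the $E$-class, but passing to $\mathcal O_E$ does not remove the problem.

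The missing idea, which is what the paper uses, is to work with every $F_m=F_0\otimes^L_{\mathcal O_E}\mathcal O_E/\pi^m$.
\begin{itemize}
\item Take $F_0$ lisse on $X\setminus Z$. Each $\mathcal O_E/\pi^m$ is a finite local ring, so Saito's result applies to $F_m$.
\item One has $\operatorname{rank}F_m=\operatorname{rank}F$ and $a_x(F_m)=a_x(F)$, because vanishing cycles commute with derived reduction. This, not the definition of $\CC$ via $F_1$, is why the conductor is unchanged.
\item You need $\CCmu(F_0)$ to reduce to $\CCmu(F_m)$ for \emph{every} $m$, not just $m=1$.
\item You need $H^0_D(\cdots,\mathcal O_E)\to\varprojlim_mH^0_D(\cdots,\mathcal O_E/\pi^m)$ to be an isomorphism. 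This requires the $\varprojlim^1$ of the groups $H^{-1}_D$ to vanish, which holds because semi-purity puts the supported complexes in $D^{\geq 0}$.
\end{itemize}
Only after this can you invert $\pi$.

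\textbf{Secondary error: the construction of $\beta_f$ for \eqref{eq:CCRFFFCCMUF}.} The morphism $\mathrm D_X(X\times X)\to\mathrm D_Y(Y\times Y)$ is not proper over $\mathbb A^1$. The square formed by $\Delta_X$, $\Delta_Y$ and $f\times f$ is not cartesian: the fibre product is $X\times_YX$. In Lemma~\ref{lem:proper-pushforward-of-Hom-and-nuHom}, by contrast, $Q\times_PQ$ \emph{is} the fibre product. On special fibres, $Tf:TX\to TY$ is proper only when $f$ is unramified.

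If the morphism were proper, you would get $\muhom(Rf_*F,Rf_*F)\simeq Rp_*q^*\muhom(F,F)$, where $q:X\times_YT^*Y\to T^*X$ and $p:X\times_YT^*Y\to T^*Y$. This already fails for $X=\mathbb P^1\to\operatorname{Spec}k$ with $F$ constant of rank one: the total ranks are $4$ and $2$.

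Moreover, for $*$-pushforward the natural base-change arrow is $\nu_{\Delta_Y}R(f\times f)_*\to R(Tf)_*\nu_{\Delta_X}$, opposite to yours. After Fourier transform it would produce $q^!$, not $q^*$.

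What works is either of the following, combined with $\mathcal F_{\psi,TY/Y}\circ R(Tf)_!\simeq Rp_!\circ q^*\circ\mathcal F_{\psi,TX/X}$:
\begin{itemize}
\item the non-invertible arrow $R(Tf)_!\nu_{\Delta_X}\mathscr H_F\to\nu_{\Delta_Y}R(f\times f)_!\mathscr H_F\simeq\nu_{\Delta_Y}\mathscr H_{Rf_*F}$, obtained by compactifying, then using $j_!R\Psi\to R\Psi j_!$ and proper base change;
\item Saito's factorisation through the graph of $f$, \cite[(2.53), (2.67), (2.68)]{SaitoMicro}.
\end{itemize}
Either gives $\beta_f$ of the shape you want. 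Note also that your displayed correspondence has $T^*X$ and $T^*Y$ interchanged. With this repair, checking the unit and evaluation compatibilities directly in the $E$-linear formalism is a legitimate alternative to the paper. The paper instead verifies \eqref{eq:pushforwardUCE} via the inverse system of finite-coefficient diagrams.
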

	\begin{proof}
		The proofs of \cite[Proposition 2.3.3 and Proposition 2.5.1]{SaitoMicro} extend to the pro-\'etale setting within the six-functor formalism.  Here are the details.
		
		Consider the following correspondence
		\begin{align}
			T^*X\xleftarrow{\,q=df\,}X\times_YT^*Y\xrightarrow{\,p\,}T^*Y .
		\end{align}
		Thus \(p\) is proper and
		\(f_\circ C=p(q^{-1}C)\) for a closed conical subset \(C\subset T^*X\).
		The microlocal characteristic cycles are defined by the compositions
		\begin{align}
			E_{T^*X}\xrightarrow{u_X}\mu\mathcal Hom_E(F,F)
			\xrightarrow{v_X}e_X^{\vee *}\mathcal K_{X/k},
			\qquad
			E_{T^*Y}\xrightarrow{u_Y}\mu\mathcal Hom_E(Rf_\ast F,Rf_\ast F)
			\xrightarrow{v_Y}e_Y^{\vee *}\mathcal K_{Y/k},
		\end{align}
		where $u_X$ and $u_Y$ are the  identities, $v_X$ and $v_Y$ evaluation morphisms.  Now we use the proof of \cite[Proposition~2.3.3]{SaitoMicro}.
		Consider the following commutative diagram (cf.  \cite[(2.53)]{SaitoMicro})
		\begin{equation}
			\begin{aligned}
				\begin{tikzcd}[column sep=large]
					X\arrow[r,equal]\arrow[d,"\Delta_X"']
					&X\arrow[r,"f"]\arrow[d,"\Gamma_f"']
					&Y\arrow[d,"\Delta_Y"]\\
					X\times X\arrow[r,"{\mathrm{id}_X\times f}"']
					&X\times Y\arrow[r,"{f\times\mathrm{id}_Y}"']
					&Y\times Y.
				\end{tikzcd}
			\end{aligned}
		\end{equation}
		Consider the middle horizontal arrows in \cite[(2.67) and (2.68)]{SaitoMicro}. They define a canonical
		morphism
		\begin{align}
			\beta_f:Rp_*q^*\mu\mathcal Hom_E(F,F)\longrightarrow \mu\mathcal Hom_E(Rf_\ast F,Rf_\ast F).
		\end{align}
		Let \(\eta_p:E_{T^*Y}\to Rp_*E_{X\times_Y T^\ast Y}\) be the unit.  Smooth base change
		for \( ! \)-pullback gives
		$q^*e_X^{\vee *}\mathcal K_{X/k}
		\simeq p^!e_Y^{\vee *}\mathcal K_{Y/k}$,
		and hence a trace morphism
		$\tau_p:Rp_*q^*e_X^{\vee *}\mathcal K_{X/k}
		\longrightarrow e_Y^{\vee *}\mathcal K_{Y/k}$.
		The unit, counit and evaluation identities give
		\begin{align}\label{eq:pushforwardUCE}
			\beta_f\circ Rp_*(q^*u_X)\circ\eta_p=u_Y,
			\qquad
			v_Y\circ\beta_f=\tau_p\circ Rp_*(q^*v_X).
		\end{align}
		
		We justify \eqref{eq:pushforwardUCE} for \(E\)-coefficients by reduction.  Choose an integral model \(F_0\), and put $F_m=F_0\otimes_{\mathcal O_E}^{L}\mathcal O_E/\pi^m$.
		Since \(f\) is proper, we have an isomorphism
		$(Rf_*F_0)\otimes_{\mathcal O_E}^{L}\mathcal O_E/\pi^m\simeq Rf_*F_m$.
		Verdier duality, external products, specialization, Fourier--Deligne
		transformation, and their unit, counit, evaluation and exchange
		morphisms commute with these coefficient changes.  Consequently the
		finite-coefficient diagrams of
		\cite[Proposition~2.3.3]{SaitoMicro} form a compatible inverse
		system.  The equivalence
		\begin{align}
			D_{\mathrm{cons}}(-,\mathcal O_E)
			\simeq\varprojlim_mD^b_{\mathrm{ctf}}(-,\mathcal O_E/\pi^m)
		\end{align}
		shows that the corresponding diagram commutes over \(\mathcal O_E\);
		rationalizing gives \eqref{eq:pushforwardUCE}.
		
		Now put		$C=f_\circ\SSmu(F)\cup\SSmu(Rf_*F)$.
		The two middle terms in \eqref{eq:pushforwardUCE} are supported on \(C\).  Applying the
		supported-trace factorization defining the microlocal characteristic
		class gives
		\begin{align}
			f_!\CCmu(F)=\CCmu(Rf_*F)
			\quad\text{in}\quad
			H_C^0(T^*Y,e_Y^{\vee *}\mathcal K_{Y/k}).
		\end{align}
		This proves \eqref{eq:CCRFFFCCMUF}.
		
		Now assume $X$ is a smooth curve. We prove	\eqref{eq:GOSforCC}. Put
		$D=T_X^*X\cup\bigcup_{x\in Z}T_x^*X$.
		Choose an integral model \(F_0\) which is lisse over \(X\setminus Z\).  Semi-purity and derived
		\(\pi\)-completeness give
		\begin{align}\label{eq:pushforwardPassTolimitCC}
			H_D^0(T^*X,e_X^{\vee *}\mathcal K_{X/k,\mathcal O_E})
			\xrightarrow{\sim}
			\varprojlim_m
			H_D^0(T^*X,e_X^{\vee *}\mathcal K_{X/k,\mathcal O_E/\pi^m}).
		\end{align}
		Indeed, the relevant supported complexes lie in \(D^{\geq0}\), so the
		possible \(\varprojlim^1H^{-1}\)-term vanishes.
		
		For every \(m\), by \cite[Proposition~2.5.1]{SaitoMicro} we have
		\begin{align}
			\CCmu(F_m)
			=-\operatorname{rank}(F)\,{\rm cl}_{\mathcal O_E/\pi^m}[T_X^*X]
			-\sum_{x\in Z}a_x(F)\,{\rm cl}_{\mathcal O_E/\pi^m}[T_x^*X].
		\end{align}
		Here rank and conductor are unchanged by derived coefficient
		reduction.  More precisely, vanishing cycles commute with reduction,
		and we have an isomorphism
		\begin{align}
			R\Phi_x(F_0)\otimes_{\mathcal O_E}^{L}\mathcal O_E/\pi^m
			\simeq R\Phi_x(F_m).
		\end{align}
		The alternating rank of these perfect complexes is unchanged.
		Thus \(a_x(F_m)=a_x(F)\) (see also \cite[(5.3.4.1)]{UYZ}).  Passing to the inverse limit using \eqref{eq:pushforwardPassTolimitCC}, and then
		inverting \(\pi\), proves
		\begin{align}
			\CCmu(F)
			=-\operatorname{rank}(F)\,{\rm cl}[T_X^*X]
			-\sum_{x\in Z}a_x(F)\,{\rm cl}[T_x^*X].
		\end{align}
	\end{proof}
	
	\begin{corollary}\label{cor:CC-cycle}
		Let $X$ be a smooth connected $k$-scheme, and let $F\in D_{\mathrm{cons}}(X_{\mathrm{pro\acute et}},E)$. Then
		we have
		\begin{equation}\label{eq:CC-cycle}
			\CCmu(F)=\operatorname{cl}(\CC(F)),
		\end{equation}
		where ${\rm cl}: Z_{{\rm dim}(X)}({\rm SS}(F))\to H^0_{{\rm SS}(F)}(T^\ast X, {p}_X^{\vee\ast} \mathcal K_{X/k})$ is the cycle class map.
	\end{corollary}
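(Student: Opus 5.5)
The plan is to reduce Corollary~\ref{cor:CC-cycle} to the mod $\pi$ statement, Corollary~\ref{cor:CC-cycleFinite}, by choosing a good integral model and passing to the limit as in the proof of Proposition~\ref{prop:pushforwardOfMicroCC-Ecoefficient}. First I would show that $\mathrm{SS}_\mu(F)=\mathrm{SS}(F)$ for $E$-coefficients, so that both sides of \eqref{eq:CC-cycle} live in $H^0_{\mathrm{SS}(F)}$ and $\dim \mathrm{SS}_\mu(F)\le \dim X$. The inclusion $\subseteq$ is Theorem~\ref{thm:singular-support-inclusion-Ecoefficient}. For the reverse inclusion I would rerun the argument of Subsection~\ref{subsec:proof-of-equality}: Lemmas~\ref{lem:Kunneth-for-muHom}, \ref{lem:specialization-detect-transversality}, \ref{lem:ULA-equivalent-transversal}, \ref{lem:SSmu-detects-ULA} and Theorem~\ref{thm:support-of-microlocalization-to-closed-subscheme} are formal six-functor statements and extend verbatim. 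The needed pro-\'etale inputs are the following: ULA is equivalent to dualizability \cite[Proposition~3.6]{Barrett}, and Beilinson's theory, in particular $\mathrm{SS}^w=\mathrm{SS}$ and the Radon description, holds for $E$-sheaves.

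Next I would take an integral model $F_0$ as in \eqref{eq:adapted-lattice-theorem82} with $\mathrm{SS}(F_0)=\mathrm{SS}(F)$, and set $F_m=F_0\otimes^L\mathcal O_E/\pi^m$. Then $\mathrm{SS}(F_m)\subseteq \mathrm{SS}(F_0)=\mathrm{SS}(F)=:C$ and $\mathrm{SS}_\mu(F_m)\subseteq C$. By definition $\mathrm{CC}(F)=\mathrm{CC}(F_1)$, and $\mathrm{CC}(F_m)=\mathrm{CC}(F_1)$ for all $m$, because the characteristic cycle is additive and determined by the Milnor formula, whose total dimensions are unchanged by derived reduction (cf.\ \cite{UYZ}). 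Corollary~\ref{cor:CC-cycleFinite}, applied to each $F_m$ with $\Lambda=\mathcal O_E/\pi^m$, gives $\mathrm{CC}_\mu(F_m)=\mathrm{cl}_{\mathcal O_E/\pi^m}(\mathrm{CC}(F))$ in $H^0_C(T^*X,p_X^{\vee*}\mathcal K_{X/k,\mathcal O_E/\pi^m})$.

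Finally I would pass to the limit. The structural morphisms $u,v$ of \eqref{eq:microlocal-trace-maps} commute with reduction by \eqref{eq:proetale-muhom-reduction}, so the classes $\mathrm{CC}_\mu(F_m)$ form the compatible system attached to $\mathrm{CC}_\mu(F_0)$. The cycle classes of a fixed cycle are compatible as well. Since $\dim C\le \dim X$, semi-purity gives $\mathcal H^{q}_C(p_X^{\vee*}\mathcal K_{X/k})=0$ for $q<0$. Hence $H^0_C$ with $\mathcal O_E$-coefficients equals $\varprojlim_m H^0_C$ with $\mathcal O_E/\pi^m$-coefficients, since the $\varprojlim^1$ of $H^{-1}_C$ vanishes, exactly as in \eqref{eq:pushforwardPassTolimitCC}. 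This yields $\mathrm{CC}_\mu(F_0)=\mathrm{cl}(\mathrm{CC}(F))$ integrally, and inverting $\pi$ via \eqref{eq:proetale-muhom-rationalization} gives \eqref{eq:CC-cycle}.

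I expect the main obstacle to be the equality $\mathrm{SS}_\mu(F)=\mathrm{SS}(F)$ for $E$-sheaves, together with the support bookkeeping. Corollary~\ref{cor:CC-cycleFinite} for $F_m$ is a statement in $H^0_{\mathrm{SS}(F_m)}$, and it must be pushed to $H^0_C$ compatibly in $m$. To sidestep the first point, I would try to avoid the full equality and use only $\mathrm{SS}_\mu(F)\subseteq C$ with $\dim C\le\dim X$, working entirely in $H^0_C$, since this is all that the limit argument needs.
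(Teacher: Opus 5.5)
Your argument is correct, but it follows a different route from the paper's.

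\textbf{The paper's route.} It first reduces to a perverse $F$ with a torsion-free perverse lattice. It then uses semi-purity to write $\CCmu(F)=\sum_a\mu_a[C_a]$ with $\mu_a\in E$. Finally, it determines each $\mu_a$ by transcribing Saito's proof of \cite[Proposition~2.5.2]{SaitoMicro} into $E$-coefficients. That transcription uses:
\begin{itemize}
\item a pencil $f\colon X'\to\mathbf P^1$ isolating $C_a$ with $(C_a,df)_x=1$;
\item the $E$-coefficient push-forward formula of Proposition~\ref{prop:pushforwardOfMicroCC-Ecoefficient}, whose curve case \eqref{eq:GOSforCC} is itself obtained by the limit argument \eqref{eq:pushforwardPassTolimitCC};
\item the Milnor formula for $F_1$;
\item the invariance of conductors under reduction from \cite{UYZ}.
\end{itemize}

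\textbf{Your route.} You use Corollary~\ref{cor:CC-cycleFinite} as a black box at every level $\mathcal O_E/\pi^m$. You then run the semi-purity limit argument of \eqref{eq:pushforwardPassTolimitCC} directly on $T^*X$ with support $C=\SSing(F)$, and finally rationalize.

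\textbf{Comparison.} Your route bypasses the perverse reduction, the pencil construction and Proposition~\ref{prop:pushforwardOfMicroCC-Ecoefficient} entirely. The price is threefold:
\begin{itemize}
\item you need Corollary~\ref{cor:CC-cycleFinite} for the non-field rings $\mathcal O_E/\pi^m$, which the paper does state for finite local rings;
\item you need $\CC(F_m)=\CC(F_1)$;
\item you need the compatibility of the structural maps $u,v$ with reduction and rationalization on all of $T^*X$.
\end{itemize}
The last is the same compatibility the paper asserts in proving Proposition~\ref{prop:pushforwardOfMicroCC-Ecoefficient}. The paper's route, by contrast, compares $E$-coefficient multiplicities component by component. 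Its finite-coefficient inputs are the Milnor formula for $F_1$ and the curve case.

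\textbf{Bookkeeping.} Your first step, proving $\SSmu(F)=\SSing(F)$ for $E$-sheaves, is unnecessary. You must not cite Theorem~\ref{thm:singular-support-equality-Ecoeff} for it: in the paper that theorem is deduced from this very corollary via Proposition~\ref{prop:perverse}, so citing it would be circular. Your fallback is exactly what the limit argument needs: $\SSmu(F)\subseteq C$ from Theorem~\ref{thm:singular-support-inclusion-Ecoefficient}, together with $\dim C\le\dim X$. You should also say explicitly why the integral class $\CCmu(F_0)$ lies in $H^0_C$ with $\mathcal O_E$-coefficients. The derived Nakayama step in the proof of Theorem~\ref{thm:singular-support-inclusion-Ecoefficient} gives $\supp\muhom_{\mathcal O_E}(F_0,F_0)\subseteq\SSmu(F_1)\subseteq C$, so the integral class, its reductions and its rationalization can all be compared in $H^0_C$.
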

	
	\begin{proof}
		By \cite[Proposition~2.1.7]{SaitoMicro} and \cite[Lemma 5.13.1]{SaitoCC}, both $\CCmu$ and $\CC$ are additive for
		distinguished triangles.   By \cite[Theorem~1.5\textup{(viii)}]{Barrett}, we have
		\begin{align}
			\SSing(F)=\bigcup_i\SSing({}^pH^iF).
		\end{align}
		By passing to the perverse cohomology sheaves and then to their 
		Jordan--H\"older constituents, 
		we may assume that $F$
		is a perverse $E$-sheaf.
		
		Take a torsion-free perverse
		integral model $F_0$. Put $F_1=F_0\otimes^L \mathcal O_E/\pi$.
		By
		\cite[Theorem~1.5\textup{(v)}]{Barrett}, we have
		\begin{equation}\label{eq:barrett-adapted-lattice}
			\SSing(F_1)=\SSing(F)
		\end{equation}
		for every such model.  By definition, 
		\begin{equation}\label{eq:defCC-lattice}
			{\rm CC}(F_1)={\rm CC}(F).
		\end{equation}
		We may assume $F\neq 0$.
		Now we follow the proof of \cite[Proposition 2.5.2]{SaitoMicro}. Let $C=\SSing(F_1)=\SSing(F)$. By \cite[Theorem 1.3]{Beilinson}, we have ${\rm dim}C= {\rm dim}X$. By semi-purity, we have
		$H^0_C(T^\ast X, p_X^{\vee\ast}\mathcal K_X)=\oplus_a E[C_a]$, where $C_a$ runs through irreducible components of $C$ (of dimension ${\rm dim}X$). Hence ${\rm CC}_\mu(F)=\sum_a \mu_a [C_a]$ for some $\mu_a\in E$. Let ${\rm CC}=\sum_a m_a[C_a]$. It suffices to show the equality $\mu_a=m_a$ in $E$ for each irreducible component $C_a$.
		
		Since the question is local on $X$, we may assume that $X$ is affine. By applying push-forward to an immersion $X\to \mathbb A^n$, we may assume that $X=\mathbb A^n$. Further, we may assume that $X$ is projective. Now we fix a closed immersion $X\to \mathbb P^N$ for $N$ large enough.

		Let \(C_a\) be an irreducible component of \(C\).
		Then, there exists a linear subvariety \(A \subset \mathbb{P}^N\) of
		codimension \(2\) and \(x \in X\setminus X \cap A\) satisfying the following
		conditions. The intersection \(X \cap A\) is transverse. Let
		\(\pi: X' \to X\) be the blow-up along \(X \cap A\) and let
		\(f \colon X' \to Y = \mathbb{P}^1\) be the morphism to the projective
		line parametrizing hyperplanes containing \(A\). The fibers of \(f\)
		are the intersections with hyperplanes. The point \(x\) is an isolated
		characteristic point of \(f \colon X' \to Y\), and \(x\) is the unique
		characteristic point in the fiber \(f^{-1}(y)\) for \(y = f(x)\). The
		component \(C_a \subset T^*X\) is the unique irreducible component of
		\(C\) meeting the section \(df\) at \(x\). The intersection number
		\((C_a,df)_x\) equals \(1\), if necessary replacing \(X\) by
		\(X \times \mathbb{P}^1\) in the case \(p=2\) \cite[Proposition 5.19]{SaitoCC}.
		
		Let \(m_a\) be the coefficient of \(C_a\) in \({\rm CC}{\pi^\ast F}={\rm CC}\pi^\ast {F}_1\), and let
		\(\mu_a \in E\) be the coefficient of \([C_a]\)
		in \({\rm CC}_\mu{F}\). Then, we have
		$({\rm CC}\pi^\ast {F}_1,df)_x
		= m_a(C_a,df)_x \in \mathbf{Z}$
		and
		$({\rm CC}_\mu{\pi^\ast F},df)_x
		= \mu_a(C_a,df)_x \in E$.
		By the Milnor formula \cite[Theorem 5.9]{SaitoCC}, we have
		\begin{align}
			\dim\operatorname{tot}R\Phi_x(\pi^\ast {F}_1,f)
			= -(CC\pi^\ast {F}_1,df)_x
			= -m_a(C_a,df)_x.
		\end{align}
		By Proposition \ref{prop:pushforwardOfMicroCC-Ecoefficient}, we have
		\(f_!CC_\mu\pi^\ast {F}=CC_\mu Rf_*\pi^\ast {F}\).
		Thus the intersection product
		$({\rm CC}_\mu{\pi^\ast F},df)_x
		= \mu_a(C_a,df)_x$
		equals the coefficient 
		\(-a_y (f_*\pi^\ast {F})\) of
		\([T_y^*Y]\) in
		$
		f_!CC_\mu\pi^\ast {F}=CC_\mu f_*\pi^\ast {F}$.
		By \cite[(5.3.4.1)]{UYZ}, we have
		\begin{align}
			a_y (f_*\pi^\ast {F}_1)=\dim\operatorname{tot}R\Phi_x(\pi^\ast {F}_1,f)=\dim\operatorname{tot}R\Phi_x(\pi^\ast {F},f)
			= a_y (f_*\pi^\ast {F}).
		\end{align}
		Thus, we have
		\begin{align}
			\dim\operatorname{tot}R\Phi_x(\pi^\ast {F}_1,f)
			= -m_a(C_a,df)_x
			= -\mu_a(C_a,df)_x.
		\end{align}
		Since \((C_a,df)_x=1\), we obtain \(m_a=\mu_a\) in \(E\), and
		hence
		$CC_\mu{F}=\operatorname{cl}CC{F}$.
	\end{proof}
	
	%
	\begin{proposition}\label{prop:perverse}
		For every perverse $E$-sheaf $\mathcal P$ on a smooth $k$-scheme $X$, we have
		\begin{equation}
			\SSmu(\mathcal P)=\SSing(\mathcal P).
		\end{equation}
	\end{proposition}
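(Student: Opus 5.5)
The inclusion $\SSmu(\mathcal P)\subseteq\SSing(\mathcal P)$ is Theorem~\ref{thm:singular-support-inclusion-Ecoefficient}, so the plan is to prove the reverse inclusion for a perverse $E$-sheaf $\mathcal P$. I would use the cycle-class comparison of Corollary~\ref{cor:CC-cycle}, rather than repeat the finite-coefficient argument of Section~\ref{sec:comparison-on-singular-supports}. The reason is that Lemma~\ref{lem:SSmu-detects-ULA} uses the transversality criterion of Corollary~\ref{cor:mu-supported-on-zero-section-implies-transversal}, whose $E$-coefficient version would need separate justification.

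First, since $\CCmu(\mathcal P)$ is by definition a class with support on $\SSmu(\mathcal P)$, its image in $H^0_{\SSing(\mathcal P)}(T^*X,p_X^{\vee*}\mathcal K_{X/k})$ restricts to zero on the open complement $T^*X\setminus\SSmu(\mathcal P)$. Set $C=\SSing(\mathcal P)$. By semi-purity, as used in the proof of Corollary~\ref{cor:CC-cycle}, we have $H^0_C(T^*X,p_X^{\vee*}\mathcal K_{X/k})=\bigoplus_a E[C_a]$ over the irreducible components $C_a$ of $C$, all of dimension $\dim X$ by \cite[Theorem 1.3]{Beilinson}. Restricting to the open set $T^*X\setminus\SSmu(\mathcal P)$ is compatible with this decomposition. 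Hence, if some component $C_a$ were not contained in $\SSmu(\mathcal P)$, the coefficient $\mu_a$ of $[C_a]$ in $\CCmu(\mathcal P)$ would vanish. Here I use that $C_a\setminus\SSmu(\mathcal P)$ is a nonempty open subset of $C_a$, so the fundamental class of $C_a$ survives restriction to it.

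Second, Corollary~\ref{cor:CC-cycle} gives $\mu_a=m_a$, where $m_a$ is the coefficient of $C_a$ in $\CC(\mathcal P)$. So it suffices to show $m_a\neq0$ for every component of $\SSing(\mathcal P)$. This is the positivity of characteristic cycles of perverse sheaves: for $\mathcal P$ perverse, the coefficients of $(-1)^{\dim X}\CC(\mathcal P)$ are positive. The coefficient $m_a$ is computed through the Milnor formula by a perverse integral model $F_0$ with $F_1=F_0\otimes^L\mathcal O_E/\pi$ and $\SSing(F_1)=\SSing(\mathcal P)$, as in the proof of Corollary~\ref{cor:CC-cycle} via \cite[Theorem~1.5]{Barrett}. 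For the perverse mod-$\pi$ sheaf $F_1$ (after reducing to Jordan--H\"older constituents if needed), Saito's positivity result \cite[Proposition 5.14]{SaitoCC} shows that every irreducible component of $\SSing(F_1)$ appears in $\CC(F_1)$ with nonzero coefficient. Since $E$ has characteristic $0$, the integer $m_a\neq0$ remains nonzero in $E$, so $\mu_a\neq0$ and $C_a\subseteq\SSmu(\mathcal P)$ for all $a$. Taking the union over $a$ gives $\SSing(\mathcal P)\subseteq\SSmu(\mathcal P)$.

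I expect the main obstacle to be the positivity input. It holds for perverse sheaves, but it can fail for arbitrary complexes because of cancellation, which is why the statement is restricted to perverse sheaves. Specifically, one must check that $\SSing(\mathcal P)$ equals the support of $\CC(\mathcal P)$ when $\mathcal P$ is a perverse $E$-sheaf. I would try to deduce this from the mod-$\pi$ statement using a perverse lattice, although $F_1$ itself need not be perverse; in that case I would pass to ${}^pH^0$ and its constituents and use the union formula \cite[Theorem~1.5(viii)]{Barrett}. A secondary point is the compatibility of semi-purity with restriction to open subsets, which reduces to the fact that the fundamental class of $C_a$ does not vanish on a dense open subset of $C_a$.
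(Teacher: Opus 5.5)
Your argument is essentially the paper's. Both use the inclusion from Theorem~\ref{thm:singular-support-inclusion-Ecoefficient}, the identity $\CCmu(\mathcal P)=\operatorname{cl}(\CC(\mathcal P))$ from Corollary~\ref{cor:CC-cycle}, semi-purity to read off the coefficients of the top-dimensional components $C_a$, and nonvanishing of every multiplicity of $\CC(\mathcal P)$ through an integral model. You show $\mu_a=0$ for $C_a\not\subseteq\SSmu(\mathcal P)$ by restricting to $T^*X\setminus\SSmu(\mathcal P)$. The paper instead describes the image of the extension-of-supports map $H^0_{\SSmu(\mathcal P)}\to H^0_{\SSing(\mathcal P)}$; the two are equivalent.

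The one point to repair is the obstacle you flagged yourself.
\begin{itemize}
\item Take the perverse lattice $\mathcal P_0$ to be torsion-free as a perverse sheaf, for instance by dividing out its $\pi$-power torsion. Then the triangle $\mathcal P_0\xrightarrow{\pi}\mathcal P_0\to\mathcal P_1$ gives ${}^pH^{-1}(\mathcal P_1)=\ker\pi=0$, so $\mathcal P_1$ is perverse.
\item Saito's effectivity result then applies directly to $\mathcal P_1$, with $\SSing(\mathcal P_1)=\SSing(\mathcal P)$ by Barrett. This is exactly what the paper does.
\item Your proposed fallback would not work. If $F_1$ is not perverse, then $\CC(F_1)=\CC({}^pH^0F_1)-\CC({}^pH^{-1}F_1)$, and the two terms can cancel. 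The union formula only controls supports, not multiplicities.
\end{itemize}

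A minor remark: in Saito's normalization $\CC(\mathcal P)$ itself is effective, not $(-1)^{\dim X}\CC(\mathcal P)$. The extra sign is harmless here, since only nonvanishing of the $m_a$ in $E$ is used.
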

	\begin{proof}
		The assertion may be checked on the connected
		components of $X$. We may therefore assume that $X$ is connected, and
		put $n=\dim X$. The assertion is clear if $\mathcal P=0$.
		By
		Theorem~\ref{thm:singular-support-inclusion-Ecoefficient}, we have
		\begin{equation}\label{eq:SSmu-contained-in-SS-perverse}
			{\rm SS}_\mu(\mathcal P)\subseteq {\rm SS}(\mathcal P).
		\end{equation}
		We first establish the positivity property of $\CC(\mathcal P)$.
		By
		\cite[Proposition~2.6 and the proof of
		Theorem~1.5\textup{(v)}]{Barrett}, we may choose an integral model
		$	\mathcal P_0\in\operatorname{Perv}(X,\mathcal O_E)$ of $\mathcal P$
		which is torsion-free as a perverse sheaf. Put
		$\mathcal P_1
		=\mathcal P_0\otimes_{\mathcal O_E}^{L}\mathcal O_E/\pi$.
		The distinguished triangle
		\begin{align}
			\mathcal P_0\xrightarrow{\pi}\mathcal P_0
			\longrightarrow\mathcal P_1\longrightarrow
		\end{align}
		shows that $\mathcal P_1$ is perverse. Indeed,
		$
		{}^pH^{-1}(\mathcal P_1)
		=\ker\bigl(\pi:\mathcal P_0\to\mathcal P_0\bigr)=0
		$
		because $\mathcal P_0$ is torsion-free, while
		$
		{}^pH^0(\mathcal P_1)
		=\operatorname{coker}
		\bigl(\pi:\mathcal P_0\to\mathcal P_0\bigr)$.
		By \cite[Theorem~1.5\textup{(ii),(v)}]{Barrett}, we have
		\begin{equation}\label{eq:SS-integral-reduction-perverse}
			\SSing(\mathcal P_1)
			=\SSing(\mathcal P_0)
			=\SSing(\mathcal P).
		\end{equation}
		By \cite[Proposition~4.14]{SaitoCC}, the characteristic cycle
		$\CC(\mathcal P_1)$ is effective and
		\begin{align}
			\operatorname{supp}\CC(\mathcal P_1)
			=\SSing(\mathcal P_1).
		\end{align}
		Since the characteristic cycle with $E$-coefficients is defined by
		reduction of an integral model, if
		${\rm SS}(\mathcal P)=\bigcup_{a\in A}C_a$
		is the decomposition into irreducible components, then
		\begin{equation}\label{eq:positive-CC-perverse-E}
			\CC(\mathcal P)
			=\CC(\mathcal P_1)
			=\sum_{a\in A}m_a[C_a],
			\qquad m_a\in\mathbf Z_{>0}.
		\end{equation}
		In particular, ${\rm SS}(\mathcal P)$ is purely $n$-dimensional, and every $m_a$ has
		nonzero image in the characteristic-zero field $E$.
		
		We now recall the consequence of semi-purity needed below. If
		$W\subseteq T^\ast X$ is a closed subset of dimension at most $n$, then the
		cycle-class map induces an isomorphism
		\begin{equation}\label{eq:semipurity-top-cycles-E}
			\operatorname{cl}_{W,E}:
			Z_n(W)\otimes_{\mathbf Z}E
			\xrightarrow{\ \sim\ }
			H_W^0\bigl(T^\ast X,p_X^{\vee\ast}\mathcal K_{X/k}\bigr),
		\end{equation}
		where $Z_n(W)$ denotes the group of $n$-dimensional cycles supported
		on $W$.
		
		Indeed, since $X$ is smooth of dimension $n$, we have
		$p_X^{\vee\ast}\mathcal K_{X/k}\simeq E(n)[2n]$.
		A closed subset of $W$ of dimension strictly smaller than $n$ has
		codimension at least $n+1$ in the smooth scheme $T^\ast X$ of dimension
		$2n$. Semi-purity therefore shows that such a subset contributes
		neither to degree $0$ nor to degree $1$ cohomology with coefficients
		in $p_X^{\vee\ast}\mathcal K_{X/k}$. After removing a closed subset of dimension
		less than $n$, the $n$-dimensional irreducible components of $W$
		become pairwise disjoint regular closed subschemes of codimension
		$n$. Absolute purity then gives one copy of $E$ for each such
		component, proving \eqref{eq:semipurity-top-cycles-E}. Moreover, since
		these identifications are induced by cycle classes, they are
		compatible with extension of supports.
		
		Let $C={\rm SS}(\mathcal P),C_\mu={\rm SS}_\mu(\mathcal P)$ and
		\[
		\rho_{C_\mu,C}:
		H_{C_\mu}^0\bigl(T^\ast X,p_X^{\vee\ast}\mathcal K_{X/k}\bigr)
		\longrightarrow
		H_C^0\bigl(T^\ast X,p_X^{\vee\ast}\mathcal K_{X/k}\bigr)
		\]
		be the extension-of-supports morphism. Since $C_\mu\subseteq C$ and
		$C$ is purely $n$-dimensional, every $n$-dimensional irreducible
		component of $C_\mu$ is one of the $C_a$. Consequently,
		\eqref{eq:semipurity-top-cycles-E} gives
		\begin{equation}\label{eq:image-extension-supports-perverse}
			\operatorname{Im}(\rho_{C_\mu,C})
			=
			\bigoplus_{\{a\in A\,:\,C_a\subseteq C_\mu\}}
			E\cdot\operatorname{cl}_{C,E}([C_a])
			\subseteq
			H_C^0\bigl(T^\ast X,p_X^{\vee\ast}\mathcal K_{X/k}\bigr).
		\end{equation}
		By Corollary~\ref{cor:CC-cycle}, interpreted after extension of
		supports along $C_\mu\subseteq C$, we have
		\begin{align}
			\rho_{C_\mu,C}\bigl(\CCmu(\mathcal P)\bigr)
			&=\operatorname{cl}_{C,E}\bigl(\CC(\mathcal P)\bigr) \notag\\
			&=\sum_{a\in A}
			m_a\,\operatorname{cl}_{C,E}([C_a])
			\quad\text{in}\quad
			H_C^0\bigl(T^\ast X,p_X^{\vee\ast}\mathcal K_{X/k}\bigr).
			\label{eq:CCmu-equals-positive-cycle-class}
		\end{align}
		The left-hand side of
		\eqref{eq:CCmu-equals-positive-cycle-class} belongs to the subspace
		described in \eqref{eq:image-extension-supports-perverse}. On the
		right-hand side, however, every coefficient $m_a$ is nonzero in $E$.
		It follows that
		$C_a\subseteq C_\mu$
		for every $a\in A$.
		Therefore $C\subseteq C_\mu$. Together with
		\eqref{eq:SSmu-contained-in-SS-perverse}, this proves
		$\SSmu(\mathcal P)=\SSing(\mathcal P)$.
	\end{proof}
	\subsection{}
	Let $X$ be a smooth $k$-scheme of pure dimension $n$.
	\begin{lemma}\label{lem:mu-t-exact}
		The functor
		\begin{equation}
			\mu_{\Delta_X}[n]:D_{\rm cons}(X\times X,E)\to D_{\rm cons}(T^*X,E)
		\end{equation}
		is perverse t-exact.
	\end{lemma}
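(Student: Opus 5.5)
We prove the lemma by splitting $\mu_{\Delta_X}[n]=\mathcal F_{\psi,TX/X}\circ\nu_{\Delta_X}$, with the shift $[n]$ distributed between the two factors, and checking t-exactness of each factor separately.

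\emph{Specialization.} First show that $\nu_{\Delta_X}=R\Psi\circ\mathrm{pr}_1^*$ is perverse t-exact from $D_{\rm cons}(X\times X,E)$ to $D_{\rm cons}(TX,E)$; no shift is needed, since $TX$ and $X\times X$ both have dimension $2n$. The argument has two steps.
\begin{itemize}
\item $\mathrm{pr}_1^*[1]$ from $X\times X$ to $X\times X\times\mathbb G_m$ is perverse t-exact, because it is smooth pullback of relative dimension $1$ shifted by $1$.
\item Gabber's theorem says $R\Psi[-1]$ is perverse t-exact for the morphism $\mathrm D_{X}(X\times X)\to\mathbb A^1$. Here the target is the special fiber $TX$ of this flat family, and the generic fiber is $X\times X\times\mathbb G_m$.
\end{itemize}
Composing, the shifts cancel. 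For $E$-coefficients the same holds: Gabber's t-exactness passes through integral models and reduction, since perversity of a constructible $\mathcal O_E$-complex can be tested after reduction modulo $\pi$ together with torsion-freeness, in the style of the arguments in Section~\ref{sec:NA-equality}. Alternatively, one can cite Gabber's statement directly in the pro-\'etale setting.

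\emph{Fourier transform.} Next we need the Fourier--Deligne transform of a vector bundle of rank $r=n$ over a base $S=X$ of dimension $n$. The classical result (Katz--Laumon, Brylinski) is that $\mathcal F_{\psi,V/S}[r]$ is perverse t-exact, with perversity on $V$ and $V^\vee$ taken relative to $k$. The relative version over a general base $S$ reduces to the absolute statement: write $\mathcal F_{\psi}[r]$ as $R\mathrm{pr}_{V^\vee!}$ applied to $\mathrm{pr}_V^*(-)[r]\otimes\mathcal L_\psi\langle\cdot,\cdot\rangle$.
\begin{itemize}
\item $\mathrm{pr}_V^*[r]$ is t-exact.
\item Tensoring with the rank-one lisse $\mathcal L_\psi$ is t-exact.
\item $R\mathrm{pr}_{V^\vee!}$ is right t-exact, since it is $!$-pushforward along an affine morphism.
\end{itemize}
So the composite is right t-exact. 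The dual inequality uses the identification $\mathcal F_\psi=R\mathrm{pr}_{V^\vee*}(\cdots)$, valid because Fourier transforms with $!$ and $*$ agree. The same step chain then gives left t-exactness, since $*$-pushforward along an affine morphism is left t-exact. Finally, the identity $\mathcal F_\psi\circ\mathcal F_{\psi^{-1}}\simeq(-1)^*[-2r](-r)$ is not even needed; only the $!=*$ comparison is used.

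\emph{Main obstacle.} The hard step is the $!=*$ comparison for $E$-coefficients in the relative setting, together with the fact that $\nu_{\Delta_X}$ really lands in $D_{\rm cons}(TX,E)$ after forgetting the $\eta_0$-action, so that perversity is measured on $TX$. I would settle both by working with integral models $F_0$. The relevant statements are compatible with $\otimes^L\mathcal O_E/\pi^m$, as recalled around \eqref{eq:proetale-muhom-reduction}. For finite coefficients, the $!=*$ comparison for the Fourier transform is Laumon's theorem, and the perverse t-exactness of $R\Psi[-1]$ is Illusie--Gabber. Torsion-freeness and perversity then ascend along the limit, by the conservativity of reduction modulo $\pi$ (\cite[Lemma~2.1]{Barrett}). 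Combining the specialization step with the Fourier step gives that $\mu_{\Delta_X}[n]$ is perverse t-exact.
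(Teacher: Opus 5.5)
Your decomposition $\mu_{\Delta_X}[n]=\mathcal F_{\psi,TX/X}[n]\circ\nu_{\Delta_X}$ matches the paper's proof, which consists of citing exactly these two facts: $\nu_{\Delta_X}$ is t-exact by Gabber's theorem on $R\Psi[-1]$, and $\mathcal F_{\psi,TX/X}[n]$ is t-exact by Katz--Laumon. The problem is your self-contained justification of the Fourier step, where Artin vanishing is stated backwards. For an affine morphism $f$, $Rf_*$ is \emph{right} t-exact and $Rf_!$ is \emph{left} t-exact \cite[\S4.1]{BBD}, not the other way round. For instance, take $f:\mathbb A^1\to\operatorname{Spec}k$. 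Then $Rf_!\Lambda[1]=\Lambda(-1)[-1]\notin{}^pD^{\le0}$ and $Rf_*\Lambda[1]=\Lambda[1]\notin{}^pD^{\ge0}$. So your two claims, ``$R\mathrm{pr}_{V^\vee!}$ is right t-exact since it is $!$-pushforward along an affine morphism'' and ``$*$-pushforward along an affine morphism is left t-exact'', are both false as stated.

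The repair is to swap the labels. Let $K$ be perverse on $V$. Then $M=\mathrm{pr}_V^*K[r]\otimes\mathcal L_\psi(\langle\cdot,\cdot\rangle)$ is perverse. Artin vanishing gives $R\mathrm{pr}_{V^\vee!}M\in{}^pD^{\ge0}$ and $R\mathrm{pr}_{V^\vee*}M\in{}^pD^{\le0}$. Laumon's forget-supports isomorphism $R\mathrm{pr}_{V^\vee!}M\simeq R\mathrm{pr}_{V^\vee*}M$ \cite{Lau87} then shows that $\mathcal F_{\psi,V/S}(K)[r]$ is perverse. Because your argument uses both halves, the conclusion survives, but the step as you wrote it does not hold.

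The $E$-coefficient bookkeeping you call the main obstacle is not where the difficulty lies; the paper handles it by direct citation. Your integral-model reduction would also work if you wanted to supply it: a constructible $\mathcal O_E$-complex is perverse and torsion-free if and only if its reduction mod $\pi$ is perverse, and one then rationalizes.
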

	\begin{proof}
		It suffices to show that both $\nu_{\Delta_X}$ and $\mathcal{F}_{\psi,TX/X}[n]$ are perverse t-exact. The first assertion follows from \cite[Corollary 4.5]{ILO14} and the second assertion is \cite[Theorem 1.3.2.3]{Lau87}.
	\end{proof}
	\begin{lemma}\label{lem:perverse-decomposition-of-muHom}
		Let $X$ be a smooth $k$-scheme of pure dimension $n$, and $F,G\in D_{\rm cons}(X,E)$. For every integer $r\in\mathbf{Z}$, we have a canonical isomorphism
		\begin{equation}
			{}^pH^r(\mu\mathcal{H}om_X(F,G)[n])\simeq\bigoplus\limits_{j-i=r}\mu\mathcal{H}om_X({}^pH^iF,{}^pH^jG)[n].
		\end{equation}
		In particular, $\cup_i\mathrm{SS}_\mu({}^pH^iF)\subseteq\mathrm{SS}_\mu(F)$.
	\end{lemma}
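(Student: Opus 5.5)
The plan is to write $\mu\mathcal{H}om_X(F,G)[n]$ as the composite functor $\mu_{\Delta_X}[n]=\mathcal F_{\psi,TX/X}[n]\circ\nu_{\Delta_X}$ applied to $\mathscr H_X(F,G)=\mathbb D_XF\boxtimes G$. Then use Lemma~\ref{lem:mu-t-exact} to commute ${}^pH^r$ past $\mu_{\Delta_X}[n]$. This reduces the statement to a perverse decomposition of the kernel:
\begin{equation}
{}^pH^r(\mathbb D_XF\boxtimes G)\simeq\bigoplus_{j-i=r}\mathbb D_X({}^pH^iF)\boxtimes{}^pH^jG,
\end{equation}
up to the normalization shift coming from $\mathbb D_X$ and $\boxtimes$.

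For the kernel decomposition, the two ingredients are standard.
\begin{itemize}
\item $\mathbb D_X$ is perverse t-exact up to the reversal ${}^pH^{-i}\mathbb D_X=\mathbb D_X{}^pH^i$, since $X$ is smooth of pure dimension $n$ and $\mathcal K_{X/k}=E(n)[2n]$.
\item The external product over a field of characteristic-zero coefficients $E$ is perverse t-exact in each variable, and the Künneth formula gives ${}^pH^r(A\boxtimes B)=\bigoplus_{a+b=r}{}^pH^aA\boxtimes{}^pH^bB$.
\end{itemize}
The shift bookkeeping must be done carefully. With Saito's convention $\mathscr H=\mathbb D F\boxtimes G$ on a $2n$-dimensional space, the normalization $[n]$ in Lemma~\ref{lem:mu-t-exact} should exactly absorb the discrepancy, yielding the stated indices $j-i=r$. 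I would check this on $F=G=E_X[n]$, where $\mu\mathcal{H}om=E_{T^*_XX}$-type objects land in the right degree.

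The main obstacle is upgrading the filtration to a direct sum. The t-exactness arguments give only a spectral sequence, or perverse filtration, on $\mathbb D_XF\boxtimes G$ with the displayed graded pieces. The splitting must be canonical. My first attempt is as follows.
\begin{itemize}
\item Over $E$, use the decomposition theorem or purity only if needed. More cheaply, note that an external product $A\boxtimes B$ of complexes over the field $E$ decomposes into ${}^pH^aA[-a]\boxtimes{}^pH^bB[-b]$ pieces only after splitting $A$ and $B$. This is not automatic.
\item Hence I would rather prove the statement on ${}^pH^r$ directly: ${}^pH^r(A\boxtimes B)$ is canonically $\bigoplus_{a+b=r}{}^pH^aA\boxtimes{}^pH^bB$. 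This follows by induction on perverse amplitude via truncation triangles. The connecting maps in the long exact ${}^pH$-sequence vanish because ${}^pH^a({}^p\tau_{\le a-1}A\boxtimes B')$ and the neighboring terms sit in disjoint degree ranges. The required Ext-vanishing in the heart comes from t-exactness of $\boxtimes$ in each variable.
\end{itemize}
Applying the exact functor $\mu_{\Delta_X}[n]$ then gives the isomorphism.

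The final claim follows from this. For each $i$, the summand $\mu\mathcal{H}om_X({}^pH^iF,{}^pH^iF)[n]$ is a direct summand of ${}^pH^0(\mu\mathcal{H}om_X(F,F)[n])$. Hence $\mathrm{SS}_\mu({}^pH^iF)=\operatorname{supp}\mu\mathcal{H}om_X({}^pH^iF,{}^pH^iF)\subseteq\operatorname{supp}\mu\mathcal{H}om_X(F,F)=\mathrm{SS}_\mu(F)$, since the support of a complex contains the supports of its perverse cohomology sheaves.
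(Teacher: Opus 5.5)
Your plan follows the paper's proof. You write $\mu\mathcal{H}om_X(F,G)[n]$ as $\mu_{\Delta_X}[n]$ applied to the kernel $\mathscr H_X(F,G)=\mathbb D_XF\boxtimes G$, decompose ${}^pH^r$ of the kernel, and pass it through the t-exact functor of Lemma~\ref{lem:mu-t-exact}; the support inclusion then follows as you say. The paper takes the kernel decomposition from BBD. No shift occurs there: ${}^pH^{-i}\mathbb D_X=\mathbb D_X\,{}^pH^i$, and $\boxtimes$ of perverse sheaves is perverse on $X\times X$, so ${}^pH^r(\mathbb D_XF\boxtimes G)=\bigoplus_{j-i=r}\mathbb D_X({}^pH^iF)\boxtimes{}^pH^jG$ exactly. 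The $[n]$ is entirely the normalization making $\mathcal F_{\psi,TX/X}[n]$ t-exact, since $\nu_{\Delta_X}$ is t-exact without shift.

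Where you go beyond the citation and prove the K\"unneth decomposition ${}^pH^r(A\boxtimes B)\simeq\bigoplus_{a+b=r}{}^pH^aA\boxtimes{}^pH^bB$ yourself, the justification fails. Consider the long exact ${}^pH$-sequence of ${}^p\tau_{\le a_1-1}A\boxtimes B\to A\boxtimes B\to{}^pH^{a_1}A[-a_1]\boxtimes B$. All its terms lie in the heart. Once $B$ is not perverse, the connecting map ${}^pH^{a_1}A\boxtimes{}^pH^{r-a_1}B\to\bigoplus_{a<a_1}{}^pH^aA\boxtimes{}^pH^{r+1-a}B$ is not killed by any degree consideration. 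Nor does t-exactness of $\boxtimes$ give Ext-vanishing in the heart.

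Degree arguments only settle the case where one factor $B'$ is perverse, since then $-\boxtimes B'$ is t-exact and commutes with ${}^pH$. The missing ingredient is a natural cross product
$\kappa_{a,b}:{}^pH^aA\boxtimes{}^pH^bB\to{}^pH^{a+b}(A\boxtimes B)$, induced by ${}^p\tau_{\le a}A\boxtimes{}^p\tau_{\le b}B\to A\boxtimes B$. Here a genuine degree argument does identify ${}^pH^{a+b}$ of the source with ${}^pH^aA\boxtimes{}^pH^bB$. Naturality of $\kappa$ along $A\to{}^pH^{a_1}A[-a_1]$ shows that ${}^pH^r(A\boxtimes B)\to{}^pH^{a_1}A\boxtimes{}^pH^{r-a_1}B$ is split surjective. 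Hence the connecting maps vanish, and the five lemma completes the induction on amplitude.
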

	\begin{proof}
		By \cite[Proposition 1.3.21]{BBD},		we have natural isomorphisms
		\begin{equation}
			{}^pH^r(\mathscr{H}_X(F,G))={}^pH^r(\mathbb{D}_XF\boxtimes G)=\bigoplus\limits_{j-i=r}\mathbb{D}_X({}^pH^iF)\boxtimes{}^pH^jG=\bigoplus\limits_{j-i=r}\mathscr{H}_X({}^pH^iF,{}^pH^jG).
		\end{equation}
		The proof is completed by applying Lemma \ref{lem:mu-t-exact}.
	\end{proof}
	\begin{theorem}\label{thm:singular-support-equality-Ecoeff}
		Let $X$ be a smooth $k$-scheme and $F\in D_{\rm cons}(X_{\rm pro\acute{e}t},E)$. We have
		\begin{equation}
			\mathrm{SS}_\mu(F)=\mathrm{SS}(F).
		\end{equation}
	\end{theorem}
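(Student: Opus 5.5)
The inclusion $\mathrm{SS}_\mu(F)\subseteq\mathrm{SS}(F)$ is Theorem~\ref{thm:singular-support-inclusion-Ecoefficient}, so the plan is to prove the reverse inclusion $\mathrm{SS}(F)\subseteq\mathrm{SS}_\mu(F)$ by reducing to the perverse case. Both sides can be computed on connected components of $X$. We may therefore assume that $X$ is connected of pure dimension $n$, so that Lemma~\ref{lem:perverse-decomposition-of-muHom} applies.

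The key steps are as follows. First, by \cite[Theorem~1.5\textup{(viii)}]{Barrett}, we have $\mathrm{SS}(F)=\bigcup_i\mathrm{SS}({}^pH^iF)$. Second, for each $i$ the perverse sheaf ${}^pH^iF$ satisfies $\mathrm{SS}({}^pH^iF)=\mathrm{SS}_\mu({}^pH^iF)$ by Proposition~\ref{prop:perverse}. Third, Lemma~\ref{lem:perverse-decomposition-of-muHom} (the case $G=F$) gives $\bigcup_i\mathrm{SS}_\mu({}^pH^iF)\subseteq\mathrm{SS}_\mu(F)$. Chaining these gives
\begin{equation*}
\mathrm{SS}(F)=\bigcup_i\mathrm{SS}({}^pH^iF)=\bigcup_i\mathrm{SS}_\mu({}^pH^iF)\subseteq\mathrm{SS}_\mu(F)\subseteq\mathrm{SS}(F),
\end{equation*}
so all of these sets are equal.

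The only point needing care is the last statement of Lemma~\ref{lem:perverse-decomposition-of-muHom}. The $r=0$ perverse cohomology of $\mu\mathcal Hom_X(F,F)[n]$ contains the summands $\mu\mathcal Hom_X({}^pH^iF,{}^pH^iF)[n]$ for all $i$ (taking $j=i$). The support of a constructible complex is the union of the supports of its perverse cohomology sheaves, and the support of a direct summand is contained in the support of the sum. Together these give $\mathrm{supp}\,\mu\mathcal Hom_X({}^pH^iF,{}^pH^iF)\subseteq\mathrm{supp}\,\mu\mathcal Hom_X(F,F)$, which is the claimed inclusion.

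This reduction is formal once Proposition~\ref{prop:perverse} and Lemma~\ref{lem:mu-t-exact} are available, so I expect no genuine obstacle here. The substantive inputs are already in those results: Proposition~\ref{prop:perverse} uses the positivity of $\mathrm{CC}$ for perverse sheaves together with Corollary~\ref{cor:CC-cycle}, and Lemma~\ref{lem:mu-t-exact} is the perverse t-exactness of specialization and of the Fourier transform. If anything needs checking, it is that the Künneth-type decomposition of ${}^pH^r(\mathbb D_XF\boxtimes F)$ is valid with $E$-coefficients on the pro-\'etale site. This holds because $\boxtimes$ is t-exact for perverse sheaves over a field, and $\mathbb D_X$ exchanges ${}^pH^i$ with ${}^pH^{-i}$.
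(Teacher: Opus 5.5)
Your proposal is correct and follows essentially the same route as the paper, which proves the theorem by combining Theorem~\ref{thm:singular-support-inclusion-Ecoefficient}, Proposition~\ref{prop:perverse}, Lemma~\ref{lem:perverse-decomposition-of-muHom} and the equality $\mathrm{SS}(F)=\bigcup_i\mathrm{SS}({}^pH^iF)$. The only difference is the citation for that last equality: you use Barrett's $E$-coefficient version (which the paper itself uses in Corollary~\ref{cor:CC-cycle}), whereas the paper's proof cites \cite[Theorem 1.4.(ii)]{Beilinson}.
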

	\begin{proof}
		Combine Theorem \ref{thm:singular-support-inclusion-Ecoefficient}, Proposition \ref{prop:perverse}, Lemma \ref{lem:perverse-decomposition-of-muHom} and \cite[Theorem 1.4.(ii)]{Beilinson}.
	\end{proof}
	
	\appendix
	\section{Cohomological correspondences}\label{sec:CohCorr}
	In this section, we recall the theory of cohomological correspondences.
	\subsection{}
	Let $B$ be a base scheme and $\mathrm{Sch}_B$ the category of schemes separated and of finite type over $B$. We have the symmetric monoidal category $\mathrm{Corr}(\mathrm{Sch}_B)$ of correspondences over $B$.
	We denote by $[g_!f^*]$ a correspondence $(X\xleftarrow{f}Z\xrightarrow{g}Y)$ from $X$ to $Y$.
	Consider the six-functor formalism
	\begin{equation}\label{eq:six-functor}
		D^b_{\mathrm{ctf}}:\mathrm{Corr}(\mathrm{Sch}_B)\to\mathrm{Cat},X\mapsto D^b_{\mathrm{ctf}}(X,\Lambda).
	\end{equation}
	The symmetric monoidal category $\mathrm{CohCorr}_B$ is the unstraightening of \eqref{eq:six-functor}. We also need the category $\mathrm{CohCorr}_B^{\otimes}$ of operations in $\mathrm{CohCorr}_B$ (cf. \cite[Construction 2.1.1.7]{Lur17}).
	\begin{enumerate}
		\item An object of $\mathrm{CohCorr}_B$ is a pair $(X;F)$ where $X\in\mathrm{Sch}_B$ and $F\in D^b_{\mathrm{ctf}}(X,\Lambda)$.
		\item A morphism $([g_!f^*];\alpha):(X;F)\to(Y;G)$ in $\mathrm{CohCorr}_B$ consists of a correspondence
		\begin{equation}
			X\xleftarrow{f}Z\xrightarrow{g}Y\quad\text{in }\mathrm{Sch}_B,
		\end{equation}
		together with a cohomological correspondence
		\begin{equation}
			\alpha:f^*F\to g^!G.
		\end{equation}
		\item The tensor product in $\mathrm{CohCorr}_B$ is 
		\begin{equation}
			(X;F)\otimes_B(Y;G)=(X\times_BY;F\boxtimes_BG).
		\end{equation}
		\item A multi-morphism $(X_1,\cdots,X_n;F_1,\cdots,F_n)\to(Y;G)$ in $\mathrm{CohCorr}_B^\otimes$ is a morphism
		\begin{equation}
			(X_1;F_1)\otimes_B\cdots\otimes_B(X_n;F_n)\to(Y;G).
		\end{equation}
		For $n=0$, we denote by $[\Lambda]:()\to(Y;\Lambda)$ the cocartesian 0-ary morphism.
	\end{enumerate}
	
	\subsection{}
	Let $S$ be a strictly henselian trait. Let $s\in S$ be the geometric closed point and $\eta\in S$ be the geometric generic point.
	By \cite[Construction 3.3]{LuZheng}, we have two symmetric monoidal categories $\mathrm{CohCorr}_{S,s},\mathrm{CohCorr}_{S,\eta}$ and a lax symmetric monoidal functor
	\begin{equation}
		\Psi:\mathrm{CohCorr}_{S,\eta}\to\mathrm{CohCorr}_{S,s},
	\end{equation}
	which we describe here:
	\begin{enumerate}
		\item $\mathrm{CohCorr}_{S,s}$ is the unstraightening of the lax symmetric monoidal functor
		\begin{equation}
			\mathrm{Corr}(\mathrm{Sch}_S)\xrightarrow{(\cdot)\times_Ss}\mathrm{Corr}(\mathrm{Sch}_s)\xrightarrow{D^b_{\mathrm{ctf}}}\mathrm{Cat},X\mapsto D^b_{\mathrm{ctf}}(X_s,\Lambda).
		\end{equation}
		Following the notation for $\mathrm{CohCorr}_S$, we use the subscript $s$ for objects and morphisms in this category. 
		\item $\mathrm{CohCorr}_{S,\eta}$ is the unstraightening of the lax symmetric monoidal functor
		\begin{equation}
			\mathrm{Corr}(\mathrm{Sch}_S)\xrightarrow{(\cdot)\times_S\eta}\mathrm{Corr}(\mathrm{Sch}_\eta)\xrightarrow{D^b_{\mathrm{ctf}}}\mathrm{Cat},X\mapsto D^b_{\mathrm{ctf}}(X_\eta,\Lambda).
		\end{equation}
		Following the notation for $\mathrm{CohCorr}_S$, we use the subscript $\eta$ for objects and morphisms in this category.
		\item   The functor $\Psi$ sends $(X;F)_\eta$ to $(X;\Psi(F))_s$.
	\end{enumerate}
	For convenience, we suppress the Galois action in the notation. $\Psi$ is in fact symmetric monoidal, but we only need the lax structure.
	\subsection{}
	Since $\Psi$ is a lax symmetric monoidal functor, it induces a functor
	\begin{equation}
		\Psi^\otimes:\mathrm{CohCorr}_{S,\eta}^\otimes\to\mathrm{CohCorr}_{S,s}^\otimes.
	\end{equation}
	This functor encodes the coherent compatibility of nearby cycles with the three functors: $*$-pullback, $!$-pushforward and tensor product.

\end{document}